\newcommand{\newterm}[1]{{\bf #1}}
\def\Figref#1{Figure~\ref{#1}}
\def\eqref#1{(\ref{#1})}
\def\Algref#1{Algorithm~\ref{#1}}
\def\norm#1{\left\|#1\right\|}
\def\inner#1{\left\langle#1\right\rangle}
\def\eps{\varepsilon}
\def\vzero{{\bm{0}}}
\def\va{{\bm{a}}}
\def\vb{{\bm{b}}}
\def\vg{{\bm{g}}}
\def\vh{{\bm{h}}}
\def\vm{{\bm{m}}}
\def\vp{{\bm{p}}}
\def\vt{{\bm{t}}}
\def\vu{{\bm{u}}}
\def\vv{{\bm{v}}}
\def\vw{{\bm{w}}}
\def\vx{{\bm{x}}}
\def\vy{{\bm{y}}}
\def\vz{{\bm{z}}}
\def\mA{{\bm{A}}}
\def\mB{{\bm{B}}}
\def\mC{{\bm{C}}}
\def\mH{{\bm{H}}}
\def\mI{{\bm{I}}}
\def\mM{{\bm{M}}}
\def\mN{{\bm{N}}}
\def\mQ{{\bm{Q}}}
\def\mU{{\bm{U}}}
\def\mV{{\bm{V}}}
\DeclareMathAlphabet{\mathsfit}{\encodingdefault}{\sfdefault}{m}{sl}
\SetMathAlphabet{\mathsfit}{bold}{\encodingdefault}{\sfdefault}{bx}{n}
\def\gB{{\mathcal{B}}}
\def\gF{{\mathcal{F}}}
\def\gL{{\mathcal{L}}}
\def\gO{{\mathcal{O}}}
\def\gS{{\mathcal{S}}}
\def\gW{{\mathcal{W}}}
\def\gX{{\mathcal{X}}}
\def\gY{{\mathcal{Y}}}
\def\gZ{{\mathcal{Z}}}
\def\sN{{\mathbb{N}}}
\def\sS{{\mathbb{S}}}
\newcommand{\unif}{\mathrm{Unif}}
\newcommand{\E}{\mathbb{E}}
\newcommand{\R}{\mathbb{R}}
\DeclareMathOperator*{\argmax}{arg\,max}
\DeclareMathOperator*{\argmin}{arg\,min}
\newtheorem{theorem}{Theorem}
\newtheorem{lemma}[theorem]{Lemma}
\newtheorem*{lemma*}{Lemma}
\newtheorem{corollary}{Corollary}
\newtheorem{proposition}[theorem]{Proposition}
\newtheorem{definition}{Definition}
\newtheorem{assumption}{Assumption}
\theoremstyle{definition}
\newcommand{\eg}{\emph{e.g.}}
\newcommand{\ie}{\emph{i.e.}}
\definecolor{darkblue}{rgb}{0.0,0.0,0.65}
\definecolor{darkred}{rgb}{0.65,0.0,0.0}
\definecolor{darkgreen}{rgb}{0.0,0.5,0.0}
\definecolor{tab:blue}{RGB}{31,119,180}  %
\definecolor{tab:red}{RGB}{214,39,40}  %
\definecolor{tab:green}{RGB}{44,160,44}  %
\definecolor{tab:orange}{RGB}{255,127,14}  %
\title{SGDA with shuffling: faster convergence for nonconvex-P{\L} minimax optimization}
\author{Hanseul Cho, Chulhee Yun \\
Kim Jaechul Graduate School of AI, KAIST\\
\texttt{\{jhs4015, chulhee.yun\}@kaist.ac.kr} \\
}
\begin{document}

\maketitle

\begin{abstract}
Stochastic gradient descent-ascent (SGDA) is one of the main workhorses for solving finite-sum minimax optimization problems. Most practical implementations of SGDA randomly reshuffle components and sequentially use them (\ie, without-replacement sampling); however, there are few theoretical results on this approach for minimax algorithms, especially outside the easier-to-analyze (strongly-)monotone setups. To narrow this gap, we study the convergence bounds of SGDA with random reshuffling (\newterm{SGDA-RR}) for smooth nonconvex-nonconcave objectives with Polyak-{\L}ojasiewicz (P{\L}) geometry. We analyze both simultaneous and alternating SGDA-RR for \emph{nonconvex-P{\L}} and \emph{primal-P{\L}-P{\L}} objectives, and obtain convergence rates faster than with-replacement SGDA. Our rates extend to mini-batch SGDA-RR, recovering known rates for full-batch gradient descent-ascent (GDA). Lastly, we present a comprehensive lower bound for GDA with an arbitrary step-size ratio, which matches the full-batch upper bound for the primal-P{\L}-P{\L} case.
\end{abstract}

\section{Introduction}
\label{sec:intro}
A finite-sum minimax optimization problem aims to solve the following:%
\begin{equation}
	\min_{\vx\in\gX}\max_{\vy \in \gY} f(\vx; \vy) := \frac{1}{n}\sum_{i=1}^n f_i (\vx; \vy), %
    \label{eq:minimax}
\end{equation}
where $f_i$ denotes the $i$-th component function. In plain language, we want to minimize the average of $n$ component functions for $\vx$, while maximizing it for $\vy$ given $\vx$.
There are many important areas in modern machine learning that fall within the minimax problem, including
generative adversarial networks (GANs) \citep{goodfellow2020generative}, %
adversarial attack and robust optimization \citep{madry2018towards, sinha2018certifiable},
multi-agent reinforcement learning (MARL) \citep{li2019robust},
AUC maximization \citep{ying2016stochastic, Liu2020Stochastic, yuan2021large}, and many more. In most cases, the objective $f$ is usually nonconvex-nonconcave, \ie, neither convex in $\vx$ nor concave in $\vy$. Since general nonconvex-nonconcave problems are known to be intractable, we would like to tackle the problems with some additional structures, such as smoothness and Polyak-{\L}ojasiewicz (P{\L}) condition(s). %
We elaborate the detailed settings for our analysis, \emph{nonconvex-P{\L}} and \emph{primal-P{\L}-P{\L}} (or, \emph{P{\L}(${\it \Phi}$)-P{\L}}), in Section~\ref{sec:setup}.

One of the simplest and most popular algorithms to solve the problem~\eqref{eq:minimax} would be \emph{stochastic gradient descent-ascent} (\newterm{SGDA}). This naturally extends the idea of stochastic gradient descent (SGD) used for minimization problems. Given an initial iterate $(\vx_0; \vy_0)$, at time $t\in \sN$, SGDA (randomly) chooses an index $i(t) \in \{1, \dots, n\}$ and accesses the $i(t)$-th component to perform a pair of updates
\begin{align*}
    \bigg[~
    \begin{split}
    	&\vx_{t} = \vx_{t-1} - \alpha \nabla_1\, f_{i(t)}(\vx_{t-1}; \vy_{t-1}), \\
    	&\vy_{t} = \vy_{t-1} + \beta \nabla_2\, f_{i(t)}(\vx'; \vy_{t-1}),
    \end{split} 
    \qquad \text{where } \vx' =
    \begin{cases}
        \vx_{t-1}, & \text{(\emph{simSGDA}), or} \\
        \vx_{t}, & \text{(\emph{altSGDA}).}
    \end{cases}
\end{align*}
Here, $\alpha>0$ and $\beta>0$ are the step sizes and $\nabla_j$ denotes the gradient with respect to $j$-th argument for $f_{i(t)}$ ($j=1,2$).
As shown in the update equations above, there are two widely used versions of SGDA: \emph{simultaneous SGDA} (\newterm{simSGDA}), and \emph{alternating SGDA} (\newterm{altSGDA}).

In such stochastic gradient methods, there are two main categories of sampling schemes for the component indices $i(t)$. 
One way is to sample $i(t)$ independently (in time) and uniformly at random from $\{1, \dots, n\}$, which is called \emph{with-replacement sampling}. This scheme is widely adopted in theory papers because it makes analysis of stochastic methods amenable: the noisy gradients $\nabla f_{i(t)}$ are independent over time $t$ and are unbiased estimators of the full-batch gradient $\nabla f$. In contrast, the vast majority of practical implementations employ \emph{without-replacement sampling}, indicating a huge theory-practice gap. In without-replacement sampling, we sample each index precisely once at each epoch. Perhaps the most popular of such schemes is \emph{random reshuffling} (\newterm{RR}), which uniformly randomly shuffles the order of indices at the beginning of every epoch. Unfortunately, it is well-known that without-replacement methods are much more difficult to analyze theoretically, largely because the sampled indices in each epoch are no longer independent of each other.

Interestingly, for minimization problems, several recent works overcome this obstacle and show that SGD using without-replacement sampling leads to faster convergence, given that the number of epochs is large enough
\citep{nagaraj2019sgd, ahn2020sgd, mishchenko2020random, rajput2020closing, nguyen2021unified, yun2021open, yun2022minibatch}.
On the other hand, for minimax problems like \eqref{eq:minimax}, the majority of the studies still assume with-replacement sampling and/or rely on independent unbiased gradient oracles \citep{nouiehed2019solving, guo2020fast,lin2020gradient,yan2020optimal,yang2020global, loizou2021stochastic, beznosikov2022stochastic}. There are very few results on minimax algorithms using without-replacement sampling; even most of the existing ones take advantage of (strong-)convexity (in $\vx$) and/or (strong-)concavity (in $\vy$) \citep{das2022sampling, maheshwari2022zeroth,yu2022fast}. Detailed comparative analysis of these works is conducted in Section~\ref{sec:discussion}.

Putting all these issues into consideration, our main question is the following.
\vspace*{-5pt}
\begin{center}
    \it Does SGDA using without-replacement component sampling provably converge fast, \\ even on smooth nonconvex-nonconcave objective $f$ with P{\L} structures?
\end{center}
\vspace*{-5pt}

\subsection{Summary of our contributions}
To answer the question, we analyze the convergence of SGDA with random reshuffling (\newterm{SGDA-RR}, \Algref{alg:SGDA wo replacement}).
We analyze both the simultaneous and alternating versions of SGDA-RR and prove convergence theorems for the following two regimes.
Here we denote the step size ratio as {\small $r=\beta/\alpha$}.
\vspace{-10pt} %
\begin{itemize}
    \item When $-f(\vx;\vy)$ satisfies $\mu_2$-P{\L} condition in $\vy$ (\newterm{nonconvex-P{\L}}) and component function $f_i$'s are $L$-smooth, we prove that SGDA-RR with $r\gtrsim (L/\mu_2)^2$ converges to $\eps$-stationarity in expectation after $\gO\left(nrL\eps^{-2} + \sqrt{n}r^{1.5}L\eps^{-3} \right)$ gradient evaluations (Theorem~\ref{thm:NCPL}).
    \item Further assuming $\mu_1$-P{\L} condition on
    $\Phi(\cdot):=\max_\vy f(\cdot;\vy)$ (\newterm{primal-P{\L}-P{\L}}, or \newterm{P{\L}($\Phi$)-P{\L}}), we prove that SGDA-RR with $r\gtrsim(L/\mu_2)^2$ converges within $\eps$-accuracy in expectation after {\small $\tilde{\gO}\left(\frac{nLr}{\mu_1}\log(\eps^{-1}) + \sqrt{n}L(\frac{r}{\mu_1})^{1.5}\eps^{-1}\right)$} gradient evaluations (Theorem~\ref{thm:2PL}). %
\end{itemize}
\vspace{-5pt} %
As will be discussed in Section~\ref{sec:discussion}, the rates shown above are \emph{faster} than existing results on with-replacement SGDA. In fact, Theorems~\ref{thm:NCPL}~\&~\ref{thm:2PL} are special cases ($b=1$) of our extended theorems (Theorems~\ref{thm:NCPL 2}~\&~\ref{thm:2PL 2} in Appendix~\ref{sec:minibatch}) that analyze \emph{mini-batch} SGDA-RR of batch size $b$; by setting $b=n$, we also recover known convergence rates for full-batch gradient descent ascent (GDA). Hence, our analysis covers the entire spectrum between vanilla SGDA-RR ($b=1$) and GDA ($b=n$).%
\vspace*{-5pt} %
\begin{itemize}
    \item Additionally, we provide complexity lower bounds for solving strongly-convex-strongly-concave (SC-SC) minimax problems using full-batch simultaneous GDA with an arbitrarily fixed step size ratio $r = \beta/\alpha$.
    Perhaps surprisingly, we find that the lower bound for SC-SC functions matches the convergence upper bound for a much larger class of primal-P{\L}-P{\L} functions when the step size ratio satisfies $r \gtrsim L^2/\mu_2^2$ (Theorem~\ref{thm:LB}).
\end{itemize}

\section{Problem setup}
\label{sec:setup}
\vspace*{-5pt} %
\subsection{Notation}
\vspace*{-5pt} %
In our problem~\eqref{eq:minimax}, the domain of every $f_i$ is $\gZ = \gX\times\gY$, where $\gX=\R^{d_x}$, $\gY=\R^{d_y}$, and $\gZ=\R^d$: we concern unconstrained problems for simplicity.
We denote the Euclidean norm and the standard inner product by $\norm{\cdot}$ and $\inner{\cdot,\cdot}$, respectively.
We often use an abbreviated notation $\vz= (\vx; \vy) \in \gZ$ for $\vx\in \gX$ and $\vy\in\gY$. Even when $\vz$ or $(\vx;\vy)$ is followed by superscripts and/or subscripts, we use the symbols interchangeably; \eg, $\vz_i^k = (\vx_i^k; \vy_i^k)$. 
Note that we split the arguments $\vx$ (for minimization) and $\vy$ (for maximization) by a semicolon (`;').
We use $\nabla_1$ and $\nabla_2$ to denote the gradients with respect to first and second arguments, respectively.%
Accordingly, we can write the full gradient as, \eg, $\nabla g = [\nabla_1 g^\top;\nabla_2 g^\top]^\top$.
For a positive integer $N$, we denote $[N]:=\{1, \dots,N\}$. Let the set $\sS_N$ be a symmetric group of degree $N$. That is, each \emph{permutation} $\sigma\in\sS_N$ is a bijection from $[N]$ to itself, or equivalently, a re-arrangement of $[N]$.
Lastly, we use the usual $\gO$/$\Omega$/$\Theta$ notation for bounds, where $\tilde{\gO}$/$\tilde{\Omega}$/$\tilde{\Theta}$ are used for hiding some logarithmic factors, respectively.%
\subsection{Algorithms: simSGDA-RR \& altSGDA-RR}
\label{sec:setup:algorithms}
As we explained in Section~\ref{sec:intro}, we consider simSGDA and altSGDA combined with RR, a without-replacement sampling scheme. We call them \newterm{simSGDA-RR} and \newterm{altSGDA-RR}, respectively. We present a detailed description of the methods in \Algref{alg:SGDA wo replacement}. For completeness, we also provide an extended version that uses mini-batches of size $\ge 1$ (\Algref{alg:SGDA wo replacement minibatch}) in Appendix \ref{sec:minibatch}. For comparison, we call the SGDA algorithms using \emph{with-replacement} sampling by just \newterm{simSGDA} and \newterm{altSGDA}.

The quantities $\alpha,\beta>0$ are step sizes associated with $\vx$ and $\vy$, respectively.
We use two separate symbols $\alpha$ and $\beta$ to allow the two step sizes to be different. Such algorithms are sometimes called \emph{two-time-scale} algorithms, in a broader sense, and they are adopted in nonconvex minimax optimization problems~\citep{heusel2017gans, lin2020gradient, yang2020global}. In fact, a recent result \citep{li2022convergence} shows that having $\alpha \neq \beta$ is sometimes \emph{necessary} for convergence.

\begin{algorithm}[t]
\caption{{\color{teal}sim}SGDA/{\color{brown}alt}SGDA-{\color{purple} RR}}
\label{alg:SGDA wo replacement}
\begin{algorithmic}[1]
\State {\bf Given:} The number of components $n$; the number of epochs $K$; step sizes $\alpha, \beta>0$ 
\State {\bf Initialize:} $(\vx_0^1; \vy_0^1)\in \R^{d_x}\times\R^{d_y}$
\For{$k\in [K]$}
    \State Sample $\sigma_k \sim {\color{purple}\unif(\sS_n)}$ \Comment RR: uniformly randomly shuffle the indices every epoch
    \For{$i\in[n]$}
        \State $\vx_{i}^k = \vx_{i-1}^k - \alpha \nabla_1 f_{\sigma_k(i)}(\vx_{i-1}^k; \vy_{i-1}^k)$
        \If{{\color{teal}simSGDA}-RR}
        \State $\vy_{i}^k = \vy_{i-1}^k + \beta \nabla_2 f_{\sigma_k(i)}({\color{teal}\vx_{i-1}^k}; \vy_{i-1}^k)$ \Comment simultaneous update: $\vx$ \& $\vy$ 
        \ElsIf{{\color{brown}altSGDA}-RR}
        \State $\vy_{i}^k = \vy_{i-1}^k + \beta \nabla_2 f_{\sigma_k(i)}({\color{brown}\vx_{i}^k}; \vy_{i-1}^k)$ \Comment alternating update: $\vx \rightarrow \vy$
        \EndIf
    \EndFor
    \State $(\vx_0^{k+1}; \vy_0^{k+1}) = (\vx_n^k; \vy_n^k)$
\EndFor
\end{algorithmic}
\end{algorithm}

\subsection{Assumptions and definitions}
To define the function classes that we are interested in solving, we introduce a few assumptions. 
\begin{assumption}[Component smoothness] \label{ass:smooth}
	Every $i$-th component $f_i:\gZ\rightarrow \R$ is $L$-\newterm{smooth}, \ie, $f_i$ is differentiable and $\nabla\! f_i$ is $L$-Lipschitz continuous: $\norm{\nabla\! f_i(\vz)\!-\!\nabla\! f_i(\bar{\vz})}\le L\norm{\vz-\bar{\vz}}$.
	As a result, $f_i(\bar{\vz})\!-\!f_i(\vz) \le \inner{\nabla f_i(\vz), \bar{\vz}\!-\!\vz} + \frac{L}{2}\!\norm{\bar{\vz}\!-\!\vz}^2$ ($\forall\vz, \bar{\vz}$) and the average $f$ of $f_i$'s is also $L$-smooth.\footnote{
    As we noted, Assumption~\ref{ass:smooth} directly implies the \emph{average smoothness} which is a common requirement in the analysis with unbiased gradient oracles. Nevertheless, we claim that Assumption~\ref{ass:smooth} is not more crucial than without-replacement sampling to obtain faster convergence rates: see Appendix~\ref{sec:LB with-replacement} for details and proofs.
    }
\end{assumption}
\begin{assumption}[Component gradient variance] \label{ass:bddvar}
	There exist constants $A, B\ge0$ such that, for any $\vz=(\vx;\vy)\in \gZ$ and $j \in \{1,2\}$, we have
	$\frac{1}{n}\sum_{i=1}^{n} \norm{\nabla_j\, f_i(\vz) - \nabla_j\, f(\vz)}^2 \le A\norm{\nabla_j\, f(\vz)}^2 + B$.%
\end{assumption}
\begin{assumption} \label{ass:primal,dual}
    For a function $f:\gX\times\gY \rightarrow \R$, the \newterm{primal function} $\Phi:\gX \rightarrow \R$
    is well-defined as $\Phi(\vx) := \max_{\vy'\in\gY} f(\vx; \vy')$.
    For each $\vx\in \gX$, the set
    $\gY_{\vx}^* := \argmax_{\vy'\in\gY} f(\vx; \vy')$
    is non-empty and closed. Moreover, we assume $\Phi(\vx)$ is bounded below by $\Phi^* = \inf_{\vx'\in \gX} \Phi(\vx') > -\infty$.
\end{assumption}
Note that Assumption~\ref{ass:bddvar} controls the discrepancy between the objective function $f$ and its components $f_i$'s; it is similar to Assumption~2 of \citet{nguyen2021unified}, adapted to minimax problems. 
Letting $A=0$ recovers a common assumption of the uniformly bounded variance of component gradients; thus, our assumption is a relaxation. Also, note that $A=B=0$ when $n=1$.

We now add an additional structure to our objective function, which is called Polyak-{\L}ojasiewicz (P{\L}) condition. A function $g:\R^d\rightarrow\R$ is said to be $\mu$-\newterm{P{\L}} if it has a minimum value $g^*$ and satisfies
\[\norm{\nabla g(\vt)}^2 \ge 2\mu(g(\vt)-g^*). \quad (\forall~ \vt\in \R^d) \tag{$\mu$-P{\L}}\]
Readers could find several studies and applications that the condition involves, in the papers by \citet{karimi2016linear,nouiehed2019solving,yang2020global,Liu2020Stochastic}, and more. 
Note that every $\mu$-strongly convex\footnote{We say a function $g:\R^d\rightarrow\R$ is $\mu$-strongly convex for some $\mu>0$ if it holds $g(\vx')\ge g(\vx) + \inner{\nabla g (\vx), \vx'-\vx} + (\mu/2)\norm{\vx'-\vx}^2$ ($\forall \vx,\vx'$); we say $g$ is $\mu$-strongly concave if $-g$ is $\mu$-strongly convex.}
function satisfies $\mu$-P{\L} condition, whereas a P{\L} function does not need to be convex.
Hence, $\mu$-P{\L} is a strict generalization of $\mu$-strong convexity. In addition, every stationary point of a P{\L} function is a global optimum, which is a benign property for optimization. 

We are interested in the case where our objective function $f(\vx; \vy)$ has such a structure in terms of $\vy$ (Assumption~\ref{ass:NCPL}). Sometimes, we further assume the primal function $\Phi$ is also P{\L} (Assumption~\ref{ass:primalPL}). We emphasize that we do not necessarily assume the P{\L} conditions for the individual $f_i$'s. 
\begin{assumption}[$\vy$-side P{\L}] \label{ass:NCPL}
    For each (fixed) $\vx\in\gX$, $-f(\vx; \cdot)$ is $\mu_2$-P{\L}, \ie, for every $(\vx; \vy)\in\gZ$, $\norm{\nabla_2\, f(\vx; \vy)}^2 \ge 2\mu_2 (\Phi(\vx)-f(\vx; \vy)),$
    where $\Phi$ is the primal function associated with $f$.
\end{assumption}
\begin{assumption}[Primal P{\L}, or P{\L}($\Phi$)] \label{ass:primalPL}
    The primal function $\Phi(\cdot)=\max_{\vy'} f(\vx; \vy')$ of $f$ is $\mu_1$-P{\L}, \ie, for every $\vx\in\gX$, $\norm{\nabla\, \Phi(\vx)}^2 \ge 2\mu_1 (\Phi(\vx)-\Phi^*),$
    where $\Phi^*=\min_{\vx} \Phi(\vx)$ is well-defined.
\end{assumption}
We say the function $f$ is \newterm{nonconvex-P{\L}} when it satisfies Assumption~\ref{ass:NCPL}. Since we do not assume any convexity/concavity, it is generally hard to reach global optima. Due to the $\vy$-side P{\L} condition, we can guarantee that the primal function $\Phi$ is differentiable and even $L_\Phi$-smooth with $L_\Phi \le L + L^2/\mu_2$ (Proposition~\ref{prop:Phi smooth} in Appendix~\ref{sec:propositions}).
Since the problem~\eqref{eq:minimax} can be reformulated as the minimization problem of $\Phi$ (when we can always find $\vy$ well that maximizes $f(\vx;\vy)$ given $\vx$), we could aim to find an approximate first-order stationary point of $\Phi$, by making the norm of the gradient of $\Phi$ small. 

On top of that, if $f$ satisfies both Assumptions~\ref{ass:NCPL}~and~\ref{ass:primalPL}, the function is said to be \newterm{primal-P{\L}-P{\L}}, or \newterm{P{\L}($\Phi$)-P{\L}} for short.\footnote{The P{\L}($\Phi$)-P{\L} condition is much weaker than \newterm{two-sided P{\L}} condition assuming ``$\vx$-side'' P{\L} condition: see Proposition~\ref{prop:Phi PL}. As pointed out by \citet{guo2020fast}, there exist a  P{\L}($\Phi$)-P{\L} function $g(\vx;\vy)$ that is not $\vx$-side $\mu$-P{\L} for any $\mu>0$ but even strongly \emph{concave} in $\vx$.}
In this case, we directly aim not only to decrease the primal function $\Phi$ associated with the objective function $f$ but also to increase the function value $f(\vx;\vy)$ in terms of $\vy$. To evaluate how close we are to our goal, we define a \emph{potential function} $V_\lambda$ later in Section~\ref{sec:main results}.
When we attain $V_\lambda(\vx^*, \vy^*)=0$, it implies that we arrive at a global minimax point: $f(\vx^*,\vy^*)=\Phi(\vx^*)=\Phi^*$. The function $V_\lambda$ enables us to develop a unified analysis for nonconvex-P{\L} and P{\L}($\Phi$)-P{\L} objective functions; we discuss this in greater detail in Section~\ref{sec:main results}.
\vspace*{-5pt} %

\section{Main results}
\label{sec:main results}
\vspace*{-5pt} %
Based on the assumptions stated in the previous section, we present the convergence results for both smooth nonconvex-P{\L} objectives and smooth P{\L}($\Phi$)-P{\L} objectives. Before stating the main theorems, we first introduce the most important tool for our analyses: the \emph{potential function}.
\vspace*{-5pt} %
\subsection{Potential function \(V_\lambda\)}\label{sec:main results:potential}
\vspace*{-5pt} %
For our convergence analyses, we utilize a function $V_\lambda : \gX\times \gY \rightarrow \R$ defined as
\begin{equation}
	V_\lambda(\vx; \vy):= \lambda (\Phi(\vx)-\Phi^*) + (\Phi(\vx)-f(\vx; \vy)), \label{eq:Lyapunov}
\end{equation}
where $\lambda>0$ is a constant. We borrow inspiration from \cite{yang2020global} and \cite{das2022sampling} to come up with this function, although the placement of $\lambda$ of ours is different. %
In fact, the convergence to a neighborhood of a global minimax point (if it exists) implies the reduction of this function.
For each $\vx$, a non-negative term $\Phi(\vx)-f(\vx; \vy)$ gets smaller as $\vy$ makes $f(\vx; \vy)$ larger.
The term becomes zero when $\vy = \vy^*(\vx)$ for some $\vy^*(\vx)\in \gY_{\vx}^*$, since $\Phi(\vx) = f(\vx; \vy^*(\vx))$.
Also, another non-negative term $\Phi(\vx)-\Phi^*$ gets smaller as $\vx$ makes $\Phi(\vx)$ smaller.
Thus, as $(\vx; \vy)$ approaches to a minimax optimal point, $V_\lambda(\vx; \vy)$ decreases to near zero.
In general, $V_\lambda$ is not guaranteed to attain exact zero, especially when the objective function $f(\vx;\vy)$ is nonconvex in $\vx$ (\eg, $f$ is nonconvex-P{\L}). Nevertheless, the potential function is still useful for deriving our convergence results.
\vspace*{-5pt} %
\subsection{Main theorems: upper bounds of convergence rates}
\vspace*{-5pt} %
Now, we present our main results.
We provide a detailed comparison of our theorems against existing results in Section~\ref{sec:discussion}.
We present the full proof in Appendices~\ref{sec:proofs simSGDA RR} and \ref{sec:proofs altSGDA RR}. We remark that both Theorems~\ref{thm:NCPL} and \ref{thm:2PL} are special cases (for mini-batch size $b = 1$) of their \emph{mini-batch} extensions: Theorems~\ref{thm:NCPL 2} and \ref{thm:2PL 2} in Appendix~\ref{sec:minibatch}.

\begin{theorem}[Nonconvex-P{\L}]\label{thm:NCPL}
	Suppose that $f$ satisfies Assumptions~\ref{ass:smooth}, \ref{ass:bddvar}, \ref{ass:primal,dual}, and \ref{ass:NCPL}. Let $\kappa_2 = L/\mu_2$, where $\mu_2$ is P{\L} constant of $-f(\vx; \cdot)$ at all $\vx$. Let $\lambda=4$. Choose the step sizes $\alpha$ and $\beta$ such that
    \vspace*{-3pt} %
	\[\beta = \min\left\{\frac{1}{6L\sqrt{n(n+A)}},\,\, \gO\left(\left( \frac{ V_\lambda(\vz_0^1)}{Bn^2K} \right)^{\frac{1}{3}}\right)\right\} \quad\text{and}\quad \alpha=\frac{\beta}{r},\]
	for some $r\ge 14\kappa_2^2$. Then, both simSGDA-RR and altSGDA-RR (\Algref{alg:SGDA wo replacement}) satisfy
    \vspace*{-3pt} %
 	\begin{align*}
        \frac{1}{K}\sum_{k=1}^K \E \norm{\nabla \Phi(\vx_0^k)}^2
        \le\gO\!\left(\frac{r L V_\lambda(\vz_0^1)}{K}\sqrt{1\!+\!\frac{A}{n}} \!+\! r \left(\frac{L^2 B V_\lambda(\vz_0^1)^2}{nK^2}\right)^{\!\nicefrac{1}{3}} \right).
 	\end{align*}
\end{theorem}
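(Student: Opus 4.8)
The plan is to run a standard epoch-wise descent argument on the primal function $\Phi$, using the fact that $\Phi$ is $L_\Phi$-smooth (Proposition~\ref{prop:Phi smooth}, with $L_\Phi \le L + L^2/\mu_2$), and control the error introduced by (i) the staleness of $\vy$ relative to the true maximizer $\vy^*(\vx)$ and (ii) the without-replacement sampling noise. Concretely, I would track the potential $V_\lambda$ along the sequence of \emph{epoch endpoints} $\vz_0^k = (\vx_0^k; \vy_0^k)$, and prove a one-epoch inequality of the form
\begin{equation*}
\E[V_\lambda(\vz_0^{k+1})] \le \E[V_\lambda(\vz_0^k)] - c\, n\alpha\, \E\norm{\nabla\Phi(\vx_0^k)}^2 + (\text{error terms}),
\end{equation*}
for an absolute constant $c>0$, where the error terms scale like $n^2\beta^2$ times the noise constant $B$ (plus terms absorbed by the smallness of $\beta$, using the $\sqrt{n(n+A)}$ cap). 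Telescoping over $k=1,\dots,K$, dividing by $cn\alpha K = cnK\beta/r$, and optimizing $\beta$ over the two regimes (the $\frac{1}{6L\sqrt{n(n+A)}}$ cap versus the cube-root choice $(V_\lambda(\vz_0^1)/(Bn^2K))^{1/3}$) yields exactly the two terms $\frac{rLV_\lambda(\vz_0^1)}{K}\sqrt{1+A/n}$ and $r(L^2 B V_\lambda(\vz_0^1)^2/(nK^2))^{1/3}$.

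The steps, in order: \textbf{(1)} Decompose $V_\lambda(\vz_0^{k+1}) - V_\lambda(\vz_0^k)$ into a ``$\Phi$-descent'' part (from the $\vx$-updates across the epoch) and a ``$\vy$-tracking'' part $\big[\Phi(\vx_0^{k+1}) - f(\vx_0^{k+1};\vy_0^{k+1})\big] - \big[\Phi(\vx_0^k) - f(\vx_0^k;\vy_0^k)\big]$. \textbf{(2)} For the $\vx$-part, view the $n$ inner $\vx$-steps as one inexact gradient step on $\Phi$: the aggregate $\vx$-increment is $-\alpha\sum_i \nabla_1 f_{\sigma_k(i)}(\vz_{i-1}^k)$, which I compare to $-n\alpha\nabla\Phi(\vx_0^k) = -n\alpha\nabla_1 f(\vx_0^k;\vy^*(\vx_0^k))$ by the Danskin-type identity. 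The deviation splits into: drift of $\vx_i^k, \vy_i^k$ within the epoch (bounded by $L$-smoothness and step-size smallness), the gap between $\vy_0^k$ and $\vy^*(\vx_0^k)$ (controlled by the $\mu_2$-P\L{} condition, which ties $\Phi(\vx_0^k)-f(\vx_0^k;\vy_0^k)$ to $\norm{\nabla_2 f(\vx_0^k;\vy_0^k)}^2$), and the shuffling-induced noise (handled by the standard ``sum of a random permutation of mean-zero vectors'' variance bound, giving the $n^2\beta^2 B$-type terms rather than $n^3$). \textbf{(3)} For the $\vy$-part, use that one epoch of ascent steps with $n$ gradients, at effective step size $\beta$, contracts $\Phi(\vx)-f(\vx;\vy)$ at rate governed by $1 - \Theta(n\beta\mu_2)$ thanks to $\mu_2$-P\L{} — but one must also pay for the primal point moving from $\vx_0^k$ to $\vx_0^{k+1}$; this cross term is where the step-size ratio condition $r\ge 14\kappa_2^2$ is used, ensuring the $\vy$-side contraction dominates the perturbation from the $\vx$-movement. \textbf{(4)} Combine (2)+(3) with $\lambda=4$ chosen so the $\Phi(\vx)-f(\vx;\vy)$ terms cancel favorably, obtaining the clean one-epoch recursion. \textbf{(5)} Telescope and plug in the two choices of $\beta$.

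The main obstacle, I expect, is Step (2)–(3): carefully bounding the in-epoch drift of the iterates and disentangling the three sources of error (intra-epoch movement, $\vy$-staleness, permutation noise) so that the drift terms are genuinely absorbed by the $\beta \le \frac{1}{6L\sqrt{n(n+A)}}$ cap and do not degrade the final rate — and simultaneously making the $\vy$-tracking contraction survive the coupling with the moving $\vx$. The simultaneous versus alternating cases will differ only in a lower-order way (in altSGDA-RR the $\vy$-update sees the already-updated $\vx_i^k$, shifting some cross terms), so I would prove the bound for simSGDA-RR in detail and note that the altSGDA-RR argument is analogous with an extra $O(\beta)$ perturbation that is likewise absorbed. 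The variance bound for without-replacement sums — that $\E\norm{\sum_{i} (\vg_{\sigma(i)} - \bar\vg)}^2 \lesssim n \sum_i \norm{\vg_i-\bar\vg}^2$ rather than $n^2$ times it — is the key lemma that gives SGDA-RR its edge over with-replacement SGDA, and I would invoke it (or its standard proof) at the appropriate place in Step (2).
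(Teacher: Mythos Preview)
Your proposal is correct and follows essentially the same approach as the paper: epoch-wise aggregation, a descent inequality on $V_\lambda$ via smoothness of $\Phi$ and $-f$ (Lemma~\ref{lem:NaiveReccurence}), bounding the noise terms through the intra-epoch drift $G_k$ and the without-replacement variance lemma (Lemmas~\ref{lem:DefnGk}--\ref{lem:boundingEGk}), converting $\|\nabla\Phi-\nabla_1 f\|^2$ into a $[\Phi-f]$ term via the QG consequence of P{\L}, and then showing the coefficient in front of $[\Phi(\vx_0^k)-f(\vz_0^k)]$ is negative precisely when $r\ge14\kappa_2^2$ (Lemma~\ref{lem:ReccurenceGeneral}), after which telescoping and step-size optimization yield the two displayed terms. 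The paper's decomposition $(\lambda+1)\Delta\Phi - \Delta f$ is algebraically identical to your $\lambda\Delta\Phi + \Delta(\Phi-f)$, and the alt-versus-sim distinction is handled just as you anticipate, by an extra $O(\alpha\beta)$ perturbation absorbed under the same step-size cap (Appendix~\ref{sec:proofs altSGDA RR}).
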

\vspace*{-5pt} %

{\noindent \bf Upper bound on gradient complexity.} To achieve $\eps$-stationarity of the primal function, \ie, $\frac{1}{K}\sum_{k=1}^K\E\norm{\nabla \Phi(\vx_0^k)}^2\le \eps^2$, a sufficient number of gradient evaluations (denoted by $T_\eps=nK$) is
\[T_\eps = \gO\left(\frac{r L V_\lambda(\vz_0^1)}{\eps^2} \max\left\{\sqrt{n^2+nA}, \frac{\sqrt{rnB}}{\eps}\right\}\right).\]
\vspace*{-5pt} %

\begin{theorem}[P{\L}($\Phi$)-P{\L}]\label{thm:2PL}
	Suppose that $f$ satisfies Assumptions~\ref{ass:smooth}, \ref{ass:bddvar}, \ref{ass:primal,dual}, \ref{ass:NCPL}, and \ref{ass:primalPL}. Let $\kappa_1 = L/\mu_1$ and $\kappa_2 = L/\mu_2$, where $\mu_1$ and $\mu_2$ are P{\L} constants of $\Phi(\cdot)$ and $-f(\vx; \cdot)$ (at all $\vx$), respectively. Let $\lambda=4$. Choose appropriate step sizes $\alpha$ and $\beta$ such that
    \vspace*{-5pt} %
	\begin{equation*}
		\beta = \min \left\{ \frac{1}{6L\sqrt{n(n+A)}},\,\, \tilde{\gO} \left(\frac{\kappa_2^2}{\mu_1 n K}\right)\right\} \quad\text{and}\quad \alpha=\frac{\beta}{r},
	\end{equation*}
    \vspace*{-5pt} %
	for some $r\ge 14\kappa_2^2$. Then, both simSGDA-RR and altSGDA-RR (\Algref{alg:SGDA wo replacement}) satisfy
	\[\E[V_\lambda(\vz_0^{K+1})] \le \gO\left( V_\lambda(\vz_0^1)\cdot\exp\left(-\frac{K}{12\kappa_1r\sqrt{1+\frac{A}{n}}}\right)\right)+\tilde{\gO}\left(\frac{\kappa_1^2r^3 B}{\mu_1 n K^2}\right).\]
\end{theorem}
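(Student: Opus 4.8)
The plan is to run the same one-epoch descent analysis used for the nonconvex-P{\L} case (Theorem~\ref{thm:NCPL}) but, instead of merely summing the per-epoch decrease of $\Phi$, to exploit the extra P{\L}($\Phi$) structure to turn the recursion into a contraction for the potential $V_\lambda$. Concretely, I would first establish a one-epoch inequality of the schematic form
\begin{equation*}
  \E[V_\lambda(\vz_0^{k+1})] \le \E[V_\lambda(\vz_0^k)] - c_1\, n\alpha\, \E\norm{\nabla\Phi(\vx_0^k)}^2 - c_2\, n\beta\, \E\bigl(\Phi(\vx_0^k)-f(\vz_0^k)\bigr) + (\text{shuffling error terms}),
\end{equation*}
where the first two negative terms come respectively from the $\lambda(\Phi-\Phi^*)$ part (driven by the $\vx$-updates, using $L_\Phi$-smoothness of $\Phi$ from Proposition~\ref{prop:Phi smooth}) and from the $(\Phi-f)$ part (driven by the $\vy$-updates, using the $\mu_2$-P{\L} condition in $\vy$), and the error terms collect the without-replacement/variance contributions of order $n^3\beta^3$ times $B$ and the cross terms controlled by the step-size ratio condition $r\ge 14\kappa_2^2$. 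This is exactly the inequality that already underlies Theorem~\ref{thm:NCPL}; the only new ingredient at this stage is keeping the $-c_2 n\beta(\Phi-f)$ term rather than discarding it.

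The second step is to apply Assumption~\ref{ass:primalPL}: $\norm{\nabla\Phi(\vx_0^k)}^2 \ge 2\mu_1(\Phi(\vx_0^k)-\Phi^*)$. Substituting this into the first negative term, and choosing $\lambda=4$ together with the constraint $r\ge 14\kappa_2^2$ so that the coefficients balance, the right-hand side becomes $\le (1-\rho)\,\E[V_\lambda(\vz_0^k)] + (\text{error})$ for some contraction factor $\rho = \Theta\bigl(\tfrac{1}{\kappa_1 r}(1+A/n)^{-1/2}\bigr)$ — here one needs $\mu_1 n\alpha = \mu_1 n\beta/r$ to govern the $\lambda(\Phi-\Phi^*)$ decay and $\mu_2 n\beta$ to govern the $(\Phi-f)$ decay, and the $r\gtrsim\kappa_2^2$ condition is precisely what makes the $\vy$-decay dominate the error coupling between the two blocks so that a single rate $\rho$ works for the whole potential. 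Then I would unroll the linear recursion $a_{k+1}\le(1-\rho)a_k + E$ over $k=1,\dots,K$ to get $\E[V_\lambda(\vz_0^{K+1})] \le (1-\rho)^K V_\lambda(\vz_0^1) + E/\rho \le \exp(-\rho K)V_\lambda(\vz_0^1) + E/\rho$.

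The third step is bookkeeping on the error term $E$. With the stated choice $\beta = \min\{\tfrac{1}{6L\sqrt{n(n+A)}},\ \tilde\gO(\kappa_2^2/(\mu_1 nK))\}$, the dominant shuffling error per epoch is $\Theta(L^2 B\, n^3\beta^3)$ (this is the standard RR error, scaled by the $r$-dependent constants coming from $\alpha=\beta/r$), so $E/\rho = \Theta(\kappa_1 r\, L^2 B\, n^3\beta^3 / (n\beta)) = \Theta(\kappa_1 r L^2 B\, n^2\beta^2)$; plugging $\beta = \tilde\gO(\kappa_2^2/(\mu_1 nK))$ and simplifying with $\kappa_2^2 L^2 = \Theta(r)$-type substitutions gives the advertised $\tilde\gO(\kappa_1^2 r^3 B/(\mu_1 n K^2))$. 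The first branch of the $\min$ guarantees $\beta$ is small enough for the per-epoch inequality to hold in the first place (it controls the $\vz$-drift within an epoch), and one checks it only affects the exponent through the absorbed $\sqrt{1+A/n}$ factor, matching $\exp(-K/(12\kappa_1 r\sqrt{1+A/n}))$.

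The main obstacle is Step~1: establishing the clean one-epoch potential inequality with the $(\Phi-f)$ term retained and with all cross-block error terms provably dominated once $r\ge 14\kappa_2^2$. This requires carefully tracking how the iterate $\vy$ drifts away from the argmax set $\gY_{\vx_0^k}^*$ over a full epoch while $\vx$ is simultaneously moving, bounding $\Phi(\vx_i^k)-f(\vx_i^k;\vy_i^k)$ in terms of its value at the epoch start plus controllable noise, and doing so separately for the simultaneous and alternating updates (the alternating case needs an extra one-step smoothness argument to handle the $\vx_i^k$ vs.\ $\vx_{i-1}^k$ mismatch in the $\vy$-gradient). The rest — applying P{\L}($\Phi$), unrolling the geometric recursion, and simplifying the error — is essentially mechanical given the machinery already developed for Theorem~\ref{thm:NCPL}.
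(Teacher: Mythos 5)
Your proposal matches the paper's proof almost step for step: Lemma~\ref{lem:ReccurenceGeneral} gives the per-epoch potential inequality you describe, with a negative $(\Phi(\vx_0^k)-f(\vz_0^k))$ term that the nonconvex-P{\L} analysis discards and the primal-P{\L} analysis retains, Lemma~\ref{lem:Reccurence2PL} applies the primal P{\L} inequality to convert $-q\alpha\norm{\nabla\Phi(\vx_0^k)}^2$ into a $(1-\mu_1 q\alpha/2)$ contraction on $V_\lambda$, and Proposition~\ref{prop:recurrence inequality} unrolls the recursion exactly as you say. A minor bookkeeping caveat on your Step~3: with $b=1$ the paper's per-epoch error is $(cr)^3 L^2 n^2 B\alpha^3 = c^3 L^2 n^2 B\beta^3$ (scaling as $n^2$, not $n^3$, in $\beta$), and the substitution ``$\kappa_2^2 L^2 = \Theta(r)$'' is not meaningful since $r$ is a dimensionless ratio, but carrying the algebra with $\beta = \tilde{\gO}(r/(\mu_1 nK))$ and $\rho = \mu_1 n\alpha/2$ does land on $\tilde{\gO}(\kappa_1^2 r^3 B/(\mu_1 n K^2))$ as advertised.
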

\vspace*{-5pt} %
{\bf Upper bound on gradient complexity.} To achieve $\eps^2$-accuracy on expectation of $V_\lambda(\vz_n^K)$, \ie, $\E [V_\lambda(\vz_n^K)]\le \eps^2$, a sufficient number of gradient evaluations (denoted by $T'_\eps=nK$) is
\[T'_\eps = \max\left\{\gO\left(\kappa_1r\sqrt{n^2+nA} \cdot \log \left(\frac{V_\lambda(\vz_0^1)}{\eps}\right)\right), \tilde{\gO}\left(\frac{\kappa_1r^{3/2}}{\eps}\sqrt{\frac{nB}{\mu_1}}\right)\right\}.\]
\vspace*{-10pt} %

{\bf Remark on step size ratio.} In both theorems, we use the step sizes of ratio $r=\beta/\alpha \gtrsim \kappa_2^2$. It is common to use such a step size scheme $r = \Theta(\kappa_2^2)$ to analyze two-time-scale (S)GDA for nonconvex minimax problems \citep{jin2020local, lin2020gradient, yang2020global}. 

{\bf Remark on the parameter $\lambda$.} In our convergence analyses, we arbitrarily choose $\lambda=4$ which makes the numerical calculations easier. The value of $\lambda>0$ does not matter for the equivalence between the equation $V_\lambda(\vx^*; \vy^*)=0$ and global minimax condition  (Proposition~\ref{prop:optimality} in Appendix~\ref{sec:propositions}). Also, the choice of $\lambda$ in both theorems can be arbitrary as long as $\lambda>1$; our logic does not fall apart if other appropriate step sizes for that $\lambda$ are chosen.
That is to say, we can show that the sequence $V_\lambda(\vz_0^k)$ almost monotonically decreases, ignoring some small variance terms.

\section{Comparison with related works}
\label{sec:discussion}
\vspace*{-5pt} %
\subsection{Comparison with stochastic with-replacement setting} \label{sec:discussion:with-replacement}
\vspace*{-5pt} %
First of all, we confirm that SGDA with random reshuffling (RR) has \emph{faster} convergence rates (\ie, fewer gradient computations) than SGDA based on with-replacement sampling. 
In particular, we compare our results with the analyses on the purely stochastic minimax settings which assume that every stochastic gradient oracle is \emph{independently sampled and unbiased}: this assumption is naturally satisfied by with-replacement sampling for the finite-sum settings we consider.
To make the comparisons fair and easy, we simply let \underline{$r=\beta/\alpha=\Theta(\kappa_2^2)$, $A=0$, and $B=\tau^2$}.

\citet[Theorem~4.5]{lin2020gradient} present a convergence rate for with-replacement simSGDA with $r\!=\!\Theta(\kappa_2^2)$ run on nonconvex $\mu_2$-strongly-concave problems with a convex \emph{bounded} constraint set $\gY$ for dual variable $\vy$.
Their gradient complexity to achieve $\frac{1}{T}\!\sum_{t=1}^T \E \norm{\nabla \Phi(\vx_t)}^2\!\le\!\eps^2$ (where $T$ is the number of iterations) is written as
\(T_\eps=\gO\left(\frac{\kappa_2^2L\Delta_\Phi +\kappa_2 L^2 D^2}{\eps^2}\max\left\{1, \frac{\kappa_2\tau^2}{\eps^2}\right\}\right),\)
where $\kappa_2=L/\mu_2$, $\Delta_{\Phi} = \Phi(\vx_0) - \Phi^*$, $D=\operatorname{diam}\gY$, and $\tau^2$ is the variance of the (unbiased) stochastic gradient oracles.
Their complexity can be simplified as $\gO(\kappa_2^3\tau^2\eps^{-4})$, treating other factors as constants.
In contrast, our Theorem~\ref{thm:NCPL} has a better gradient complexity in terms of $\eps$ and $\tau$, thanks to shuffling:
\[\gO\left(\frac{\kappa_2^2 L V_\lambda(\vz_0^1)}{\eps^2} \max\left\{n, \frac{\kappa_2\tau\sqrt{n}}{\eps}\right\}\right), \tag{Ours, from Theorem~\ref{thm:NCPL}}\]
or simply $\gO(\kappa_2^3\tau\sqrt{n}\eps^{-3})$.
Thus, our gradient complexity for both simSGDA-RR \emph{and} altSGDA-RR is better than that of with-replacement simSGDA when $\eps$ is small as $\eps\le \gO(\tau/\sqrt{n}).$
Our rate has three more strengths:
(i) we do not require strong concavity in $\vy$, which is a strictly stronger assumption than requiring $\vy$-side PL condition;
(ii) we do not require the constraint set $\gY$ to be bounded;
(iii) our result can easily extend to the case of \emph{any} mini-batch sizes, whereas \citet{lin2020gradient} need a particular choice of mini-batch size $M=\gO(\kappa_2 \tau^2/\eps)$ to ensure convergence.

For nonconvex-P{\L} objectives, \citet[Theorem~3.1]{yang2022faster} provide a convergence rate for with-replacement altSGDA with $r=\Theta(\kappa_2^2)$. Their rate can be translated to a gradient complexity for achieving $\frac{1}{T}\!\sum_{t=1}^T \E \norm{\nabla \Phi(\vx_t)}^2\le \eps^2$, written as
\(\gO\left(\frac{\kappa_2^2 L V_\lambda(\vz_0)}{\eps^2} \left(1 + \frac{\kappa_2^2 V_\lambda(\vz_0)^2 \tau^2}{\Delta_\Phi \eps^2}\right)\right)\)
or simply $\gO(\kappa_2^4\tau^2\eps^{-4})$.
Therefore, our gradient complexity for both altSGDA-RR \emph{and} simSGDA-RR is better when $\eps$ is small as $\eps\le \gO(\kappa_2\tau/\sqrt{n})$.
 
For P{\L}($\Phi$)-P{\L} objectives, \citet[Theorem~3.3]{yang2020global} obtain a convergence rate for with-replacement altSGDA with $r=\Theta(\kappa_2^2)$.\footnote{Although they consider two-sided P{\L} problems, their analysis applies to P{\L}($\Phi$)-P{\L} problems as well.} They apply diminishing step sizes ($\gO(1/t)$, $t\in\sN$) to derive a gradient complexity bound $\gO\left(\frac{\kappa_1\kappa_2^4\tau^2}{\mu_1\eps^2}\right)$ to achieve $\E [V_\lambda (\vz_T)] \le \eps^2$. 
One can apply the constant step sizes depending on the total number $T$ of iterations to their analysis and derive a similar complexity with only deterioration in a logarithmic factor.
In contrast, our gradient complexity for both sim/altSGDA-RR using constant step sizes can be written as, for small enough $\eps$,
\[\tilde{\gO}\left(\frac{\kappa_1\kappa_2^3\tau\sqrt{n}}{\eps\sqrt{\mu_1}}\right). \tag{Ours, from Theorem~\ref{thm:2PL}}\]
This is a better complexity in $\eps$ and $\kappa_2$, especially when $\eps\le \tilde{\gO}\left(\kappa_2\tau/\!\sqrt{n\mu_1}\right)$, even without the requirement of diminishing step size.
\vspace*{-5pt} %

\subsection{Comparison with other works on stochastic without-replacement setting} \label{sec:discussion:without-replacement}
One of the most relevant works to this paper is \citet[Theorem~3]{das2022sampling}.
The authors obtain a similar convergence rate to us for the two-sided P{\L} objective, based on linearization of gradients, but for a dissimilar algorithm which they refer to as \emph{AGDA-RR}.
The algorithm can be also thought of as \emph{epoch-wise}-alternating SGDA-RR, whereas our algorithm (altSGDA-RR) can be called as \emph{step-wise}-alternating SGDA-RR.
In epoch $k$, their algorithm ({\it i}) performs updates only on $\vx$ ($\vx_0^k,\ldots,\vx_n^{k}$) while fixing $\vy$ to $\vy_0^k$, and then ({\it ii}) performs updates only on $\vy$ ($\vy_0^k,\ldots,\vy_n^k$) while fixing $\vx$ to $\vx_0^{k+1} = \vx_n^k$.
We believe that our step-wise algorithm is closer to practice, especially when $n$ is large.
Because of the distinction between algorithms, the proof techniques are also different.

\citet[Theorem~3]{xie2021efficient} present a convergence rate of \emph{CD-MA}, an extension of simSGDA to the cross-device federated learning setup, on nonconvex-P{\L} setting.
Their convergence result for \emph{CD-MA} also assumes mini-batch sampling by random reshuffling.
As a consequence, they yield a rate analogous to our Theorem~\ref{thm:NCPL} if we reduce their result to the single-machine setup.
Nevertheless, our convergence bound contains a term that shrinks with the number of components or mini-batches, whereas theirs does not. For a more detailed comparison, please refer to Appendix~\ref{sec:omitted comparison}.

There are also some works on RR-based (constrained) minimax optimization algorithms other than SGDA, but for convex-concave problems.
\citet{maheshwari2022zeroth} present \emph{OGDA-RR}, a gradient-free RR-based optimistic GDA algorithm.
\citet{yu2022fast} study \emph{stochastic proximal point with RR}, consisting of double-loop epochs.
Their analyses exploit convex-concavity and Lipschitz continuity of their objective, based on the arguments by \citet{nagaraj2019sgd}.
This enables a direct usage of the duality gap, the difference between primal function $\Phi(\cdot)$ and dual function $\Psi(\cdot) = \min_{\vx} f(\vx;\cdot)$,
as a criterion for optimality.
On the contrary, our work relies on a different structure of the functions, which in turn differentiates the constructions of convergence rates.

\subsection{Comparison with deterministic setting} \label{sec:discussion:deterministic}
Here, we compare our rates with (full-batch) \emph{gradient descent-ascent} (GDA):
\begin{align*}
    \bigg[~
    \begin{split}
    	&\vx_{k} = \vx_{k-1} - \alpha \nabla_1\, f(\vx_{k-1}; \vy_{k-1}), \\
    	&\vy_{k} = \vy_{k-1} + \beta \nabla_2\, f(\vx'; \vy_{k-1}),
    \end{split} 
    \qquad \text{where } \vx' =
    \begin{cases}
        \vx_{k-1}, & \text{(\emph{simGDA}), or} \\
        \vx_{k}, & \text{(\emph{altGDA}).}
    \end{cases}
\end{align*}
It uses the whole information of the objective $f$ at every iteration without any noise. For comparison with GDA, we utilize our extended theorems for arbitrary mini-batch size $b$ (Theorems~\ref{thm:NCPL 2}~and~\ref{thm:2PL 2} in Appendix~\ref{sec:minibatch}). By letting $b=n$ and matching our iterate $\vz_0^k=(\vx_0^k;\vy_0^k)$ to a GDA iterate $\vz_k=(\vx_k;\vy_k)$, our results reduce to upper convergence  bounds for simGDA and altGDA.

For nonconvex-P{\L} problems (Theorems~\ref{thm:NCPL}\,\&\,\ref{thm:NCPL 2}), the convergence rate and \emph{iteration} complexity (\ie, sufficient number of iterations $K_\eps$) become
\begin{equation}
    \min_{k\in [K]} \norm{\nabla \Phi(\vx_k)}^2 \le \gO\left(\frac{\kappa_2^2L V_\lambda(\vz_1) }{K}\right); \quad \ie,~ K_\eps = \gO\left(\frac{\kappa_2^2 L V_\lambda(\vz_1)}{\eps^2}\right), \label{eq:NCPL GDA}
\end{equation}
when $r=\Theta(\kappa_2^2)$.
This is similar to a known rate of simGDA with $r=\Theta(\kappa_2^2)$ for nonconvex-strongly-concave problems by \citet[Theorem~4.4]{lin2020gradient} as a special case. 
Their iteration complexity is written as $\gO((\kappa_2^2 L \Delta_{\Phi} + \kappa_2 L^2 D^2)/\eps^2)$,
where the symbols are already defined in Section~\ref{sec:discussion:with-replacement}. To see how the two bounds compare in terms of the factors other than $\eps$, notice that we have $\Phi(\vx)-f(\vx; \vy) \le \tfrac{L}{2}\norm{\vy-\vy^*(\vx)}^2$ for any $(\vx; \vy)$, due to the $L$-smoothness of $-f$. 
Here, $\vy^*(\vx)$ is an element of $\gY_{\vx}^* = \argmax_y f(\vx; \vy)$.
Thus, we have $V_\lambda (\vz_1) = \lambda [\Phi(\vx_1)-\Phi^*] + [\Phi(\vx_1)-f(\vz_1)] \le \lambda \Delta_{\Phi} + LD^2/2$. 
As a result, we could \emph{loosely} translate our iteration complexity \eqref{eq:NCPL GDA} to $\gO((\kappa_2^2 L \Delta_{\Phi} + \kappa_2^2 L^2 D^2)/\eps^2)$.
We suspect that the discrepancy in terms of $\kappa_2$ comes from the fact that our analysis does not require the (strong) concavity in terms of $\vy$ or a bounded constraint $\gY$: these requirements made a considerable difference in proofs.

For P{\L}($\Phi$)-P{\L} problems (Theorems~\ref{thm:2PL}\,\&\,\ref{thm:2PL 2}), the rate and iteration complexity ($K'_\eps$) become
\begin{equation}
    V_\lambda(\vz_{K+1}) \le V_\lambda(\vz_1)\cdot\exp\left(-\frac{K}{C\kappa_1\kappa_2^2}\right); \quad \ie,~ K'_\eps = \gO\left(\kappa_1\kappa_2^2 \log(1/\eps)\right) \label{eq:2PL GDA}
\end{equation}
where $r=\Theta(\kappa_2^2)$ and $C$ is a numerical constant.
This recovers the linear convergence by \citet[Theorem~3.2]{yang2020global} as a special case, where they prove convergence of altGDA with step size ratio $r=\Theta(\kappa_2^2)$ for two-sided P{\L} problem. 
Following the proof of \citep[Theorem~3.2]{yang2020global}, 
one can show that the bound \eqref{eq:2PL GDA} indeed implies the actual convergence to a global minimax point $\vz^*$,
in the sense that we can achieve $\norm{\vz_k-\vz^*}\le \eps$ in $\gO\left(\kappa_1 \kappa_2^2 \log (1/\eps)\right)$ iterations.

\section{Lower bound for (full-batch) simGDA using separate step sizes}
\label{sec:LB GDA}
As an extension of the discussion from Section~\ref{sec:discussion:deterministic}, we characterize a lower complexity bound of deterministic simGDA with separate step sizes ($\alpha,\beta$) of arbitrary ratio $r=\beta/\alpha$, for smooth strongly-convex-strongly-concave (SC-SC) cases.
Surprisingly, at least for $r \gtrsim \kappa_2^2$, our lower bound matches the upper complexity bound of GDA for a much wider class of smooth P{\L}($\Phi$)-P{\L} problems,\footnote{
strongly-convex-strongly-concave (SC-SC) $\subset$ two-sided~P{\L} $\subset$ P{\L}($\Phi$)-P{\L} $\subset$ nonconvex-P{\L}.}
which is quite surprising.

For a smooth P{\L}($\Phi$)-P{\L} problems, simGDA with at least $r=\Omega(\kappa_2^2)$ has an upper complexity bound $K=\gO(\kappa_1r\log(1/\eps))$ for a \emph{global} $\eps$-convergence $V_\lambda(\vz_K) \le \eps^2$ in terms of potential function. This means that the lowest complexity is $\gO(\kappa_1 \kappa_2^2 \log(1/\epsilon))$ achieved when $r = \Theta(\kappa_2^2)$. On the other hand, for a $L$-smooth $\mu$-SC-SC problem with saddle point $\vz^*$, it is well-known that the simGDA with a single step-size ($\alpha=\beta$) has a tight upper/lower complexity $K=\Theta(\kappa^2\log(1/\eps))$ to achieve $\norm{\vz_K-\vz^*}^2\le\eps^2$, where $\kappa = L/\mu$ (\eg, \citet[Theorem\,C.1]{das2022sampling}). The difference of complexity bounds in condition number ($\kappa_1 \kappa_2^2$ v.s.\ $\kappa^2$) is somewhat questionable because, at least in smooth minimization problems, strongly convex problems and P{\L} problems have identical gradient descent (GD) iteration complexity $\gO(\kappa\log(1/\eps))$~\citep[Theorem~1]{karimi2016linear}.

One could ask where the discrepancy in terms of $\kappa$ comes from: is it due to ({\it i}) the criteria ($V_\lambda(\vz_K)$ v.s.\ $\norm{\vz_K-\vz^*}^2$) for $\eps$-accuracy, ({\it ii}) the function classes (P{\L}($\Phi$)-P{\L} v.s.\ SC-SC), or ({\it iii}) the step size ratios ($\Omega(\kappa_2^2)$ v.s.\ 1)? We answer the question by showing the following theorem: the discrepancy in $\kappa$ comes from the step size ratio difference. We defer the proof to Appendix~\ref{sec:proofs LB GDA}.

\begin{theorem}[Lower bound, ratio-specific]\label{thm:LB}
    Consider a class $\gF(L, \mu_1, \mu_2)$ of functions $f(\vx;\vy)$ with two arguments $\vx$ and $\vy$, which is $L$-smooth, $\mu_1$-strongly-convex in $\vx$, and $\mu_2$-strongly-concave in $\vy$. Suppose $\kappa_1 = L/\mu_1\ge c$ and $\kappa_2 = L/\mu_2\ge c$ for some constant $c>1$. Then, for any step size ratio $r = \beta/\alpha >0$, there exists a function $f\in\gF(L, \mu_1, \mu_2)$ with a unique saddle point $\vz^*$, for which simGDA with any step sizes $(\alpha, \beta) = (\beta/r,\beta)$ requires at least
    \[K=\left\{\begin{array}{ll}
    \Omega\left(\kappa_1 r\log(1/\eps)\right), &\text{\rm if}~ r\ge \kappa_2/c,\\
    \Omega\left(\kappa_1\kappa_2\log(1/\eps)\right), & \text{\rm if}~ c/\kappa_1\le r\le \kappa_2/c,\\
    \Omega(\left(\kappa_2/r\right)\log(1/\eps)), & \text{\rm if}~ 0<r\le c/\kappa_1
    \end{array}\right.\]
    iterations to achieve either $\norm{\vz_K-\vz^*}^2 \le \eps^2$ or $V_\lambda(\vz_K)\le \eps^2$.
\end{theorem}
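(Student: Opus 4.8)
The plan is to exhibit, for each ratio $r$, an explicit quadratic $f\in\gF(L,\mu_1,\mu_2)$ with saddle at the origin, so that simGDA becomes a linear recursion $\vz_{k+1}=M_r(\beta)\vz_k$ (with $\vz^*=0$), and then to bound the spectral radius $\rho(M_r(\beta))$ from below, uniformly over every admissible step size $\beta$ (with $\alpha=\beta/r$). The translation into an iteration count is standard: if $\rho(M_r(\beta))\ge 1$ the iterates do not converge from a generic start, so the claimed bound is vacuous; if $\rho(M_r(\beta))=1-\delta<1$, choosing $\vz_0$ with a unit component along the slowest eigendirection gives $\norm{\vz_K-\vz^*}\ge c(1-\delta)^K$, and for these quadratics $V_\lambda(\vz)\ge\Phi(\vx)-\Phi^*\ge\tfrac{\mu_1}{2}\norm{\vx-\vx^*}^2$ and $V_\lambda(\vz)\ge\Phi(\vx)-f(\vx;\vy)\ge\tfrac{\mu_2}{2}\norm{\vy-\vy^*(\vx)}^2$, so the same mode forces $V_\lambda(\vz_K)\ge c'(1-\delta)^{2K}$ as well; hence reaching either $\norm{\vz_K-\vz^*}^2\le\eps^2$ or $V_\lambda(\vz_K)\le\eps^2$ requires $K=\Omega(\delta^{-1}\log(1/\eps))$ for $\eps$ below a threshold. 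So it suffices to build, in each regime, an instance with $\rho(M_r(\beta))\ge 1-O(\delta_r)$ for all $\beta$, where $\delta_r$ is $1/(\kappa_1r)$, $1/(\kappa_1\kappa_2)$, and $r/\kappa_2$ respectively.

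Each instance will be a direct sum of a few two-variable gadgets $\tfrac{a}{2}x^2+\gamma xy-\tfrac{b}{2}y^2$ with $a\in\{\mu_1,L\}$, $b\in\{\mu_2,L\}$, and $\gamma\in\{0,\gamma_0\}$, $\gamma_0^2=(L-\mu_1)(L-\mu_2)$; membership in $\gF(L,\mu_1,\mu_2)$ and uniqueness of the origin as saddle are immediate from the block-diagonal Hessian, and simGDA on a direct sum is the direct sum of the per-block iterations, so $\rho(M_r(\beta))$ is the maximum of the per-block spectral radii. A $\gamma=0$ block has eigenvalues $1-\alpha a=1-\beta a/r$ and $1-\beta b$; a $\gamma=\gamma_0$ block has $\det=1-\beta(\tfrac{a}{r}+b)+\tfrac{\beta^2(ab+\gamma_0^2)}{r}$ and $\mathrm{tr}=2-\beta(\tfrac{a}{r}+b)$. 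For the large-ratio regime $r\ge\kappa_2/c$ I use the decoupled instance $\tfrac{\mu_1}{2}x^2-\tfrac{\mu_2}{2}y_1^2-\tfrac{L}{2}y_2^2$: the $y_2$-coordinate forces $\beta<2/L$ for stability, so the $x$-eigenvalue obeys $1-\alpha\mu_1=1-\beta\mu_1/r\ge 1-2/(\kappa_1r)$, and a one-line balance argument (pushing $\beta$ toward $2/L$ only drives $\abs{1-\beta L}$ back toward $1$) shows $\rho$ stays $\ge 1-O(1/(\kappa_1r))$ for every admissible $\beta$. Symmetrically, for $r\le c/\kappa_1$ the instance $\tfrac{\mu_1}{2}x_1^2+\tfrac{L}{2}x_2^2-\tfrac{\mu_2}{2}y^2$ forces $\alpha<2/L$, i.e.\ $\beta<2r/L$, pinning the $y$-eigenvalue at $1-\beta\mu_2\ge 1-2r/\kappa_2$.

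The intermediate regime $c/\kappa_1\le r\le\kappa_2/c$ is the crux, and I expect it to be the main obstacle: a decoupled instance can only stall the slower of $\vx,\vy$ and hence yields at best $\Omega(\max\{\kappa_1,\kappa_2\})$ rather than $\Omega(\kappa_1\kappa_2)$ when $\kappa_1\ne\kappa_2$, so genuine $\vx$--$\vy$ coupling is needed. I use $f=\bigl[\tfrac{\mu_1}{2}x^2+\gamma_0 xy-\tfrac{\mu_2}{2}y^2\bigr]+\bigl[\tfrac{\mu_1}{2}\tilde x^2-\tfrac{\mu_2}{2}\tilde y^2\bigr]$. The key point is that convergence of the coupled block already demands $\det<1$, i.e.\ $\beta<\beta_*:=\tfrac{\mu_1+\mu_2 r}{\mu_1\mu_2+\gamma_0^2}$; for every such $\beta$ one has $\beta\mu_2\le\beta_*\mu_2$ and $\alpha\mu_1=\beta\mu_1/r\le\beta_*\mu_1/r$, and since $\mu_1+\mu_2 r\le 2\max\{\mu_1,\mu_2 r\}$ and $\gamma_0^2\ge L^2/4$ (using $\kappa_1,\kappa_2\ge c\ge 2$), at least one of these two quantities is $\le 8\mu_1\mu_2/L^2=8/(\kappa_1\kappa_2)$. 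Whichever it is, the matching decoupled eigenvalue ($1-\beta\mu_2$ or $1-\alpha\mu_1$) is $\ge 1-8/(\kappa_1\kappa_2)$, so $\rho(M_r(\beta))\ge 1-8/(\kappa_1\kappa_2)$ for every admissible $\beta$ (and for $\beta\ge\beta_*$ the coupled block diverges), which gives $K=\Omega(\kappa_1\kappa_2\log(1/\eps))$.

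Finally I will glue the three instances into one direct sum valid for all $r>0$ (the spectral radius being the max over gadgets, so the dominant slow mode is always the one matching the regime of $r$, while the others only help), collect the numerical constants, and record the threshold on $\eps$ below which the $\log(1/\eps)$ factor is genuinely active. What remains is elementary: for each block, checking the real-versus-complex discriminant and the trace/determinant stability conditions, confirming that no competing eigenvalue of the direct sum is slower in a way that would degrade the stated rate (e.g.\ that the $y$-side eigenvalue $1-\beta\mu_2$ in the large-$r$ instance is dominated by the $x$-side one precisely because $\kappa_1 r\ge\kappa_2$ there), and verifying the $V_\lambda$ and norm lower bounds for the specific quadratics. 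These are routine $2\times2$ computations; the genuinely new ingredient is the coupled gadget together with the observation that its stability ceiling $\beta_*$ is exactly what stalls one of $\vx,\vy$ throughout the middle regime.
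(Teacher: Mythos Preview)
Your proposal is correct and shares the paper's overall architecture---a coupled $(x,y)$ gadget whose stability ceiling pins down $\beta$, together with decoupled coordinates that are then forced to be slow---but the execution differs in a few notable ways. In the intermediate regime the paper tailors the coupled block to $r$: for $\mu_1/\mu_2\le r\le\kappa_2/c$ it uses $\tfrac{r\mu_2}{2}x^2+\ell xy-\tfrac{\mu_2}{2}y^2$ (with $\ell$ depending on $r$) plus a separate $\tfrac{\mu_1}{2}v^2$, and symmetrically for $c/\kappa_1\le r\le\mu_1/\mu_2$; this makes the coupled eigenvalues explicitly complex with $\rho^2=(1-\beta\mu_2)^2+\beta^2\ell^2/r$, so the bound on $\beta$ drops out of $\rho<1$ directly. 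Your single $r$-independent gadget $\tfrac{\mu_1}{2}x^2+\gamma_0 xy-\tfrac{\mu_2}{2}y^2$ with $\gamma_0^2=(L-\mu_1)(L-\mu_2)$ avoids the case split at $r=\mu_1/\mu_2$, and replaces the eigenvalue computation by the cleaner Schur--Cohn observation that $\rho<1$ forces $\det<1$, hence $\beta<\beta_*$; the ``at least one of $\beta\mu_2,\alpha\mu_1$ is $O(1/\kappa_1\kappa_2)$'' dichotomy then handles both halves of the interval at once. Your glued single instance valid for all $r$ is a mild strengthening the paper does not pursue (the theorem allows $f$ to depend on $r$, and the paper does).

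Two small points to tidy. First, the theorem only assumes $c>1$, not $c\ge 2$, so your inequality $\gamma_0^2\ge L^2/4$ should be replaced by $\gamma_0^2\ge L^2(1-1/c)^2$; the resulting constant $2/(1-1/c)^2$ still depends only on $c$ and is absorbed in the $\Omega$. Second, the ``balance argument'' you allude to in the large-$r$ case is unnecessary: once $\beta<2/L$ is forced by the $y_2$-coordinate, the $x$-eigenvalue $1-\beta\mu_1/r$ already satisfies $1-\beta\mu_1/r>1-2/(\kappa_1 r)$ for all such $\beta$, and for $\beta\ge 2/L$ the $y_2$-block has $\rho\ge1$; no trade-off analysis is needed.
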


Thanks to Theorem~\ref{thm:LB}, we can say from Theorem~\ref{thm:2PL 2} that for any step size ratio $r \gtrsim \kappa_2^2$, we have a \emph{tight} upper bound on the iteration complexity $K=\gO(\kappa_1r\log(1/\eps))$ of simGDA for general P{\L}($\Phi$)-P{\L} problems. 
Note that Theorem~\ref{thm:LB} also subsumes the existing lower bound of the equal-step-size ($r = 1$) simGDA for $\mu$-SC-SC problems.

Given the tightness of bounds for $r \gtrsim \kappa_2^2$, a natural next step is to discuss $1\lesssim r \lesssim \kappa_2^2$.
Recent work by \citet{li2022convergence} also discusses the step size ratio of simGDA.
In \citet[Theorem~4.1]{li2022convergence}, the authors construct a $y$-side strongly-concave function\footnote{\label{footnote:PLPL x-concave} $g(x;y)=-\frac{L}{2}x^2+Lxy-\frac{\mu}{2} y^2$, where $L/\mu>1$: its primal function is strongly convex.
}
and show that simGDA with a step size ratio $r\le \kappa_2$ is \emph{impossible} to converge. The \emph{necessity} of $r\gtrsim\kappa_2$ implied by this theorem also applies to the P{\L}($\Phi$)-P{\L} case. Thus, there is no hope for showing an upper convergence bound of simGDA with $1\lesssim r\lesssim\kappa_2$ for general nonconvex-P{\L} problems.
We remark that their theorem does not contradict nor subsume Theorem~\ref{thm:LB} because we consider a much smaller function class (SC-SC) to construct the lower bounds.

On the \emph{sufficiency} of $r \gtrsim \kappa_2$ for convergence, \citet[Theorem~4.2]{li2022convergence} show that simGDA with $r \ge c\kappa$ (for some $c>1$) can \emph{locally} converge at the iteration complexity $\gO(\kappa_1 r \log(1/\eps))$ for some nonconvex-strongly-concave problems, which matches the bound in Theorem~\ref{thm:LB}.
Our upper bounds (Theorems~\ref{thm:NCPL 2} and \ref{thm:2PL 2}) do require $r\gtrsim \kappa_2^2$, which may look suboptimal, but we claim that our results are not necessarily weaker. One reason is that our convergence guarantee is \emph{global}, \ie, independent of the initialization.
Another reason is that their analysis is only valid when a \emph{differential Stackelberg equilibrium}\footnote{Loosely speaking, a differential Stackelberg equilibrium is a stationary point $(\vx^*; \vy^*)$ where $f(\vx^*; \cdot)$ is locally strongly concave near $\vy^*$ and $\Phi(\cdot)$ is locally strongly convex near $\vx^*$.} exists, whereas a general P{\L}($\Phi$)-P{\L} function may not have such an equilibrium (for an example, see Proposition~\ref{prop:2PL wo stackelberg} in Appendix~\ref{sec:propositions}). 

As far as we know, it is still an open problem whether a \emph{global} convergence bound for simGDA on nonconvex-P{\L} problems can be shown when the step size ratio $r$ is between $\Omega(\kappa_2)$ and $\gO(\kappa_2^2)$.%

\section{Experiments}
\label{sec:experiments}
To validate our main theoretical findings, here we present some numerical results. We focus on the primal-P{\L}-strongly-concave (or P{\L}($\Phi$)-SC, which is P{\L}($\Phi$)-P{\L} as well) quadratic games of the form
\begin{equation}
\label{eq:quadraticgame}
\begin{aligned}
    \min_{\vx \in \R^d}\max_{\vy \in \R^d} f(\vx;\vy) &\textstyle= \frac{1}{2}\vx^\top \mA \vx + \vx^\top \mB \vy - \frac{1}{2}\vy^\top\mC\vy= \frac{1}{n}\sum_{i=1}^n f_i(\vx;\vy),\\
    \text{where} \quad f_i(\vx;\vy) &\textstyle= \frac{1}{2}\vx^\top \mA_i \vx + \vx^\top \mB_i \vy - \frac{1}{2}\vy^\top\mC_i\vy + \vu_i^\top \vx- \vv_i^\top \vy.
\end{aligned}
\end{equation}
This toy example is often used to numerically evaluate the minimax algorithms \citep{yang2020global,loizou2021stochastic,das2022sampling} and appears in various domains such as AUC maximization \citep{ying2016stochastic}, policy evaluation \citep{du2017stochastic}, and imitation learning \citep{cai2019global}%

To make the game in Equation~\eqref{eq:quadraticgame} satisfy P{\L}($\Phi$)-SC and component $L$-smoothness, we should sample the coefficient matrices and vectors carefully. First, they need to be $\norm{\mA_i}_2,\norm{\mB_i}_2,\norm{\mC_i}_2\le L$ and $\sum_{i=1}^n \vu_i = \sum_{i=1}^n \vv_i = \vzero$. To make the primal function $\Phi$ a well-defined real-valued function for any $\vx\in \R^d$, we choose $\mC=\frac{1}{n}\sum_{i=1}^n \mC_i$ to be positive definite, \ie, $\mu\mI\preceq\mC$ for an identity matrix $\mI$ and $\mu>0$. Then, the primal function can be explicitly written as
\begin{align*}
    \textstyle\Phi(\vx) =\max_{\vy\in \R^d} f(\vx;\vy) =\frac{1}{2}\vx^\top \left(\mA + \mB\mC^{-1}\mB^\top\right)\vx := \frac{1}{2}\vx^\top \mM\vx.
\end{align*}
We construct a matrix $\mM:=\mA + \mB\mC^{-1}\mB^\top$ to be rank-deficient positive semi-definite. Letting the smallest nonzero eigenvalue of $\mM$ by $\mu$, we ensure that $\Phi$ is $\mu$-P{\L} but not strongly convex.  We emphasize that the objective function $f$ is not even (strongly-)convex in $\vx$ in general.

We compare six algorithms in total: simSGDA-RR, altSGDA-RR, AGDA-RR (as defined in \citet{das2022sampling}), and the with-replacement counterparts of these three algorithms. To this end, on 5 different randomly-generated quadratic games and under 2 random seeds per game (\ie, 10 runs per algorithm), we run each algorithm for the same number of epochs using constant step sizes of ratio $\beta/\alpha = c\kappa_2^2$ for some constant $c$ and $\kappa_2=L/\mu$. 

\begin{figure}[t]
    \centering
    \begin{subfigure}[b]{0.32\textwidth}
         \centering
         \includegraphics[width=\textwidth]{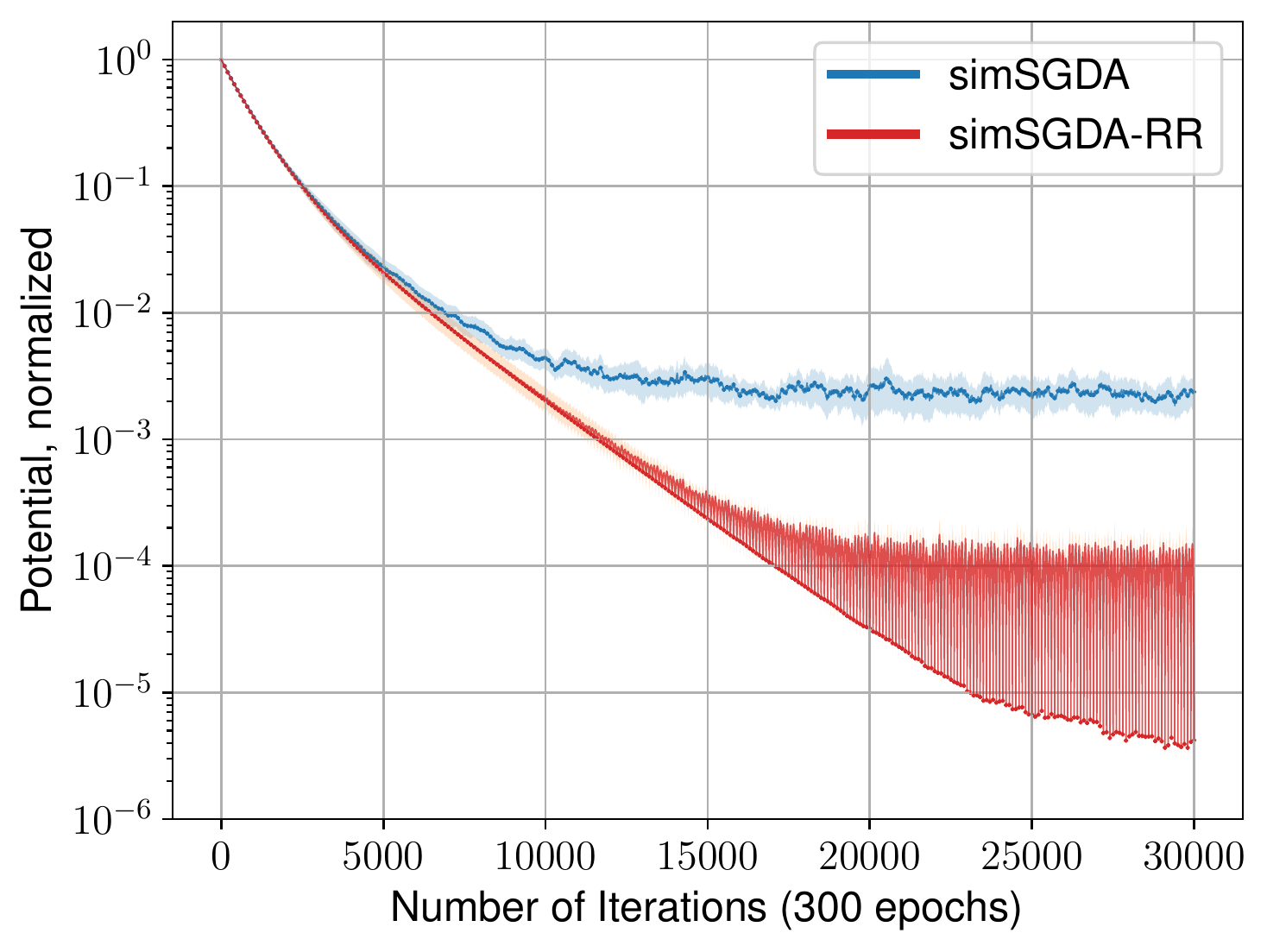}
         \caption{{\color{tab:blue}simSGDA} v.s. {\color{tab:red}simSGDA-RR}.}
         \label{fig:Experiment:simSGDA}
     \end{subfigure}
     ~
     \begin{subfigure}[b]{0.32\textwidth}
         \centering
         \includegraphics[width=\textwidth]{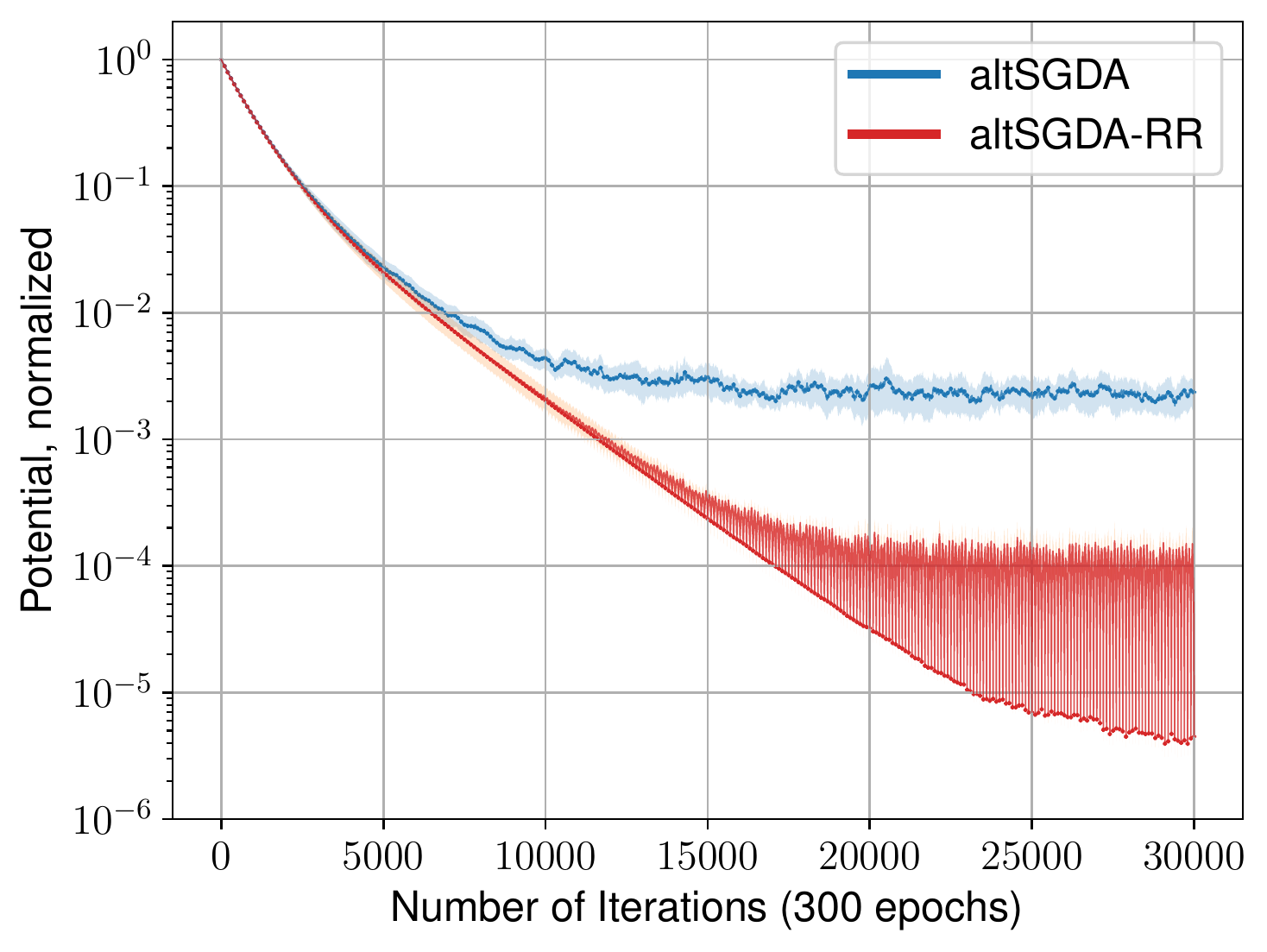}
         \caption{{\color{tab:blue}altSGDA} v.s. {\color{tab:red}altSGDA-RR}.}
         \label{fig:Experiment:altSGDA}
     \end{subfigure}
     ~
     \begin{subfigure}[b]{0.32\textwidth}
         \centering
         \includegraphics[width=\textwidth]{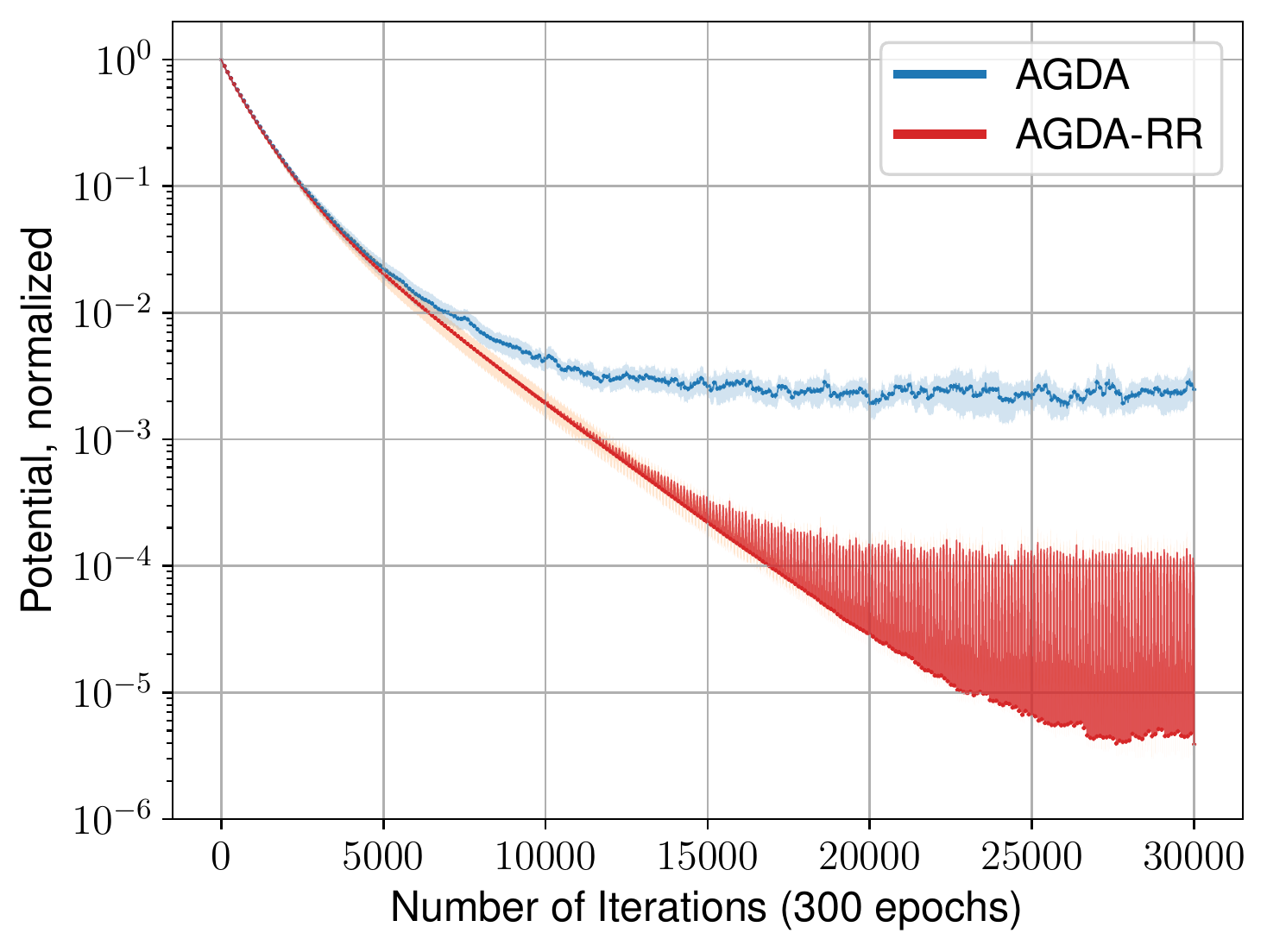}
         \caption{{\color{tab:blue}AGDA} v.s. {\color{tab:red}AGDA-RR}.}
         \label{fig:Experiment:AGDA}
     \end{subfigure}
    \caption{Experimental results on quadratic games \eqref{eq:quadraticgame}. Solid lines: average across 10 different runs. Shaded regions: 95\% confidence intervals ($\pm 1.96$ std). Dots: start/end of epochs.
    The vertical axes are on a \emph{logarithmic scale}.}
    \label{fig:Experiment}
\end{figure}

We report the potential function values ($V_\lambda$, defined in Equation~\eqref{eq:Lyapunov})
at every iteration.\footnote{As described in Section~\ref{sec:discussion:without-replacement}, AGDA-RR uses only one-side gradient ($\nabla_1$ or $\nabla_2$) at each iteration; given a fixed budget of gradient computations, it should access components twice as many times as SGDA-RR. Hence, we report the values at every other iteration of AGDA \& AGDA-RR, for a fair comparison.}
Results are presented in \Figref{fig:Experiment}: the values are normalized by dividing them by the initial value. As we discussed in Section~\ref{sec:discussion:with-replacement}, we observe that the random reshuffling considerably accelerates the convergence of the algorithms. Furthermore, all three algorithms with random reshuffling show more or less the same performance. Specifically, the plots for simSGDA ({resp.} simSGDA-RR) and altSGDA ({resp.} altSGDA-RR) are almost identical. We believe this is because we choose a random seed for each of the 10 different runs and share it across different algorithms.

Please refer to Appendix~\ref{sec:experimental details:quadratic} for more detailed construction, discussion, and comparative study of the experimental results.

\section{Conclusion}
\label{sec:conclusion}
We investigated stochastic algorithms based on without-replacement component sampling, called simSGDA-RR and altSGDA-RR, for solving smooth nonconvex finite-sum minimax optimization problems.
We established convergence rates under the $\vy$-side P{\L} condition (nonconvex-P{\L}) and, additionally, the primal P{\L} condition (P{\L}($\Phi$)-P{\L}).
We ascertain that the SGDA-RR can achieve a faster rate than its with-replacement counterpart, which agrees with the existing theory on without-replacement SGD for minimization. 
Lastly, we provided complexity lower bounds for simGDA with an arbitrarily fixed step size ratio $r$, demonstrating that the full-batch upper bound with $r\gtrsim\kappa_2^2$ for P{\L}($\Phi$)-P{\L} functions is tight.

Possible future directions include widening our results beyond sim/altSGDA (\eg, extra-gradient or optimistic GDA) and beyond RR (\eg, single/adversarial shuffling). As also discussed in Section~\ref{sec:LB GDA}, an interesting open question remains open: can we identify tight convergence rates for stochastic (with-/without-replacement) and/or deterministic GDA with step size ratio $r$ satisfying $\kappa_2\lesssim r \lesssim \kappa_2^2$, for general nonconvex-P{\L} problems?

\subsubsection*{Acknowledgments}
This work was supported by Institute of Information \& communications Technology Planning \& evaluation (IITP) grant (No.\ 2019-0-00075, Artificial Intelligence Graduate School Program (KAIST)) funded by the Korea government (MSIT).
The work was also supported by the National Research Foundation of Korea (NRF) grant (No.\ NRF-2019R1A5A1028324) funded by the Korea government (MSIT). CY acknowledges support from a grant funded by Samsung Electronics Co., Ltd.

\bibliography{iclr2023_conference}
\bibliographystyle{iclr2023_conference}

\newpage
\tableofcontents
\newpage

\appendix

\section{Mini-batch SGDA-RR and convergence rates}
\label{sec:minibatch}
In this appendix, we present an algorithm that extends simSGDA-RR and altSGDA-RR by using mini-batches of size $b\ge 1$. For simplicity, we assume that the number of components $n$ is an integer multiple of the mini-batch size $b$ in our analysis; \ie, $n=bq$ for some integer $q\ge1$. One can extend this to the case when $n$ is not necessarily a multiple of $b$ (\eg, $n=b(q-1)+s$, where $q\ge 1$, $s\in [b]$) so that there are $q-1$ mini-batches of size $b$ and one more mini-batch of size $s\le b$.

\begin{algorithm}[h]
\caption{{\color{violet}Mini-batch} {\color{teal}sim}SGDA/{\color{brown}alt}SGDA-{\color{purple} RR}}
\label{alg:SGDA wo replacement minibatch}
\begin{algorithmic}[1]
\State {\bf Given:} The number of components $n={\color{violet} b}(q-1)+s$ ($q$: number of iterations per epoch); {\color{violet} mini-batch size $b$}; the number of epochs $K$; step sizes $\alpha, \beta>0$ 
\State {\bf Initialize:} $(\vx_0^1; \vy_0^1)\in \R^{d_x}\times\R^{d_y}$
\For{$k\in [K]$}
    \State Sample $\sigma_k \sim {\color{purple}\unif(\sS_n)}$ \Comment RR: uniformly randomly shuffle the indices every epoch
    \For{$t\in[q]$}
        \State ${\color{violet}\gB^k_t := \{\sigma_k(j):b(t-1)<j\le bt, j\in [n]\}}$ \Comment Mini-batch : a set of component indices
        \vspace{3pt}
        \State $\vx_{t}^k = \vx_{t-1}^k - \frac{\alpha}{\color{violet}b} \sum_{i\in {\color{violet}\gB^k_t}} \nabla_1 f_{i}(\vx_{t-1}^k; \vy_{t-1}^k)$
        \If{{\color{teal}simSGDA}-RR}
        \State $\vy_{t}^k = \vy_{t-1}^k + \frac{\beta}{\color{violet}b} \sum_{i\in {\color{violet}\gB^k_t}} \nabla_2 f_{i}({\color{teal}\vx_{t-1}^k}; \vy_{t-1}^k)$ \Comment simultaneous update: $\vx$ \& $\vy$ 
        \ElsIf{{\color{brown}altSGDA}-RR}
        \State $\vy_{t}^k = \vy_{t-1}^k + \frac{\beta}{\color{violet}b} \sum_{i\in {\color{violet}\gB^k_t}} \nabla_2 f_{i}({\color{brown}\vx_{t}^k}; \vy_{t-1}^k)$ \Comment alternating update: $\vx \rightarrow \vy$
        \EndIf
    \EndFor
    \State $(\vx_0^{k+1}; \vy_0^{k+1}) = (\vx_{n/b}^k; \vy_{n/b}^k)$
\EndFor
\end{algorithmic}
\end{algorithm}

Next, we illustrate the generalized versions of our main results (Theorems~\ref{thm:NCPL} and \ref{thm:2PL}) for \Algref{alg:SGDA wo replacement minibatch} with mini-batches of size $b\ge1$. Let us assume $n\ge 2$ because the case $n=1$ trivially boils down to simGDA or altGDA. We defer the proofs for simultaneous updates to Appendix~\ref{sec:proofs simSGDA RR}. We present the parts that change in the proof for alternating updates in Appendix~\ref{sec:proofs altSGDA RR}.

\begin{theorem}[Nonconvex-P{\L}, mini-batch SGDA-RR] \label{thm:NCPL 2}
	Suppose $f$ satisfies Assumptions~\ref{ass:smooth},~\ref{ass:bddvar},~\ref{ass:primal,dual},~and~\ref{ass:NCPL}. 
	Let $\lambda=4$. Choose the step sizes $\alpha$ and $\beta$ by $\alpha=\beta/r$ for some $r\ge 14\kappa_2^2$ and
	\[\beta = b\cdot\min\left\{\frac{1}{6Ln \sqrt{1 + \frac{n-b}{n-1}\cdot\frac{A}{n}}},\,\, \frac{1}{c}\left( \frac{V_\lambda(\vz_0^1)}{Ln^2(\frac{n-b}{n-1})BK} \right)^{\frac{1}{3}}\right\},\]
	for some numerical constant $c>0$. Then, mini-batch simSGDA-RR and altSGDA-RR with mini-batch size $b$ (a divisor of $n$) satisfy
	\begin{align*}
		\frac{1}{K}\sum_{k=1}^K \E\norm{\nabla \Phi (\vx_0^k)}^2 \le \frac{6r L V_\lambda(\vz_0^1)}{K}\sqrt{1+\left( \frac{n-b}{n-1} \right)\frac{A}{n}} + 2cr \left(\frac{L^2 B\, V_\lambda(\vz_0^1)^2}{nK^2}\cdot\frac{n-b}{n-1}\right)^{1/3}.
	\end{align*}
\end{theorem}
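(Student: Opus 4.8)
The plan is to prove a one-epoch descent inequality for the potential $V_\lambda$ and then telescope over $k=1,\dots,K$. I would carry out the argument for simSGDA-RR in full and then record the minor modifications for altSGDA-RR, where the $\vy$-update evaluates $\nabla_2 f_i$ at $\vx_t^k$ rather than $\vx_{t-1}^k$; this only adds a term of order $\norm{\vx_t^k-\vx_{t-1}^k}=\gO(\alpha)$ that is absorbed by the small step size. Two facts are used throughout: $\Phi$ is $L_\Phi$-smooth with $L_\Phi\le L+L^2/\mu_2\le 2L\kappa_2$ (as noted below Assumption~\ref{ass:NCPL}) and $\nabla\Phi(\vx)=\nabla_1 f(\vx;\vy^*(\vx))$ for any $\vy^*(\vx)\in\gY_{\vx}^*$ (Danskin); and the $L$-smoothness of $-f(\vx;\cdot)$ with Assumption~\ref{ass:NCPL} gives $\tfrac{1}{2L}\norm{\nabla_2 f(\vx;\vy)}^2\le \Phi(\vx)-f(\vx;\vy)\le \tfrac{1}{2\mu_2}\norm{\nabla_2 f(\vx;\vy)}^2$ and $\norm{\vy-\vy^*(\vx)}^2\le \tfrac{2}{\mu_2}\bigl(\Phi(\vx)-f(\vx;\vy)\bigr)$, which let me trade the $\vy$-gradient and the $\vy$-to-optimum distance against the gap $\Phi(\vx)-f(\vx;\vy)$.

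The key observation is that one epoch is an inexact GDA step with effective step sizes $\tilde\alpha=\alpha n/b$ and $\tilde\beta=\beta n/b$ (so $\tilde\beta/\tilde\alpha=r$): since every index appears exactly once per epoch, $\vx_0^{k+1}-\vx_0^k=-\tfrac{\alpha}{b}\sum_{t=1}^{n/b}\sum_{i\in\gB_t^k}\nabla_1 f_i(\vz_{t-1}^k)=-\tilde\alpha\,\nabla_1 f(\vz_0^k)+\ve_x^k$, where the frozen-point sum collapses to the true gradient with \emph{no} sampling noise, and $\ve_x^k=-\tfrac{\alpha}{b}\sum_t\sum_{i\in\gB_t^k}\bigl(\nabla_1 f_i(\vz_{t-1}^k)-\nabla_1 f_i(\vz_0^k)\bigr)$ is a pure ``drift'' term (analogously for $\vy$). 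By $L$-smoothness, $\norm{\ve_x^k}\le \alpha L\sum_{t=1}^{n/b}\norm{\vz_{t-1}^k-\vz_0^k}$, so everything reduces to bounding the within-epoch movement. I would do this by unrolling the recursion: cumulative shuffled-gradient partial sums deviate from their deterministic means by at most the sampling-\emph{without}-replacement variance, where Assumption~\ref{ass:bddvar} enters through $\E\norm{\tfrac1b\sum_{i\in\gB}\nabla_j f_i(\vz_0^k)-\nabla_j f(\vz_0^k)}^2\le \tfrac{n-b}{b(n-1)}\bigl(A\norm{\nabla_j f(\vz_0^k)}^2+B\bigr)$ — precisely the origin of the $\tfrac{n-b}{n-1}$ factor in the statement. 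Combined with the smallness of $\beta$ (the first branch of the $\min$, forcing $\beta Ln\sqrt{1+\tfrac{n-b}{n-1}\tfrac{A}{n}}\le\tfrac{1}{6}$), this yields $\sum_t\E\norm{\vz_{t-1}^k-\vz_0^k}^2\lesssim \tilde\beta^2\bigl(\norm{\nabla_1 f(\vz_0^k)}^2/r^2+\norm{\nabla_2 f(\vz_0^k)}^2+B\bigr)$ up to numerical constants, the $\vx$-part being down-weighted by $1/r^2$ because $\alpha=\beta/r$.

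I would then assemble the epoch descent. By $L_\Phi$-smoothness of $\Phi$, the dominant term of $\Phi(\vx_0^{k+1})-\Phi(\vx_0^k)$ is $-\tilde\alpha\inner{\nabla\Phi(\vx_0^k),\nabla_1 f(\vz_0^k)}$; writing $\nabla_1 f(\vz_0^k)=\nabla\Phi(\vx_0^k)+\bigl(\nabla_1 f(\vx_0^k;\vy_0^k)-\nabla_1 f(\vx_0^k;\vy^*(\vx_0^k))\bigr)$, bounding the second piece by $L\norm{\vy_0^k-\vy^*(\vx_0^k)}\le L\sqrt{\tfrac{2}{\mu_2}(\Phi(\vx_0^k)-f(\vz_0^k))}$, and applying Young's inequality leaves a $\tfrac{\tilde\alpha}{2}\norm{\nabla\Phi(\vx_0^k)}^2$ descent plus an $\gO(\tilde\alpha L\kappa_2)$-weighted gap term $\Phi(\vx_0^k)-f(\vz_0^k)$ and drift/variance remainders of order $\tilde\alpha^2 L_\Phi$ and $\tilde\alpha L\tilde\beta$. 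Separately, the $\vy$-ascent decreases $-f$ by roughly $\tilde\beta\norm{\nabla_2 f(\vz_0^k)}^2\ge 2\mu_2\tilde\beta\,(\Phi(\vx_0^k)-f(\vz_0^k))$ (Assumption~\ref{ass:NCPL}) — a genuine contraction of the gap — while the $\vx$-move changes $\Phi$ by at most $\gO(\tilde\alpha)\norm{\nabla\Phi(\vx_0^k)}\norm{\vy_0^k-\vy^*(\vx_0^k)}+\gO(\tilde\alpha^2 L_\Phi)(\cdots)$. Multiplying the $\Phi$-descent by $\lambda=4$, adding the gap contraction, and using $\norm{\nabla_1 f(\vz_0^k)}^2\le 2\norm{\nabla\Phi(\vx_0^k)}^2+\tfrac{4L^2}{\mu_2}(\Phi(\vx_0^k)-f(\vz_0^k))$ and $\norm{\nabla_2 f(\vz_0^k)}^2\le 2L(\Phi(\vx_0^k)-f(\vz_0^k))$, the choice $r\ge 14\kappa_2^2$ makes the contraction rate $\mu_2\tilde\beta=r\mu_2\tilde\alpha\gtrsim \kappa_2^2\mu_2\tilde\alpha=(L^2/\mu_2)\tilde\alpha$ dominate every $\gO(L\kappa_2)\tilde\alpha$ cross-term, so all terms proportional to $\Phi(\vx_0^k)-f(\vz_0^k)$ cancel while a constant fraction of $\tfrac{\tilde\alpha}{2}\norm{\nabla\Phi(\vx_0^k)}^2$ survives, leaving $\E[V_\lambda(\vz_0^{k+1})\mid\vz_0^k]\le V_\lambda(\vz_0^k)-c\,\tilde\alpha\norm{\nabla\Phi(\vx_0^k)}^2+c'\,\tilde\alpha\,\tilde\beta^2 L^2 B\,\tfrac{n-b}{b(n-1)}$ for numerical constants $c,c'>0$. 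Telescoping over $k$, dividing by $c\tilde\alpha K$, and using $\tilde\alpha=\tilde\beta/r$ gives $\tfrac1K\sum_{k=1}^K\E\norm{\nabla\Phi(\vx_0^k)}^2\lesssim \tfrac{r V_\lambda(\vz_0^1)}{\tilde\beta K}+r\tilde\beta^2 L^2 B\,\tfrac{n-b}{b(n-1)}$; substituting $\tilde\beta=\beta n/b$ with the prescribed two-branch $\beta$ — the first branch turning the first summand into $\tfrac{6rLV_\lambda(\vz_0^1)}{K}\sqrt{1+(\tfrac{n-b}{n-1})\tfrac{A}{n}}$ and the second branch balancing the two contributions to produce $2cr\bigl(\tfrac{L^2 B\,V_\lambda(\vz_0^1)^2}{nK^2}\cdot\tfrac{n-b}{n-1}\bigr)^{1/3}$ — yields exactly the claimed inequality.

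The step I expect to be the main obstacle is the coupled drift estimate: since $\norm{\nabla_1 f(\vz_0^k)}$ is controlled by $\norm{\nabla\Phi(\vx_0^k)}$ only up to the gap $\Phi(\vx_0^k)-f(\vz_0^k)$, the $\vx$- and $\vy$-drifts feed back into each other, so one must unroll the two interleaved recursions simultaneously while keeping the dependence on $r$ and on $\beta Ln$ sharp enough that, after scaling by $\lambda$ and combining with the $\mu_2\tilde\beta$-contraction, every $\norm{\nabla_1 f}^2$ and $\norm{\nabla_2 f}^2$ term is genuinely absorbed rather than merely bounded. A secondary but necessary technicality is the conditional-expectation bookkeeping for without-replacement mini-batches (the sets $\gB_t^k$ are dependent across $t$ within an epoch), which must be handled carefully to justify the partial-sum variance bounds beyond the first iteration; for altSGDA-RR one additionally checks that replacing $\vx_{t-1}^k$ by $\vx_t^k$ in the $\vy$-update perturbs each of these estimates by only a lower-order $\gO(\alpha)$ amount.
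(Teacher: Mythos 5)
Your proposal follows essentially the same route as the paper's proof: aggregate each epoch into an inexact simGDA step with effective step sizes $q\alpha, q\beta$, control the drift/noise through the within-epoch movement $\sum_t \norm{\vz_{t-1}^k - \vz_0^k}^2$ bounded via the without-replacement sampling variance (the source of the $\tfrac{n-b}{b(n-1)}$ factor), establish a one-epoch descent inequality for $V_\lambda$ using $L_\Phi$-smoothness of $\Phi$, $L$-smoothness of $-f$, and the $\vy$-side P{\L}/QG inequalities with $r\gtrsim\kappa_2^2$ absorbing the gap-dependent cross terms, and then telescope. The sketch is correct in structure and key estimates (with some looseness in constants, e.g., your variance remainder is short an $r$ factor and carries an extra $n/b$ relative to the paper's Lemma~\ref{lem:ReccurenceGeneral}, but these are bookkeeping details in a blind outline), so this matches the paper's argument.
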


\begin{theorem}[P{\L}($\Phi$)-P{\L}, mini-batch SGDA-RR] \label{thm:2PL 2}
	Suppose $f$ satisfies Assumptions~\ref{ass:smooth},~\ref{ass:bddvar},~\ref{ass:primal,dual},~\ref{ass:NCPL},~and~\ref{ass:primalPL}. Let $\lambda=4$. Choose the step sizes $\alpha$ and $\beta$ by $\alpha=\beta/r$ for some $r\ge 14\kappa_2^2$ and
	\[\beta = b\cdot \min \left\{ \frac{1}{6Ln \sqrt{1 + \frac{n-b}{n-1}\cdot\frac{A}{n}}},\,\, \frac{2r}{\mu_1 n K} \max\left\{1, \,\,\log \left(\frac{V_\lambda(\vz_0^1) \mu_1 n K^2}{8c^3 \kappa_1^2r^3 \left( \frac{n-b}{n-1}\right)B}\right)\right\}\right\},\]
	for some numerical constant $c>0$. Then, mini-batch simSGDA-RR and altSGDA-RR with mini-batch size $b$ (a divisor of $n$) satisfy
	\[\E[V_\lambda(\vz_0^{K+1})] \le \gO\left(V_\lambda(\vz_0^1)\cdot\exp\left(-\frac{K}{12\kappa_1r\sqrt{1+ \frac{n-b}{n-1}\frac{A}{n}}}\right)\right)+ \tilde{\gO}\left(\frac{\kappa_1^2r^3 B}{\mu_1 n K^2}\right) \cdot \frac{n-b}{n-1}.\]
\end{theorem}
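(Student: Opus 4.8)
\textbf{Proof proposal for Theorem~\ref{thm:2PL 2}.}

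The plan is to build a one-epoch progress inequality for the potential function $V_\lambda$ and then unroll it over the $K$ epochs, using the $\mu_1$-P{\L} property of $\Phi$ to turn a contraction into linear convergence. Concretely, I would first establish a \emph{descent lemma on the primal side}: track how $\Phi(\vx_0^k) - \Phi^*$ evolves across one epoch of mini-batch SGDA-RR. Since $\Phi$ is $L_\Phi$-smooth (Proposition~\ref{prop:Phi smooth}, with $L_\Phi \le L + L^2/\mu_2$), a Taylor expansion of $\Phi$ along the aggregated $\vx$-update of the epoch gives a term $-\tfrac{\alpha q}{\text{something}}\norm{\nabla\Phi(\vx_0^k)}^2$ plus error terms measuring (a) the drift of the iterates $\vx_t^k$ within the epoch, (b) the gap between $\nabla_1 f(\vx_t^k;\vy_t^k)$ and $\nabla\Phi(\vx_0^k)$ coming from $\vy_t^k$ not being a maximizer, and (c) the reshuffling-induced variance, which is where the $\tfrac{n-b}{n-1}B$ factor enters (this is the standard without-replacement variance reduction; it is $0$ when $b=n$). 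In parallel I would track the \emph{dual-side quantity} $\Phi(\vx_0^k) - f(\vx_0^k;\vy_0^k)$, showing it contracts by a factor governed by $\mu_2$ and $\beta$ (using Assumption~\ref{ass:NCPL}), again up to iterate-drift and reshuffling-variance error terms. The requirement $r = \beta/\alpha \gtrsim \kappa_2^2$ and the step-size cap $\beta \le \tfrac{b}{6Ln\sqrt{1+\frac{n-b}{n-1}\frac{A}{n}}}$ are exactly what is needed to control the cross-terms so that the combined quantity $V_\lambda = \lambda(\Phi-\Phi^*) + (\Phi - f)$ enjoys a clean recursion: I expect something like $\E[V_\lambda(\vz_0^{k+1})] \le (1 - c_0 \alpha q \mu_1)\,\E[V_\lambda(\vz_0^k)] + (\text{error})$, where the error is $\tilde\gO$ of $\alpha^3 q^3 L^2 B \tfrac{n-b}{n-1}$ (cubic in step size, as is typical for RR third-order error terms).

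The second half is then the unrolling. With $\alpha q = \beta q / r = \beta n /(br)$, the contraction factor per epoch is $1 - \Theta(\beta n \mu_1/(br))$, and substituting the chosen $\beta$ (the $\tfrac{2r}{\mu_1 n K}\max\{1,\log(\cdots)\}$ branch, when it is the active one) makes the exponent over $K$ epochs behave like $-\Theta(K / (\kappa_1 r \sqrt{1 + \frac{n-b}{n-1}\frac{A}{n}}))$, matching the stated bound. Summing the geometric series of error terms and plugging in the logarithmic factor in $\beta$ is precisely the standard trick (cf.\ \citet{yang2020global}, \citet{das2022sampling}) for balancing the exponentially-decaying term against the variance floor, producing the residual $\tilde\gO(\tfrac{\kappa_1^2 r^3 B}{\mu_1 n K^2}\cdot\tfrac{n-b}{n-1})$. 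The case distinction in the $\max\{1,\log(\cdots)\}$ of $\beta$ handles whether $K$ is large enough that the log is needed; when the first branch $\tfrac{1}{6Ln\sqrt{\cdots}}$ of the min is active instead, the recursion just gives the pure exponential term with no need for the variance-balancing, and the residual is dominated.

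For the alternating variant, the only change is in the dual update: $\vy_t^k$ uses $\nabla_2 f_i(\vx_t^k;\vy_{t-1}^k)$ rather than $\nabla_2 f_i(\vx_{t-1}^k;\vy_{t-1}^k)$. The extra discrepancy $\norm{\nabla_2 f(\vx_t^k;\cdot) - \nabla_2 f(\vx_{t-1}^k;\cdot)} \le L\norm{\vx_t^k - \vx_{t-1}^k} = L\cdot O(\alpha)\cdot(\text{gradient bound})$ is one order higher in $\alpha$ than the leading terms, so under the same step-size constraints it is absorbed into the existing error budget without changing the rate; I would isolate this in a short supplementary lemma (as the excerpt promises in Appendix~\ref{sec:proofs altSGDA RR}). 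The main obstacle, I expect, is bookkeeping the within-epoch iterate drift: one must show $\norm{\vz_t^k - \vz_0^k}$ stays $O(\alpha)\cdot\sqrt{V_\lambda(\vz_0^k)}$-ish (or $O(\beta)$ on the $\vy$ side) \emph{uniformly over $t\in[q]$}, and the without-replacement structure means the per-step stochastic gradients are not independent, so the variance bookkeeping (getting the sharp $\tfrac{n-b}{n-1}$ rather than a crude $n$) requires the combinatorial sampling-without-replacement identity for $\Var$ of a sum of a random subset — this is the delicate step and the reason Assumption~\ref{ass:bddvar} is phrased as an $(A,B)$-bound. Everything else is careful but routine smoothness/P{\L} algebra.
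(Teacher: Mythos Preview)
Your proposal is correct and follows essentially the same route as the paper: epoch-wise aggregation of the updates, a smoothness-based descent inequality for $V_\lambda$ (the paper writes $V_\lambda(\vz_0^{k+1})-V_\lambda(\vz_0^k)=(\lambda+1)[\Phi(\vx_0^{k+1})-\Phi(\vx_0^k)]+[f(\vz_0^k)-f(\vz_0^{k+1})]$ and bounds the two pieces via $L_\Phi$- and $L$-smoothness, rather than tracking $\Phi-\Phi^*$ and $\Phi-f$ separately, but this is purely presentational), bounding the noise terms $\norm{\vg^k-\nabla_1 f(\vz_0^k)}^2$ and $\norm{\vh^k-\nabla_2 f(\vz_0^k)}^2$ through the within-epoch drift $G_k$ and the without-replacement variance identity that produces the $\tfrac{n-b}{n-1}$ factor, using $r\gtrsim\kappa_2^2$ to make the coefficient of $\Phi-f$ negative, applying the primal P{\L} condition to obtain the contraction $\E_k[V_\lambda(\vz_0^{k+1})]\le(1-\mu_1 q\alpha/2)V_\lambda(\vz_0^k)+(cr)^3 L^2\tfrac{q^2(q-1)}{n-1}B\alpha^3$, and finally unrolling with the two-case analysis on $K$. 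Your treatment of the alternating variant also matches: the extra $L^2 q\alpha^2\norm{\vg^k}^2$ term in the bound on $\norm{\vh^k-\nabla_2 f(\vz_0^k)}^2$ is absorbed by tightening the first step-size condition, and the rest of the argument is unchanged.
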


As a side remark, some works consider a sampling method called \emph{$b$-minibatch sampling} where all the elements in each mini-batch are distinct (\ie, without-replacement component sampling \underline{per mini-batch}), \eg, \citet[Definition~2.1]{loizou2021stochastic}. However, there is a significant gap between this method and ours: any two distinct mini-batches sampled by the $b$-minibatch sampling can intersect with each other (\ie, mini-batches are sampled with replacement), whereas, in each epoch of our \Algref{alg:SGDA wo replacement minibatch}, all the mini-batches are mutually disjoint.

\section{Technical propositions}
\label{sec:propositions}
{\noindent \bf Notation.} Throughout this appendix, we use $\gX=\R^{d_x}$ and $\gY=\R^{d_y}$. Given a closed set $\gS\subset \R^d$, we denote the set of all projection(s) of $\vv\in \R^d$ onto $\gS$, \ie, the nearest point(s) in $\gS$ from $\vv$, by $\Pi_\gS (\vv) := \argmin_{\vw\in\gS} \norm{\vv-\vw}$.

\subsection{Function classes: P{\L} condition, smoothness, and more}

\begin{proposition}[$\kappa\ge 1$]\label{prop:kappa}
	Let $g$ be an $L$-smooth function which is bounded below by $g^*$. Then, for any $\vx$,
	\[\norm{\nabla g(\vx)}^2 \le 2L\left[g(\vx)-g^*\right]. \]
	If $g$ is $\mu$-P{\L} as well, then $\mu\le L$. Consequently, the condition number $\kappa:=L/\mu$ of $g$ is $\ge 1$.
\end{proposition}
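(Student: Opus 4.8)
The plan is to prove the two assertions separately, both via the standard descent-lemma trick applied at the minimizer.

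\textbf{First inequality.} Fix $\vx$ and set $\bar{\vz} = \vx - \frac{1}{L}\nabla g(\vx)$ in the $L$-smoothness upper bound $g(\bar{\vz}) \le g(\vx) + \inner{\nabla g(\vx), \bar{\vz}-\vx} + \frac{L}{2}\norm{\bar{\vz}-\vx}^2$. With this choice the right-hand side becomes $g(\vx) - \frac{1}{L}\norm{\nabla g(\vx)}^2 + \frac{1}{2L}\norm{\nabla g(\vx)}^2 = g(\vx) - \frac{1}{2L}\norm{\nabla g(\vx)}^2$. Since $g(\bar{\vz}) \ge g^*$, rearranging gives $\frac{1}{2L}\norm{\nabla g(\vx)}^2 \le g(\vx) - g^*$, which is the claim. (Note this only uses $L$-smoothness and boundedness below, not the P{\L} condition.)

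\textbf{Second assertion.} Now suppose additionally $g$ is $\mu$-P{\L}, so $\norm{\nabla g(\vt)}^2 \ge 2\mu(g(\vt)-g^*)$ for all $\vt$. Combining this lower bound with the upper bound just proved yields $2\mu(g(\vx)-g^*) \le \norm{\nabla g(\vx)}^2 \le 2L(g(\vx)-g^*)$ for every $\vx$. To conclude $\mu \le L$ it suffices to exhibit a single point where $g(\vx) - g^* > 0$; if no such point existed, $g$ would be constant equal to $g^*$, hence $\nabla g \equiv 0$, and the P{\L} inequality $0 \ge 2\mu \cdot 0$ holds trivially for any $\mu$ — so in that degenerate case the statement $\mu \le L$ (and $\kappa \ge 1$) is vacuous or should be read as: the smallest valid P{\L} constant is $0 \le L$. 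Assuming $g$ is nonconstant, pick $\vx$ with $g(\vx) > g^*$ (such a point exists since $g^* = \inf g$ and $g$ is nonconstant), divide both sides of $2\mu(g(\vx)-g^*) \le 2L(g(\vx)-g^*)$ by the positive quantity $2(g(\vx)-g^*)$, and obtain $\mu \le L$. Therefore $\kappa = L/\mu \ge 1$.

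\textbf{Main obstacle.} There is essentially no hard step; the only subtlety is the degenerate case where $g \equiv g^*$, in which every $\mu > 0$ vacuously satisfies the P{\L} inequality and the claim "$\mu \le L$" must be interpreted appropriately (the infimal P{\L} constant is $0$). I expect the authors simply to assume $g$ is nonconstant (or note the degenerate case is trivial) and present the two-line computation above.
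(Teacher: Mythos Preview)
Your proposal is correct and follows essentially the same approach as the paper: both plug the point $\vx - \tfrac{1}{L}\nabla g(\vx)$ into the smoothness upper bound (the paper frames this as minimizing the quadratic majorant, but it is the same computation), then combine with the P{\L} inequality. Your explicit treatment of the degenerate constant case is a nice touch that the paper omits.
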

\begin{proof}
	Since $g$ is $L$-smooth, for any $\vx$ and $\vy$,
	\begin{equation}
		g^* \le g(\vy) \le g(\vx)+\inner{\nabla g(\vx), \vy-\vx}+\frac{L}{2}\norm{\vy-\vx}^2. \label{eq:lower bdd smooth}
	\end{equation}
	Now define a convex quadratic function $h_x(\vy)$ of $\vy$ as 
	\[h_x(\vy):=g(\vx)+\inner{\nabla g(\vx), \vy-\vx}+\frac{L}{2}\norm{\vy-\vx}^2.\]
	Since its gradient is
	\[\nabla h_x (\vy) = \nabla g(\vx) + L(\vy-\vx),\]
	$\vy^* := \vx-\frac{1}{L}\nabla g(\vx)$ is a minimum of $h_x$. 
	Plugging $\vy=\vy^*$ to the equation~\eqref{eq:lower bdd smooth}, we get
	\[g^*\le g(\vx)+\inner{\nabla g(\vx), -\frac{1}{L}\nabla g(\vx)} + \frac{L}{2}\norm{-\frac{1}{L}\nabla g(\vx)}^2 = g(\vx)-\frac{1}{2L}\norm{\nabla g(\vx)}^2.\]
	Rearranging the terms,
	\[\norm{\nabla g(\vx)}^2 \le 2L \left[g(\vx)-g^*\right].\]
	If we additionally utilize P{\L} inequality with $g^* := \min g(\vx)$,
	\[\norm{\nabla g(\vx)}^2 \ge 2\mu \left[g(\vx)-g^*\right],\]
	we directly yield $\mu\le L$ and thus $\kappa=L/\mu\ge 1$.
\end{proof}

\begin{definition}[\cite{karimi2016linear}] \label{def:PL related}
    Consider $g:\gX \rightarrow \R$. Let $\vx_p \in \Pi_{\gX^*}(\vx)$ be a projection of $\vx$ onto the optimal set $\gX^* = \argmin_{\vx\in \gX} g(\vx)$.
    \begin{enumerate}[label={(\arabic*)}]
        \item \label{def:SC} We say $g$ satisfies $\mu$-strong convexity (SC) if $g(\vx')\ge g(\vx) + \inner{\nabla g (\vx), \vx'-\vx} + \frac{\mu}{2}\norm{\vx'-\vx}^2$ for any $\vx, \vx'\in\gX$.
        \item \label{def:RSI} We say $g$ satisfies $\mu$-restricted secant inequality (RSI) if $\inner{\nabla g (\vx), \vx-\vx_p} \ge \mu \norm{\vx_p -\vx}^2$ for any $\vx\in\gX$.
		\item \label{def:EB} We say $g$ satisfies $\mu$-error bound (EB) condition if $\norm{\nabla g(\vx)}\ge \mu \norm{\vx_p -\vx}$ for any $\vx\in\gX$.
		\item \label{def:QG} We say $g$ satisfies $\mu$-quadratic growth (QG) condition if $g(\vx)\!-\!\min_{\vx'} g(\vx')\ge \frac{\mu}{2}\norm{\vx_p\!-\!\vx}^2$ for any $\vx\in\gX$.
	\end{enumerate}
\end{definition}

\begin{proposition} \label{prop:PLQGEB} 
    From Definition~\ref{def:PL related}, The following implications are true.
    \begin{itemize}
        \item $\mu$-SC implies $\mu$-P{\L} and $\mu$-RSI.
        \item $\mu$-P{\L} implies $\mu$-QG and $\mu$-EB.
		\item $\mu$-RSI implies $\mu$-EB.
		\item $\mu$-EB and $L$-smoothness together imply $(\mu^2/L)$-P{\L}.
	\end{itemize}
\end{proposition}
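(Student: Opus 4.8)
\textbf{Proof proposal for Proposition~\ref{prop:PLQGEB}.}

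The plan is to verify each of the four implications separately, since they are essentially a reorganization of the chain of inequalities from \cite{karimi2016linear}. For a function $g:\gX\to\R$ with optimal set $\gX^*$, write $g^* = \min_{\vx'} g(\vx')$ and let $\vx_p \in \Pi_{\gX^*}(\vx)$ throughout. I would take the implications in the order listed.

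\emph{($\mu$-SC $\Rightarrow$ $\mu$-P{\L} and $\mu$-RSI).} For P{\L}: starting from the SC inequality with $\vx'$ arbitrary, minimize the right-hand side $g(\vx) + \inner{\nabla g(\vx), \vx'-\vx} + \tfrac{\mu}{2}\norm{\vx'-\vx}^2$ over $\vx'$ (the minimizer is $\vx' = \vx - \tfrac{1}{\mu}\nabla g(\vx)$), which yields $g^* \ge g(\vx) - \tfrac{1}{2\mu}\norm{\nabla g(\vx)}^2$; rearrange. This is exactly the argument already used in the proof of Proposition~\ref{prop:kappa}, just with $L$ replaced by $\mu$ and the inequality direction flipped. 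For RSI: apply SC with the roles of $\vx,\vx'$ swapped, i.e. $g(\vx) \ge g(\vx_p) + \inner{\nabla g(\vx_p), \vx-\vx_p} + \tfrac{\mu}{2}\norm{\vx-\vx_p}^2$ and also $g(\vx_p) \ge g(\vx) + \inner{\nabla g(\vx), \vx_p - \vx} + \tfrac{\mu}{2}\norm{\vx_p-\vx}^2$; adding them and using $\nabla g(\vx_p) = \vzero$ (since $\vx_p$ is a global minimizer in the unconstrained setting) gives $0 \ge \inner{\nabla g(\vx), \vx_p - \vx} + \mu\norm{\vx_p - \vx}^2$, which is $\mu$-RSI.

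\emph{($\mu$-P{\L} $\Rightarrow$ $\mu$-QG and $\mu$-EB).} Both of these are the nontrivial direction and I expect QG to be the main obstacle. The standard approach (following \cite{karimi2016linear}, Appendix~A, Theorem~2) is a gradient-flow / differential-inequality argument: consider the gradient flow $\dot{\vx}(t) = -\nabla g(\vx(t))$ started at $\vx(0) = \vx$. Using P{\L}, $\tfrac{d}{dt}\bigl(g(\vx(t)) - g^*\bigr) = -\norm{\nabla g(\vx(t))}^2 \le -2\mu\bigl(g(\vx(t))-g^*\bigr)$, so $g(\vx(t)) - g^* \le e^{-2\mu t}(g(\vx)-g^*)$ and in particular the flow converges to some point $\vx_\infty \in \gX^*$. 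Then bound the path length: $\norm{\vx - \vx_\infty} \le \int_0^\infty \norm{\nabla g(\vx(t))}\,dt$, and combine with $\tfrac{d}{dt}\sqrt{g(\vx(t))-g^*} = \tfrac{-\norm{\nabla g(\vx(t))}^2}{2\sqrt{g(\vx(t))-g^*}} \le -\tfrac{\sqrt{2\mu}}{2}\norm{\nabla g(\vx(t))}$ (using P{\L} once more to lower-bound $\norm{\nabla g(\vx(t))}/\sqrt{g(\vx(t))-g^*} \ge \sqrt{2\mu}$) to get $\int_0^\infty \norm{\nabla g(\vx(t))}\,dt \le \sqrt{\tfrac{2}{\mu}}\sqrt{g(\vx)-g^*}$. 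Since $\norm{\vx_p - \vx} \le \norm{\vx_\infty - \vx}$, this gives $\tfrac{\mu}{2}\norm{\vx_p - \vx}^2 \le g(\vx)-g^*$, i.e. $\mu$-QG. For $\mu$-EB: P{\L} gives $\norm{\nabla g(\vx)}^2 \ge 2\mu(g(\vx)-g^*)$, and $\mu$-QG gives $g(\vx)-g^* \ge \tfrac{\mu}{2}\norm{\vx_p-\vx}^2$; chaining, $\norm{\nabla g(\vx)}^2 \ge \mu^2\norm{\vx_p-\vx}^2$, which is $\mu$-EB. (Alternatively one can cite \cite{karimi2016linear} for the P{\L}$\Rightarrow$QG step rather than reproduce the flow argument.)

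\emph{($\mu$-RSI $\Rightarrow$ $\mu$-EB).} Immediate from Cauchy--Schwarz: $\mu\norm{\vx_p - \vx}^2 \le \inner{\nabla g(\vx), \vx - \vx_p} \le \norm{\nabla g(\vx)}\,\norm{\vx_p - \vx}$, so dividing by $\norm{\vx_p-\vx}$ (the case $\vx \in \gX^*$ being trivial) yields $\norm{\nabla g(\vx)} \ge \mu\norm{\vx_p-\vx}$.

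\emph{($\mu$-EB and $L$-smoothness $\Rightarrow$ $(\mu^2/L)$-P{\L}).} Here I would use the descent-lemma consequence from Assumption~\ref{ass:smooth}/Proposition~\ref{prop:kappa}: $L$-smoothness gives $g^* \le g(\vx_p) \le g(\vx) - \tfrac{1}{2L}\norm{\nabla g(\vx)}^2$... no, that bounds the wrong thing; instead use $g(\vx) - g^* \le \tfrac{L}{2}\norm{\vx - \vx_p}^2$, which follows from $L$-smoothness and $\nabla g(\vx_p) = \vzero$ (the quadratic upper bound expanded at $\vx_p$). Combining with EB, $\norm{\vx-\vx_p}^2 \le \tfrac{1}{\mu^2}\norm{\nabla g(\vx)}^2$, gives $g(\vx)-g^* \le \tfrac{L}{2\mu^2}\norm{\nabla g(\vx)}^2$, i.e. $\norm{\nabla g(\vx)}^2 \ge \tfrac{\mu^2}{L}\bigl(g(\vx)-g^*\bigr)$, which is $(\mu^2/L)$-P{\L}.

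The main obstacle is the P{\L}$\Rightarrow$QG implication, which genuinely requires the integration-along-gradient-flow (or an equivalent discrete-time / Gronwall) argument rather than a one-line manipulation; everything else is a short algebraic consequence of the definitions, Cauchy--Schwarz, and the smoothness descent lemma already recorded in Proposition~\ref{prop:kappa}. If brevity is desired, I would cite \cite{karimi2016linear} for that single step and present the remaining three implications in full.
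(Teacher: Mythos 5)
Your argument is correct, and for the most part it parallels the paper's proof, which (like you) defers the two harder steps, P{\L}$\Rightarrow$QG and RSI$\Rightarrow$EB, to \citet[Theorem~2]{karimi2016linear}; your gradient-flow derivation of QG is a faithful reproduction of Karimi et al.'s argument, and your Cauchy--Schwarz derivation of RSI$\Rightarrow$EB is exactly theirs. The genuine divergence is in SC$\Rightarrow$RSI: the paper first establishes SC$\Rightarrow$P{\L}$\Rightarrow$QG and then chains the SC inequality with QG to get $\inner{\nabla g(\vx),\vx-\vx_p}\ge g(\vx)-g(\vx_p)+\tfrac{\mu}{2}\norm{\vx_p-\vx}^2\ge\mu\norm{\vx_p-\vx}^2$, whereas you add the two SC inequalities at $\vx$ and $\vx_p$ and use $\nabla g(\vx_p)=\vzero$ to get RSI in one stroke. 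Your route is more self-contained, since it does not depend on the nontrivial P{\L}$\Rightarrow$QG step that both you and the paper outsource to Karimi et al.; the paper's route, on the other hand, reuses an implication it has already proved, which is a legitimate trade of elegance for logical economy. For SC$\Rightarrow$P{\L} your minimize-over-$\vx'$ argument is the clean standard one and is in fact more explicit than the paper's terse one-line substitution remark, which omits a minimization or Cauchy--Schwarz step needed to pass from the plugged-in inequality to the P{\L} form. The final EB-plus-smoothness step is identical in both.
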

\begin{proof}
    Most of the proofs originated from \citet[Theorem~2]{karimi2016linear}.
    
    (SC $\Rightarrow$ P{\L}) Substitute $\vx$ to $\vx_p$ and $\vx'$ to $\vx$, respectively, from Definition~\ref{def:PL related}.\ref{def:SC}.
    
    (P{\L} $\Rightarrow$ QG \& EB) See the proof in \citet[Theorem~2]{karimi2016linear}
    
    (SC $\Rightarrow$ RSI) We know $\mu$-SC $\Rightarrow$ $\mu$-P{\L} $\Rightarrow$ $\mu$-QG. From Definition~\ref{def:PL related}.\ref{def:SC} \& \ref{def:PL related}.\ref{def:QG},
    \begin{align*}
        \inner{\nabla g (\vx), \vx-\vx_p} &\stackrel{\text{SC}}{\ge} g(\vx) - g(\vx_p) + \frac{\mu}{2}\norm{\vx_p-\vx}^2 \\
        &\stackrel{\text{QG}}{\ge} \frac{\mu}{2}\norm{\vx_p-\vx}^2 + \frac{\mu}{2}\norm{\vx_p-\vx}^2 = \mu\norm{\vx_p-\vx}^2.
    \end{align*}
    This implies $\mu$-RSI.
    
    (RSI $\Rightarrow$ EB) See the proof in \citet[Theorem~2]{karimi2016linear}.
    
    (EB \& smooth $\Rightarrow$ P{\L}) We use $\nabla g(\vx_p)=\bm0$. By $L$-smoothness and $\mu$-EB condition,
    \begin{align*}
        g(\vx) - g(\vx_p) &\stackrel{\text{smooth}}{\le}\inner{\nabla g(\vx_p), \vx-\vx_p}+\frac{L}{2}\norm{\vx-\vx_p}^2 = \frac{L}{2}\norm{\vx-\vx_p}^2\\
        &\stackrel{~~\text{EB}~~}{\le} \frac{L}{2\mu^2}\norm{\nabla g(\vx)}^2.
    \end{align*}
    This implies $(\mu^2/L)$-P{\L} condition on $g$.
\end{proof}

\begin{proposition}[Lipschitz continuity-like property of $\vy^*(\vx)$]\label{prop:y*(x) Lipschitz}
    For an $L$-smooth function $g:\gX\times\gY \rightarrow \R$, suppose $-g(\vx; \cdot)$ is $\mu_2$-P{\L}. Let $\kappa_2 = L/\mu_2$.
    
    Consider any $\vx_0, \vx_1\in \gX$. For any $\vy^*_0 \in \gY^*_{\vx_0}=\argmax_{\vy\in \gY} g(\vx_0; \vy)$, there exists a $\vy^*_1 \in \gY_{\vx_1}^*=\argmax_{\vy\in \gY} g(\vx_1; \vy)$ such that $\norm{\vy^*_0 - \vy^*_1}\le \kappa_2\norm{\vx_0-\vx_1}$.
    
    In fact, it is enough to choose $\vy^*_1$ as a projection of $\vy^*_0$ onto the set $\gY_{\vx_1}^*$, namely, $\vy^*_1 \in \Pi_{\gY_{\vx_1}^*} (\vy^*_0).$
\end{proposition}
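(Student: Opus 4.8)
The plan is to establish a one-sided Lipschitz-type bound on the argmax map using the Polyak-\L ojasiewicz inequality together with the quadratic growth (QG) and error bound (EB) consequences of P\L\ derived in Proposition~\ref{prop:PLQGEB}. The essential point is that although $-g(\vx;\cdot)$ is P\L\ rather than strongly concave, the P\L\ property still forces the optimal set to behave ``quadratically well'' near optimum, which is exactly what QG encodes, and that combined with $L$-smoothness of $g$ gives control of how far $\vy_0^*$ can be from the optimal set at the nearby point $\vx_1$.

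First, fix $\vx_0,\vx_1$ and $\vy_0^*\in\gY_{\vx_0}^*$, and let $\vy_1^*\in\Pi_{\gY_{\vx_1}^*}(\vy_0^*)$ be a projection of $\vy_0^*$ onto the (nonempty, closed) optimal set $\gY_{\vx_1}^*$ — here I would invoke Assumption~\ref{ass:primal,dual} to know this projection exists. Write $\Phi(\vx_1)-g(\vx_1;\vy_0^*) = g(\vx_1;\vy_1^*) - g(\vx_1;\vy_0^*)$, which is the suboptimality of $\vy_0^*$ for the maximization problem at $\vx_1$. Applying the QG consequence of the $\mu_2$-P\L\ property of $-g(\vx_1;\cdot)$ (Proposition~\ref{prop:PLQGEB}, P\L\ $\Rightarrow$ QG) gives $\Phi(\vx_1)-g(\vx_1;\vy_0^*) \ge \frac{\mu_2}{2}\norm{\vy_0^*-\vy_1^*}^2$, since $\vy_1^*$ is by construction a projection of $\vy_0^*$ onto the optimal set. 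So it remains to upper-bound the left-hand side by something of order $\norm{\vx_0-\vx_1}^2$.

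For the upper bound, I would use that $\vy_0^*$ is optimal at $\vx_0$, so $\nabla_2 g(\vx_0;\vy_0^*)=\vzero$, and that $g$ is $L$-smooth. Concretely: $g(\vx_1;\vy_1^*)-g(\vx_1;\vy_0^*) \le g(\vx_0;\vy_1^*) - g(\vx_0;\vy_0^*) + [\text{cross terms controlled by }L\norm{\vx_0-\vx_1}]$; but since $\vy_0^*$ maximizes $g(\vx_0;\cdot)$ we have $g(\vx_0;\vy_1^*)-g(\vx_0;\vy_0^*)\le 0$, so the whole quantity is bounded by a term involving $L\norm{\vx_0-\vx_1}\norm{\vy_0^*-\vy_1^*}$ plus possibly an $L\norm{\vx_0-\vx_1}^2$ term, after applying smoothness of $\nabla g$ (the Lipschitz gradient estimate) to the difference $g(\vx_1;\cdot)-g(\vx_0;\cdot)$ evaluated at the two points $\vy_1^*$ and $\vy_0^*$. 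Denoting $a=\norm{\vy_0^*-\vy_1^*}$ and $b=\norm{\vx_0-\vx_1}$, combining the two estimates yields something like $\frac{\mu_2}{2}a^2 \lesssim L a b$ (the $Lb^2$ terms being absorbable), hence $a \le \kappa_2 b$ after dividing by $a$ (handling the trivial case $a=0$ separately).

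The main obstacle I anticipate is keeping the smoothness-based upper bound clean: one has to bound $g(\vx_1;\vy_1^*)-g(\vx_1;\vy_0^*)-\big(g(\vx_0;\vy_1^*)-g(\vx_0;\vy_0^*)\big)$, which is a ``mixed second difference,'' and the natural route is to write it as $\int$ or telescoping of gradient differences and apply $L$-Lipschitzness of $\nabla g$; getting the constant to come out to exactly $\kappa_2$ (rather than, say, $2\kappa_2$) may require being slightly careful, e.g.\ using the identity $\nabla_2 g(\vx_0;\vy_0^*)=\vzero$ to kill a first-order term and arranging the Cauchy--Schwarz step so the $a^2$ coefficient is exactly $\mu_2/2$ on one side and the cross term is exactly $La b$ on the other. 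An alternative cleaner route, if the direct second-difference bookkeeping gets messy, is to use the EB consequence of P\L\ (Proposition~\ref{prop:PLQGEB}): $\norm{\nabla_2 g(\vx_1;\vy_0^*)} \ge \mu_2\,\mathrm{dist}(\vy_0^*,\gY_{\vx_1}^*) = \mu_2 a$, while $\norm{\nabla_2 g(\vx_1;\vy_0^*)} = \norm{\nabla_2 g(\vx_1;\vy_0^*)-\nabla_2 g(\vx_0;\vy_0^*)} \le L b$ by $L$-smoothness of $g$; dividing gives $a\le \kappa_2 b$ immediately. I would present this EB-based argument as the primary proof since it is the shortest and gives the constant $\kappa_2$ with no slack.
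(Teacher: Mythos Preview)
Your EB-based argument is correct and is essentially the paper's proof: both use $\nabla_2 g(\vx_0;\vy_0^*)=\vzero$ together with $L$-smoothness to get $\norm{\nabla_2 g(\vx_1;\vy_0^*)}\le L\norm{\vx_0-\vx_1}$, and then invoke a consequence of P\L\ to convert the gradient norm into distance to the optimal set. The only cosmetic difference is that the paper chains the P\L\ inequality with QG (i.e., $2\mu_2[\Phi(\vx_1)-g(\vx_1;\vy_0^*)]\le\norm{\nabla_2 g(\vx_1;\vy_0^*)}^2$ and $\Phi(\vx_1)-g(\vx_1;\vy_0^*)\ge\tfrac{\mu_2}{2}\norm{\vy_0^*-\vy_1^*}^2$) rather than invoking EB directly, but these are equivalent one-line routes to the same inequality $\mu_2\norm{\vy_0^*-\vy_1^*}\le\norm{\nabla_2 g(\vx_1;\vy_0^*)}$. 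Your first ``mixed second difference'' approach is unnecessary and, as you already noted, would struggle to recover the sharp constant $\kappa_2$; drop it in favor of the EB argument.
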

\begin{proof}
    We borrow the proof from \citet[Lemma~A.3]{nouiehed2019solving}.
    
    Recall $\Phi(\vx):=\max_{\vy'\in\gY} g(\vx; \vy')$. By P{\L} inequality and smoothness of $g$, 
    \begin{align*}
        2\mu_2 \left(\Phi(\vx_1)-g(\vx_1; \vy^*_0)\right) &\le \norm{\nabla_2 g(\vx_1; \vy^*_0)}^2 \\
        &= \norm{\nabla_2 g(\vx_1; \vy^*_0) - \nabla_2 g(\vx_0; \vy^*_0)}^2 \le L^2 \norm{\vx_1-\vx_0}^2.
    \end{align*}
    The second equality applies $\nabla_2\, g(\vx_0; \vy^*_0)=\bm0$, since $\vy^*_0\in \argmax_{\vy} g(\vx_0; \vy)$.
    
    Moreover, note that $-g(\vx_1; \cdot)$ satisfies $\mu_2$-QG condition ($\because$ Proposition~\ref{prop:PLQGEB}). To apply this, we utilize our choice of $\vy^*_1$:
    \[\Phi(\vx_1)-g(\vx_1; \vy^*_0) \ge \frac{\mu_2}{2}\norm{\vy^*_1-\vy^*_0}^2.\]
    As a result, we have $\mu_2^2\norm{\vy^*_0 - \vy^*_1}^2\le L^2\norm{\vx_0-\vx_1}^2$. This completes the proof.
\end{proof}

\begin{proposition}[Smoothness of primal function]\label{prop:Phi smooth}
	Consider the same function $g$ as Proposition~\ref{prop:y*(x) Lipschitz}. Then, the function $\Phi(\vx):=\max_{\vy'\in\gY} g(\vx; \vy')$ is differentiable with 
	\[\nabla \Phi(\vx) = \nabla_1\, g(\vx; \vy^*(\vx)), \quad \text{regardless of the choice of }~ \vy^*(\vx)\in \argmax_{\vy'\in\gY} g(\vx; \vy').\]
	Moreover, $\Phi$ is $L(\kappa_2 +1)$-smooth, where $\kappa_2 = L/\mu_2$.
\end{proposition}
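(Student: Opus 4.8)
The plan is to establish the gradient formula (the Danskin-type envelope theorem) first, and then derive the smoothness constant from it using Proposition~\ref{prop:y*(x) Lipschitz}. For the gradient formula $\nabla\Phi(\vx) = \nabla_1 g(\vx;\vy^*(\vx))$, I would fix $\vx_0$ and a maximizer $\vy^*_0 \in \gY^*_{\vx_0}$, and bound $\Phi(\vx_0+\vh) - \Phi(\vx_0)$ from both sides. The lower bound is easy: $\Phi(\vx_0+\vh) \ge g(\vx_0+\vh;\vy^*_0) \ge g(\vx_0;\vy^*_0) + \inner{\nabla_1 g(\vx_0;\vy^*_0),\vh} - \tfrac{L}{2}\norm{\vh}^2 = \Phi(\vx_0) + \inner{\nabla_1 g(\vx_0;\vy^*_0),\vh} - \tfrac{L}{2}\norm{\vh}^2$, using $L$-smoothness of $g$. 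For the upper bound, pick $\vy^*_{\vh} \in \Pi_{\gY^*_{\vx_0+\vh}}(\vy^*_0)$ so that, by Proposition~\ref{prop:y*(x) Lipschitz}, $\norm{\vy^*_{\vh} - \vy^*_0} \le \kappa_2\norm{\vh}$; then $\Phi(\vx_0+\vh) = g(\vx_0+\vh;\vy^*_{\vh}) \le g(\vx_0;\vy^*_0) + \inner{\nabla g(\vx_0;\vy^*_0),(\vh;\vy^*_{\vh}-\vy^*_0)} + \tfrac{L}{2}\norm{(\vh;\vy^*_{\vh}-\vy^*_0)}^2$, and since $\nabla_2 g(\vx_0;\vy^*_0)=\vzero$ the cross term drops, leaving $\Phi(\vx_0+\vh) \le \Phi(\vx_0) + \inner{\nabla_1 g(\vx_0;\vy^*_0),\vh} + \tfrac{L}{2}(1+\kappa_2^2)\norm{\vh}^2$. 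Combining, $\abs{\Phi(\vx_0+\vh)-\Phi(\vx_0)-\inner{\nabla_1 g(\vx_0;\vy^*_0),\vh}} = O(\norm{\vh}^2)$, which shows $\Phi$ is differentiable at $\vx_0$ with $\nabla\Phi(\vx_0)=\nabla_1 g(\vx_0;\vy^*_0)$; in particular the value is independent of which maximizer is chosen.

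For the smoothness bound, take any $\vx_0,\vx_1$ and maximizers $\vy^*_0,\vy^*_1$ chosen compatibly as in Proposition~\ref{prop:y*(x) Lipschitz} (so $\norm{\vy^*_0-\vy^*_1}\le\kappa_2\norm{\vx_0-\vx_1}$). Then
\[
\norm{\nabla\Phi(\vx_0)-\nabla\Phi(\vx_1)} = \norm{\nabla_1 g(\vx_0;\vy^*_0)-\nabla_1 g(\vx_1;\vy^*_1)} \le L\norm{(\vx_0;\vy^*_0)-(\vx_1;\vy^*_1)} = L\sqrt{\norm{\vx_0-\vx_1}^2+\norm{\vy^*_0-\vy^*_1}^2},
\]
using the $L$-smoothness of $g$ (Lipschitzness of the full gradient $\nabla g$). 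Bounding $\norm{\vy^*_0-\vy^*_1}\le\kappa_2\norm{\vx_0-\vx_1}$ gives $\norm{\nabla\Phi(\vx_0)-\nabla\Phi(\vx_1)} \le L\sqrt{1+\kappa_2^2}\,\norm{\vx_0-\vx_1} \le L(1+\kappa_2)\norm{\vx_0-\vx_1}$, which is the claimed $L(\kappa_2+1)$-smoothness.

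The main subtlety I expect is in the gradient-formula part: proving differentiability (not just that a candidate gradient exists) requires the two-sided quadratic squeeze, and the upper bound genuinely needs the Lipschitz-type control on the maximizer from Proposition~\ref{prop:y*(x) Lipschitz} together with the stationarity $\nabla_2 g(\vx_0;\vy^*_0)=\vzero$ to kill the $\vy$-component of the first-order term — without that cancellation one would only get a one-sided estimate. The well-definedness of $\vy^*(\vx)$ and nonemptiness/closedness of $\gY^*_{\vx}$ come from Assumption~\ref{ass:primal,dual} (guaranteed here since $-g(\vx;\cdot)$ is P{\L}, hence has attained maxima), so there is no issue there. The smoothness bound itself is then a routine consequence and should present no obstacle. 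Alternatively, one can simply cite \citet[Lemma~A.5]{nouiehed2019solving} or \citet[Lemma~4.3]{lin2020gradient} for both claims, but the self-contained argument above is short enough to include.
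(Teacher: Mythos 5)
Your proof is correct. The smoothness bound in the second half essentially matches the paper's argument (same use of Proposition~\ref{prop:y*(x) Lipschitz}); you apply Lipschitz continuity of the full gradient $\nabla g$ in one step rather than the paper's triangle-inequality split $\norm{\nabla_1 g(\vx_0;\vy^*_0)-\nabla_1 g(\vx_1;\vy^*_0)}+\norm{\nabla_1 g(\vx_1;\vy^*_0)-\nabla_1 g(\vx_1;\vy^*_1)}$, which gives the slightly tighter intermediate constant $L\sqrt{1+\kappa_2^2}$ before you loosen to $L(1+\kappa_2)$. The differentiability argument, however, takes a genuinely different route. The paper fixes a unit direction $\vu$, constructs a continuous selection $\vp(t)\in\argmax_{\vy'}g(\vx+t\vu;\vy')$ by projection, sandwiches the difference quotient $\bigl(\Phi(\vx+h\vu)-\Phi(\vx)\bigr)/h$ between two mean-value-theorem expressions of the form $\inner{\nabla_1 g(\vx+t_i\vu;\cdot),\vu}$, and passes to the limit $h\to 0^+$ to obtain the directional derivative; the gradient formula is then read off (implicitly using continuity of the candidate gradient to promote Gateaux to Fr\'echet differentiability). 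You instead run a direct two-sided quadratic squeeze: $L$-smoothness gives the lower bound $\Phi(\vx_0+\vh)\ge\Phi(\vx_0)+\inner{\nabla_1 g(\vx_0;\vy^*_0),\vh}-\tfrac{L}{2}\norm{\vh}^2$ with the fixed maximizer, while the nearby maximizer from Proposition~\ref{prop:y*(x) Lipschitz} together with the stationarity $\nabla_2 g(\vx_0;\vy^*_0)=\vzero$ gives $\Phi(\vx_0+\vh)\le\Phi(\vx_0)+\inner{\nabla_1 g(\vx_0;\vy^*_0),\vh}+\tfrac{L}{2}(1+\kappa_2^2)\norm{\vh}^2$, so Fr\'echet differentiability follows directly with an explicit $O(\norm{\vh}^2)$ remainder. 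Your version is arguably cleaner in that it sidesteps the directional-derivative intermediary and makes the crucial cancellation $\nabla_2 g(\vx_0;\vy^*_0)=\vzero$ explicit; the paper's MVT sandwich is purely first-order but tacitly needs the extra Gateaux-to-Fr\'echet step. Both arguments correctly avoid the second-order Taylor expansion (and hence the implicit twice-differentiability assumption) used in the original Lemma~A.5 of \citet{nouiehed2019solving}.
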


\begin{proof}
    This is already proved in Lemma~A.5 of \cite{nouiehed2019solving}. However, we present a bit different proof without using second-order Taylor expansion. To start, recall $\gY_\vx^* := \argmax_{\vy\in \gY} g(\vx; \vy)$. That is, we could choose any $\vy^*(\vx)\in \gY_\vx^*$.
    
    We first show the differentiability of $\Phi$. Fix a unit vector $\vu\in \gX=\R^{d_x}$: $\norm{\vu}=1$. Let any $h>0$. We first claim that there exists a path $\vp:(-h,h]\rightarrow\gY=\R^{d_y}$ which is continuous at $t=0$ and $\vp(t) \in \gY_{(\vx+t\vu)}^*$. In fact, let $\vp(t)$ be a projection of $\vy^*(\vx)$ (that we chose) onto the set $\gY_{(\vx+t\vu)}^*$. Then, $\vp(0)=\vy^*(\vx)$, and by Proposition~\ref{prop:y*(x) Lipschitz}, we have $\norm{\vp(0)-\vp(t)} \le \kappa_2 \norm{\vx - (\vx+t\vu)} = \kappa_2 t$. This shows the continuity of $\vp(t)$ at $t=0$. Now, note that there exists a $t_1\in (0,h)$ such that,
    \begin{align*}
        &\Phi(\vx + h\vu) - \Phi(\vx)\\
        &= g(\vx+h\vu; \vp(h)) - g(\vx; \vp(0))\\
        &= \big\{g(\vx+h\vu; \vp(h)) -g(\vx+h\vu; \vp(0))\big\} + \big\{g(\vx+h\vu; \vp(0))- g(\vx; \vp(0))\big\} \\
        &\ge 0 + \inner{\nabla_1 g(\vx+t_1\vu; \vp(0)), h\vu},
    \end{align*}
    by mean value theorem (applied to the first argument). We have the inequality at the last line because $g(\vx+h\vu; \vp(h)) \ge g(\vx+h\vu; \vp(0))$, since $\vp(h) \in \gY_{(x+hu)}^*$. With a similar logic, there exists a $t_2\in (0,h)$ such that,
    \begin{align*}
        &\Phi(\vx + h\vu) - \Phi(\vx)\\
        &= g(\vx+h\vu; \vp(h)) - g(\vx; \vp(0))\\
        &= \big\{g(\vx+h\vu; \vp(h)) -g(\vx; \vp(h))\big\} + \big\{g(\vx; \vp(h))- g(\vx; \vp(0))\big\} \\
        &\le \inner{\nabla_1 g(\vx+t_2\vu; \vp(h)), h\vu} + 0.
    \end{align*}
    To combine these two inequalities into a single line,
    \[\inner{\nabla_1 g(\vx+t_1\vu; \vp(0)), \vu}\le \frac{\Phi(\vx + h\vu) - \Phi(\vx)}{h} \le \inner{\nabla_1 g(\vx+t_2\vu; \vp(h)), \vu}.\]
    Using the continuity of $p(\cdot)$ and $\nabla_1 g(\cdot;\cdot)$ ($\because$ $g$ has Lipschitz continuous gradient), we can deduce that the directional derivative of $\Phi$ in a direction $\vu$ (denoted by $D_{\vu} \Phi$) is in fact
    \[D_{\vu} \Phi(\vx) = \inner{\nabla_1 g(\vx; \vy^*(\vx)), \vu},\]
    by taking the limit $h\rightarrow 0+$. Since $\vu$ is arbitrary, we can conclude that $\nabla \Phi(\vx) = \nabla_1\, g(\vx; \vy^*(\vx))$.
    
    The proof of Lipschitz smoothness of $\Phi$ exactly follows the proof by \cite{nouiehed2019solving}. Consider any $\vx_0, \vx_1\in \gX$. As in Proposition~\ref{prop:y*(x) Lipschitz}, choose any $\vy^*_0 \in \gY_{\vx_0}^*$ and $\vy^*_1 \in \Pi_{\gY_{\vx_1}^*} (\vy^*_0)$. Then,
    \begin{align*}
        &\norm{\nabla \Phi(\vx_0) - \nabla \Phi(\vx_1)} \\
        &= \norm{\nabla_1 g(\vx_0; \vy^*_0) - \nabla_1 g(\vx_1; \vy^*_1)} \\
        &\le \norm{\nabla_1 g(\vx_0; \vy^*_0) - \nabla_1 g(\vx_1; \vy^*_0)} + \norm{\nabla_1 g(\vx_1; \vy^*_0) - \nabla_1 g(\vx_1; \vy^*_1)}\\
        &\le L\left\{\norm{\vx_0-\vx_1} + \norm{\vy^*_0-\vy^*_1}\right\} \\
        &\le L(1+\kappa_2)\norm{\vx_0-\vx_1}.
    \end{align*}
    The last inequality holds because of Proposition~\ref{prop:y*(x) Lipschitz}.
\end{proof}

\begin{proposition}[$\vx$-side P{\L} $\Rightarrow$ primal P{\L}] \label{prop:Phi PL}
	Suppose $g:\gX\times\gY \rightarrow \R$ is $L$-smooth and two-sided P{\L} with constants $\mu_1$ and $\mu_2$. Then, $g$ satisfies primal P{\L} condition: the function $\Phi(\vx):=\max_{\vy'\in\gY} g(\vx; \vy')$ is $\mu_1$-P{\L}. As a result, a smooth two-sided P{\L} function is P{\L}($\Phi$)-P{\L}.
\end{proposition}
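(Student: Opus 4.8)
The plan is to derive the primal P{\L} inequality for $\Phi$ from the $\vx$-side P{\L} inequality for $g$, applied with the dual variable frozen at a maximizer. First I would invoke Proposition~\ref{prop:Phi smooth}: since $g$ is $L$-smooth and $-g(\vx;\cdot)$ is $\mu_2$-P{\L} (so that $\gY_\vx^*=\argmax_{\vy'} g(\vx;\vy')$ is nonempty for every $\vx$), the map $\Phi$ is differentiable with $\nabla\Phi(\vx)=\nabla_1 g(\vx;\vy^*(\vx))$ for any choice $\vy^*(\vx)\in\gY_\vx^*$. I would also record that $\Phi^*:=\inf_\vx \Phi(\vx)$ is finite: fixing any $\vx_1$ and $\vy_1\in\gY_{\vx_1}^*$, every $\vx$ satisfies $\Phi(\vx)\ge g(\vx;\vy_1)\ge \min_{\vx'} g(\vx';\vy_1)>-\infty$, the last minimum existing because $g(\cdot;\vy_1)$ is $\mu_1$-P{\L}.

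The heart of the proof is a short chain of inequalities. Fix $\vx$, put $\vy^\star=\vy^*(\vx)$, and apply the $\mu_1$-P{\L} condition to the function $\vx'\mapsto g(\vx';\vy^\star)$ (with $\vy^\star$ held fixed) evaluated at the point $\vx$:
\[
\norm{\nabla\Phi(\vx)}^2=\norm{\nabla_1 g(\vx;\vy^\star)}^2\ge 2\mu_1\!\left(g(\vx;\vy^\star)-\min_{\vx'} g(\vx';\vy^\star)\right)=2\mu_1\!\left(\Phi(\vx)-\min_{\vx'} g(\vx';\vy^\star)\right),
\]
where the last equality uses $g(\vx;\vy^\star)=\max_{\vy'} g(\vx;\vy')=\Phi(\vx)$. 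To finish I bound the stray term: for every $\bar\vx\in\gX$ we have $\min_{\vx'} g(\vx';\vy^\star)\le g(\bar\vx;\vy^\star)\le \max_{\vy'} g(\bar\vx;\vy')=\Phi(\bar\vx)$, and taking the infimum over $\bar\vx$ gives $\min_{\vx'} g(\vx';\vy^\star)\le\Phi^*$. Substituting back yields $\norm{\nabla\Phi(\vx)}^2\ge 2\mu_1(\Phi(\vx)-\Phi^*)$, i.e.\ $\Phi$ is $\mu_1$-P{\L}. The concluding statement is then immediate: a smooth two-sided P{\L} function satisfies Assumption~\ref{ass:NCPL} by hypothesis and Assumption~\ref{ass:primalPL} by the above, hence is P{\L}($\Phi$)-P{\L}.

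I do not anticipate a genuine obstacle; once Proposition~\ref{prop:Phi smooth} is available the argument is essentially two lines. The only points that need care are (i) checking $\Phi^*$ is well-defined (handled above) — and, if one insists on the infimum being \emph{attained} as in the paper's definition of P{\L}, this can be obtained from the smoothness of $\Phi$ together with the quadratic-growth consequence of the inequality just derived, or is subsumed by Assumption~\ref{ass:primal,dual}; and (ii) legitimately applying the $\vx$-side P{\L} hypothesis to $g(\cdot;\vy^\star)$ for the particular frozen value $\vy^\star=\vy^*(\vx)$, which is valid precisely because the hypothesis holds for \emph{every} fixed second argument. One may also phrase the last step with an $\eps>0$ slack in $\min_{\vx'} g(\vx';\vy^\star)\le\Phi^*+\eps$ and let $\eps\to0$, sidestepping any discussion of attainment.
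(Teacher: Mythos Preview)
Your proof is correct and complete; the paper itself does not give an argument but simply cites \citet[Lemma~A.3]{yang2020global}, and your self-contained derivation (apply $\vx$-side P{\L} at the frozen maximizer $\vy^\star$, identify $\nabla\Phi(\vx)=\nabla_1 g(\vx;\vy^\star)$ via Proposition~\ref{prop:Phi smooth}, then bound $\min_{\vx'} g(\vx';\vy^\star)\le\Phi^*$) is precisely the standard route one finds in that reference.
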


\begin{proof}
    See Lemma~A.3 of \cite{yang2020global}.
\end{proof}

\begin{definition} \label{def:optimality}
     Consider $g:\gX\times\gY \rightarrow \R$. Then, the point $(\vx^*; \vy^*)\in \gX\times\gY$ is called
    \begin{enumerate}[label=(\roman*)]
	    \item \label{def:stationary} a stationary point of $g$ if ~$\nabla_1\, g(\vx^*; \vy^*) = \nabla_2\, g(\vx^*; \vy^*) = 0.$
	    \item \label{def:saddle} a saddle point of $g$ if ~$g(\vx^*; \vy)\le g(\vx^*; \vy^*)\le g(\vx; \vy^*)$ for all $\vx, \vy$.
	    \item \label{def:global minimax 2} a global minimax point of $g$ if ~$g(\vx^*; \vy)\le g(\vx^*; \vy^*)\le \max_{\vy'} g(\vx; \vy')$ for all $\vx, \vy$.
	    \item \label{def:global maximin} a global maximin point of $g$ if ~$\min_{\vx'} g(\vx'; \vy) \le g(\vx^*; \vy^*)\le g(\vx; \vy^*)$ for all $\vx, \vy$.
	\end{enumerate}
\end{definition}

\begin{proposition}\label{prop:optimality}
    Consider a function $g:\gX\times\gY \rightarrow \R$.
    \begin{enumerate}[label=(\arabic*)]
        \item \label{prop:optimality:saddle } In general, a saddle point of $g$ is a global minimax/maximin point.
        \item Let $\Phi(\vx):=\max_{\vy} g(\vx; \vy)$ and $\Phi^* := \min_x \Phi(\vx)$ be well-defined. Let $\lambda>0$ be a constant. In general, a point $(\vx^*; \vy^*)$ is a global minimax point of $g$ if and only if \[V_\lambda(\vx^*;\vy^*):=\lambda[\Phi(\vx)-\Phi^*] + [\Phi(\vx)-g(\vx;\vy)]=0.\]
        \item If $g$ is smooth nonconvex-P{\L}, then a global minimax point is a stationary point. 
        \item If $g$ is P{\L}($\Phi$)-P{\L}, then there exists a global minimax point  $(\vx^*; \vy^*)$ of $g$. As a result, if $g$ is also smooth, then the point $(\vx^*; \vy^*)$ is a stationary point.
        \item If $g$ is smooth two-sided P{\L}, every stationary point is a saddle point. As a result, there exists a saddle point $(\vx^*; \vy^*)$ of $g$.
    \end{enumerate}
\end{proposition}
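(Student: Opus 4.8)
The plan is to dispatch the five items of Proposition~\ref{prop:optimality} in order, leaning on the non-negative decomposition of $V_\lambda$ and on the facts about the primal function $\Phi$ already established (Propositions~\ref{prop:Phi smooth}~and~\ref{prop:Phi PL}). For item (1) I argue straight from Definition~\ref{def:optimality}: if $(\vx^*;\vy^*)$ is a saddle point then $g(\vx^*;\vy)\le g(\vx^*;\vy^*)\le g(\vx;\vy^*)\le\max_{\vy'}g(\vx;\vy')$ for all $\vx,\vy$, which is exactly the global-minimax property, and the maximin property is obtained symmetrically. For item (2) the key observation is
\[V_\lambda(\vx^*;\vy^*)=\lambda\,(\Phi(\vx^*)-\Phi^*)+(\Phi(\vx^*)-g(\vx^*;\vy^*)),\]
in which the first summand is $\ge 0$ because $\Phi^*=\inf_\vx\Phi(\vx)$ and the second is $\ge 0$ because $\Phi(\vx^*)=\max_{\vy'}g(\vx^*;\vy')\ge g(\vx^*;\vy^*)$. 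Hence $V_\lambda(\vx^*;\vy^*)=0$ is equivalent to having both $\Phi(\vx^*)=\Phi^*$ and $g(\vx^*;\vy^*)=\Phi(\vx^*)$, from which the two inequalities defining a global minimax point follow at once ($g(\vx^*;\vy)\le\Phi(\vx^*)=g(\vx^*;\vy^*)$ and $g(\vx^*;\vy^*)=\Phi^*\le\Phi(\vx)=\max_{\vy'}g(\vx;\vy')$); conversely, a global minimax point forces $\Phi(\vx^*)\le g(\vx^*;\vy^*)$ and $g(\vx^*;\vy^*)\le\Phi^*$, which together with the trivial reverse inequalities pins both summands to zero.

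For item (3), item (2) tells me that a global minimax point $(\vx^*;\vy^*)$ has $\vx^*\in\argmin_\vx\Phi(\vx)$ and $\vy^*\in\gY_{\vx^*}^*$. Smoothness makes $g(\vx^*;\cdot)$ differentiable, so the unconstrained maximizer $\vy^*$ obeys $\nabla_2 g(\vx^*;\vy^*)=0$; and because $\vy^*\in\gY_{\vx^*}^*$, Proposition~\ref{prop:Phi smooth} gives $\nabla\Phi(\vx^*)=\nabla_1 g(\vx^*;\vy^*)$, which vanishes since $\vx^*$ minimizes the differentiable function $\Phi$. Thus $(\vx^*;\vy^*)$ is stationary. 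For item (4), the only real content is exhibiting a zero of $V_\lambda$: under Assumption~\ref{ass:primalPL} the minimum $\Phi^*=\min_\vx\Phi(\vx)$ is well-defined, attained at some $\vx^*$ (should one prefer to derive attainment rather than assume it, use that $\Phi$ is $L(\kappa_2+1)$-smooth by Proposition~\ref{prop:Phi smooth} and run the standard descent-lemma-plus-P{\L} argument: gradient descent on $\Phi$ yields a Cauchy sequence whose limit minimizes $\Phi$), and $\gY_{\vx^*}^*\ne\emptyset$ by Assumption~\ref{ass:primal,dual}; for any $\vy^*\in\gY_{\vx^*}^*$ both summands of $V_\lambda(\vx^*;\vy^*)$ vanish, so item (2) makes $(\vx^*;\vy^*)$ a global minimax point, and if $g$ is additionally smooth then item (3)---applicable since P{\L}($\Phi$)-P{\L} is a special case of nonconvex-P{\L}---makes it stationary.

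For item (5), let $(\vx^*;\vy^*)$ be any stationary point and write $\Psi(\vy)=\min_{\vx'}g(\vx';\vy)$. The $\vy$-side P{\L} inequality gives $0=\norm{\nabla_2 g(\vx^*;\vy^*)}^2\ge 2\mu_2(\Phi(\vx^*)-g(\vx^*;\vy^*))\ge 0$, so $g(\vx^*;\vy^*)=\Phi(\vx^*)=\max_{\vy'}g(\vx^*;\vy')$; symmetrically the $\vx$-side P{\L} inequality gives $g(\vx^*;\vy^*)=\Psi(\vy^*)=\min_{\vx'}g(\vx';\vy^*)$. Combining, $g(\vx^*;\vy)\le g(\vx^*;\vy^*)\le g(\vx;\vy^*)$ for all $\vx,\vy$, which is a saddle point; existence then follows at once, since Proposition~\ref{prop:Phi PL} says a smooth two-sided P{\L} function is P{\L}($\Phi$)-P{\L}, so item (4) produces a global minimax point, which is stationary by item (3) and hence a saddle point by what was just shown. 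The only step beyond unwinding definitions where anything is at stake is the attainment of $\inf_\vx\Phi(\vx)$ in item (4)---ruling out the infimum being approached only ``at infinity''---which is exactly where smoothness of $\Phi$ and the P{\L} geometry do real work; if the paper's convention that a P{\L} function ``has a minimum value'' is read as asserting attainment, this step is free, and otherwise the descent-lemma argument closes it. Everything else reduces to the non-negative split of $V_\lambda$ and the gradient identity $\nabla\Phi(\vx)=\nabla_1 g(\vx;\vy^*(\vx))$.
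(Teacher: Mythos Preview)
Your proof is correct and follows essentially the same route as the paper: items (1)--(5) are handled by the same decomposition of $V_\lambda$ into two non-negative summands, the gradient identity $\nabla\Phi(\vx)=\nabla_1 g(\vx;\vy^*(\vx))$ from Proposition~\ref{prop:Phi smooth}, the P{\L} inequalities on both sides, and the chain (4)$\Rightarrow$(3)$\Rightarrow$(5) for existence of a saddle point. The only addition is your explicit discussion of attainment of $\min_\vx\Phi(\vx)$ in item (4); the paper simply writes ``Let $\vx^*\in\argmin_\vx\Phi(\vx)$,'' relying on the convention (stated when defining P{\L}) that a P{\L} function ``has a minimum value,'' so your descent-lemma argument is extra care rather than a different idea.
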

In particular, smooth two-sided P{\L} functions enjoy the ``minimax theorem,'' which establishes ``minimax = maximin.''
\begin{proof} 
    (1) (saddle point $\Rightarrow$ global minimax \& global maximin) This is straightforward by the definitions: for any $\vx$ and $\vy$,
    \[\min_{\vx'} g(\vx'; \vy) \le g(\vx^*; \vy) \le g(\vx^*; \vy^*)\le  g(\vx; \vy^*) \le \max_{\vy'} g(\vx; \vy').\]
    
    (2) (global minimax $\iff$ $V_\lambda=0$) The terms $\Phi(\vx)-\Phi^*$ and $\Phi(\vx)-g(\vx;\vy)$ are non-negative. Hence, $V_\lambda (\vx; \vy)$ is non-negative, and $V_\lambda (\vx^*; \vy^*)=0$ if and only if $\Phi^* = \Phi(\vx^*) = g(\vx^*; \vy^*)$, which is equivalent to the global minimax point condition.
    
    (3) (smooth nonconvex-P{\L}: global minimax $\Rightarrow$ stationary) Suppose $(\vx^*; \vy^*)$ is a global minimax point. Since $g(\vx^*; \vy)\le g(\vx^*; \vy^*)$ for any $\vy$, $\Phi(\vx^*) = \max_y g(\vx^*; \vy) = g(\vx^*; \vy^*)$. Thus, $\Phi$ has a minimum $g(\vx^*; \vy^*)$ at $\vx=\vx^*$. By Proposition~\ref{prop:Phi smooth},  $\Phi(\cdot)$ is a differentiable function and we have
    \[\nabla_1\, g(\vx^*; \vy^*) = \nabla \Phi(\vx^*) = 0.\]
    Also, since a differentiable function $g(\vx^*; \vy)$ has a maximum at $\vy=\vy^*$, we also have $\nabla_2\, g(\vx^*; \vy^*) = 0$. Therefore, $(\vx^*; \vy^*)$ is a stationary point.
    
    (4) (P{\L}($\Phi$)-P{\L}: $\exists$ global minimax) Let $\vx^* \in \argmin_\vx \Phi(\vx)$ and $\vy^* \in \argmax_\vy f(\vx^*;\vy)$. Then, $f(\vx^*,\vy^*)=\Phi(\vx^*)=\Phi^*$. as noted in (2), $(\vx^*,\vy^*)$ is a global minimax point. By (3), it is in fact a stationary point, when $g$ is smooth as well.
    
    (5) (smooth two-sided P{\L}: stationary $\Rightarrow$ saddle) Let $(\vx^*; \vy^*)$ be a stationary point. By P{\L} inequalities, for any $\vx$ and $\vy$,
    \begin{align*}
        0 = \norm{\nabla_2\, g(\vx^*; \vy^*)}^2 \ge 2\mu_2(\max_{\vy} g(\vx^*; \vy) - g(\vx^*; \vy^*)) \ge 0, \\
        0 = \norm{\nabla_1\, g(\vx^*; \vy^*)}^2 \ge 2\mu_1(g(\vx^*; \vy^*) - \min_{\vx} g(\vx; \vy^*)) \ge 0.
    \end{align*}
    Since $\mu_1, \mu_2>0$, these imply $\max_{\vy} g(\vx^*; \vy) = g(\vx^*; \vy^*) = \min_{\vx} g(\vx; \vy^*)$. Thus, $(\vx^*; \vy^*)$ is a saddle point. Note that (4) and Proposition~\ref{prop:Phi PL} together proves the existence of a stationary point of $g$. Therefore, there must exists a saddle point, which is also pointed out by \citet[Lemma~8]{guo2020fast}. 
    This concludes the proof.
\end{proof}

We remark that, in the proof above, (3) is false for general (nonconvex-nonconcave) functions. Only \emph{local} minimax point can ensure stationarity \citep{jin2020local}. As remarked by \cite{jin2020local} (Figure~2 of their paper), the function $xy - \cos(y)$ has non-stationary global minimax points $(0, \pm \pi)$.

The following two propositions are for showing that general two-sided P{\L} function may not have a \emph{differential Stackelberg equilibrium} defined as \citet[Definition~3.1]{li2022convergence}.

\begin{proposition} \label{prop: SC compose linear is PL}
    Let $g$ be a $\mu$-strongly convex function on $\R^n$. Consider any matrix $\mM\in\R^{n\times m}$ with a positive rank. Suppose that $\theta$ is the smallest \underline{nonzero} singular value of $\mM$. Then $g(\mM \vy)$ is a $\mu\theta^2$-P{\L} function of $\vy\in \R^m$.
\end{proposition}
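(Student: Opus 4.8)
The plan is to mimic the textbook argument that $\mu$-strong convexity implies the $\mu$-P{\L} inequality (cf.\ Proposition~\ref{prop:PLQGEB}), but to run it over the column space $\mathcal{R}(\mM)=\{\mM\vy:\vy\in\R^m\}$ rather than over all of $\R^n$. Write $h(\vy):=g(\mM\vy)$; by the chain rule $h$ is differentiable with $\nabla h(\vy)=\mM^\top\nabla g(\mM\vy)$. A differentiable $\mu$-strongly convex $g$ is coercive and bounded below, and $\mathcal{R}(\mM)$ is a closed subspace, so $h$ attains a minimum value $h^*=\min_{\vx\in\mathcal{R}(\mM)}g(\vx)$; thus the claim is precisely that $\norm{\nabla h(\vy)}^2\ge 2\mu\theta^2\,(h(\vy)-h^*)$ for every $\vy$ (the hypothesis $\operatorname{rank}\mM\ge 1$ guarantees $\theta>0$, so dividing by $\mu\theta^2$ is legitimate).

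Fix $\vy$ and set $\vx=\mM\vy\in\mathcal{R}(\mM)$. Evaluating the strong-convexity inequality at the point $\mM(\vy+\vd)$ and using $\inner{\nabla g(\vx),\mM\vd}=\inner{\mM^\top\nabla g(\vx),\vd}=\inner{\nabla h(\vy),\vd}$ gives, for every $\vd\in\R^m$,
\[ g(\mM(\vy+\vd)) \;\ge\; h(\vy)+\inner{\nabla h(\vy),\,\vd}+\tfrac{\mu}{2}\norm{\mM\vd}^2. \]
As $\vd$ ranges over $\R^m$, the point $\mM(\vy+\vd)$ sweeps out all of $\mathcal{R}(\mM)$, so taking the infimum over $\vd$ on both sides turns the left-hand side into $h^*$; it then remains to lower bound $\min_{\vd}\bigl[\inner{\nabla h(\vy),\vd}+\tfrac{\mu}{2}\norm{\mM\vd}^2\bigr]$.

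The only delicate point — and the step I expect to be the main obstacle — is that $\norm{\mM\vd}^2$ degenerates on $\ker\mM$, so $\norm{\mM\vd}^2\ge\theta^2\norm{\vd}^2$ is false in general. The remedy is to note that $\nabla h(\vy)=\mM^\top\nabla g(\vx)\in\mathcal{R}(\mM^\top)=(\ker\mM)^\perp$, so the linear term $\inner{\nabla h(\vy),\vd}$ depends only on the component of $\vd$ in $(\ker\mM)^\perp$; hence a minimizing $\vd$ may be taken in $(\ker\mM)^\perp$, and there $\norm{\mM\vd}^2=\vd^\top\mM^\top\mM\vd\ge\theta^2\norm{\vd}^2$ because $\theta^2$ is the least eigenvalue of $\mM^\top\mM$ on $(\ker\mM)^\perp$. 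This reduces the problem to minimizing $\inner{\nabla h(\vy),\vd}+\tfrac{\mu\theta^2}{2}\norm{\vd}^2$, whose minimum is $-\tfrac{1}{2\mu\theta^2}\norm{\nabla h(\vy)}^2$, attained at $\vd=-\tfrac{1}{\mu\theta^2}\nabla h(\vy)$. Plugging back in yields $h^*\ge h(\vy)-\tfrac{1}{2\mu\theta^2}\norm{\nabla h(\vy)}^2$, i.e.\ the asserted $\mu\theta^2$-P{\L} inequality. An equivalent, slightly slicker route avoids the kernel bookkeeping: parametrize $\mathcal{R}(\mM)$ by an orthonormal basis $\mU\in\R^{n\times k}$ with $k=\operatorname{rank}\mM$, observe that $\vs\mapsto g(\mU\vs)$ is $\mu$-strongly convex hence $\mu$-P{\L}, and then convert the bound on $\norm{\mU^\top\nabla g}$ into one on $\norm{\mM^\top\nabla g}=\norm{\nabla h(\vy)}$ via $\norm{\mM^\top\vv}^2\ge\theta^2\norm{\mU\mU^\top\vv}^2$.
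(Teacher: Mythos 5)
Your proof is correct. The paper itself offers no derivation here — it simply points to \citet[Appendix~B]{karimi2016linear} — so your argument is a welcome self-contained version. The crucial step, which you handle carefully, is the observation that $\nabla h(\vy)=\mM^\top\nabla g(\mM\vy)$ lies in $(\ker\mM)^\perp = \mathcal{R}(\mM^\top)$, so the infimum over $\vd$ in the strong-convexity inequality may be restricted to $(\ker\mM)^\perp$, where $\norm{\mM\vd}^2\ge\theta^2\norm{\vd}^2$ does hold; without this restriction the naive replacement would fail whenever $\mM$ has a nontrivial kernel, so this is not a cosmetic detail. You also correctly note that $h^*$ is attained (coercivity of $g$ plus closedness of $\mathcal{R}(\mM)$) and that $\theta>0$ by the positive-rank hypothesis, which legitimizes the division by $\mu\theta^2$. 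The alternative route you sketch — reducing to $\vs\mapsto g(\mU\vs)$ for an orthonormal basis $\mU$ of $\mathcal{R}(\mM)$, then comparing $\norm{\mM^\top\vv}$ with $\norm{\mU^\top\vv}$ — is equally valid and is essentially the style of argument in the cited reference; both yield the asserted $\norm{\nabla h(\vy)}^2\ge 2\mu\theta^2\big(h(\vy)-h^*\big)$.
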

\begin{proof}
    See \citet[Appendix~B]{karimi2016linear} for the proof.
\end{proof}

\begin{proposition} \label{prop:2PL wo stackelberg}
	Consider a twice continuously differentiable strongly-convex-strongly-concave function $h:\R^r\times\R^s\rightarrow \R$. That is, for some constants $\mu_1, \mu_2>0$, $h(\vx; \vy)$ is $\mu_1$-strongly-convex in $\vx$ and $-h(\vx; \vy)$ is $\mu_2$-strongly-convex in $\vy$.
	Let $(\vx^*; \vy^*)$ be the unique stationary point of $h$. Of course, it is a \textbf{differential Stackelberg equilibrium} of $h$. That is, if the hessian matrix $\nabla^2 h(\vx^*; \vy^*)$ at that point is written as
	\[\nabla^2 h(\vx^*; \vy^*)=\begin{bmatrix}
	\nabla_{1,1}^2\, h(\vx^*; \vy^*) & \nabla_{1,2}^2\, h(\vx^*; \vy^*) \\
	\nabla_{2,1}^2\, h(\vx^*; \vy^*) & \nabla_{2,2}^2\, h(\vx^*; \vy^*) \\
	\end{bmatrix} = \begin{bmatrix}
	\mC & \mB \\ \mB^\top & -\mA
	\end{bmatrix},\]
	then $\mA$ and $\mC-\mB\mA^{-1}\mB^\top$ are both positive definite matrices.
	Consider a function $g:\R^p\times \R^q \rightarrow\R$ defined by $g(\vx; \vy)=h(\mM \vx; \mN \vy)$ for some matrices $\mM\in \R^{r\times p}$, $\mN\in\R^{s\times q}$. Then, $g$ is two-sided P{\L}. Moreover, each stationary point of $g$ may not be a differential Stackelberg equilibrium in general, for example, when $s<q$.
\end{proposition}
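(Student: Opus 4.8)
The plan is to treat the two assertions separately: first that $g(\vx;\vy)=h(\mM\vx;\mN\vy)$ is two-sided P{\L}, and then that its stationary points fail the differential Stackelberg condition as soon as $\mN$ is not injective (e.g.\ $s<q$).

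\textbf{Two-sided P{\L}.} For the $\vy$-side condition I would fix $\vx$ and write $-g(\vx;\vy)=\psi_{\vx}(\mN\vy)$ where $\psi_{\vx}(\vw):=-h(\mM\vx;\vw)$. Since $-h$ is $\mu_2$-strongly convex in its second argument \emph{uniformly over the first argument}, $\psi_{\vx}$ is $\mu_2$-strongly convex for every $\vx$. Applying Proposition~\ref{prop: SC compose linear is PL} with $\psi_{\vx}$ in the role of the strongly convex function, $\mN$ in the role of the matrix, and $\theta_{\mN}$ the smallest nonzero singular value of $\mN$ (a quantity independent of $\vx$), we conclude that $-g(\vx;\cdot)$ is $\mu_2\theta_{\mN}^2$-P{\L}, with a P{\L} constant that does not depend on $\vx$. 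The $\vx$-side condition is symmetric: $g(\vx;\vy)=\phi_{\vy}(\mM\vx)$ with $\phi_{\vy}(\vw):=h(\vw;\mN\vy)$ being $\mu_1$-strongly convex, and Proposition~\ref{prop: SC compose linear is PL} gives that $g(\cdot;\vy)$ is $\mu_1\theta_{\mM}^2$-P{\L} for $\theta_{\mM}$ the smallest nonzero singular value of $\mM$. (One also gets for free that the primal function of $g$ is finite everywhere, since $\psi_{\vx}$ is coercive and $\mathrm{range}\,\mN$ is closed.) So $g$ is two-sided P{\L}, provided $\mM$ and $\mN$ have positive rank so that $\theta_{\mM},\theta_{\mN}>0$.

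\textbf{Failure of the differential Stackelberg property.} Take $\mM,\mN$ of full row rank (so $\vx^*\in\mathrm{range}\,\mM$, $\vy^*\in\mathrm{range}\,\mN$) and pick preimages $\vx^*_g,\vy^*_g$ with $\mM\vx^*_g=\vx^*$, $\mN\vy^*_g=\vy^*$. By the chain rule $\nabla_1 g(\vx^*_g;\vy^*_g)=\mM^\top\nabla_1 h(\vx^*;\vy^*)=\vzero$ and likewise $\nabla_2 g(\vx^*_g;\vy^*_g)=\vzero$, so $(\vx^*_g;\vy^*_g)$ is a stationary point of $g$. Differentiating once more, the lower-right block of its Hessian is $\nabla^2_{2,2}\,g(\vx^*_g;\vy^*_g)=\mN^\top\nabla^2_{2,2}h(\vx^*;\vy^*)\,\mN=-\mN^\top\mA\mN$, which is negative \emph{semi}definite but singular whenever $\mathrm{rank}\,\mN<q$ (which happens in particular if $s<q$), because $(\mN\vv)^\top\mA(\mN\vv)=0$ for any nonzero $\vv\in\ker\mN$. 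A differential Stackelberg equilibrium \citep[Definition~3.1]{li2022convergence} requires in particular that $g(\vx^*_g;\cdot)$ be locally strongly concave at $\vy^*_g$, i.e.\ $\nabla^2_{2,2}g\prec 0$ (this is also what makes the Schur-complement/primal-Hessian condition even well-posed), so the constructed point is not a differential Stackelberg equilibrium. Moreover $\nabla^2_{2,2}g$ equals $-\mN^\top\mA'\mN$ at \emph{every} point, with $\mA'\succ0$ throughout since $-h$ is $\mu_2$-strongly convex in its second argument; hence under $\mathrm{rank}\,\mN<q$ it is singular at every stationary point of $g$, so \emph{no} stationary point of $g$ is a differential Stackelberg equilibrium.

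\textbf{Main obstacle.} There is no serious technical difficulty here; the two points that need care are (i) checking that the P{\L} moduli coming out of Proposition~\ref{prop: SC compose linear is PL} are uniform in the frozen variable --- which holds because both the strong-convexity modulus of $h$ and the singular values of $\mM,\mN$ are independent of it --- and (ii) being precise about the definition of a differential Stackelberg equilibrium, namely that mere singularity (not indefiniteness) of the $(2,2)$ Hessian block already destroys local strong concavity in $\vy$ and hence rules the point out, without any need to inspect the primal Hessian.
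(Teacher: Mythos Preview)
Your proposal is correct and follows essentially the same approach as the paper: two-sided P{\L} via Proposition~\ref{prop: SC compose linear is PL}, and failure of the differential Stackelberg property because the $(2,2)$ Hessian block $-\mN^\top\mA\mN$ is rank-deficient whenever $s<q$. Your treatment is in fact slightly more careful than the paper's --- you explicitly track uniformity of the P{\L} constants in the frozen variable, and your observation that $\nabla_{2,2}^2 g$ is singular at \emph{every} point (not just stationary ones) sidesteps the paper's characterization of stationary points, which tacitly uses injectivity of $\mM^\top,\mN^\top$.
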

\begin{proof}
    Because of Proposition~\ref{prop: SC compose linear is PL}, $g$ is clearly a two-sided P{\L} function.
    
    If $(\vx; \vy)$ is a stationary point of $g$, then it must be an element of an affine set $\{(\vx; \vy)\in \R^p\times\R^q:\mM \vx = \vx^*; \mN \vy =\vy^*\}$. This is because
    \[\nabla g(\vx; \vy)=\begin{bmatrix}
	\nabla_1\, g(\vx; \vy) \\ \nabla_2\, g(\vx; \vy)
	\end{bmatrix} = \begin{bmatrix}
	\mM^\top \nabla_1\, h(\mM \vx; \mN \vy) \\ \mN^\top \nabla_2\, h(\mM \vx; \mN \vy)
	\end{bmatrix} = \bm{0}\]
	if and only if $\nabla_1\, h(\mM \vx; \mN \vy)=\bm{0}$ and $\nabla_2\, h(\mM \vx; \mN \vy)=\bm{0}$, being equivalent to $\mM \vx = \vx^*$ and $\mN \vy =\vy^*$. Furthermore, the hessian of $g$ at $(\vx; \vy)$ is 
	\begin{align*}
	    \nabla^2 g(\vx; \vy) &=\begin{bmatrix}
    	\mM^\top \nabla_{1,1}^2\, h(\mM \vx; \mN \vy) \mM & \mM^\top \nabla_{1,2}^2\, h(\mM \vx; \mN \vy)\mN \\
    	\mN^\top \nabla_{2,1}^2\, h(\mM \vx; \mN \vy)\mM & \mN^\top \nabla_{2,2}^2\, h(\mM \vx; \mN \vy)\mN \\
    	\end{bmatrix} \\
    	&= \begin{bmatrix}
    	\mM^\top \mC \mM & \mM^\top \mB\mN \\
    	(\mM^\top \mB\mN)^\top & -\mN^\top \mA\mN \\
    	\end{bmatrix}.
	\end{align*}
	If $s<q$, the $q\times q$ matrix $\mN^\top \mA\mN$ cannot have a full rank, thereby it cannot be even invertible. This implies the stationary point $(\vx; \vy)$ cannot be a differential Stackelberg equilibrium.
\end{proof}

\subsection{Without-replacement sampling} \label{sec:RR lemma}

In this subsection, we provide a useful proposition for analysis of mini-batching approach under without-replacement sampling. We consider the case of mutually disjoint mini-batches in a whole epoch, not only applying without-replacement sampling to each individual mini-batch.

Consider a collection of $n$ vectors $\vv_1, \dots, \vv_n \in \R^d$. Suppose we uniformly randomly sample a permutation $\sigma:[n]\rightarrow[n]$; \ie, $\sigma \sim \operatorname{Unif}(\sS_n)$. 
Define
\[\vm=\frac{1}{n}\sum_{i=1}^n \vv_i ~~\text{(sample mean)}\quad \text{and} \quad \tau^2=\frac{1}{n}\sum_{i=1}^n \norm{\vv_i-\vm}^2 ~~\text{(sample variance)}.\]
Fix any $b\in [n]$ and let $n=b(q-1)+s$ for some integers $q\ge 1$ and $s\in [b]$. Now, divide the indices $[n]$ into $q$ batches, with exactly $b$ items per batch (except for the last batch when $s<b$), as follows:
\begin{align*}
    \gW_t &= \left\{\sigma(j) : b(t-1)<j\le bt, j\in [n]\right\} \quad (t\in [q]).
\end{align*}
For each batch $\gW_t$, define 
\[\vw_t=\frac{1}{|\gW_t|}\sum_{i\in \gW_t} \vv_i ~~\text{(batch mean)}.\]
For any $k\in[q-1]$, define
\[\vm_k := \frac{1}{k}\sum_{t=1}^k \vw_t ~~\text{(accumulative average of batch means over $1\le t\le k$)}.\]
Of course, we may simply take $\vm_q = \vm$ (deterministically) for $k=q$.
Thus, because of the randomness of $\sigma$, we can obtain the mean (vector) and the variance (scalar) of $\vm_k$ as follows. 

\begin{proposition}[Without-replacement sampling] \label{prop:variance}
    Given the setup  above, for any $k<q$ and $n>1$,
	\[\E [\vm_k] = \vm ~~\text{ and }~~ \E \left[\norm{\vm_k-\vm}^2\right]= \frac{(n-bk)}{bk(n-1)} \tau^2.\]
	(Of course, if $k=q$ or $n=1=q$, $\E [\norm{\vm_q-\vm}^2 ]=0$ since $\vm_q=\vm$.)
\end{proposition}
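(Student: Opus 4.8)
### Proof proposal for Proposition~\ref{prop:variance}

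My plan is to reduce the claim to a clean statement about sampling without replacement from a finite population, and then compute the first two moments directly. Think of the $q$ batches as partitioning $[n]$ into blocks of size $b$ (with the last block of size $s$). The quantity $bk \cdot \vm_k = \sum_{t=1}^{k}|\gW_t|\vw_t = \sum_{j=1}^{bk}\vv_{\sigma(j)}$ is just the sum of the first $bk$ entries of a uniformly random permutation of $\vv_1,\dots,\vv_n$; here I am using $k<q$ so that $bk\le n$ and all $k$ batches genuinely have size $b$. So it suffices to analyze $\vS_m := \sum_{j=1}^{m}\vv_{\sigma(j)}$ for $m=bk$, and then divide by $m$.

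First I would compute the mean. By linearity and the fact that $\sigma(j)$ is marginally uniform on $[n]$ for each $j$, $\E[\vv_{\sigma(j)}] = \vm$, hence $\E[\vS_m] = m\vm$ and $\E[\vm_k] = \vm$. Second, for the variance I would center everything: let $\vu_i := \vv_i - \vm$, so that $\sum_{i=1}^n \vu_i = \vzero$ and $\frac1n\sum_i\norm{\vu_i}^2 = \tau^2$. Then $\vm_k - \vm = \frac{1}{m}\sum_{j=1}^{m}\vu_{\sigma(j)}$, and
\[
\E\left[\norm{\vm_k - \vm}^2\right] = \frac{1}{m^2}\sum_{j=1}^{m}\sum_{l=1}^{m}\E\left[\inner{\vu_{\sigma(j)}, \vu_{\sigma(l)}}\right].
\]
The diagonal terms ($j=l$) each contribute $\frac1n\sum_i\norm{\vu_i}^2 = \tau^2$, giving $m\tau^2$ total. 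For the off-diagonal terms ($j\ne l$), the pair $(\sigma(j),\sigma(l))$ is uniform over ordered pairs of distinct indices, so $\E[\inner{\vu_{\sigma(j)},\vu_{\sigma(l)}}] = \frac{1}{n(n-1)}\sum_{a\ne b}\inner{\vu_a,\vu_b}$. Using $\sum_{a\ne b}\inner{\vu_a,\vu_b} = \norm{\sum_a \vu_a}^2 - \sum_a\norm{\vu_a}^2 = 0 - n\tau^2 = -n\tau^2$, each off-diagonal term equals $-\frac{\tau^2}{n-1}$, and there are $m(m-1)$ of them. Combining,
\[
\E\left[\norm{\vm_k - \vm}^2\right] = \frac{1}{m^2}\left(m\tau^2 - \frac{m(m-1)}{n-1}\tau^2\right) = \frac{\tau^2}{m}\cdot\frac{(n-1)-(m-1)}{n-1} = \frac{(n-m)}{m(n-1)}\tau^2,
\]
and substituting $m = bk$ gives exactly $\frac{(n-bk)}{bk(n-1)}\tau^2$. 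The degenerate case $k=q$ (or $n=1$) is immediate since then $\vm_q=\vm$ by definition, so the variance is zero; note the formula also vanishes formally when $bk=n$.

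I do not anticipate a serious obstacle here — the computation is the standard "sampling without replacement" variance calculation (the $\frac{n-m}{n-1}$ finite-population correction factor), just carried out for $\R^d$-valued data via inner products. The only point requiring a little care is the bookkeeping identifying $bk\cdot\vm_k$ with a prefix sum of the permuted sequence, which relies on $k<q$ so that batches $1,\dots,k$ are all full of size $b$; this is exactly the hypothesis stated. A secondary subtlety is justifying that $(\sigma(j),\sigma(l))$ for $j\ne l$ is uniform on ordered distinct pairs, but this is immediate from the uniformity of $\sigma$ over $\sS_n$.
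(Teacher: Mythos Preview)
Your proof is correct and actually takes a cleaner route than the paper's. The paper proceeds in two stages: it first computes the variance of each individual batch mean $\vw_t$ and the covariance $\operatorname{Cov}(\vw_t,\vw_u)$ between distinct batch means (both via the pairwise covariances $\operatorname{Cov}(\vv_{\sigma(i)},\vv_{\sigma(j)})$), and then combines these batch-level second moments to obtain $\E\norm{\vm_k-\vm}^2$. You bypass the batch structure entirely by observing that, since $k<q$ forces all $k$ batches to have full size $b$, the quantity $\vm_k$ is literally the unweighted average $\frac{1}{bk}\sum_{j=1}^{bk}\vv_{\sigma(j)}$ of the first $bk$ permuted vectors, so the standard finite-population variance formula applies directly with sample size $m=bk$. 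Your approach is shorter and makes the appearance of the finite-population correction factor $\frac{n-bk}{n-1}$ more transparent; the paper's two-level decomposition is not wrong, just unnecessarily indirect for this particular claim (the intermediate batch-level quantities are not reused elsewhere).
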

{\bf Remark.} As a special case, if $n=bq$ (namely, $b$ divides $n$ and $s=b$), then for any $k\le q$,
\[\E \left[\norm{\vm_k-\vm}^2\right]= \frac{(q-k)}{k(n-1)} \tau^2.\]
If we further assume $b=s=1$ and $q=n$, this proposition recovers Lemma~1 of  \citet{mishchenko2020random}.
\begin{proof}[Proof of Proposition~\ref{prop:variance}]
    Since $\sigma$ is a uniformly randomly sampled permutation, it is easy to obtain that
    \[\E [\vv_{\sigma(i)}] = \E [\vw_t] = \E [\vm_k] = \vm,\]
    for any $i\in[n]$, $t\in [q]$, and $k\in [q]$. 
    
    The covariances between $\vv_{\sigma(i)}$'s can be deduced from the proof by \citet[Lemma~1]{mishchenko2020random} as follows:
    \begin{align*}
        \operatorname{Cov}(\vv_{\sigma(i)}, \vv_{\sigma(j)}) := \E\left[\inner{\vv_{\sigma(i)}-\vm, \vv_{\sigma(j)}-\vm}\right] =\begin{cases}
            -\frac{\tau^2}{n-1}, & ~\text{if}~i\ne j,\\
            \tau^2 & ~\text{if}~i=j.
        \end{cases}
    \end{align*}
    Thus, for each $t\in[q]$, the variance of $\vw_t$ is obtained as
    \begin{align*}
        \E \left[ \norm{\vw_t-\vm}^2 \right] &= \E \left[ \norm{\frac{1}{|\gW_t|} \sum_{i\in \gW_t} \left( \vv_i-\vm \right)}^2 \right]\\
        &= \frac{1}{|\gW_t|^2} \left\{ \sum_{i\in \gW_t} \E \left[\norm{\vv_i-\vm}^2\right] + \sum_{\substack{i,j\in \gW_t \\ i\ne j}} \operatorname{Cov}(\vv_{i}, \vv_{j}) \right\}\\
        &= \frac{1}{|\gW_t|^2} \left\{ |\gW_t| \tau^2 + |\gW_t| (|\gW_t|-1) \left(-\frac{\tau^2}{n-1}\right) \right\} = \frac{n-|\gW_t|}{|\gW_t|(n-1)}\tau^2,
    \end{align*}
    which can also be directly deduced by Lemma~1 of  \citet{mishchenko2020random}. We notice that this does not depends on the size of the batch $\gW_t$.
    
    Next, we look at the covariances between distinct $\vw_t$'s. For a pair of distinct integers $t, u\in [q]$, by the bi-linearity of covariance,
    \begin{align*}
        \operatorname{Cov}(\vw_t, \vw_u) &= \frac{1}{|\gW_t|\cdot |\gW_u|} \sum_{(i,j)\in \gW_t\times \gW_u} \operatorname{Cov}(\vv_i, \vv_j) \\
        &= \frac{1}{|\gW_t|\cdot |\gW_u|} \sum_{(i,j)\in \gW_t\times \gW_u} \left(-\frac{\tau^2}{n-1}\right) = -\frac{\tau^2}{n-1}.
    \end{align*}
    The second equality holds because $\gW_t$ and $\gW_u$ are a disjoint set of integers whenever $t\ne u$.
    
    Now, fix any $k\in [q-1]$. Note that, by our mini-batching strategy, $|\gW_t| = b$ for every $t<q$. Therefore, by definition of $\vm_k$, 
    \begin{align*}
        \E \left[ \norm{\vm_k-\vm}^2 \right] &= \E \left[ \norm{\frac{1}{k} \sum_{t=1}^{k} \left( \vw_t-\vm \right)}^2 \right]\\
        &= \frac{1}{k^2} \left\{ \sum_{t=1}^{k} \E \left[\norm{\vw_t-\vm}^2\right] + \sum_{\substack{t,u \in [k] \\ t\ne u}} \operatorname{Cov}(\vw_{t}, \vw_{u}) \right\}\\
        &= \frac{1}{k^2} \left\{ k \cdot \left(\frac{n-b}{b(n-1)}\tau^2\right) + k(k-1)\cdot \left(-\frac{\tau^2}{n-1}\right) \right\} = \frac{n-bk}{bk(n-1)}\tau^2.
    \end{align*}
\end{proof}

\subsection{Basic recurrence inequality}

In this subsection, we present a basic result of a recurrence inequality. It serves as a stepping-stone of our convergence bound, particularly at the end of the proof (Appendix \ref{sec:proofs simSGDA RR 2PL}).

\begin{proposition}\label{prop:recurrence inequality}
	Let $\{a_k\}_{k=1}^{\infty}$ be a sequence of non-negative numbers satisfying the following recurrence inequality:
	\[a_{k+1} \le (1-b\eta)a_k + c\eta^{m+1} ,\]
	where $b, c,$ and $\eta$ are non-negative real numbers such that $b\eta\in (0,1)$, and $m$ is a non-negative integer. Then, for any integer $K\ge 1$, we have
	\[a_{K+1}\le (1-b\eta)^{K} a_1 + c\eta^m/b.\]
\end{proposition}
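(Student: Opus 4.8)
The plan is to unroll the recurrence inequality $a_{k+1} \le (1-b\eta)a_k + c\eta^{m+1}$ directly, since it has the familiar form of a geometric contraction with an additive constant forcing term. Write $\rho := 1-b\eta \in (0,1)$ and $\delta := c\eta^{m+1}$, so the hypothesis reads $a_{k+1} \le \rho a_k + \delta$. First I would apply this inequality repeatedly, starting from $a_{K+1} \le \rho a_K + \delta \le \rho(\rho a_{K-1} + \delta) + \delta = \rho^2 a_{K-1} + (\rho+1)\delta$, and continue down to $a_1$. After $K$ steps this yields
\[
a_{K+1} \le \rho^{K} a_1 + \delta \sum_{j=0}^{K-1} \rho^{j}.
\]
This can be proved cleanly by induction on $K$ if one prefers to avoid ``$\dots$'' reasoning: the base case $K=1$ is exactly the hypothesis, and the inductive step multiplies the bound for $a_K$ by $\rho$ and adds $\delta$.

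Next I would bound the geometric sum. Since $\rho \in (0,1)$, we have $\sum_{j=0}^{K-1}\rho^{j} \le \sum_{j=0}^{\infty}\rho^{j} = \frac{1}{1-\rho} = \frac{1}{b\eta}$. Substituting back and recalling $\delta = c\eta^{m+1}$ gives
\[
a_{K+1} \le \rho^{K} a_1 + \frac{c\eta^{m+1}}{b\eta} = (1-b\eta)^{K} a_1 + \frac{c\eta^{m}}{b},
\]
which is exactly the claimed bound. The non-negativity of the $a_k$ is not actually needed for the upper bound itself (it only guarantees the statement is non-vacuous), and $b\eta \in (0,1)$ is precisely what makes the geometric series converge and keeps $\rho^K$ a genuine contraction factor; $m$ being a non-negative integer plays no role beyond bookkeeping.

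There is no real obstacle here — this is a textbook ``linear recurrence with constant perturbation'' argument, and the only thing to be careful about is the off-by-one in the exponent ($\eta^{m+1}$ divided by $\eta$ leaves $\eta^m$), which is why the factor of $\eta$ from $\frac{1}{b\eta}$ matters. If I wanted maximal rigor I would present the whole thing as a short induction on $K$ rather than an informal telescoping, to make the bound on the partial geometric sum explicit at each stage; but either presentation is routine.
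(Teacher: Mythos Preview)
Your proposal is correct and essentially matches the paper's approach: the paper presents exactly the short induction on $K$ you describe at the end, directly maintaining the bound $(1-b\eta)^{K}a_1 + c\eta^m/b$ at each step (the inductive computation $(1-b\eta)\cdot c\eta^m/b + c\eta^{m+1} = c\eta^m/b$ replaces your explicit geometric-sum bound). The two presentations are equivalent variants of the same standard argument.
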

\begin{proof}
	We proceed with induction on $K=0, 1, 2,\cdots$. Note that
	\begin{align*}
		a_{1} \le (1-b\eta)^0a_1 +c \eta^m/b.
	\end{align*}
	This shows the case when $K=0$. On the other hand, if $K\ge 1$, by an inductive assumption,
	\begin{align*}
		a_{K+1} &\le (1-b\eta)a_{K} + c\eta^{m+1} \\
		&\le (1-b\eta)\cdot \left((1-b\eta)^{K-1} a_1+ c\eta^m/b\right) + c\eta^{m+1} \\
		&= (1-b\eta)^Ka_1 + c\eta^{m}/b.
	\end{align*}
\end{proof}

\section{Proofs for (mini-batch) simultaneous SGDA-RR}
\label{sec:proofs simSGDA RR}
In this appendix, we provide a convergence analysis for the mini-batch \textbf{sim}SGDA-RR (\Algref{alg:SGDA wo replacement minibatch}) on both general nonconvex-P{\L} problems and primal-P{\L}-P{\L} problems. The two cases mostly share the same proof strategies; they only diverge at the end of the proofs. 
The proof is long; we first provide the sketch of proof in subsection~\ref{subsec:sketch of proof}; then, we provide the full proof by dividing it into 4 follow-up subsections of this appendix.
The proof for the alternating counterpart (minibatch altSGDA-RR) can be done with some modifications illustrated in Appendix~\ref{sec:proofs altSGDA RR}.
All technical propositions required for the proofs can be found in Appendix~\ref{sec:propositions}.

\subsection{Warm-up: proof sketch for $b=1$}\label{subsec:sketch of proof}
Here we simply consider the proofs of Theorem~\ref{thm:NCPL}~and~\ref{thm:2PL} for {\bf sim}SGDA-RR, which is a fully stochastic case (mini-batches of size $b=1$). The proofs for altSGDA-RR can be done with slight modifications.

We start the proof by aggregating all updates throughout an epoch to obtain an ``epoch-wise'' update:
\begin{align*}
    \vx_0^{k+1} &= \vx_0^k - n\alpha\vg^k, \quad \vg^k = \textstyle\frac{1}{n}\sum_{i=1}^n \nabla_1 f_{\sigma_k(i)}(\vz_{i-1}^k),\\
    \vy_0^{k+1} &= \vy_0^k + n\beta\vh^k, \quad \vh^k = \textstyle\frac{1}{n}\sum_{i=1}^n \nabla_2 f_{\sigma_k(i)}(\vz_{i-1}^k).
\end{align*}
The reason is that the sampled components in each epoch are dependent to each other so that it is much harder to deal with each iteration individually.
The strategy of update-aggregation is quite general for analysis of optimization algorithms involving without-replacement sampling \citep{ahn2020sgd, mishchenko2020random, nguyen2021unified,das2022sampling}.
We assume that the intermediate iterates $\vz_1^k,\ldots,\vz_n^k$ stay close to the starting iterate $\vz_0^k$ of an epoch $k$, which can be ensured by small step sizes. Then, we can approximate the aggregated epoch of SGDA-RR as a step of simGDA applied to $f=\frac{1}{n}\sum_{i=1}^n f_i$, with approximations of $\vg^k \approx \nabla_1 f(\vz_0^k)$ and $\vh^k \approx \nabla_2 f(\vz_0^k)$.

With Assumptions~\ref{ass:smooth}~and~\ref{ass:NCPL}, note that the primal function $\Phi(\cdot)$ is $(L+L^2/\mu_2)$-smooth (Proposition~\ref{prop:Phi smooth}). Applying this and $L$-smoothness of $-f$, we can have the following inequality (Lemma~\ref{lem:NaiveReccurence}):
\begin{align*}
	V_\lambda(\vz_0^{k+1}) - V_\lambda(\vz_0^{k}) &\le -\left((\lambda+1)/2\right) n\alpha \norm{\nabla \Phi (\vx_0^k)}^2 + (\lambda+1)n\alpha \norm{\nabla \Phi (\vx_0^k)-\nabla_1\, f(\vz_0^k)}^2 \\
	&\quad + \left(n\alpha/2\right) \norm{\nabla_1\, f(\vz_0^k)}^2 - \left(n\beta/2\right) \norm{\nabla_2\, f(\vz_0^k)}^2\\
    &\quad + \left(\lambda + 1/2\right)n\alpha\norm{\vg^k-\nabla_1\, f(\vz_0^k)}^2 + \left(n\beta/2\right)\norm{\vh^k-\nabla_2\, f(\vz_0^k)}^2.
\end{align*}
Hence, to guarantee the fast decrease of $V_\lambda(\vz_0^k)$, it is important to control the ``noise'' terms for GDA approximations, $\norm{\vg^k-\nabla_1\, f(\vz_0^k)}^2$ and $\norm{\vh^k-\nabla_2\, f(\vz_0^k)}^2$, in the last line of inequality above. By applying the tools for without-replacement sampling (Proposition~\ref{prop:variance}), we can actually upper-bound the conditional expectations of both noise terms %
by
\[2L^2n(n+A)\left(\alpha^2\norm{\nabla_1\, f(\vz_0^k)}^2 +\beta^2\norm{\nabla_2\, f(\vz_0^k)}^2 \right) + 2L^2 n (\alpha^2+\beta^2)B. ~~ \text{(Lemma~\ref{lem:DefnGk}\,\&\,\ref{lem:boundingEGk})}\]
Then, by taking advantage of several properties of smooth nonconvex-P{\L} functions (\eg, Propositions~\ref{prop:PLQGEB},~\ref{prop:y*(x) Lipschitz},~and~\ref{prop:Phi smooth}) and some small-step-size assumptions (\eg, $\beta=\gO(1/nL)$, $\beta/\alpha = r \gtrsim \kappa_2^2$), we eventually have
\[\E\left[V_\lambda(\vz_0^{k+1})\right] - \E \left[V_\lambda(\vz_0^{k})\right] \le -n\alpha \E\left[\norm{\nabla \Phi (\vx_0^k)}^2\right] - (L\kappa_2n\alpha/2) \E\left[\Phi(\vx_0^k)-f(\vz_0^k)\right] + C\alpha^3, \]
where $C\ge0$ is a constant (with respect to $k$) depending on $L$, $n$, $B$, and $r=\beta/\alpha$. (Lemma~\ref{lem:ReccurenceGeneral}).
We note that the step size ratio $r \gtrsim \kappa_2^2$ is crucial for showing that the coefficient in front of the term  $\E\left[\Phi(\vx_0^k)-f(\vz_0^k)\right]$ is non-positive: even if it is possible with $r \lesssim \kappa_2^2$, we must assume that $\kappa_2$ upper-bounded by a positive numerical constant, which is not desirable for showing convergence bounds. Thus, we expect that a different proof strategy should be applied to avoid the requirement $r \gtrsim \kappa_2^2$ on the step size ratio.

The proofs of Theorems~\ref{thm:NCPL}~and~\ref{thm:2PL} diverge from here. The rest of the proof is mostly about choosing appropriate step sizes and solving the recurrence inequalities. %

The full proof of Theorems~\ref{thm:NCPL 2}~and~\ref{thm:2PL 2} starts from the following subsection.

\subsection{Epoch-wise representations and bounding noise terms}

Before starting the proof, we again remark that we assume that the mini-batch size $b$ divides the number of components $n$ (namely, $q:=n/b$ is a positive integer) for simplicity: thus, readers who want to read proofs for fully stochastic case (\ie, $b=1$) can substitute $n$ to every $q$. Also,
there is no problem in treating any fraction with a positive numerator and a zero denominator as $+\infty$. Moreover, we simply regard $(q-1)/(n-1)=1$ when $n=1$.

We start the proof by aggregating all updates throughout an epoch to obtain an ``epoch-wise'' update equation. The reason is that the sampled components in each epoch depend on each other, so it is much harder to deal with each iteration individually. At iteration $t\in [n/b]=[q]$ of epoch $k\in [K]$, we use a mini-batch
\[\gB^k_t := \{\sigma_k(j): b(t-1)<j\le bt, j\in[n]\}.\]
To ease the analysis of Algorithm~\ref{alg:SGDA wo replacement minibatch}, define the following sums associated with (partial) gradient oracles at a point $\vz=(\vx;\vy)$ over the mini-batch:
\[\vg_t^k (\vz) := \frac{1}{b} \sum_{i\in \gB_t^k} \nabla_1 f_{i}(\vz), \quad \vh_t^k (\vz) := \frac{1}{b} \sum_{i\in \gB_t^k} \nabla_2 f_{i}(\vz).\]
By Assumption~\ref{ass:smooth}, $\vg_t^k$ and $\vh_t^k$ are $L$-Lipschitz continuous.
Computing the average of them over a whole epoch ($\vz_0^k, \cdots, \vz_{q-1}^k$), we define
\[\vg^k := \frac{1}{q} \sum_{t=1}^q \vg_t^k (\vz_{t-1}^k), \quad \vh^k  := \frac{1}{q} \sum_{t=1}^q \vh_t^k (\vz_{t-1}^k).\]
Then, by summing up the updates in the epoch $k$, we can summarize the epoch as follows.
\[\vx_{0}^{k+1} = \vx_{0}^k - q\alpha \vg^k, \quad \vy_{0}^{k+1} = \vy_{0}^k + q\beta \vh^k. \tag{simSGDA-RR}\label{eq:epochwise update}\]

We may assume that the intermediate iterates $\vz_1^k,\ldots,\vz_q^k$ stay close to the starting iterate $\vz_0^k$ of an epoch $k$, which results from, \eg, small step sizes. Then, we can approximate the aggregated epoch of SGDA-RR as a step of simGDA applied to $f=\frac{1}{n}\sum_{i=1}^n f_i$: $\vg^k \approx \nabla_1 f(\vz_0^k), \quad \vh^k \approx \nabla_2 f(\vz_0^k)$. In other words,
\[\vx_{0}^{k+1} \approx \vx_{0}^k - q\alpha \nabla_1 f(\vz_0^k), \quad \vy_{0}^{k+1} \approx \vy_{0}^k + q\beta \nabla_2 f(\vz_0^k),\quad \tag{$\approx$simGDA}\label{eq:approximately GDA}\]

With Assumptions~\ref{ass:smooth}, \ref{ass:primal,dual} and \ref{ass:NCPL}, we can yield a naive (but complicated) upper bound of the gap $V_\lambda(\vz_0^{k+1}) - V_\lambda(\vz_0^{k})$, only applying the smoothness of $\Phi$ and $-f$, without any assumptions on step sizes. 
\begin{lemma} \label{lem:NaiveReccurence}
	Suppose that Assumptions~\ref{ass:smooth}, \ref{ass:primal,dual} and \ref{ass:NCPL} hold. Let $\kappa_2 = L/\mu_2$, where $\mu_2$ is P{\L} constant of $-f(\vx; \cdot)$. Then, the mini-batch simSGDA-RR satisfies that
	\begin{align}
		&V_\lambda(\vz_0^{k+1}) - V_\lambda(\vz_0^{k})\nonumber\\
        &\le -\left(\frac{\lambda + 1}{2}\right) q\alpha \norm{\nabla \Phi (\vx_0^k)}^2 + (\lambda+1)q\alpha \norm{\nabla \Phi (\vx_0^k)-\nabla_1 f(\vz_0^k)}^2 \nonumber\\
		&\quad + \frac{q\alpha}{2} \norm{\nabla_1 f(\vz_0^k)}^2 - \frac{q\beta}{2} \norm{\nabla_2 f(\vz_0^k)}^2 \nonumber\\
		&\quad + \left(\lambda + \frac{1}{2}\right)q\alpha\norm{\vg^k-\nabla_1 f(\vz_0^k)}^2 + \frac{q\beta}{2}\norm{\vh^k-\nabla_2 f(\vz_0^k)}^2 \nonumber\\
		&\quad - \big[\lambda - \left\{(\lambda+1)(\kappa_2+1)+1\right\}Lq\alpha \big]\frac{q\alpha}{2}\norm{\vg^k}^2 - (1-Lq\beta)\frac{q\beta}{2} \norm{\vh^k}^2. \label{eq:NaiveReccurence}
	\end{align}
\end{lemma}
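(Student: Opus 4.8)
The plan is to expand the one-epoch change of the potential directly from its definition \eqref{eq:Lyapunov} and bound the two resulting pieces by the appropriate smoothness inequalities, using only the epoch-wise update form \eqref{eq:epochwise update}, $\vx_0^{k+1}=\vx_0^k-q\alpha\vg^k$ and $\vy_0^{k+1}=\vy_0^k+q\beta\vh^k$; the precise definitions of $\vg^k,\vh^k$ play no role at this stage, so the same argument applies to altSGDA-RR once $\vh^k$ is redefined accordingly. I would start from $V_\lambda(\vz_0^{k+1})-V_\lambda(\vz_0^k)=(\lambda+1)\bigl[\Phi(\vx_0^{k+1})-\Phi(\vx_0^k)\bigr]-\bigl[f(\vz_0^{k+1})-f(\vz_0^k)\bigr]$, which is legitimate since Assumption~\ref{ass:primal,dual} makes $\Phi$ well-defined and bounded below. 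For the first bracket, Proposition~\ref{prop:Phi smooth} (valid under Assumptions~\ref{ass:smooth} and \ref{ass:NCPL}) gives that $\Phi$ is differentiable and $L_\Phi$-smooth with $L_\Phi\le L(\kappa_2+1)$, so $\Phi(\vx_0^{k+1})-\Phi(\vx_0^k)\le -q\alpha\inner{\nabla\Phi(\vx_0^k),\vg^k}+\tfrac{L_\Phi}{2}q^2\alpha^2\norm{\vg^k}^2$. For the second bracket, $L$-smoothness of $f$ (Assumption~\ref{ass:smooth}, used also in its lower-bound form) gives $f(\vz_0^{k+1})-f(\vz_0^k)\ge\inner{\nabla f(\vz_0^k),\vz_0^{k+1}-\vz_0^k}-\tfrac{L}{2}\norm{\vz_0^{k+1}-\vz_0^k}^2$; substituting $\vz_0^{k+1}-\vz_0^k=(-q\alpha\vg^k;\,q\beta\vh^k)$ turns $-[f(\vz_0^{k+1})-f(\vz_0^k)]$ into $q\alpha\inner{\nabla_1 f(\vz_0^k),\vg^k}-q\beta\inner{\nabla_2 f(\vz_0^k),\vh^k}+\tfrac{L}{2}q^2\alpha^2\norm{\vg^k}^2+\tfrac{L}{2}q^2\beta^2\norm{\vh^k}^2$.

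The three inner products are then dispatched via the polarization identity $\pm\inner{\va,\vb}=\pm\tfrac12\norm{\va}^2\pm\tfrac12\norm{\vb}^2\mp\tfrac12\norm{\va-\vb}^2$. Applied with $(\va,\vb)=(\nabla\Phi(\vx_0^k),\vg^k)$, the term $-(\lambda+1)q\alpha\inner{\nabla\Phi(\vx_0^k),\vg^k}$ becomes $-\tfrac{(\lambda+1)q\alpha}{2}\norm{\nabla\Phi(\vx_0^k)}^2-\tfrac{(\lambda+1)q\alpha}{2}\norm{\vg^k}^2+\tfrac{(\lambda+1)q\alpha}{2}\norm{\nabla\Phi(\vx_0^k)-\vg^k}^2$; on the last summand I would use $\norm{\va+\vb}^2\le2\norm{\va}^2+2\norm{\vb}^2$ with $\nabla\Phi(\vx_0^k)-\vg^k=(\nabla\Phi(\vx_0^k)-\nabla_1 f(\vz_0^k))+(\nabla_1 f(\vz_0^k)-\vg^k)$ to produce the bias term $\norm{\nabla\Phi(\vx_0^k)-\nabla_1 f(\vz_0^k)}^2$ and the noise term $\norm{\vg^k-\nabla_1 f(\vz_0^k)}^2$. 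Similarly $q\alpha\inner{\nabla_1 f(\vz_0^k),\vg^k}=\tfrac{q\alpha}{2}\norm{\nabla_1 f(\vz_0^k)}^2+\tfrac{q\alpha}{2}\norm{\vg^k}^2-\tfrac{q\alpha}{2}\norm{\vg^k-\nabla_1 f(\vz_0^k)}^2$ and $-q\beta\inner{\nabla_2 f(\vz_0^k),\vh^k}=\tfrac{q\beta}{2}\norm{\vh^k-\nabla_2 f(\vz_0^k)}^2-\tfrac{q\beta}{2}\norm{\nabla_2 f(\vz_0^k)}^2-\tfrac{q\beta}{2}\norm{\vh^k}^2$. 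Collecting all contributions, the $\norm{\nabla\Phi}^2$, $\norm{\nabla\Phi-\nabla_1 f}^2$, $\norm{\nabla_1 f}^2$, $\norm{\nabla_2 f}^2$, $\norm{\vg^k-\nabla_1 f}^2$, and $\norm{\vh^k-\nabla_2 f}^2$ groups already appear with the stated coefficients; the $\norm{\vg^k}^2$ coefficient equals $-\tfrac{(\lambda+1)q\alpha}{2}+\tfrac{q\alpha}{2}+\tfrac{q^2\alpha^2}{2}\bigl((\lambda+1)L_\Phi+L\bigr)$, which I bound using $(\lambda+1)L_\Phi+L\le L\bigl((\lambda+1)(\kappa_2+1)+1\bigr)$ to get exactly $-\bigl[\lambda-\{(\lambda+1)(\kappa_2+1)+1\}Lq\alpha\bigr]\tfrac{q\alpha}{2}$; and the $\norm{\vh^k}^2$ coefficient is $-\tfrac{q\beta}{2}+\tfrac{L}{2}q^2\beta^2=-(1-Lq\beta)\tfrac{q\beta}{2}$. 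This yields \eqref{eq:NaiveReccurence}.

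The computation is entirely elementary, so there is no genuine obstacle; the only point requiring care is choosing each split so that the coefficients land exactly as stated. In particular, one must use the \emph{full} polarization identity (not Young's inequality) on $-(\lambda+1)q\alpha\inner{\nabla\Phi(\vx_0^k),\vg^k}$: it is the $-\tfrac{(\lambda+1)q\alpha}{2}\norm{\vg^k}^2$ produced there that cancels against the $+\tfrac{q\alpha}{2}\norm{\vg^k}^2$ coming from the $f$-side expansion and leaves the leading $-\tfrac{\lambda q\alpha}{2}\norm{\vg^k}^2$. Note that Assumption~\ref{ass:bddvar} is deliberately not invoked here: this lemma is the ``smoothness-only'' step, and the subsequent control of the noise terms $\norm{\vg^k-\nabla_1 f(\vz_0^k)}^2$ and $\norm{\vh^k-\nabla_2 f(\vz_0^k)}^2$ (where Assumption~\ref{ass:bddvar} and the without-replacement sampling bound of Proposition~\ref{prop:variance} enter) is carried out separately.
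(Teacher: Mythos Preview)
Your proposal is correct and follows essentially the same approach as the paper's proof: the same decomposition $V_\lambda(\vz_0^{k+1})-V_\lambda(\vz_0^k)=(\lambda+1)[\Phi(\vx_0^{k+1})-\Phi(\vx_0^k)]+[f(\vz_0^k)-f(\vz_0^{k+1})]$, the same use of $L(\kappa_2+1)$-smoothness of $\Phi$ and $L$-smoothness of $-f$, the same polarization identities on the three inner products, and the same Young split of $\norm{\nabla\Phi(\vx_0^k)-\vg^k}^2$. The only cosmetic difference is that the paper carries out the polarization inside each of the two pieces \eqref{eq:Phi ineq} and \eqref{eq:f ineq} before combining, whereas you combine first and then collect; the resulting coefficients are identical.
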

\begin{proof}
	By definition of $V_\lambda$, the following equation holds:
	\begin{equation}
		V_\lambda(\vz_0^{k+1}) - V_\lambda(\vz_0^{k}) = (\lambda+1)\left[\Phi(\vx_0^{k+1})-\Phi(\vx_0^k)\right] + \left[f(\vz_0^k) - f(\vz_0^{k+1}) \right]. \label{eq:potential gap}
	\end{equation}
	
	First, we seek for an upper bound of $\Phi(\vx_0^{k+1})-\Phi(\vx_0^k)$. By Proposition~\ref{prop:Phi smooth}, $\Phi$ is $L(\kappa_2 + 1)$-smooth. Hence, we have
	\begin{align}
		&\Phi(\vx_0^{k+1})-\Phi(\vx_0^k)\nonumber\\
        &\le \inner{\nabla \Phi (\vx_0^k), \vx_0^{k+1}- \vx_0^k} + \frac{L(\kappa_2+1)}{2}\norm{\vx_0^{k+1}-\vx_0^k}^2 \nonumber\\
		&= -q\alpha \inner{\nabla \Phi (\vx_0^k), \vg^k}+\frac{L(\kappa_2+1)}{2}q^2 \alpha^2 \norm{\vg^k}^2 \nonumber\\
		&= -\frac{q\alpha}{2}\left\{\norm{\nabla \Phi (\vx_0^k)}^2 + \norm{\vg^k}^2 - \norm{\nabla \Phi (\vx_0^k)-\vg^k}^2\right\} + \frac{L(\kappa_2+1)}{2}q^2 \alpha^2 \norm{\vg^k}^2 \nonumber\\
		&= -\frac{q\alpha}{2}\norm{\nabla \Phi (\vx_0^k)}^2 +\frac{q\alpha}{2} \norm{\nabla \Phi (\vx_0^k)-\vg^k}^2 -\frac{q\alpha}{2}(1-L(\kappa_2+1)q\alpha)\norm{\vg^k}^2 \nonumber\\
		&\le -\frac{q\alpha}{2}\norm{\nabla \Phi (\vx_0^k)}^2 +q\alpha \norm{\nabla \Phi (\vx_0^k)-\nabla_1 f(\vz_0^k)}^2 +q\alpha \norm{\vg^k-\nabla_1 f(\vz_0^k)}^2\nonumber\\
		&\quad -\frac{q\alpha}{2}(1-L(\kappa_2+1)q\alpha)\norm{\vg^k}^2. \label{eq:Phi ineq}
	\end{align}	
	The third line is due to polarization equality\footnote{For any $\va, \vb\in \R^d$, $2\langle{\va,\vb}\rangle = \norm{\va}^2 + \norm{\vb}^2 - \norm{\va-\vb}^2$.} and the last inequality applies Young's inequality.\footnote{For any $\va, \vb\in \R^d$, $\norm{\va+\vb}^2 \le 2\norm{\va}^2 + 2\norm{\vb}^2$.}

	Next, applying Assumption~\ref{ass:smooth}, $L$-smoothness of $-f(\cdot;\cdot)$ yields an upper bound of $f(\vz_0^k) - f(\vz_0^{k+1})$.
	\begin{align}
		&f(\vz_0^k) - f(\vz_0^{k+1})\nonumber\\
        &\le -\inner{\nabla f (\vz_0^k), \vz_0^{k+1}- \vz_0^k} + \frac{L}{2}\norm{\vz_0^{k+1}-\vz_0^k}^2 \nonumber\\
		&= -\inner{\nabla_1 f (\vz_0^k), \vx_0^{k+1}- \vx_0^k} -\inner{\nabla_2 f (\vz_0^k), \vy_0^{k+1}- \vy_0^k}\nonumber + \frac{L}{2}\norm{\vx_0^{k+1}-\vx_0^k}^2 + \frac{L}{2}\norm{\vy_0^{k+1}-\vy_0^k}^2 \nonumber\\
		&= q\alpha \inner{\nabla_1 f (\vz_0^k), \vg^k}-q\beta \inner{\nabla_2 f (\vz_0^k), \vh^k} + \frac{L}{2}q^2 \alpha^2 \norm{\vg^k}^2 +\frac{L}{2}q^2 \beta^2 \norm{\vh^k}^2 \nonumber \\
		&= \frac{q\alpha}{2} \norm{\nabla_1 f (\vz_0^k)}^2 - \frac{q\alpha}{2}\norm{\vg^k-\nabla_1 f (\vz_0^k)}^2 + \frac{q\alpha}{2}(1+Lq\alpha)\norm{\vg^k}^2 \nonumber\\
		&\quad - \frac{q\beta}{2} \norm{\nabla_2 f (\vz_0^k)}^2 + \frac{q\beta}{2}\norm{\vh^k-\nabla_2 f (\vz_0^k)}^2 - \frac{q\beta}{2}(1-Lq\beta)\norm{\vh^k}^2. \label{eq:f ineq}
	\end{align}
	
	The last equality is due to polarization equality. Lastly, substituting \eqref{eq:Phi ineq} and \eqref{eq:f ineq} to \eqref{eq:potential gap} finishes the proof.
\end{proof}

We remark that the last two terms of the inequality~\eqref{eq:NaiveReccurence} can be simply ignored by applying small enough step sizes. However, the terms in the third line of \eqref{eq:NaiveReccurence} are non-negatives terms related to the ``noise'' of approximation $\vg^k \approx \nabla_1 f(\vz_0^k),~\vh^k \approx \nabla_2 f(\vz_0^k)$. Hence, it is important to control the noise terms $\norm{\vg^k-\nabla_1 f (\vz_0^k)}^2$ and $\norm{\vh^k-\nabla_2 f (\vz_0^k)}^2$ to guarantee a fast decrease of $V_\lambda(\vz_0^k)$.

\begin{lemma} \label{lem:DefnGk}
	For mini-batch simSGDA-RR, define
	\begin{equation}
		G_k := \frac{1}{q}\sum_{t=1}^{q} \norm{\vz_{t-1}^k - \vz_0^k}^2. \label{eq:noise}
	\end{equation}
    With Assumption~\ref{ass:smooth}, then
    \[\norm{\vg^k-\nabla_1 f (\vz_0^k)}^2 \le L^2G_k \quad\text{and}\quad \norm{\vh^k-\nabla_2 f (\vz_0^k)}^2 \le L^2G_k.\]
\end{lemma}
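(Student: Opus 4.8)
The plan is to exploit the fact that the mini-batches $\gB_1^k,\dots,\gB_q^k$ form a partition of $[n]$, so that averaging the mini-batch partial-gradient sums over a whole epoch recovers the full-batch gradient evaluated at a single point. Concretely, since $n=bq$ and each $\gB_t^k$ has exactly $b$ elements, for any fixed $\vz$ we have
\[
\frac{1}{q}\sum_{t=1}^q \vg_t^k(\vz) = \frac{1}{q}\sum_{t=1}^q \frac{1}{b}\sum_{i\in\gB_t^k}\nabla_1 f_i(\vz) = \frac{1}{n}\sum_{i=1}^n \nabla_1 f_i(\vz) = \nabla_1 f(\vz),
\]
and likewise $\frac{1}{q}\sum_{t=1}^q \vh_t^k(\vz) = \nabla_2 f(\vz)$. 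In particular, $\nabla_1 f(\vz_0^k) = \frac{1}{q}\sum_{t=1}^q \vg_t^k(\vz_0^k)$ and $\nabla_2 f(\vz_0^k) = \frac{1}{q}\sum_{t=1}^q \vh_t^k(\vz_0^k)$.

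First I would rewrite the noise term as an average of per-iteration discrepancies of the \emph{same} mini-batch function between the intermediate iterate and the anchor point:
\[
\vg^k - \nabla_1 f(\vz_0^k) = \frac{1}{q}\sum_{t=1}^q\left(\vg_t^k(\vz_{t-1}^k) - \vg_t^k(\vz_0^k)\right).
\]
Then, applying Jensen's inequality (convexity of $\norm{\cdot}^2$) to move the norm inside the average, and using that each $\vg_t^k$ is $L$-Lipschitz continuous (being the average of the $L$-Lipschitz maps $\nabla_1 f_i$, $i\in\gB_t^k$, by Assumption~\ref{ass:smooth}), I obtain
\[
\norm{\vg^k - \nabla_1 f(\vz_0^k)}^2 \le \frac{1}{q}\sum_{t=1}^q \norm{\vg_t^k(\vz_{t-1}^k) - \vg_t^k(\vz_0^k)}^2 \le \frac{1}{q}\sum_{t=1}^q L^2\norm{\vz_{t-1}^k - \vz_0^k}^2 = L^2 G_k,
\]
with $G_k$ as defined in \eqref{eq:noise}. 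The bound for $\vh^k$ is identical, replacing $\nabla_1$ by $\nabla_2$ and $\vg_t^k$ by $\vh_t^k$.

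There is essentially no hard step here. The only point worth handling carefully is that the decomposition into per-iteration differences relies on the mini-batches being mutually disjoint and exhausting $[n]$, so that the partial-gradient averages telescope to the full gradient at the \emph{anchor} point $\vz_0^k$; once that identity is in place, the rest is just Jensen plus term-by-term Lipschitzness. The minor bookkeeping subtlety is that $\vg_t^k(\vz_{t-1}^k)$ uses the same mini-batch $\gB_t^k$ that appears in the update producing $\vz_t^k$, so each summand is genuinely a difference of one fixed function at two points, which is precisely what makes the Lipschitz estimate applicable in each term.
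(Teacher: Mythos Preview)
Your proposal is correct and follows essentially the same approach as the paper: use the partition property of the mini-batches to write $\nabla_1 f(\vz_0^k)=\frac{1}{q}\sum_{t=1}^q \vg_t^k(\vz_0^k)$, then apply Jensen's inequality and the $L$-Lipschitzness of each $\vg_t^k$ term-by-term (and analogously for $\vh^k$). Your write-up is slightly more explicit about why the mini-batch averages recover the full gradient and why $\vg_t^k$ inherits the Lipschitz constant, but the argument is the same.
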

As a side remark, $G_k=0$ when $q=1$ and, in particular, $n=1$.
\begin{proof}
	Recall that $\frac{1}{q} \sum_{t=1}^q \vg_t^k (\vz) = \nabla_1 f(\vz)$ and $\frac{1}{q} \sum_{t=1}^q \vh_t^k (\vz) = \nabla_2 f(\vz)$. By Lipschitz continuity and Jensen's inequality,\footnote{For any $n$ vectors $a_1,\cdots,a_n$, $\norm{\frac{1}{n}\sum_{j=1}^n a_j}^2 \le \frac{1}{n} \sum_{j=1}^n \norm{a_j}^2$.}
	\begin{align*}
		\norm{\vg^k-\nabla_1 f (\vz_0^k)}^2 &= \norm{\frac{1}{q}\sum_{t=1}^{q} \left[\vg_t^k (\vz_{t-1}^k) - \vg_t^k (\vz_{0}^k)\right]}^2 \nonumber\\
		&\le \frac{1}{q}\sum_{t=1}^{q}\norm{\vg_t^k (\vz_{t-1}^k) - \vg_t^k (\vz_{0}^k)}^2 \le \frac{L^2}{q}\sum_{t=1}^{q}\norm{\vz_{t-1}^k - \vz_0^k}^2. 
	\end{align*}
	Similarly,
	\begin{equation*}
		\norm{\vh^k-\nabla_2 f (\vz_0^k)}^2 \le \frac{L^2}{q}\sum_{t=1}^{q}\norm{\vz_{t-1}^k - \vz_0^k}^2.
	\end{equation*}
	This concludes the proof.
\end{proof}
Thanks to the lemma, it suffices to bound the term $G_k$. One can notice that it also represents how far the intermediate iterates $\vz_{t}^k$ are from the pivot $\vz_0^k$ in average. Before moving on, we define an algorithm-specific symbol denoting a conditional expectation.
\begin{definition} \label{def:conditional E}
	We denote a conditional expectation of a random variable $X$ given all iterates of the first $k-1$ epochs by $\E_k [X]= \E[X|\vz_0^1, \vz_1^1, \ldots, \vz_n^{k-1}]$.
	In particular, if $k=1$, it boils down to a conditional expectation given only the initial iterate $\vz_0^1$. 
\end{definition}

We get an upper bound of a (conditional) expectation $\E_k [G_k]$ in the following lemma, which extends a lemma of \citet[Lemma~6]{nguyen2021unified} to our minimax problems.
\begin{lemma}\label{lem:boundingEGk}
	Suppose that Assumptions~\ref{ass:smooth} and \ref{ass:bddvar} hold. 
	Assume that the permutation $\sigma_k$ is sampled uniformly at random from $\sS_n$. Then, for any step sizes $\alpha,\beta$ satisfying $\alpha^2 + \beta^2 \le \frac{1}{3q(q-1) L^2}$, 
	the iterates $\{\vz_t^k\}_{t=0}^{q-1}$ of the $k$-th epoch of mini-batch simSGDA-RR satisfies (for $n>1$)
	\begin{equation*}
		\E_k G_k \le  2\left( q^2+\frac{q(q-1)}{n-1} A\right) \left(\alpha^2 \norm{\nabla_1 f(\vz_0^k)}^2 + \beta^2 \norm{\nabla_2 f(\vz_0^k)}^2\right) + \frac{2q(q-1)}{n-1}(\alpha^2+\beta^2)B.
	\end{equation*}
\end{lemma}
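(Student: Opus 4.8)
The plan is to turn the within‑epoch recursion into a self‑bounding inequality for $G_k$, and then to take the conditional expectation $\E_k$ and feed it into Proposition~\ref{prop:variance}. Throughout, $\vz_0^k$ is fixed under $\E_k$, and the only epoch‑$k$ randomness is $\sigma_k\sim\unif(\sS_n)$.

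First I would unroll the updates of Algorithm~\ref{alg:SGDA wo replacement minibatch}: for each $t\in[q]$,
\begin{align*}
\vx_{t-1}^k-\vx_0^k = -\alpha\sum_{s=1}^{t-1}\vg_s^k(\vz_{s-1}^k), \qquad \vy_{t-1}^k-\vy_0^k = \beta\sum_{s=1}^{t-1}\vh_s^k(\vz_{s-1}^k),
\end{align*}
so $\norm{\vz_{t-1}^k-\vz_0^k}^2 = \alpha^2\norm{\sum_{s=1}^{t-1}\vg_s^k(\vz_{s-1}^k)}^2 + \beta^2\norm{\sum_{s=1}^{t-1}\vh_s^k(\vz_{s-1}^k)}^2$. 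Writing $\vg_s^k(\vz_{s-1}^k) = \vg_s^k(\vz_0^k) + \big(\vg_s^k(\vz_{s-1}^k)-\vg_s^k(\vz_0^k)\big)$ (and similarly for $\vh$), applying $\norm{\va+\vb}^2\le 2\norm{\va}^2+2\norm{\vb}^2$, and then using Jensen and the $L$‑Lipschitzness of $\vg_s^k,\vh_s^k$ (Assumption~\ref{ass:smooth}), the drift part of each summand is at most $2(t-1)L^2\sum_{s=1}^{t-1}\norm{\vz_{s-1}^k-\vz_0^k}^2 \le 2q(q-1)L^2\,G_k$. Averaging over $t\in[q]$ gives
\begin{align*}
G_k \le \frac{2}{q}\sum_{t=1}^{q}\Big(\alpha^2\norm{\textstyle\sum_{s=1}^{t-1}\vg_s^k(\vz_0^k)}^2 + \beta^2\norm{\textstyle\sum_{s=1}^{t-1}\vh_s^k(\vz_0^k)}^2\Big) + 2q(q-1)L^2(\alpha^2+\beta^2)\,G_k ,
\end{align*}
and the hypothesis $\alpha^2+\beta^2\le\tfrac{1}{3q(q-1)L^2}$ bounds the last coefficient by $\tfrac23$, so it can be absorbed into the left side, leaving $G_k \le \tfrac{6}{q}\sum_{t=1}^{q}\big(\alpha^2\norm{\sum_{s\le t-1}\vg_s^k(\vz_0^k)}^2 + \beta^2\norm{\sum_{s\le t-1}\vh_s^k(\vz_0^k)}^2\big)$. (When $q=1$ the sum is empty, recovering $G_k=0$.)

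Next I would take $\E_k$. The crucial observation is that, with $\vz_0^k$ frozen, $\sum_{s=1}^{t-1}\vg_s^k(\vz_0^k)$ is $(t-1)$ times the accumulative average of the batch means of $\{\nabla_1 f_i(\vz_0^k)\}_{i\in[n]}$ over the first $t-1$ mutually disjoint blocks of $\sigma_k$ — exactly the object $\vm_{t-1}$ in Proposition~\ref{prop:variance} with $\vv_i=\nabla_1 f_i(\vz_0^k)$, whose mean is $\nabla_1 f(\vz_0^k)$ and whose ``sample variance'' is $\tau_1^2:=\frac1n\sum_{i=1}^n\norm{\nabla_1 f_i(\vz_0^k)-\nabla_1 f(\vz_0^k)}^2$. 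Thus $\E_k\norm{\sum_{s=1}^{t-1}\vg_s^k(\vz_0^k)}^2 = (t-1)^2\norm{\nabla_1 f(\vz_0^k)}^2 + \frac{(t-1)(n-b(t-1))}{b(n-1)}\tau_1^2$, and likewise for the $\vh$‑sum with $\tau_2^2$; Assumption~\ref{ass:bddvar} then gives $\tau_j^2\le A\norm{\nabla_j f(\vz_0^k)}^2+B$.

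Finally I would sum over $t\in[q]$, keeping the arithmetic series \emph{exact}: $\sum_{t=1}^q(t-1)^2=\frac{(q-1)q(2q-1)}{6}\le\frac{q^3}{3}$ and, using $n=bq$, $\sum_{t=1}^q\frac{(t-1)(n-b(t-1))}{b}=\frac{q(q-1)(q+1)}{6}$; multiplying by $\tfrac6q$ turns these into $\le 2q^2$ and $\frac{q^2-1}{n-1}$ respectively, so
\begin{align*}
\E_k G_k \le \Big(2q^2+\frac{q^2-1}{n-1}A\Big)\big(\alpha^2\norm{\nabla_1 f(\vz_0^k)}^2+\beta^2\norm{\nabla_2 f(\vz_0^k)}^2\big) + \frac{q^2-1}{n-1}(\alpha^2+\beta^2)B ,
\end{align*}
and the elementary inequality $q^2-1\le q^2\le 2q(q-1)$ (equivalently $(q-1)^2\ge0$, valid since $n>1$ forces $q\ge2$) upgrades this to the stated bound. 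I expect the only real bookkeeping hazard to be that crude estimates like $n-b(t-1)\le n$ are \emph{too lossy} here — they would give $\frac{3q(q-1)}{n-1}$ instead of $\frac{2q(q-1)}{n-1}$ — so one must evaluate $\sum_m m(n-bm)$ exactly; conceptually nothing is harder than the scalar without‑replacement estimates of \citet{mishchenko2020random, nguyen2021unified}, which this lemma extends by running the same argument on the two blocks $\vx$ and $\vy$ in parallel.
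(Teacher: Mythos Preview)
Your proof is correct and follows essentially the same route as the paper's: unroll the epoch, split into an anchor at $\vz_0^k$ plus a Lipschitz drift, self-bound $G_k$ using the step-size hypothesis, apply Proposition~\ref{prop:variance}, and sum the resulting arithmetic series exactly (the paper's three-way split followed by a $2\times$ absorption and your two-way split followed by a $3\times$ absorption produce identical intermediate constants). One cosmetic slip: ``$n>1$ forces $q\ge2$'' fails when $b=n$, but since you already dispatched $q=1$ via $G_k=0$, and since $q^2-1=(q-1)(q+1)\le2q(q-1)$ in fact holds for all $q\ge1$, nothing is affected.
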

\begin{proof}
    Note that $G_k=0$ when $q=1$ by its definition. From now, we may assume $q>1$ and $n>1$ in this proof.
	By summing the first $t \in [q-1]$ updates of the $k$-th epoch of mini-batch simSGDA-RR, we have 
	\[\vx_{t}^{k} = \vx_{0}^k - t\alpha \left(\frac{1}{t} \sum_{j=1}^t \vg_j^k(\vz_{j-1}^k)\right), \quad \vy_{t}^{k} = \vy_{0}^k + t\beta \left(\frac{1}{t} \sum_{j=1}^t \vh_j^k(\vz_{j-1}^k)\right).\]
	Then we can bound the following squared distance.
	\begin{align}
		&\norm{\vx_t^k - \vx_0^k}^2 = \alpha^2 t^2 \norm{\frac{1}{t} \sum_{j=1}^t \vg_j^k(\vz_{j-1}^k)}^2 \nonumber\\
		&\le 3\alpha^2 t^2 \left[\norm{\frac{1}{t}\sum_{j=1}^{t} \left[\vg_j^k  (\vz_{j-1}^k)-\vg_j^k (\vz_0^k)\right]}^2 + \norm{\frac{1}{t}\sum_{j=1}^{t} \vg_j^k (\vz_0^k)-\nabla_1 f (\vz_0^k)}^2 + \norm{\nabla_1 f(\vz_0^k)}^2\right]\nonumber\\
		&\le 3\alpha^2 t\sum_{j=1}^{t} \norm{\vg_j^k  (\vz_{j-1}^k)-\vg_j^k (\vz_0^k)}^2 + 3\alpha^2 t^2 \left[\norm{\frac{1}{t}\sum_{j=1}^{t} \vg_j^k (\vz_0^k)-\nabla_1 f (\vz_0^k)}^2 + \norm{\nabla_1 f(\vz_0^k)}^2\right]\nonumber\\
		&\le 3\alpha^2 L^2 t \cdot \sum_{j=1}^{t} \norm{\vz_{j-1}^k-\vz_0^k}^2 + 3\alpha^2 t^2 \left[\norm{\frac{1}{t}\sum_{j=1}^{t} \vg_j^k (\vz_0^k)-\nabla_1 f (\vz_0^k)}^2 + \norm{\nabla_1 f(\vz_0^k)}^2\right]\nonumber\\
		&\le 3\alpha^2 L^2 t \cdot qG_k + 3\alpha^2 t^2 \left[\norm{\frac{1}{t}\sum_{j=1}^{t} \vg_j^k (\vz_0^k)-\nabla_1 f (\vz_0^k)}^2 + \norm{\nabla_1 f(\vz_0^k)}^2\right]. \label{eq:xgap}
	\end{align}
	The second and third lines are due to Jensen's inequality. The fourth line is due to $L$-Lipschitz continuity of $\vg_j^k$. Likewise, 
	\begin{equation}
		\norm{\vy_t^k - \vy_0^k}^2 \le 3\beta^2 L^2 t \cdot qG_k + 3\beta^2 t^2 \left[\norm{\frac{1}{t}\sum_{j=1}^{t} \vh_j^k (\vz_0^k)-\nabla_2 f (\vz_0^k)}^2 + \norm{\nabla_2 f(\vz_0^k)}^2\right].\label{eq:ygap}
	\end{equation}
	Summing up (\ref{eq:xgap}) and (\ref{eq:ygap}),
	\begin{align}
		&\norm{\vz_t^k - \vz_0^k}^2 = \norm{\vx_t^k - \vx_0^k}^2 + \norm{\vy_t^k - \vy_0^k}^2 \nonumber\\
		&\le 3(\alpha^2 +\beta^2) L^2 t qG_k + 3\alpha^2 t^2 \left[\norm{\frac{1}{t}\sum_{j=1}^{t} \vg_j^k (\vz_0^k)-\nabla_1 f (\vz_0^k)}^2 + \norm{\nabla_1 f(\vz_0^k)}^2\right] \nonumber\\
		&\quad + 3\beta^2 t^2 \left[\norm{\frac{1}{t}\sum_{j=1}^{t} \vh_j^k (\vz_0^k)-\nabla_2 f (\vz_0^k)}^2 + \norm{\nabla_2 f(\vz_0^k)}^2\right]. \label{eq:zgap}
	\end{align}

	Taking (conditional) expectation $\E_k$ (given $\vz_0^k$) to inequality~\eqref{eq:zgap},
	\begin{align}
		&\E_k \norm{\vz_t^k - \vz_0^k}^2 \nonumber\\
		&\stackrel{\eqref{eq:zgap}}{\le} 3(\alpha^2 +\beta^2) L^2 t q \cdot (\E_k[G_k]) + 3\alpha^2 t^2 \norm{\nabla_1 f(\vz_0^k)}^2   + 3\beta^2 t^2\norm{\nabla_2 f(\vz_0^k)}^2  \nonumber\\
		&\quad + 3\alpha^2 t^2 \E_k \norm{\frac{1}{t}\sum_{j=1}^{t} \vg_j^k (\vz_0^k)-\nabla_1 f (\vz_0^k)}^2 + 3\beta^2 t^2 \E_k\norm{\frac{1}{t}\sum_{j=1}^{t} \vh_j^k (\vz_0^k)-\nabla_2 f (\vz_0^k)}^2. \label{eq:expected zgap}
	\end{align}

	Here we take advantage of the without-replacement sampling. Putting $\nabla_s f_i (\vz_0^k) \mapsto \vv_i$ ($s\in \{1,2\}$), one can realize a correspondence between the quantities that arise from our algorithm and the symbols in Appendix~\ref{sec:RR lemma}: for $s=1$ ($\nabla_1 f_i (\vz_0^k) \mapsto \vv_i$),
	\[\vm = \nabla_1 f (\vz_0^k), \quad \tau^2 \le A\norm{\nabla_1 f (\vz_0^k)}^2+B,\quad \vw_t = \vg_t^k(\vz_0^k), \quad \vm_t = \frac{1}{t}\sum_{j=1}^{t} \vg_j^k (\vz_0^k),\]
	and for $s=2$ ($\nabla_2 f_i (\vz_0^k) \mapsto \vv_i$), 
	\[\vm = \nabla_2 f (\vz_0^k), \quad \tau^2 \le A\norm{\nabla_2 f (\vz_0^k)}^2+B,\quad \vw_t = \vh_t^k(\vz_0^k), \quad \vm_t = \frac{1}{t}\sum_{j=1}^{t} \vh_j^k (\vz_0^k).\]
	The upper bounds of $\tau^2$'s come from Assumption~\ref{ass:bddvar}. Then by Proposition \ref{prop:variance}, for any $t\le q$, 
	\[t^2\E_k \norm{\frac{1}{t}\sum_{j=1}^{t} \vg_j^k (\vz_0^k)-\nabla_1 f (\vz_0^k)}^2 \le \frac{t(q-t)}{n-1} \left(A\norm{\nabla_1 f (\vz_0^k)}^2 + B\right),\]
	\[t^2\E_k\norm{\frac{1}{t}\sum_{j=1}^{t} \vh_j^k (\vz_0^k)-\nabla_2 f (\vz_0^k)}^2 \le \frac{t(q-t)}{n-1} \left(A\norm{\nabla_2 f (\vz_0^k)}^2 + B\right).\]

	Putting these to the inequality~\eqref{eq:expected zgap},
	\begin{align*}
		\E_k \norm{\vz_t^k - \vz_0^k}^2 &\le 3(\alpha^2+\beta^2)\left[L^2 t q \E_k [G_k] + \frac{t(q-t)}{n-1}B\right] \\
		&\quad +  3\left(\alpha^2 \norm{\nabla_1 f(\vz_0^k)}^2 + \beta^2 \norm{\nabla_2 f(\vz_0^k)}^2\right)\left[t^2+\frac{t(q-t)}{n-1}A\right].
	\end{align*}
	Taking an average of the inequality above over $0\le t\le q-1$, 
	\begin{align}
		\E_k G_k &= \frac{1}{q} \sum_{t=0}^{q-1} \E_k \norm{\vz_{t}^k - \vz_0^k}^2 \notag\\
		&\le \frac{3q(q-1)}{2} (\alpha^2 +\beta^2)L^2 \E_k G_k + (\alpha^2+\beta^2)\frac{q^2-1}{2(n-1)}B\notag\\
		&\quad + \left(\alpha^2 \norm{\nabla_1 f(\vz_0^k)}^2 + \beta^2 \norm{\nabla_2 f(\vz_0^k)}^2\right)\left(\frac{(q-1)(2q-1)}{2}+\frac{q^2-1}{2(n-1)}A\right),\label{eq:sumbound}
	\end{align}
	where we used the facts 
    $$\textstyle\sum_{t=0}^{q-1} t = \frac{q(q-1)}{2},\quad\frac{1}{q}\sum_{t=0}^{q-1} t^2 = \frac{(q-1)(2q-1)}{6},\quad\text{and}\quad\frac{1}{q}\sum_{t=0}^{q-1} \frac{t(q-t)}{n-1} = \frac{q^2-1}{6(n-1)}.$$
    Since we assumed $\alpha^2 + \beta^2 \leq \frac{1}{3q(q-1)L^2}$, we have $1 \leq 2 \left( 1-\frac{3q(q-1)L^2}{2}(\alpha^2 +\beta^2)\right)$. Using this,
	\begin{align*}
		&\E_k G_k \le 2\left(1-\frac{3q(q-1)L^2}{2}(\alpha^2 +\beta^2)\right)\E_k G_k\\
		&\stackrel{\text{\eqref{eq:sumbound}}}{\le} \left((q-1)(2q-1)+ \frac{q^2-1}{(n-1)}A\right)\left(\alpha^2 \norm{\nabla_1 f(\vz_0^k)}^2 + \beta^2 \norm{\nabla_2 f(\vz_0^k)}^2\right) + \frac{q^2-1}{n-1}(\alpha^2 + \beta^2)B \\
		&\le 2\left( q^2+\frac{q(q-1)}{n-1} A\right) \left(\alpha^2 \norm{\nabla_1 f(\vz_0^k)}^2 + \beta^2 \norm{\nabla_2 f(\vz_0^k)}^2\right) + \frac{2q(q-1)}{n-1}(\alpha^2+\beta^2)B,
	\end{align*}
	where the last inequality used $(q-1)(2q-1) \leq 2q^2$  and $q+1 \le 2q$ for $q \geq 1$.
\end{proof}

\subsection{Recurrence inequalities for general smooth nonconvex-P{\L} objective}

Subsequently, we obtain recurrence inequalities about (expected) potential function $\E_k[V_\lambda(\vz_0^k)]$ for nonconvex-P{\L} problem. Since primal-P{\L}-P{\L} problem is a subclass of nonconvex-P{\L} problem, the recurrence relations can serve as stepping-stones of our convergence rates.

We introduce some assumptions on \emph{small} step sizes which enable us to get rid of a few troublesome terms from our bound. On top of that, combining the P{\L} condition (Assumption~\ref{ass:NCPL}) with Lemmas~\ref{lem:NaiveReccurence},~\ref{lem:DefnGk},~and~\ref{lem:boundingEGk}, we eventually obtain a much more concise bound on the expected per-epoch change of $V_\lambda$. This simple recurrence inequality becomes the key to proving our convergence bounds.
\begin{lemma}\label{lem:ReccurenceWithSmallStepSizes}
	Suppose that Assumptions~\ref{ass:smooth}, \ref{ass:bddvar}, \ref{ass:primal,dual}, and \ref{ass:NCPL} hold. Assume that the step sizes $\alpha$ and $\beta$ satisfy
	\begin{equation}
		\alpha\le \frac{\lambda}{\{(\lambda+1)(\kappa_2+1) +1\}qL}, \quad \beta\le \frac{1}{qL}, \quad \alpha^2+\beta^2 \le \frac{1}{3q(q-1)L^2}, \label{eq:step sizes condition 1}
	\end{equation}
	and the condition
	\[C_0 := q\beta- 2L^2q\left(q^2+\frac{q(q-1)}{n-1}A\right) \left((2\lambda+1)\alpha +\beta \right)\beta^2 \ge 0\]
	as well. Then, the iterates of mini-batch simSGDA-RR satisfy
	\[\E_k[V_\lambda(\vz_0^{k+1})] - V_\lambda(\vz_0^{k}) \le -C_1 \norm{\nabla \Phi (\vx_0^k)}^2 - C_2 \left[\Phi(\vx_0^k)-f(\vz_0^k)\right] + C_3\]
	where
	\begin{align*}
		C_1 &= \left(\frac{\lambda-1}{2}\right)q\alpha- 2L^2q\left(q^2+\frac{q(q-1)}{n-1}A\right)\big( (2\lambda+1)\alpha +\beta \big)\alpha^2, \\
		C_2 &= \mu_2 C_0 - 2(\lambda+2)L\kappa_2 q\alpha - 4L^3\kappa_2 q\left(q^2+\frac{q(q-1)}{n-1}A\right)\big( (2\lambda+1)\alpha +\beta \big)\alpha^2\\
		&= \mu_2 q \beta - 2(\lambda+2)L\kappa_2 q\alpha - 2L^2\mu_2 q\left(q^2+\frac{q(q-1)}{n-1}A\right)\big( (2\lambda+1)\alpha +\beta \big)\left(2\kappa_2^2\alpha^2 + \beta^2\right),\\
		C_3 &= \left(\frac{L^2q^2(q-1)}{n-1}\right)\left((2\lambda+1)\alpha +\beta\right)(\alpha^2+\beta^2)B.
	\end{align*}
\end{lemma}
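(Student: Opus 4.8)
The plan is to take Lemma~\ref{lem:NaiveReccurence} as the starting point, apply the conditional expectation $\E_k[\cdot]$, and then eliminate or re-express each term on the right-hand side of~\eqref{eq:NaiveReccurence} using the step-size hypotheses and the P{\L}/QG geometry. The first observation is that under the first two conditions in~\eqref{eq:step sizes condition 1} the bracketed coefficients $\lambda-\{(\lambda+1)(\kappa_2+1)+1\}Lq\alpha$ and $1-Lq\beta$ are nonnegative, so the last two terms of~\eqref{eq:NaiveReccurence} are $\le 0$ and may simply be dropped. What remains of the ``noise'' is $\left(\lambda+\tfrac12\right)q\alpha\norm{\vg^k-\nabla_1 f(\vz_0^k)}^2 + \tfrac{q\beta}{2}\norm{\vh^k-\nabla_2 f(\vz_0^k)}^2$, which by Lemma~\ref{lem:DefnGk} is at most $\tfrac{q}{2}\big((2\lambda+1)\alpha+\beta\big)L^2 G_k$; taking $\E_k[\cdot]$ and invoking Lemma~\ref{lem:boundingEGk}---whose hypothesis is precisely the third inequality in~\eqref{eq:step sizes condition 1}---turns this into $L^2 q\big((2\lambda+1)\alpha+\beta\big)\big(q^2+\tfrac{q(q-1)}{n-1}A\big)\big(\alpha^2\norm{\nabla_1 f(\vz_0^k)}^2+\beta^2\norm{\nabla_2 f(\vz_0^k)}^2\big)+C_3$, where the $B$-part is exactly $C_3$.

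Next I would collect terms according to the quantity they involve. Adding the noise contribution in $\norm{\nabla_2 f(\vz_0^k)}^2$ to the term $-\tfrac{q\beta}{2}\norm{\nabla_2 f(\vz_0^k)}^2$ from~\eqref{eq:NaiveReccurence} yields exactly $-\tfrac12 C_0\,\norm{\nabla_2 f(\vz_0^k)}^2$, and since $C_0\ge 0$ by hypothesis, the $\mu_2$-P{\L} condition (Assumption~\ref{ass:NCPL}) converts this into $-\mu_2 C_0\,[\Phi(\vx_0^k)-f(\vz_0^k)]$. To handle the cross term $\norm{\nabla\Phi(\vx_0^k)-\nabla_1 f(\vz_0^k)}^2$ and the remaining positive multiples of $\norm{\nabla_1 f(\vz_0^k)}^2$, I would choose $\vy^*(\vx_0^k)$ to be a projection of $\vy_0^k$ onto $\gY_{\vx_0^k}^*$. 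Then Proposition~\ref{prop:Phi smooth} gives $\nabla\Phi(\vx_0^k)=\nabla_1 f(\vx_0^k;\vy^*(\vx_0^k))$, $L$-smoothness of $f$ gives both $\norm{\nabla\Phi(\vx_0^k)-\nabla_1 f(\vz_0^k)}\le L\norm{\vy^*(\vx_0^k)-\vy_0^k}$ and $\norm{\nabla_1 f(\vz_0^k)}^2\le 2\norm{\nabla\Phi(\vx_0^k)}^2+2L^2\norm{\vy^*(\vx_0^k)-\vy_0^k}^2$, and the $\mu_2$-QG property (Proposition~\ref{prop:PLQGEB}, from Assumption~\ref{ass:NCPL}) gives $\norm{\vy^*(\vx_0^k)-\vy_0^k}^2\le\tfrac{2}{\mu_2}[\Phi(\vx_0^k)-f(\vz_0^k)]$.

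Substituting these bounds and using $L^2/\mu_2=L\kappa_2$, the coefficient of $\norm{\nabla\Phi(\vx_0^k)}^2$ assembles to $-\tfrac{\lambda-1}{2}q\alpha+2L^2 q\big(q^2+\tfrac{q(q-1)}{n-1}A\big)\big((2\lambda+1)\alpha+\beta\big)\alpha^2=-C_1$, while the coefficient multiplying $[\Phi(\vx_0^k)-f(\vz_0^k)]$ combines $-\mu_2 C_0$ with $2(\lambda+1)L\kappa_2 q\alpha$ from the cross term and $2L\kappa_2 q\alpha+4L^3\kappa_2 q\big(q^2+\tfrac{q(q-1)}{n-1}A\big)\big((2\lambda+1)\alpha+\beta\big)\alpha^2$ from the $\norm{\nabla_1 f}^2$ terms, which equals $-C_2$; the leftover constant is $C_3$. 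The second displayed form of $C_2$ follows from $4L^3\kappa_2\alpha^2=2L^2\mu_2\cdot 2\kappa_2^2\alpha^2$ and factoring $2L^2\mu_2 q\big(q^2+\tfrac{q(q-1)}{n-1}A\big)\big((2\lambda+1)\alpha+\beta\big)$ out of the two $\mu_2$-weighted pieces. The step I expect to require the most care is the sign bookkeeping: the P{\L} and QG inequalities only go one way, so before each substitution one must verify the relevant prefactor has the right sign---this is exactly what the hypotheses $C_0\ge 0$ and~\eqref{eq:step sizes condition 1} guarantee---after which the derivation is a disciplined but essentially routine computation.
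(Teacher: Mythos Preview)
The proposal is correct and follows essentially the same route as the paper's own proof: drop the last two terms of~\eqref{eq:NaiveReccurence} using the first two step-size conditions, bound the noise via Lemmas~\ref{lem:DefnGk} and~\ref{lem:boundingEGk} (enabled by the third step-size condition), then convert $\norm{\nabla_2 f}^2$, $\norm{\nabla\Phi-\nabla_1 f}^2$, and $\norm{\nabla_1 f}^2$ into multiples of $\norm{\nabla\Phi}^2$ and $[\Phi-f]$ using the $\vy$-side P{\L}/QG inequalities and Young's inequality exactly as in~\eqref{eq:-f is PL}--\eqref{eq:apply QG and Young}. Your coefficient bookkeeping also agrees with the paper's $C_1,C_2,C_3$.
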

\begin{proof}
	The first two inequalities of \eqref{eq:step sizes condition 1} eliminate the last two terms on the right-hand side of the inequality in Lemma~\ref{lem:NaiveReccurence}. In addition, applying Lemma~\ref{lem:DefnGk} to Lemma~\ref{lem:NaiveReccurence} as well, we have
	\begin{align}
		V_\lambda(\vz_0^{k+1}) - V_\lambda(\vz_0^{k}) &{\le} -\left(\frac{\lambda + 1}{2}\right) q\alpha \norm{\nabla \Phi (\vx_0^k)}^2 + (\lambda+1)q\alpha \norm{\nabla \Phi (\vx_0^k)-\nabla_1 f(\vz_0^k)}^2 \nonumber\\
		&\quad + \frac{q\alpha}{2} \norm{\nabla_1 f(\vz_0^k)}^2 - \frac{q\beta}{2} \norm{\nabla_2 f(\vz_0^k)}^2 + \frac{(2\lambda+1)\alpha +\beta}{2}qL^2 G_k. \label{eq:before expectation}
	\end{align}
	If we take the conditional expectation $\E_k$ and apply Lemma~\ref{lem:boundingEGk} (which requires the third inequality of \eqref{eq:step sizes condition 1} to hold) to \eqref{eq:before expectation}
	\begin{align}
		&\E_k[V_\lambda(\vz_0^{k+1})] - V_\lambda(\vz_0^{k}) \nonumber\\
		&{\le} -\left(\frac{\lambda + 1}{2}\right) q\alpha \norm{\nabla \Phi (\vx_0^k)}^2 + (\lambda+1)q\alpha \norm{\nabla \Phi (\vx_0^k)-\nabla_1 f(\vz_0^k)}^2 \nonumber\\
		&\quad + \frac{1}{2}\left[ q\alpha + 2L^2q\left(q^2+\frac{q(q-1)}{n-1}A\right) \left((2\lambda+1)\alpha +\beta \right)\alpha^2\right]\norm{\nabla_1 f(\vz_0^k)}^2 \nonumber\\
		&\quad - \frac{1}{2}\underbrace{\left[q\beta- 2L^2q\left(q^2+\frac{q(q-1)}{n-1}A\right) \left((2\lambda+1)\alpha +\beta \right)\beta^2\right]}_{C_0}\norm{\nabla_2 f(\vz_0^k)}^2 \nonumber\\
		&\quad+ \underbrace{\left(\frac{L^2q^2(q-1)}{n-1}\right)\left((2\lambda+1)\alpha +\beta\right)(\alpha^2+\beta^2)B}_{C_3}. \label{eq:epochwise intermed}
	\end{align}
    It is now left to bound terms in \eqref{eq:epochwise intermed} using the tools developed so far. First, recall that $\Phi(\vx):= \max_{\vy'\in \gY} f(\vx; \vy')$. Since $-f(\vx; \vy)$ is $\mu_2$-P{\L} in $\vy$, we have
	\begin{equation}
		-\norm{\nabla_2 f(\vz_0^k)}^2\le -2\mu_2(\Phi(\vx_0^k)-f(\vz_0^k)). \label{eq:-f is PL}
	\end{equation}
	Given any $\vx$, $\nabla \Phi(\vx) = \nabla_1 f(\vx; \vy^*(\vx))$ for any $\vy^*(\vx) \in \argmax_{\vy'\in \gY} f(\vx; \vy')$ by Proposition~\ref{prop:Phi smooth}. Besides, $-f(\vx; \cdot)$ satisfies QG condition with constant $\mu_2$ by Proposition~\ref{prop:PLQGEB}. Thus, by choosing $\vy^*(\vx_0^k)$ to be the projection of $\vy_0^k$ onto $\argmax_{\vy'\in \gY} f(\vx_0^k; \vy')$,
	\begin{align}
		\norm{\nabla \Phi (\vx_0^k)-\nabla_1 f(\vz_0^k)}^2 \le L^2 \norm{\vy^*(\vx_0^k)-\vy_0^k}^2 \le 2L\kappa_2 \left[\Phi(\vx_0^k)-f(\vz_0^k)\right]. \label{eq:apply QG}
	\end{align}
	Here, the first inequality applies $L$-Lipschitz continuity of $\nabla_1 f(\vx_0^k; \cdot)$, implied by Assumption~\ref{ass:smooth}.
	On top of that, applying the Young's inequality to the term $\norm{\nabla_1 f(\vz_0^k)}^2$, 
	\begin{align}
		\norm{\nabla_1 f(\vz_0^k)}^2 &\le 2 \norm{\nabla \Phi(\vx_0^k)}^2 + 2 \norm{\nabla\Phi(\vx_0^k)-\nabla_1 f(\vz_0^k)}^2 \nonumber\\
		&\stackrel{\text{\eqref{eq:apply QG}}}{\le} 2 \norm{\nabla \Phi(\vx_0^k)}^2 + 4L\kappa_2 \left[\Phi(\vx_0^k)-f(\vz_0^k)\right] \label{eq:apply QG and Young}
	\end{align}

	By applying inequalities~\eqref{eq:-f is PL}, \eqref{eq:apply QG}, and \eqref{eq:apply QG and Young} to the bound~\eqref{eq:epochwise intermed}, we conclude the proof.
\end{proof}

In Lemma~\ref{lem:ReccurenceWithSmallStepSizes}, we saw that if step sizes are chosen to satisfy certain conditions, then we can simplify the per-epoch progress a great deal. It is now left to choose appropriate step sizes and parameters (\eg, $\lambda$) so as to make sure not only that $\alpha$ and $\beta$ meet the \emph{small} step size conditions \eqref{eq:step sizes condition 1} but also that the constants $C_0$, $C_1$, $C_2$, and $C_3$ are positive. 

\begin{lemma}\label{lem:ReccurenceGeneral}
	Suppose that Assumptions~\ref{ass:smooth}, \ref{ass:bddvar}, \ref{ass:primal,dual} and \ref{ass:NCPL} hold. Let $\lambda=4$ and assume that
	\[0<\beta\le \frac{1}{6L \sqrt{q^2 + \frac{q(q-1)}{n-1}A}}, \quad \alpha = \frac{\beta}{r}, \quad \text{where}~~r\ge14\kappa_2^2.\]
	Then these satisfy all the inequalities~\eqref{eq:step sizes condition 1} and the terms defined in Lemma~\ref{lem:ReccurenceWithSmallStepSizes} satisfy
	\[C_0>0, \quad C_1>q\alpha, \quad C_2>L\kappa_2 q \alpha/2, \quad C_3\ge0.\]
	Consequently, due to the recurrence inequality in Lemma~\ref{lem:ReccurenceWithSmallStepSizes}, mini-batch simSGDA-RR satisfies, for some numerical constant $c>0$,
	\begin{align}
		&\E_k[V_\lambda(\vz_0^{k+1})] - V_\lambda(\vz_0^{k}) \nonumber\\
        &\le -q\alpha \norm{\nabla \Phi (\vx_0^k)}^2 - (L\kappa_2 q \alpha/2) \left[\Phi(\vx_0^k)-f(\vz_0^k)\right] + (cr)^3 L^2 \left(\frac{q^2(q-1)}{n-1}\right)B\alpha^3.\tag{$\star$}\label{eq:recurrence inequality general}
	\end{align}
\end{lemma}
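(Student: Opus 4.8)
The plan is to substitute the prescribed choices ($\lambda=4$, $\alpha=\beta/r$ with $r\ge 14\kappa_2^2$, and $\beta\le\frac{1}{6L\sqrt{S}}$, where I abbreviate $S:=q^2+\frac{q(q-1)}{n-1}A\ge q^2$) into the four constants $C_0,C_1,C_2,C_3$ of Lemma~\ref{lem:ReccurenceWithSmallStepSizes} and check each of the claimed sign/size conditions, after first confirming that these choices meet the ``small step size'' requirements~\eqref{eq:step sizes condition 1}. That preliminary check is quick: from $\sqrt{S}\ge q$ we get $\beta\le\frac{1}{6Lq}\le\frac1{qL}$; since $\kappa_2\ge 1$ (Proposition~\ref{prop:kappa}) we have $5\kappa_2+6\le 11\kappa_2^2$, hence $\alpha=\beta/r\le\frac{1}{84Lq\kappa_2^2}\le\frac{4}{(5\kappa_2+6)qL}$; and $\alpha^2+\beta^2\le 2\beta^2\le\frac1{18L^2q^2}\le\frac{1}{3q(q-1)L^2}$, the last inequality being vacuous when $q=1$ (in which case $G_k\equiv 0$ and there is nothing to control).

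The heart of the argument is a handful of elementary estimates that I would establish once and reuse throughout. From $r\ge 14\kappa_2^2\ge 14$ we get $\alpha\le\beta/14$, so $(2\lambda+1)\alpha+\beta=9\alpha+\beta\le\tfrac{23}{14}\beta<2\beta$ and $2\kappa_2^2\alpha^2+\beta^2\le(1+\tfrac1{98})\beta^2<2\beta^2$; the bound on $\beta$ gives $L^2S\beta^2\le\tfrac1{36}$; and, using $L/\kappa_2=\mu_2$, the ratio bound gives $L\kappa_2\alpha=\frac{L\kappa_2\beta}{r}\le\frac{\mu_2\beta}{14}$. Armed with these, $C_0=q\beta-2L^2qS(9\alpha+\beta)\beta^2\ge q\beta(1-4L^2S\beta^2)\ge\tfrac89 q\beta>0$; in $C_1=\tfrac32 q\alpha-2L^2qS(9\alpha+\beta)\alpha^2$ the subtracted term is at most $\tfrac27(L^2S\beta^2)\,q\alpha\le\tfrac1{126}q\alpha$ (bounding $\alpha^2\le\alpha\beta/14$), so $C_1\ge(\tfrac32-\tfrac1{126})q\alpha>q\alpha$; for $C_2$ I would use its second displayed form, where $2(\lambda+2)L\kappa_2q\alpha=12L\kappa_2q\alpha\le\tfrac{12}{14}\mu_2q\beta$ and the remaining term is at most $2L^2\mu_2qS\cdot\tfrac{23}{14}\beta\cdot\tfrac{99}{98}\beta^2\le 0.093\,\mu_2q\beta$, yielding $C_2\ge(1-\tfrac67-0.093)\mu_2q\beta\ge 0.05\,\mu_2q\beta>\tfrac{\mu_2q\beta}{28}\ge L\kappa_2q\alpha/2$; and $C_3\ge 0$ holds trivially.

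Finally, feeding $C_1>q\alpha$, $C_2>L\kappa_2q\alpha/2$, and the nonnegativity of $\norm{\nabla\Phi(\vx_0^k)}^2$ and of $\Phi(\vx_0^k)-f(\vz_0^k)$ into the recurrence of Lemma~\ref{lem:ReccurenceWithSmallStepSizes} gives $\E_k[V_\lambda(\vz_0^{k+1})]-V_\lambda(\vz_0^{k})\le -q\alpha\norm{\nabla\Phi(\vx_0^k)}^2-(L\kappa_2q\alpha/2)[\Phi(\vx_0^k)-f(\vz_0^k)]+C_3$; to match \eqref{eq:recurrence inequality general} it only remains to bound $C_3=\frac{L^2q^2(q-1)}{n-1}(9\alpha+\beta)(\alpha^2+\beta^2)B$, and since $\beta=r\alpha$ with $r\ge 14$ we have $9\alpha+\beta=(9+r)\alpha\le 2r\alpha$ and $\alpha^2+\beta^2=(1+r^2)\alpha^2\le 2r^2\alpha^2$, so $C_3\le 4r^3L^2\frac{q^2(q-1)}{n-1}B\alpha^3=(cr)^3L^2\frac{q^2(q-1)}{n-1}B\alpha^3$ with $c=4^{1/3}$. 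The one genuinely delicate step is the estimate on $C_2$: it carries the ``bad'' term $-2(\lambda+2)L\kappa_2q\alpha$ with an explicit factor $\kappa_2$, and it is precisely the hypothesis $r\ge 14\kappa_2^2$ that turns $L\kappa_2\alpha$ into $O(\mu_2\beta)$ so that this term is absorbed into a fraction of the $\mu_2q\beta$ coming from $C_0$ while still leaving a (thin but positive) margin above $L\kappa_2q\alpha/2$; everything else is bookkeeping of absolute constants, together with the choice $\lambda=4$ needed only to make $\tfrac{\lambda-1}{2}>1$ in the $C_1$ estimate.
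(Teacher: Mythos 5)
Your proof is correct and follows essentially the same route as the paper's: verify that the prescribed step sizes meet the conditions of Lemma~\ref{lem:ReccurenceWithSmallStepSizes}, then bound $C_0,C_1,C_2,C_3$ directly using $L^2S\beta^2\le\frac{1}{36}$ and the ratio hypothesis $r\ge 14\kappa_2^2$, with the key observation (which you correctly flag as the delicate step) being that $r\gtrsim\kappa_2^2$ converts $L\kappa_2\alpha$ into a small multiple of $\mu_2\beta$ so the $-2(\lambda+2)L\kappa_2q\alpha$ term in $C_2$ is absorbed. The only cosmetic difference is that the paper dispenses with the $C_0$ check by observing $C_0>C_2/\mu_2$, whereas you bound $C_0\ge\tfrac{8}{9}q\beta$ directly; both are fine.
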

Please note that we mark the recurrence inequality above with a special symbol \eqref{eq:recurrence inequality general} because this inequality is the exact point where the proofs of Theorems~\ref{thm:NCPL 2} and \ref{thm:2PL 2} start to deviate.
\begin{proof}
	Regardless of $A\ge0$, we have
	\begin{equation}
	    \beta \le \frac{1}{6Lq} \quad \text{and}\quad \alpha \le \frac{1}{6Lqr}\le \frac{1}{84L\kappa_2^2q}. \label{eq:alpha bound}
	\end{equation}
	This is enough to guarantee that the inequalities~\eqref{eq:step sizes condition 1} hold with $\lambda=4$. Since $C_0 > C_2/\mu_2$, it is enough to show $C_2 > 0$ to prove that $C_0 > 0$. 
	Applying $\lambda=4$, $\kappa_2\ge 1$, and $\beta/\alpha =r\ge 14\kappa_2^2$,
	\begin{align*}
		\frac{C_1}{q\alpha} &= \frac{3}{2} - 2L^2\left(q^2+\frac{q(q-1)}{n-1}A\right) \left(9+ r\right)\alpha^2 \\
		&\ge \frac{3}{2} - \frac{2}{6^2}\cdot\frac{9+r}{r^2}\ge \frac{3}{2} - \frac{2\cdot 23}{6^2\cdot14^2} > 1, \\
		\frac{C_2}{\mu_2 q\beta} &= 1- \frac{12\kappa_2^2}{r} -  2L^2\left(q^2+\frac{q(q-1)}{n-1}A\right) \left(\frac{9}{r}+ 1\right)
		\left(\frac{2\kappa_2^2}{r^2} + 1\right)\beta^2 \\
		&\ge 1-\frac{12}{14} - \frac{2}{6^2}\left(\frac{9}{14\kappa_2^2}+ 1\right)
		\left(\frac{2}{14^2\kappa_2^2} + 1\right)\ge \frac{2}{14} - \frac{2\cdot 23\cdot 198}{6^2\cdot 14^3}>\frac{1}{2\cdot 14}.
	\end{align*}
	Thus, $C_1 > q\alpha$ and
	\[C_2 > \frac{\mu_2 q \beta}{2\cdot 14} = \frac{\mu_2qr\alpha}{2\cdot14}\ge L\kappa_2 q\alpha/2.\]
	Then we conclude the proof by bounding the term $C_3$. We can already check from the definition that $C_3 \geq 0$. We can upper-bound $C_3$ by
	\[C_3 = \left(\frac{L^2q^2(q-1)}{n-1}\right)\left(9 + r\right)(1+r^2)B\alpha^3 \le (cr)^3 L^2 \left(\frac{q^2(q-1)}{n-1}\right)B\alpha^3,\]
	for some numerical constant $c>0$.
\end{proof}

\subsection{Convergence rates for smooth nonconvex-P{\L} problem}

In this subsection, we show the convergence bound of general smooth nonconvex-P{\L} problems in terms of $\min_{k\in [K]} \E\left[\norm{\nabla \Phi (\vx_0^k)}^2\right]$. From the inequality~\eqref{eq:recurrence inequality general} in Lemma~\ref{lem:ReccurenceGeneral}, we can simply ignore the second term
\[- (L\kappa_2 q \alpha/2) \left[\Phi(\vx_0^k)-f(\vz_0^k)\right] \le 0\]
of the right-hand side because $\Phi(\vx) \ge f(\vx;\vy)$ for any $(\vx;\vy)$. In other words, we may deal with the inequality 
\[\E_k[V_\lambda(\vz_0^{k+1})] - V_\lambda(\vz_0^{k}) \le -q\alpha \norm{\nabla \Phi (\vx_0^k)}^2 + (cr)^3 L^2 \left(\frac{q^2(q-1)}{n-1}\right)B\alpha^3. \tag{nc-P{\L}}\label{eq:recurrence inequality NCPL}\]
Plugging $q=n/b$, we eventually show the convergence rate (Theorem~\ref{thm:NCPL 2}). (Recall that $b$ is the size of mini-batches.) 

\begin{theorem}[Equivalent to Theorem~\ref{thm:NCPL 2}, for simSGDA-RR] \label{thm:NCPL 3}
	Suppose that $f$ satisfies Assumptions~\ref{ass:smooth}, \ref{ass:bddvar}, \ref{ass:primal,dual}, and \ref{ass:NCPL} are satisfied. 
	Let $\lambda=4$. Choose the step sizes $\alpha$ and $\beta$ by $\alpha=\beta/r$ for some $r\ge 14\kappa_2^2$ and
	\[\beta = \min\left\{\frac{1}{6L \sqrt{q^2 + \frac{q(q-1)}{n-1}A}},\,\, \frac{1}{c}\left( \frac{V_\lambda(\vz_0^1)}{L^2q^2(\frac{q-1}{n-1})BK} \right)^{\frac{1}{3}}\right\},\]
	for some numerical constant $c>0$. Then, mini-batch simSGDA-RR satisfies
	\begin{align*}
		\frac{1}{K}\sum_{k=1}^K  \E\left[\norm{\nabla \Phi (\vx_0^k)}^2\right] \le \frac{6r L V_\lambda(\vz_0^1)}{K}\sqrt{1+\left(\frac{q-1}{n-1}\right)\frac{A}{q}} + 2cr \left(\frac{L^2 B\, V_\lambda(\vz_0^1)^2}{qK^2}\cdot \frac{q-1}{n-1}\right)^{1/3}.
	\end{align*}
\end{theorem}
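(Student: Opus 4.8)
The plan is to take the per-epoch recurrence already proved in Lemma~\ref{lem:ReccurenceGeneral} and telescope it over the $K$ epochs. First I would check that the step sizes prescribed in Theorem~\ref{thm:NCPL 3} are precisely those required by Lemma~\ref{lem:ReccurenceGeneral}: $\alpha=\beta/r$ with $r\ge 14\kappa_2^2$, and $\beta=\min\{\beta_1,\beta_2\}\le\beta_1:=\tfrac{1}{6L\sqrt{q^2+\frac{q(q-1)}{n-1}A}}$, so inequality~\eqref{eq:recurrence inequality general} applies verbatim. Since $\Phi(\vx)\ge f(\vx;\vy)$ for every $(\vx;\vy)$, the middle term $-(L\kappa_2 q\alpha/2)[\Phi(\vx_0^k)-f(\vz_0^k)]$ in~\eqref{eq:recurrence inequality general} is non-positive and may be discarded, leaving the cleaner recurrence~\eqref{eq:recurrence inequality NCPL}:
\[\E_k[V_\lambda(\vz_0^{k+1})]-V_\lambda(\vz_0^k)\le -q\alpha\norm{\nabla\Phi(\vx_0^k)}^2+(cr)^3 L^2\tfrac{q^2(q-1)}{n-1}B\alpha^3.\]

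Next I would take total expectations via the tower property ($\E[\E_k[\,\cdot\,]]=\E[\,\cdot\,]$) and sum the resulting inequalities for $k=1,\dots,K$. The left-hand side telescopes to $\E[V_\lambda(\vz_0^{K+1})]-V_\lambda(\vz_0^1)$; since the potential function is non-negative (as observed after its definition in Section~\ref{sec:main results:potential}, cf.\ Proposition~\ref{prop:optimality}), we have $\E[V_\lambda(\vz_0^{K+1})]\ge 0$, so after rearranging and dividing by $q\alpha K$ we obtain
\[\frac1K\sum_{k=1}^K\E\norm{\nabla\Phi(\vx_0^k)}^2\le \frac{V_\lambda(\vz_0^1)}{q\alpha K}+(cr)^3 L^2\tfrac{q(q-1)}{n-1}B\alpha^2.\]

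It then remains to substitute $\alpha=\beta/r$ (turning the bound into $\tfrac{rV_\lambda(\vz_0^1)}{q\beta K}+c^3 rL^2\tfrac{q(q-1)}{n-1}B\beta^2$) and plug in the chosen $\beta$. Writing $\beta=\min\{\beta_1,\beta_2\}$ with $\beta_2:=\tfrac1c\big(\tfrac{V_\lambda(\vz_0^1)}{L^2 q^2(\frac{q-1}{n-1})BK}\big)^{1/3}$, I would use $\tfrac1\beta=\max\{\tfrac1{\beta_1},\tfrac1{\beta_2}\}\le\tfrac1{\beta_1}+\tfrac1{\beta_2}$ to split the first term and $\beta\le\beta_2$ to control the second. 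Using $q^2+\tfrac{q(q-1)}{n-1}A=q^2\big(1+\tfrac{q-1}{q(n-1)}A\big)$, so $\tfrac1{\beta_1}=6Lq\sqrt{1+\tfrac{q-1}{n-1}\cdot\tfrac{A}{q}}$, the $1/\beta_1$ piece yields exactly $\tfrac{6rLV_\lambda(\vz_0^1)}{K}\sqrt{1+\tfrac{q-1}{n-1}\cdot\tfrac{A}{q}}$; the $1/\beta_2$ piece of the first term and the $\beta_2^2$ piece of the second term, upon collecting the powers of $L$, $B$, $q$, $(n-1)$, $V_\lambda(\vz_0^1)$, and $K$, each reduce to $cr\big(\tfrac{L^2 B V_\lambda(\vz_0^1)^2}{qK^2}\cdot\tfrac{q-1}{n-1}\big)^{1/3}$, and their sum gives the claimed second summand.

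I do not expect any real obstacle here: the argument is essentially bookkeeping, and all the substantive work (the naive recurrence of Lemma~\ref{lem:NaiveReccurence}, the noise control of Lemmas~\ref{lem:DefnGk}--\ref{lem:boundingEGk}, and the sign/size analysis of $C_0,\dots,C_3$ in Lemmas~\ref{lem:ReccurenceWithSmallStepSizes}--\ref{lem:ReccurenceGeneral}) has already been done. The only mildly delicate point is matching the exponents in the two $\beta_2$-substitutions so that the two contributions come out \emph{equal}, which is what forces the factor $2$. I would also dispatch the degenerate cases in one line each: if $B=0$ then $\beta_2=+\infty$, $\beta=\beta_1$, the noise term disappears, and only the first summand remains; and if $q=1$ (in particular $n=1$) then $G_k\equiv 0$ throughout and the whole derivation collapses to the noiseless GDA bound.
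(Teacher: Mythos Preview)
Your proposal is correct and follows essentially the same approach as the paper's own proof: drop the non-positive middle term of~\eqref{eq:recurrence inequality general}, take full expectations and telescope, divide by $q\alpha K$, and then substitute the chosen $\alpha=\min\{\alpha_1,\alpha_2\}$ using $\tfrac{1}{\alpha}\le\tfrac{1}{\alpha_1}+\tfrac{1}{\alpha_2}$ for the first term and $\alpha\le\alpha_2$ for the second. Your exponent bookkeeping for the two $\beta_2$-contributions is exactly right (each gives $cr(\cdot)^{1/3}$, summing to $2cr(\cdot)^{1/3}$), and the degenerate-case remarks are a fine addition.
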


\begin{proof}
	To replace the conditional expectations with unconditional expectations, we take expectation to both sides of the inequality~\eqref{eq:recurrence inequality NCPL}:
	\[\E[V_\lambda(\vz_0^{k+1}) - V_\lambda(\vz_0^{k})] \le -q\alpha  \E\left[\norm{\nabla \Phi (\vx_0^k)}^2\right] + (cr)^3 L^2 \left(\frac{q^2(q-1)}{n-1}\right)B\alpha^3. \]
	Rearranging the terms and taking a sum from $k=1$ to $k=K$, we have
	\[q\alpha\sum_{k=1}^K  \E\left[\norm{\nabla \Phi (\vx_0^k)}^2\right] \le \E[V_\lambda(\vz_0^{1})-V_\lambda(\vz_0^{K+1})]+(cr)^3 L^2 \left(\frac{q^2(q-1)}{n-1}\right)B\alpha^3K.\]
	Dividing both sides by $qK\alpha$, we get the following. Note that $V_\lambda$ is non-negative.
	\begin{align*}
		\frac{1}{K}\sum_{k=1}^K  \E\left[\norm{\nabla \Phi (\vx_0^k)}^2\right] &\le \frac{V_\lambda(\vz_0^{1})}{qK\alpha}+(cr)^3 L^2 \left(\frac{q(q-1)}{n-1}\right)B\alpha^2
	\end{align*}
	Since our choice of step sizes implies
	\[\alpha = \min\left\{\frac{1}{6rL\sqrt{q^2 + \frac{q(q-1)}{n-1}A}},\,\, \frac{1}{cr} \left( \frac{V_\lambda(\vz_0^1)}{L^2Bq^2(\frac{q-1}{n-1})K} \right)^{\frac{1}{3}}\right\},\]
	we eventually prove the theorem by using the inequality $\max\{a,b\}\le a+b$ (for $a,b\ge0$).
\end{proof}

\subsection{Convergence rates for smooth primal-P{\L}-P{\L} problem} \label{sec:proofs simSGDA RR 2PL}

In this subsection, we prove the convergence bound of primal-P{\L}-P{\L} (or, P{\L}($\Phi$)-P{\L}) problems in terms of $\E\left[V_\lambda(\vz_0^{K+1})\right]$.

Unlike the previous subsection, we additionally utilize Assumption~\ref{ass:primalPL} stating that $f(\vx; \vy)$ satisfies primal P{\L} condition, namely, the primal function $\Phi(\vx) = \max_{\vy'} f(\vx;\vy')$ is a $\mu_1$-P{\L} function. With this assumption, we yield another recurrence inequality from the inequality~\eqref{eq:recurrence inequality general}. We note that it uses the $\mu_1$-P{\L} condition for $\Phi$ ($\because$ Proposition~\ref{prop:Phi PL}) but not necessarily for $f(\cdot; \vy)$.

\begin{lemma}\label{lem:Reccurence2PL}
	Suppose that $f$ satisfies Assumptions~\ref{ass:smooth}, \ref{ass:bddvar}, \ref{ass:primal,dual}, \ref{ass:NCPL}, and \ref{ass:primalPL}. Then, with the same choice of $\lambda=4$ and the same condition of the step sizes $\alpha$ and $\beta$ as in Lemma~\ref{lem:ReccurenceGeneral}, the mini-batch simSGDA-RR satisfies that, for some numerical constant $c>0$,
	\begin{align}
		\E_k[V_\lambda(\vz_0^{k+1})] &\le (1-\mu_1 q\alpha/2) V_\lambda(\vz_0^k) + (cr)^3 L^2\left(\frac{q^2(q-1)}{n-1}\right)B\alpha^3. \tag{P{\L}($\Phi$)-P{\L}}\label{eq:recurrence inequality 2PL}
	\end{align}
\end{lemma}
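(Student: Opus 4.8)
The plan is to start directly from the simplified per-epoch recurrence \eqref{eq:recurrence inequality general} of Lemma~\ref{lem:ReccurenceGeneral}, which, under the present hypotheses ($\lambda=4$, $\alpha=\beta/r$ with $r\ge 14\kappa_2^2$, and $\beta$ small), reads
\[
\E_k[V_\lambda(\vz_0^{k+1})] - V_\lambda(\vz_0^{k}) \le -q\alpha \norm{\nabla \Phi (\vx_0^k)}^2 - \tfrac{L\kappa_2 q \alpha}{2} \big[\Phi(\vx_0^k)-f(\vz_0^k)\big] + (cr)^3 L^2 \tfrac{q^2(q-1)}{n-1}B\alpha^3 .
\]
The only new input is Assumption~\ref{ass:primalPL}, the $\mu_1$-P{\L} property of $\Phi$, giving $\norm{\nabla \Phi(\vx_0^k)}^2 \ge 2\mu_1\big(\Phi(\vx_0^k)-\Phi^*\big)$; everything else has already been assembled in Lemmas~\ref{lem:ReccurenceWithSmallStepSizes}--\ref{lem:ReccurenceGeneral}.

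Next I would show that the two negative terms on the right-hand side together dominate $-\tfrac{\mu_1 q\alpha}{2}V_\lambda(\vz_0^k)$. Writing out $V_\lambda(\vz_0^k)=4\big(\Phi(\vx_0^k)-\Phi^*\big)+\big(\Phi(\vx_0^k)-f(\vz_0^k)\big)$ (this is where $\lambda=4$ enters), the comparison splits term by term. For the primal-suboptimality piece, the $\mu_1$-P{\L} inequality gives $-q\alpha\norm{\nabla\Phi(\vx_0^k)}^2 \le -2\mu_1 q\alpha\big(\Phi(\vx_0^k)-\Phi^*\big) = -\tfrac{\mu_1 q\alpha}{2}\cdot 4\big(\Phi(\vx_0^k)-\Phi^*\big)$, an exact match. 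For the duality-gap piece, since $\Phi(\vx_0^k)-f(\vz_0^k)\ge 0$ (Assumption~\ref{ass:primal,dual}) and $L\kappa_2 = L^2/\mu_2 \ge L \ge \mu_1$ (using $\kappa_2\ge1$ and $\kappa_1=L/\mu_1\ge1$ from Proposition~\ref{prop:kappa}), we get $-\tfrac{L\kappa_2 q\alpha}{2}\big(\Phi(\vx_0^k)-f(\vz_0^k)\big) \le -\tfrac{\mu_1 q\alpha}{2}\big(\Phi(\vx_0^k)-f(\vz_0^k)\big)$. Adding the two inequalities yields $-q\alpha\norm{\nabla\Phi(\vx_0^k)}^2 - \tfrac{L\kappa_2 q\alpha}{2}\big(\Phi(\vx_0^k)-f(\vz_0^k)\big) \le -\tfrac{\mu_1 q\alpha}{2}V_\lambda(\vz_0^k)$.

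Finally I would substitute this bound back into the recurrence, move $V_\lambda(\vz_0^k)$ to the right, and read off $\E_k[V_\lambda(\vz_0^{k+1})] \le \big(1-\tfrac{\mu_1 q\alpha}{2}\big)V_\lambda(\vz_0^k) + (cr)^3 L^2 \tfrac{q^2(q-1)}{n-1}B\alpha^3$, which is exactly \eqref{eq:recurrence inequality 2PL} with the same numerical constant $c$. I expect no real obstacle: the content of the lemma was already front-loaded into Lemma~\ref{lem:ReccurenceGeneral}, and this step merely invokes the extra P{\L} hypothesis on $\Phi$ together with the elementary observation that $\lambda=4$ is precisely the value for which the P{\L} factor $2\mu_1$ equals $\lambda\mu_1/2$. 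The only minor bookkeeping is to note $1-\tfrac{\mu_1 q\alpha}{2}\in(0,1)$ (so the inequality is genuinely a contraction), which follows from the step-size bound $\alpha\le \tfrac{1}{84L\kappa_2^2 q}$ inherited from Lemma~\ref{lem:ReccurenceGeneral} and $\mu_1\le L$.
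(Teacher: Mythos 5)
Your proposal is correct and follows essentially the same route as the paper: invoke the $\mu_1$-P{\L} inequality on the $\norm{\nabla\Phi}^2$ term, use $L\kappa_2 \ge L \ge \mu_1$ to weaken the duality-gap coefficient, and observe that $\lambda=4$ makes the two pieces reassemble exactly into $-\tfrac{\mu_1 q\alpha}{2}V_\lambda(\vz_0^k)$. The closing remark that $1-\tfrac{\mu_1 q\alpha}{2}\in(0,1)$ also matches the paper's own observation immediately after the lemma.
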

\begin{proof}
	Since the primal function $\Phi$ is a $\mu_1$-P{\L} function,
	\[-\norm{\nabla \Phi(\vx_0^k)}^2 \le -2\mu_1\left[\Phi(\vx_0^k)-\Phi^*\right].\]

	Also, since $\mu_1 \le L$ and $\kappa_2\ge 1$, we know that $-L\kappa_2 \le -\mu_1$. Applying these to the inequality~\eqref{eq:recurrence inequality general}, we have
	\begin{align*}
		&\E_k\left[V_\lambda(\vz_0^{k+1})\right] - V_\lambda(\vz_0^{k}) \\
		&\le -(2\mu_1 q\alpha/\lambda)\cdot \lambda \left[\Phi(\vx_0^k)-\Phi^*\right] - (\mu_1 q\alpha/2) \left[\Phi(\vx_0^k)-f(\vz_0^k)\right] + (cr)^3 L^2 \left(\frac{q^2(q-1)}{n-1}\right)B\alpha^3\\
		&= -(\mu_1 q\alpha/2)\cdot V_\lambda(\vz_0^k) + (cr)^3 L^2 \left(\frac{q^2(q-1)}{n-1}\right)B\alpha^3,
	\end{align*}
	since $\lambda=4$. By re-arranging the terms, we conclude the proof.
\end{proof}
Of course, the multiplier $1-\mu_1 q\alpha/2$ has a value between 0 and 1. To see why, note that from Equation~\eqref{eq:alpha bound},
\[ 0<\mu_1 q\alpha/2\le  \mu_1 q\cdot \frac{1}{2\cdot84L\kappa_2^2 q} =\frac{1}{168\kappa_1\kappa_2^2}<1.\]

\begin{theorem}[Equivalent to Theorem~\ref{thm:2PL 2}, for simSGDA-RR] \label{thm:2PL 3}
	Assume that $f$ satisfies Assumptions~\ref{ass:smooth}, \ref{ass:bddvar}, \ref{ass:primal,dual}, \ref{ass:NCPL}, and \ref{ass:primalPL}. Let $\lambda=4$. Choose the step sizes by $\alpha=\beta/r$ for some $r\ge 14\kappa_2^2$ and
	\[\beta = \min \left\{ \frac{1}{6L \sqrt{q^2 + \frac{q(q-1)}{n-1}A}},\,\, \frac{2r}{\mu_1 q K} \max\left\{1, \,\,\log \left(\frac{V_\lambda(\vz_0^1) \mu_1 q K^2}{8(cr)^3\kappa_1^2 \left( \frac{q-1}{n-1}\right)B}\right)\right\}\right\},\]
	for some numerical constant $c>0$. Then, mini-batch simSGDA-RR satisfies
	\[\E[V_\lambda(\vz_n^{K})] \le \gO\left(V_\lambda(\vz_0^1)\cdot\exp\left(-\frac{K}{12\kappa_1r\sqrt{1 + \left(\frac{q-1}{n-1}\right)\frac{A}{q}}}\right)\right)+ \tilde{\gO}\left(\frac{\kappa_1^2r^3 B}{\mu_1 q K^2}\right) \cdot \frac{q-1}{n-1}.\]
\end{theorem}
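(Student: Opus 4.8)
The plan is to convert the per-epoch recurrence of Lemma~\ref{lem:Reccurence2PL} into a closed form via Proposition~\ref{prop:recurrence inequality}, and then calibrate the step size so that the exponentially decaying term and the residual ``variance'' term end up of the same order. First I would check that the step sizes prescribed in the theorem satisfy all hypotheses of Lemma~\ref{lem:ReccurenceGeneral} (hence of Lemma~\ref{lem:Reccurence2PL}): $\beta$ is defined as a minimum whose first argument is $\frac{1}{6L\sqrt{q^2+\frac{q(q-1)}{n-1}A}}$, so $\beta\le\frac{1}{6L\sqrt{q^2+\frac{q(q-1)}{n-1}A}}$, while $\alpha=\beta/r$ with $r\ge14\kappa_2^2$. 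Taking total expectation in \eqref{eq:recurrence inequality 2PL} and writing $a_k:=\E[V_\lambda(\vz_0^k)]$ gives
\[a_{k+1}\le\left(1-\frac{\mu_1q\alpha}{2}\right)a_k+(cr)^3L^2\left(\frac{q^2(q-1)}{n-1}\right)B\,\alpha^3 .\]
Applying Proposition~\ref{prop:recurrence inequality} with $b=\mu_1q/2$, $\eta=\alpha$, $m=2$ (recall $\mu_1q\alpha/2\in(0,1)$, checked just after Lemma~\ref{lem:Reccurence2PL}) yields
\[\E[V_\lambda(\vz_0^{K+1})]\le\left(1-\frac{\mu_1q\alpha}{2}\right)^{K}V_\lambda(\vz_0^1)+\frac{2(cr)^3L^2}{\mu_1q}\left(\frac{q^2(q-1)}{n-1}\right)B\,\alpha^2 .\]

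For the decay term I would use $1-x\le e^{-x}$ and $\alpha=\beta/r$, so it is at most $V_\lambda(\vz_0^1)\exp(-\mu_1q\beta K/(2r))$. Writing $\Lambda:=\frac{V_\lambda(\vz_0^1)\mu_1qK^2}{8(cr)^3\kappa_1^2\frac{q-1}{n-1}B}$ for the argument of the logarithm appearing in the step size, the fact that $\beta$ is the minimum of its two arguments, together with $q^2+\frac{q(q-1)}{n-1}A=q^2\big(1+\frac{q-1}{n-1}\frac Aq\big)$ and $\mu_1/L=1/\kappa_1$, gives
\[\frac{\mu_1q\beta K}{2r}=\min\left\{\frac{K}{12\kappa_1r\sqrt{1+\frac{q-1}{n-1}\cdot\frac Aq}},\;\max\{1,\log\Lambda\}\right\}.\]
Hence $\exp(-\mu_1q\beta K/(2r))$ is at most the sum of $\exp\big(-\frac{K}{12\kappa_1r\sqrt{1+\frac{q-1}{n-1}\frac Aq}}\big)$ --- exactly the first term claimed --- and $\exp(-\max\{1,\log\Lambda\})$. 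When $\Lambda\ge1$ the latter is $\le1/\Lambda$, so $V_\lambda(\vz_0^1)$ times it equals $\frac{8(cr)^3\kappa_1^2\frac{q-1}{n-1}B}{\mu_1qK^2}=\tilde{\gO}\big(\frac{\kappa_1^2r^3B}{\mu_1qK^2}\big)\cdot\frac{q-1}{n-1}$; when $\Lambda<1$, $V_\lambda(\vz_0^1)$ itself is already at most that quantity, so the bound holds trivially.

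For the variance term, since $\alpha\le\frac{2}{\mu_1qK}\max\{1,\log\Lambda\}$ by the second branch of the minimum, the last term of the second display is at most $\frac{8(cr)^3L^2}{\mu_1^3qK^2}\big(\frac{q-1}{n-1}\big)B\max\{1,\log\Lambda\}^2=\tilde{\gO}\big(\frac{\kappa_1^2r^3B}{\mu_1qK^2}\big)\cdot\frac{q-1}{n-1}$, using $L^2/\mu_1^2=\kappa_1^2$ and absorbing the logarithm into $\tilde{\gO}$. Adding the two pieces and identifying $\vz_n^K=\vz_0^{K+1}$ proves Theorem~\ref{thm:2PL 3}; the general mini-batch statement (Theorem~\ref{thm:2PL 2}) follows by substituting $q=n/b$, and the complexity bound $T'_\eps=nK$ by forcing each of the two terms to be $\le\eps^2/2$.

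The recurrence-solving is routine; the delicate part is choosing the logarithmic factor in the step size so that the exponentially small term and the $\Theta(1/K^2)$ variance term come out of the \emph{same} order, together with the two corner-case analyses ($\Lambda<1$, and which argument of the $\min$ is active), so that everything collapses cleanly into $\tilde{\gO}$. One must also confirm that this step size still obeys the ``small step size'' constraints \eqref{eq:step sizes condition 1} and keeps $C_0,\dots,C_3$ of the correct sign --- which it does precisely because it never exceeds the first argument of the minimum, the very bound inherited from Lemma~\ref{lem:ReccurenceGeneral}.
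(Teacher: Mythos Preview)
Your proposal is correct and follows essentially the same approach as the paper: unroll the recurrence of Lemma~\ref{lem:Reccurence2PL} via Proposition~\ref{prop:recurrence inequality}, then balance the exponential term against the $\alpha^2$-variance term through the prescribed step size. Your use of $\exp(-\min\{a,b\})\le e^{-a}+e^{-b}$ packages the paper's explicit case split (large $K$ versus small $K$) more compactly, and your treatment of the $\Lambda<1$ corner case is exactly the paper's Case~2, but the logic is identical.
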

\begin{proof}
	To replace the conditional expectations with unconditional expectations, we take expectation to both sides of the inequality~\eqref{eq:recurrence inequality 2PL}:
	\begin{equation*}
	    \E\left[V_\lambda(\vz_0^{k+1})\right] \le
		(1-\mu_1 q\alpha/2) \E\left[V_\lambda(\vz_0^k)\right] +  (cr)^3 L^2 \left(\frac{q^2(q-1)}{n-1}\right)B\alpha^3.
	\end{equation*}
	Unrolling the recurrence inequality (Proposition~\ref{prop:recurrence inequality}) and using the facts $\beta = 14\kappa_2^2 \alpha$, we have
	\begin{align}
		\E[V_\lambda(\vz_n^{K})] &\le (1-\mu_1 q\alpha/2)^K V_\lambda(\vz_0^1)+\frac{2\cdot (cr)^3 L^2}{\mu_1q\alpha} \left(\frac{q^2(q-1)}{n-1}\right)B\alpha^3 \nonumber\\
		&\le \exp(-\mu_1qK\alpha/2) V_\lambda(\vz_0^1)+2(cr)^3 \mu_1\kappa_1^2\left(\frac{q(q-1)}{n-1}\right)B\alpha^2. \label{eq:2PL total bound}
	\end{align}
	Note that, in the inequality above, the second term of the right hand side becomes zero when $q=1$. In that case, we can prove exponential decay of $\E[V_\lambda(\vz^{k}_0)]$. Thus, we simply assume $q>1$ hereafter.
	
	\emph{Case 1:} If $K$ is as large as
    \[K> \frac{\kappa_1r^{3/2}}{\sqrt{\mu_1}}\cdot\sqrt{\frac{8c^3e B}{V_\lambda(\vz_0^1)\, q}\left( \frac{q-1}{n-1}\right)},  \quad (e = \exp(1))\]
    we have a step size $\alpha$ as
	\begin{equation*}
		\alpha = \min \left\{ \frac{1}{6Lr\sqrt{q^2 + \frac{q(q-1)}{n-1}A}},~ \frac{2}{\mu_1 q K} \log \left(\clubsuit \right)\right\}, \quad \text{where}~~\clubsuit = \frac{V_\lambda(\vz_0^1) \mu_1 q K^2}{8(cr)^3 \kappa_1^2\kappa_2^6 \left( \frac{q-1}{n-1}\right)B}.
	\end{equation*}
	Due to the lower bound of epoch size $K$, the fraction $\clubsuit$ inside the log factor is indeed greater than $e>1$, which guarantees the step size is positive.
	Putting this to the inequality~\eqref{eq:2PL total bound} and using the fact that $\max\{a,b\}\le a+b$ (for $a,b\ge 0$), we eventually have
	\begin{align*}
		&\E\left[V_\lambda(\vz_n^{K})\right] \\
		&\le V_\lambda(\vz_0^1)\cdot\exp\left(-\frac{K}{12\kappa_1r\sqrt{1 + \left(\frac{q-1}{n-1}\right)\frac{A}{q}}}\right)+ \frac{2\cdot8(cr)^3\kappa_1^2B}{\mu_1 q K^2} \left( \frac{q-1}{n-1}\right)\left[1+\log^2 \left(\clubsuit\right) \right]\\
    	&= V_\lambda(\vz_0^1)\cdot\exp\left(-\frac{K}{12\kappa_1r\sqrt{1 + \left(\frac{q-1}{n-1}\right)\frac{A}{q}}}\right)+ \tilde{\gO}\left(\frac{\kappa_1^2r^3B}{\mu_1 q K^2} \right)\cdot \frac{q-1}{n-1}.
	\end{align*}
	
	\emph{Case 2:} Otherwise, the log factor might have a negative value when $K$ is too small. However, in this case, we have
	\[V_\lambda(\vz_0^1) \le \frac{8(cr)^3e \kappa_1^2 B}{\mu_1 q K^2}\cdot\frac{q-1}{n-1}\,;\quad \alpha = \min \left\{ \frac{1}{84L\kappa_2^2\sqrt{q^2 + \frac{q(q-1)}{n-1}A}},~ \frac{2}{\mu_1 q K}\right\}.\]
	Putting these to the inequality~\eqref{eq:2PL total bound}, we have
	\begin{align*}
	    \E\left[V_\lambda(\vz_n^{K})\right] &\le \frac{8(cr)^3e \kappa_1^2 B}{\mu_1 q K^2} \left( \frac{q-1}{n-1}\right)\left[\exp(-\mu_1qK\alpha/2) + \frac{1}{e}\cdot\left(\mu_1qK\alpha/2\right)^2\right]\\
	    &\le \frac{8(cr)^3e \kappa_1^2 B}{\mu_1 q K^2} \left( \frac{q-1}{n-1}\right) = \gO\left(\frac{\kappa_1^2r^3 B}{\mu_1 q K^2}\right) \cdot \frac{q-1}{n-1}.
	\end{align*}
	The inequality in the last line is due to the fact that $e^{-t} + t^2/e \le 1$ for each $t\in (0,1]$, and that $\mu_1qK\alpha/2 \in (0,1]$.
	
	Combining both \emph{Case 1} and \emph{Case 2}, we conclude the proof of the theorem.
\end{proof}

\section{Proofs for (mini-batch) alternating SGDA-RR: focusing on changes in the proof}
\label{sec:proofs altSGDA RR}
In this appendix, we prove the same convergence rates for altSGDA-RR as the simultaneous update counterpart. Since most of the steps in the proof are similar to those in Appendix~\ref{sec:proofs simSGDA RR}, we only describe which steps change in the proof.

\subsection{Epoch-wise representations and bounding noise terms}

To analyze altSGDA-RR, we modify the notation for epoch-wise updates. The only change is that an update $\vy_t^k\mapsto\vy_{t+1}^k$ uses $\vx_{t+1}^k$ instead of $\vx_{t}^k$. Hence, the definition of $\vh^k$ should be modified. Recall that
\[\vg_t^k (\vz) := \frac{1}{b} \sum_{i\in \gB_t^k} \nabla_1 f_{i}(\vz), \quad \vh_t^k (\vz) := \frac{1}{b} \sum_{i\in \gB_t^k} \nabla_2 f_{i}(\vz),\]
where $\gB_t^k$ is a mini-batch of size $b$ formed at iteration $t$ of epoch $k$. Then, at epoch $k$, by re-definition of $\vh^k$,
\begin{align*}
	\vg^k := \frac{1}{q} \sum_{t=1}^q \vg_t^k (\vx^k_{t-1};\vy_{t-1}^k), \quad &\vh^k := \frac{1}{q} \sum_{t=1}^q \vh_t^k (\vx^k_{\color{red}t};\vy_{t-1}^k).\\
	\vx_{0}^{k+1} = \vx_{0}^k - q\alpha \vg^k, \quad &\vy_{0}^{k+1} = \vy_{0}^k + q\beta \vh^k. \tag{altSGDA-RR}\label{eq:epochwise update alt}
\end{align*}

We still approximate this epoch-wise update rule to a full-batch simultaneous GDA update \eqref{eq:approximately GDA} with step sizes $q\alpha$ and $q\beta$.
Again, we control the ``noise'' terms $\norm{\vg^k-\nabla_1 f (\vz_0^k)}^2$ and $\norm{\vh^k-\nabla_2 f (\vz_0^k)}^2$ not to be large. 
Because of the modification of $\vh^k$, we have a different result for $\norm{\vh^k-\nabla_2 f (\vz_0^k)}^2$ as follows. 

\begin{lemma} \label{lem:noise, alt}
	For mini-batch altSGDA-RR, recall that
	\begin{equation*}
		G_k := \frac{1}{q}\sum_{t=1}^{q} \norm{\vz_{t-1}^k - \vz_0^k}^2.
	\end{equation*}
	If we have Assumption~\ref{ass:smooth}, then we have 
	\begin{equation}
		\norm{\vh^k-\nabla_2 f (\vz_0^k)}^2 \le L^2 G_k + L^2q\alpha^2\norm{\vg^k}^2,\quad \text{ whereas }~ \norm{\vg^k-\nabla_1 f (\vz_0^k)}^2 \le L^2 G_k. \label{eq:xy gradgap alt}
	\end{equation}
\end{lemma}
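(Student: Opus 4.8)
The plan is to mirror the proof of Lemma~\ref{lem:DefnGk} but to account carefully for the fact that, in altSGDA-RR, the $\vy$-update at iteration $t$ uses $\vx_t^k$ rather than $\vx_{t-1}^k$. The bound on $\norm{\vg^k - \nabla_1 f(\vz_0^k)}^2$ is unchanged, since the $\vx$-updates in altSGDA-RR are identical to those in simSGDA-RR; so I would simply cite the computation from Lemma~\ref{lem:DefnGk} for that half, namely
\[
\norm{\vg^k - \nabla_1 f(\vz_0^k)}^2 = \norm{\tfrac1q\sum_{t=1}^q\bigl[\vg_t^k(\vz_{t-1}^k) - \vg_t^k(\vz_0^k)\bigr]}^2 \le \tfrac1q\sum_{t=1}^q\norm{\vg_t^k(\vz_{t-1}^k) - \vg_t^k(\vz_0^k)}^2 \le \tfrac{L^2}{q}\sum_{t=1}^q\norm{\vz_{t-1}^k - \vz_0^k}^2 = L^2 G_k,
\]
using $\tfrac1q\sum_t \vg_t^k(\vz) = \nabla_1 f(\vz)$, Jensen, and $L$-Lipschitzness of $\vg_t^k$.

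For the $\vh^k$ term, I would write $\vh^k - \nabla_2 f(\vz_0^k) = \tfrac1q\sum_{t=1}^q\bigl[\vh_t^k(\vx_t^k;\vy_{t-1}^k) - \vh_t^k(\vz_0^k)\bigr]$, apply Jensen to pull the square inside the average, and then use $L$-Lipschitzness of $\vh_t^k$ to get $\norm{\vh_t^k(\vx_t^k;\vy_{t-1}^k) - \vh_t^k(\vx_0^k;\vy_0^k)}^2 \le L^2\bigl(\norm{\vx_t^k - \vx_0^k}^2 + \norm{\vy_{t-1}^k - \vy_0^k}^2\bigr)$. The first summand is the ``current-iterate'' displacement in $\vx$, which is \emph{not} part of $\vz_{t-1}^k - \vz_0^k$; I would split it via $\vx_t^k - \vx_0^k = (\vx_{t-1}^k - \vx_0^k) + (\vx_t^k - \vx_{t-1}^k)$ and use $\norm{a+b}^2 \le 2\norm a^2 + 2\norm b^2$, or more directly bound $\norm{\vx_t^k - \vx_0^k}^2 + \norm{\vy_{t-1}^k-\vy_0^k}^2 \le \norm{\vz_t^k - \vz_0^k}^2 + \norm{\vz_{t-1}^k - \vz_0^k}^2$ (since $\norm{\vy_{t-1}^k-\vy_0^k}^2 \le \norm{\vz_{t-1}^k-\vz_0^k}^2$ and $\norm{\vx_t^k-\vx_0^k}^2 \le \norm{\vz_t^k-\vz_0^k}^2$). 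Averaging over $t\in[q]$ gives $\tfrac1q\sum_{t=1}^q\norm{\vz_t^k-\vz_0^k}^2 + \tfrac1q\sum_{t=1}^q\norm{\vz_{t-1}^k-\vz_0^k}^2$; the second sum is exactly $G_k$, and the first sum equals $G_k - \tfrac1q\norm{\vz_0^k-\vz_0^k}^2 + \tfrac1q\norm{\vz_q^k-\vz_0^k}^2 = G_k + \tfrac1q\norm{\vz_q^k - \vz_0^k}^2 - 0$ after re-indexing, so I need a bound on the single extra term $\tfrac1q\norm{\vz_q^k-\vz_0^k}^2$.

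The cleanest route to the stated bound is to isolate the $\vx$-increment directly: from $\vx_t^k = \vx_{t-1}^k - \alpha\vg_t^k(\cdot)$ we get $\vx_t^k - \vx_0^k = -\alpha\sum_{j=1}^t \vg_j^k(\vz_{j-1}^k)$. For the alternating case I would instead handle the $\vy$-update argument by writing $\norm{\vh^k - \nabla_2 f(\vz_0^k)}^2 \le \tfrac1q\sum_t \norm{\vh_t^k(\vx_t^k;\vy_{t-1}^k)-\vh_t^k(\vx_0^k;\vy_0^k)}^2$ and then $\norm{\vh_t^k(\vx_t^k;\vy_{t-1}^k)-\vh_t^k(\vx_{t-1}^k;\vy_{t-1}^k)}^2 + \norm{\vh_t^k(\vx_{t-1}^k;\vy_{t-1}^k)-\vh_t^k(\vx_0^k;\vy_0^k)}^2$ via Young, bounding the first by $L^2\norm{\vx_t^k-\vx_{t-1}^k}^2 = L^2\alpha^2\norm{\vg_t^k(\vz_{t-1}^k)}^2$ and the second by $L^2\norm{\vz_{t-1}^k-\vz_0^k}^2$. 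Then $\tfrac1q\sum_t\norm{\vg_t^k(\vz_{t-1}^k)}^2 \ge \norm{\tfrac1q\sum_t\vg_t^k(\vz_{t-1}^k)}^2 = \norm{\vg^k}^2$ goes the wrong way, so instead I use Jensen more carefully or simply carry $\tfrac1q\sum_t\norm{\vg_t^k(\vz_{t-1}^k)}^2$; the target bound $L^2 G_k + L^2 q\alpha^2\norm{\vg^k}^2$ suggests the intended split keeps a single aggregated $\vg^k$, so I would apply Young with the \emph{full-epoch} version, bounding $\norm{\vh^k - \nabla_2 f(\vz_0^k)}^2$ via the telescoped quantity $\sum_t(\vx_t^k - \vx_{t-1}^k)$ averaged, giving the extra term $L^2\norm{q\alpha\vg^k}^2/q = L^2 q\alpha^2\norm{\vg^k}^2$ after Jensen. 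The main obstacle is precisely this bookkeeping: making the ``shifted argument'' error collapse to the clean term $L^2 q\alpha^2\norm{\vg^k}^2$ rather than to $L^2\alpha^2\sum_t\norm{\vg_t^k(\vz_{t-1}^k)}^2$, which requires grouping the per-step $\vx$-increments into the single aggregate $q\alpha\vg^k$ before applying Jensen, rather than after. Everything else is a direct transcription of Lemma~\ref{lem:DefnGk}.
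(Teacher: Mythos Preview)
Your proposal has the right ingredients (Jensen, $L$-Lipschitzness of $\vh_t^k$, an index shift) but none of the three routes you sketch actually delivers the stated bound; each introduces either a spurious factor of $2$, an extra $\norm{\vh^k}^2$ term, or per-step gradient norms $\norm{\vg_t^k(\vz_{t-1}^k)}^2$ that cannot be collapsed to $\norm{\vg^k}^2$ in the right direction.

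The paper's argument is much simpler than the bookkeeping you attempt. After Jensen and Lipschitz one has
\[
\norm{\vh^k-\nabla_2 f(\vz_0^k)}^2 \le \frac{L^2}{q}\sum_{t=1}^q\Bigl(\norm{\vx_t^k-\vx_0^k}^2 + \norm{\vy_{t-1}^k-\vy_0^k}^2\Bigr).
\]
Now shift the index \emph{only in the $\vx$-sum}, keeping it exact rather than bounding by $\norm{\vz_t^k-\vz_0^k}^2$:
\[
\sum_{t=1}^q\norm{\vx_t^k-\vx_0^k}^2 = \sum_{t=1}^q\norm{\vx_{t-1}^k-\vx_0^k}^2 + \norm{\vx_q^k-\vx_0^k}^2,
\]
since the $t=0$ term vanishes. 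Recombining with the untouched $\vy$-sum gives exactly $\sum_{t=1}^q\norm{\vz_{t-1}^k-\vz_0^k}^2 = qG_k$, plus the single extra term $\norm{\vx_q^k-\vx_0^k}^2 = \norm{q\alpha\vg^k}^2 = q^2\alpha^2\norm{\vg^k}^2$. Dividing by $q$ yields $L^2 G_k + L^2 q\alpha^2\norm{\vg^k}^2$ with no factor of $2$ and no $\vh^k$ contamination. Your first route loses this precision by passing to $\norm{\vz_t^k-\vz_0^k}^2$ before shifting, which doubles the $G_k$ coefficient and drags in $\norm{\vy_q^k-\vy_0^k}^2$.
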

\begin{proof} Because of $L$-Lipschitz continuity of $\vh^k_t(\cdot;\cdot)$,
	\begin{align*}
		\norm{\vh^k-\nabla_2 f (\vz_0^k)}^2 &= \norm{\frac{1}{q}\sum_{t=1}^{q} \left[\vh_t^k (\vx_{t}^k; \vy_{t-1}^k) - \vh_t^k (\vx_{0}^k; \vy_{0}^k)\right]}^2 \nonumber\\
		&\le \frac{1}{q}\sum_{t=1}^{q}\norm{\vh_t^k (\vx_{t}^k; \vy_{t-1}^k) - \vh_t^k (\vx_{0}^k; \vy_{0}^k)}^2 \nonumber \\
		&\le \frac{L^2}{q}\sum_{t=1}^{q}\norm{\vz_{t-1}^k - \vz_0^k}^2 + \frac{L^2}{q}\norm{\vx_q^k-\vx_0^k}^2 = L^2 G_k + L^2q\alpha^2\norm{\vg^k}^2.
	\end{align*}
	The last ineqaulity holds because $\vx^k_q = \vx^{k+1}_0$.
\end{proof}

\subsection{Bounding noise terms: a bit different proof of Lemma~\ref{lem:boundingEGk}}

We notice that the same result as Lemma~\ref{lem:boundingEGk} holds not only for simultaneous updates but also alternating updates, even though it is not very straightforward. We need to reflect the changes from the previous subsection. That is, we have to be careful when we expand the term $\norm{\vy_t^k-\vy_0^k}^2$ ($0\le t\le q-1$). Unlike the inequality~\eqref{eq:xgap} (in the original proof), we have
\begingroup
\allowdisplaybreaks
\begin{align*}
	&\norm{\vy_t^k - \vy_0^k}^2 = \beta^2 t^2 \norm{\frac{1}{t}\sum_{j=1}^{t} \vh_j^k (\vx_{j}^k; \vy_{j-1}^k) }^2 \nonumber\\
	&\le 3\beta^2 t^2\! \left[\norm{\frac{1}{t}\sum_{j=1}^{t} \left[\vh_j^k  (\vx_{j}^k; \vy_{j-1}^k)-\vh_j^k (\vz_0^k)\right]}^2\! + \norm{\frac{1}{t}\sum_{j=1}^{t} \vh_j^k (\vz_0^k)-\nabla_2 f (\vz_0^k)}^2\!+ \norm{\nabla_2 f(\vz_0^k)}^2\right]\nonumber\\
	&\le 3\beta^2 t^2\!\left[\frac{1}{t}\sum_{j=1}^{t} \norm{\vh_j^k  (\vx_j^k; \vy_{j-1}^k)-\vh_j^k (\vz_0^k)}^2\! + \norm{\frac{1}{t}\sum_{j=1}^{t} \vh_j^k (\vz_0^k)-\nabla_2 f (\vz_0^k)}^2\!+ \norm{\nabla_2 f(\vz_0^k)}^2\right]\nonumber\\
	&\le 3\beta^2 t^2\! \left[\!\frac{L^2}{t}\!\left(\norm{\vx_t^k\!-\!\vx_0^k}^2\!+\! \sum_{j=1}^{t} \norm{\vz_{j\!-\!1}^k\!-\!\vz_0^k}^2\!\right)\!+\!\norm{\frac{1}{t}\sum_{j=1}^{t} \vh_j^k (\vz_0^k)-\nabla_2 f (\vz_0^k)}^2\!+ \norm{\nabla_2 f(\vz_0^k)}^2\right]\nonumber\\
	&\le 3\beta^2 L^2 t\! \sum_{j=1}^{t} \norm{\vz_j^k-\vz_0^k}^2\! + 3\beta^2 t^2 \!\left[\norm{\frac{1}{t}\sum_{j=1}^{t} \vh_j^k (\vz_0^k)-\nabla_2 f (\vz_0^k)}^2\!+ \norm{\nabla_2 f(\vz_0^k)}^2\right]\nonumber\\
	&\le 3\beta^2 L^2 t \cdot qG_k + 3\beta^2 t^2 \left[\norm{\frac{1}{t}\sum_{j=1}^{t} \vh_j^k (\vz_0^k)-\nabla_2 f (\vz_0^k)}^2\!+ \norm{\nabla_2 f(\vz_0^k)}^2\right].
\end{align*}
\endgroup
The second and third inequality holds by Jensen's inequality, and the last inequality holds because $t\le q-1$. The resulting upper bound is identical to the inequality~\eqref{eq:ygap}. Proving this inequality above suffices to show that the conclusion of Lemma~\ref{lem:boundingEGk} also holds for altSGDA-RR, because we eventually take an average along $0\le t \le q-1$ and the other steps in the proof do not utilize the ``order'' (either simultaneous or alternating) of updates.

\subsection{Recurrence inequalities for general smooth nonconvex-P{\L} objective}

In the proof for simSGDA-RR, we applied Lemma~\ref{lem:NaiveReccurence}, Lemma~\ref{lem:boundingEGk}, and the ``small-step-size'' assumptions (three inequalities in~\eqref{eq:step sizes condition 1}) to deduce Lemma~\ref{lem:ReccurenceWithSmallStepSizes}. However, due to Lemma~\ref{lem:noise, alt} that we obtained for altSGDA-RR, we need slightly different assumptions on step sizes rather than \eqref{eq:step sizes condition 1}. 

Fortunately, we notice that the Lemma~\ref{lem:NaiveReccurence} also holds for altSGDA-RR, with a modified version of $\vh^k$. This is because the proof of the lemma does not utilize step-wise updates, while the discrepancy between simultaneous and alternating updates only appears in the step-wise updates. Thus, we have the same result as Lemma~\ref{lem:ReccurenceWithSmallStepSizes}.

\begin{lemma}\label{lem:ReccurenceWithSmallStepSizes, alt}
	Suppose that Assumptions~\ref{ass:smooth}, \ref{ass:bddvar}, \ref{ass:primal,dual}, and \ref{ass:NCPL} hold. Modify the inequalities~\eqref{eq:step sizes condition 1} (from Lemma~\ref{lem:ReccurenceWithSmallStepSizes}) by
	\begin{equation}
		\lambda - \left\{(\lambda+1)(\kappa_2+1)+1\right\}Lq\alpha -L^2q\alpha\beta \ge 0, \quad \beta\le \frac{1}{qL}, \quad \alpha^2+\beta^2 \le \frac{1}{3q(q-1)L^2}. \label{eq:step sizes condition 2}
	\end{equation}
	(In fact, only the first one is different.) Then, the result of Lemma~\ref{lem:ReccurenceWithSmallStepSizes} still holds for mini-batch altSGDA-RR.
\end{lemma}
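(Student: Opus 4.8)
The plan is to transcribe the proof of Lemma~\ref{lem:ReccurenceWithSmallStepSizes} almost line-for-line, substituting the altSGDA-RR noise bound (Lemma~\ref{lem:noise, alt}) for the simultaneous-update bound (Lemma~\ref{lem:DefnGk}) and carefully tracking the single extra term this introduces. First I would recall that Lemma~\ref{lem:NaiveReccurence}, and hence inequality~\eqref{eq:NaiveReccurence}, holds verbatim for altSGDA-RR with the re-defined $\vh^k$: its proof only manipulates the epoch-wise update $\vx_0^{k+1}=\vx_0^k-q\alpha\vg^k$, $\vy_0^{k+1}=\vy_0^k+q\beta\vh^k$ together with the smoothness of $\Phi$ and $-f$, and never uses the step-wise ordering of updates. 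The first two inequalities of~\eqref{eq:step sizes condition 2} are not actually needed at this stage; they only serve, as in the simultaneous case, to make the $\norm{\vg^k}^2$ and $\norm{\vh^k}^2$ terms nonpositive.

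Next I would plug the altSGDA-RR noise bounds from Lemma~\ref{lem:noise, alt}, namely $\norm{\vg^k-\nabla_1 f(\vz_0^k)}^2\le L^2 G_k$ and $\norm{\vh^k-\nabla_2 f(\vz_0^k)}^2\le L^2 G_k+L^2 q\alpha^2\norm{\vg^k}^2$, into~\eqref{eq:NaiveReccurence}. The first bound and the $L^2 G_k$ piece of the second contribute exactly the term $\tfrac{(2\lambda+1)\alpha+\beta}{2}qL^2 G_k$ that already appears in~\eqref{eq:before expectation}, so the $G_k$ coefficient is unchanged. The only new contribution is $\tfrac{q\beta}{2}\cdot L^2 q\alpha^2\norm{\vg^k}^2=\tfrac{q\alpha}{2}(L^2 q\alpha\beta)\norm{\vg^k}^2$; merging it with the existing $-\big[\lambda-\{(\lambda+1)(\kappa_2+1)+1\}Lq\alpha\big]\tfrac{q\alpha}{2}\norm{\vg^k}^2$ yields $-\big[\lambda-\{(\lambda+1)(\kappa_2+1)+1\}Lq\alpha-L^2 q\alpha\beta\big]\tfrac{q\alpha}{2}\norm{\vg^k}^2$, which is precisely why the first inequality of~\eqref{eq:step sizes condition 1} must be strengthened to the first inequality of~\eqref{eq:step sizes condition 2}. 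Under that modified condition this term is nonpositive and is dropped; the $\norm{\vh^k}^2$ term is dropped using $\beta\le 1/(qL)$.

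From there the argument is no longer algorithm-specific: I would take the conditional expectation $\E_k$ and bound $\E_k G_k$ via Lemma~\ref{lem:boundingEGk}, whose hypothesis $\alpha^2+\beta^2\le\tfrac{1}{3q(q-1)L^2}$ is the third inequality of~\eqref{eq:step sizes condition 2} and which, as established in the preceding subsection, applies equally to altSGDA-RR. Finally the P{\L}-based inequalities~\eqref{eq:-f is PL},~\eqref{eq:apply QG}, and~\eqref{eq:apply QG and Young} are applied verbatim to the resulting bound on $\E_k[V_\lambda(\vz_0^{k+1})]-V_\lambda(\vz_0^k)$, producing the same statement with the identical constants $C_0,C_1,C_2,C_3$ as in Lemma~\ref{lem:ReccurenceWithSmallStepSizes}.

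I expect the only genuinely delicate point to be the bookkeeping around this extra $\norm{\vg^k}^2$ term: one must verify that it does not change the coefficient of $G_k$ (it does not, since it arises separately from the $L^2 G_k$ piece) and that it does not interfere with the subsequent P{\L} manipulations (it does not, since those act only on $\norm{\nabla_1 f(\vz_0^k)}^2$, $\norm{\nabla_2 f(\vz_0^k)}^2$, and $\norm{\nabla\Phi(\vx_0^k)}^2$, none of which are affected by dropping the $\norm{\vg^k}^2$ term). Everything else is a straightforward transcription, so the lemma should follow with essentially no new ideas beyond Lemma~\ref{lem:noise, alt}.
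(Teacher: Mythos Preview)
Your proposal is correct and follows essentially the same route as the paper: apply Lemma~\ref{lem:noise, alt} inside inequality~\eqref{eq:NaiveReccurence}, absorb the single extra $\tfrac{q\beta}{2}\cdot L^2 q\alpha^2\norm{\vg^k}^2$ contribution into the $\norm{\vg^k}^2$ coefficient (which is exactly what forces the modified first condition of~\eqref{eq:step sizes condition 2}), drop the now-nonpositive $\norm{\vg^k}^2$ and $\norm{\vh^k}^2$ terms, and observe that the remainder coincides with~\eqref{eq:before expectation} so the rest of Lemma~\ref{lem:ReccurenceWithSmallStepSizes} carries over verbatim. Your bookkeeping is slightly more explicit than the paper's, but the argument is the same.
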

\begin{proof}
	We first apply Lemma~\ref{lem:noise, alt} to the general bound resulted from Lemma~\ref{lem:NaiveReccurence}:
	\begin{align}
		&V_\lambda(\vz_0^{k+1}) - V_\lambda(\vz_0^{k}) \nonumber\\
		&{\le} -\left(\frac{\lambda + 1}{2}\right) q\alpha \norm{\nabla \Phi (\vx_0^k)}^2 + (\lambda+1)q\alpha \norm{\nabla \Phi (\vx_0^k)-\nabla_1 f(\vz_0^k)}^2 \nonumber\\
		&\quad + \frac{q\alpha}{2} \norm{\nabla_1 f(\vz_0^k)}^2 - \frac{q\beta}{2} \norm{\nabla_2 f(\vz_0^k)}^2 + \frac{(2\lambda+1)\alpha +\beta}{2}qL^2 G_k \nonumber\\
		&\quad - \big[\lambda - \left\{(\lambda+1)(\kappa_2+1)+1\right\}Lq\alpha -L^2q\alpha\beta\big]\frac{q\alpha}{2}\norm{\vg^k}^2 - (1-Lq\beta)\frac{q\beta}{2} \norm{\vh^k}^2. \label{eq:general bound}
	\end{align}

	Hence, the first two inequalities of \eqref{eq:step sizes condition 2} eliminate the last two terms on the right side of the inequality~\eqref{eq:general bound} above:
	\begin{align*}
		V_\lambda(\vz_0^{k+1}) - V_\lambda(\vz_0^{k}) &{\le} -\left(\frac{\lambda + 1}{2}\right) q\alpha \norm{\nabla \Phi (\vx_0^k)}^2 + (\lambda+1)q\alpha \norm{\nabla \Phi (\vx_0^k)-\nabla_1 f(\vz_0^k)}^2 \\
		&\quad + \frac{q\alpha}{2} \norm{\nabla_1 f(\vz_0^k)}^2 - \frac{q\beta}{2} \norm{\nabla_2 f(\vz_0^k)}^2 + \frac{(2\lambda+1)\alpha +\beta}{2}qL^2 G_k.
	\end{align*}
	This is identical to the inequality~\eqref{eq:before expectation} in the proof of Lemma \ref{lem:ReccurenceWithSmallStepSizes}. From this point on, the rest of the proof is exactly identical to Lemma~\ref{lem:ReccurenceWithSmallStepSizes}.
\end{proof}

Lemma~\ref{lem:ReccurenceWithSmallStepSizes, alt} establishes that altSGDA-RR also satisfies a concise bound on the expected per-epoch change of $V_\lambda$, albeit under a slightly different set of assumptions \eqref{eq:step sizes condition 2} on step sizes. 
Using this result, we can prove the convergence rates for altSGDA-RR that are exactly the same as simSGDA-RR. 

\subsection{Small step size assumptions}

It is left to show an altSGDA-RR counterpart for Lemma~\ref{lem:ReccurenceGeneral} which establishes the general recurrence inequality~\eqref{eq:recurrence inequality general}. In fact, the same choice of step sizes as simSGDA-RR, namely
\[0<\beta\le \frac{1}{6L \sqrt{q^2 + \frac{q(q-1)}{n-1}A}}\quad \text{and} \quad \alpha = \frac{\beta}{r} \quad \text{where} ~~r\ge 14\kappa_2^2,\]
actually meets the newly introduced conditions~\eqref{eq:step sizes condition 2}. Among the three inequalities, the only one that needs to be checked is
\[\lambda - \left\{(\lambda+1)(\kappa_2+1)+1\right\}Lq\alpha - L^2q\alpha\beta>0.\]

Note that, regardless of $A\ge0$,
\[\beta \le \frac{1}{6Lq} \quad \text{and}\quad \alpha \le \frac{1}{6Lqr}\le \frac{1}{84L\kappa_2^2q}\]
In this case, 
\begin{align*}
    &\lambda - \left\{(\lambda+1)(\kappa_2+1)+1\right\}Lq\alpha - L^2q\alpha\beta\\
    &\ge 4 - (11\kappa_2 + L\beta)Lq\alpha \ge 4 - \left (11\kappa_2 + \frac{1}{6} \right)\cdot \frac{1}{84\kappa_2^2}>0.
\end{align*}
Therefore, there is no need to modify our choices of $\lambda$ and the step sizes $\alpha, \beta$ for the analysis of altSGDA-RR, and the rest of the proof for simSGDA-RR goes through.

\section{Proofs for lower bound of deterministic full-batch simGDA} 
\label{sec:proofs LB GDA}
In this appendix, we illustrate a comprehensive lower bound for full-batch GDA, which is specific to the choice of step size ratio (Theorem~\ref{thm:LB}). Before we start the proof, we define a class of smooth strongly-convex-strongly concave functions. 
\begin{definition}
Let $\gF(L, \mu_1, \mu_2)$ be the class of functions $f(\vx;\vy)$ with two arguments $\vx$ and $\vy$ of any dimension, which is $L$-smooth, $\mu_1$-strongly-convex in $\vx$, and $\mu_2$-strongly-concave in $\vy$. Let $\kappa_1 = L/\mu_1\ge1$ and $\kappa_2 = L/\mu_2\ge1$ be condition numbers of the function class. Denote the (unique) saddle (or, global minimax) point by $\vz^*=(\vx^*;\vy^*)$.
\end{definition}

We restate and prove the Theorem~\ref{thm:LB} for reader's convenience.
\begin{theorem}[Restatement of Theorem~\ref{thm:LB}]\label{thm:LB 2}
    Suppose $\kappa_1\ge c$ and $\kappa_2\ge c$ for some constant $c>1$. Then, for each step size ratio $r>0$, there exists a function $f\in\gF(L, \mu_1, \mu_2)$ for which simGDA with any step sizes $\alpha$ and $\beta$ of ratio $r=\beta/\alpha$ requires
    \[K=\left\{\begin{array}{ll}
    \Omega\left(\kappa_1 r\log(1/\eps)\right), &\text{\rm if}~ r\ge \kappa_2/c,\\
    \Omega\left(\kappa_1\kappa_2\log(1/\eps)\right), & \text{\rm if}~ c/\kappa_1\le r\le \kappa_2/c,\\
    \Omega((\kappa_2/r)\log(1/\eps)), & \text{\rm if}~ 0<r\le c/\kappa_1
    \end{array}\right.\]
    iterations to achieve either $\norm{\vz_k-\vz^*}^2 \le \eps^2$ or $V_\lambda(\vz_K)\le \eps^2$.
\end{theorem}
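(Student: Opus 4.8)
The plan is to exhibit, for each regime of $r$, a simple quadratic function in $\gF(L,\mu_1,\mu_2)$ on which simGDA either diverges or contracts slowly, and then to read off the iteration complexity from the spectral radius of the linear update operator. Since simGDA on a quadratic $f(\vx;\vy)=\frac12\vx^\top\mP\vx + \vx^\top\mQ\vy - \frac12\vy^\top\mR\vy$ is a linear recursion $\vz_k - \vz^* = \mT(\vz_{k-1}-\vz^*)$ with
\[
\mT = \begin{bmatrix} \mI - \alpha\mP & -\alpha\mQ \\ \beta\mQ^\top & \mI - \beta\mR \end{bmatrix},
\]
the number of iterations needed to reach $\norm{\vz_K - \vz^*}\le\eps$ is $\Theta\!\left(\tfrac{\log(1/\eps)}{\log(1/\rho(\mT))}\right)$ when $\rho(\mT)<1$, and is infinite when $\rho(\mT)\ge1$. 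For the $V_\lambda$ criterion, I would note that for SC-SC quadratics $V_\lambda(\vz_K)$ is equivalent to $\norm{\vz_K-\vz^*}^2$ up to constants depending on $L,\mu_1,\mu_2$ (via $L$-smoothness and strong convexity bounds, as already used in Section~\ref{sec:discussion:deterministic}), so the same lower bound transfers.

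First I would handle the simplest building block: the decoupled quadratic $f(\vx;\vy) = \tfrac{\mu_1}{2}x^2 - \tfrac{\mu_2}{2}y^2$ (scalar), where $\mT = \mathrm{diag}(1-\alpha\mu_1, 1-\beta\mu_2)$. Here $\rho(\mT) = \max\{|1-\alpha\mu_1|,|1-\beta\mu_2|\}$, and since $\alpha=\beta/r$, choosing $\beta$ optimally still leaves $\rho \ge 1 - \Theta(\min\{\beta\mu_1/r \cdot r, \beta\mu_2\}) $ — more carefully, optimizing over $\beta$ gives a best rate governed by balancing $\alpha\mu_1 = \beta\mu_1/r$ against $\beta\mu_2$; if $r$ is large the $\vx$-update is the bottleneck, forcing $\beta = O(r/\kappa_1 \cdot \text{something})$ and hence $K = \Omega(\kappa_1 r \log(1/\eps))$, recovering the first branch. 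This already shows the first and (by symmetry with a coupled term) informs the third branch $r \le c/\kappa_1$ where the $\vy$-update becomes the bottleneck, giving $K=\Omega((\kappa_2/r)\log(1/\eps))$.

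The genuinely delicate regime is the middle one, $c/\kappa_1 \le r \le \kappa_2/c$, where I expect the matching lower bound $\Omega(\kappa_1\kappa_2\log(1/\eps))$ cannot come from a decoupled example (the decoupled one would only give $\max\{\kappa_1 r, \kappa_2/r\}$, which can be as small as $\sqrt{\kappa_1\kappa_2}$ when $r\approx\sqrt{\kappa_2/\kappa_1}$). The trick, following the $y$-side strongly-concave construction of \citet[Theorem~4.1]{li2022convergence} and the SC-SC lower bounds surveyed in \citet[Theorem~C.1]{das2022sampling}, is to use a \emph{coupled} $2\times2$ example such as $f(x;y) = \tfrac{\mu_1}{2}x^2 + Lxy - \tfrac{\mu_2}{2}y^2$ (verifying it lies in $\gF$ requires checking the Hessian block bound, which forces the cross term's magnitude to be $\le L$ and is why the construction is tight). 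For this $f$, $\mT$ is a genuine $2\times2$ matrix whose eigenvalues I would compute explicitly; the key computation is to show that for \emph{any} $\alpha,\beta$ with $\beta/\alpha = r$ in the middle range, the spectral radius satisfies $\rho(\mT) \ge 1 - O(1/(\kappa_1\kappa_2))$, typically because the coupling term $\alpha\beta L^2 = \beta^2 L^2/r$ contributes a $+$ to $\det\mT$ that prevents both eigenvalues from being made small simultaneously — essentially the determinant $\det\mT = (1-\alpha\mu_1)(1-\beta\mu_2) + \alpha\beta L^2$ is bounded below, and since $\rho(\mT)^2 \ge |\det\mT|$, we get the bound. The main obstacle, then, will be the careful case analysis of this $2\times2$ eigenvalue/determinant bound across the three regimes and ensuring the transition points ($r = \kappa_2/c$ and $r = c/\kappa_1$) are handled so the three branches glue into a continuous lower bound; I would also need to double-check that the constant $c>1$ assumption is used exactly where needed (to keep $1-\alpha\mu_1$ and $1-\beta\mu_2$ from changing sign in ways that help the algorithm).

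Finally, I would assemble the three cases: in each regime pick the explicit worst-case $f$ (decoupled for the outer two, coupled $2\times2$ for the middle one), lower-bound $\rho(\mT)$ uniformly over all valid $(\alpha,\beta)$ with the prescribed ratio, convert $\rho(\mT) \ge 1 - \delta$ into $K \ge \Omega(\delta^{-1}\log(1/\eps))$ via $\norm{\vz_K - \vz^*} \ge \rho(\mT)^K \norm{\vz_0 - \vz^*}$ for a suitably chosen initialization along the slowest eigendirection, and translate to the $V_\lambda$ criterion using the quadratic equivalence. The statement that this matches the P{\L}($\Phi$)-P{\L} upper bound for $r \gtrsim \kappa_2^2$ is then immediate by plugging $r = \Theta(\kappa_2^2)$ into the first branch.
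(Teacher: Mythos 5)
Your framing — quadratic worst cases, spectral-radius analysis of the linear update operator $\mT$, and the $V_\lambda \leftrightarrow \norm{\vz-\vz^*}^2$ equivalence — is precisely the paper's (the last piece is Lemma~\ref{lem:CriteriaEquivalence}). The problem is that both of your concrete hard instances undershoot the claimed rates by a full factor of $\kappa_1$ or $\kappa_2$.

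For the outer regimes you propose the decoupled $f(x;y)=\tfrac{\mu_1}{2}x^2-\tfrac{\mu_2}{2}y^2$. This is in $\gF(L,\mu_1,\mu_2)$, but it is only $\max\{\mu_1,\mu_2\}$-smooth, so there is no hard $L$-scale cap on $\beta$. Minimizing $\rho(\mT)=\max\{|1-\beta\mu_1/r|,|1-\beta\mu_2|\}$ over $\beta$ at fixed ratio gives, for $r\ge\mu_1/\mu_2$, the best choice $\beta=2r/(\mu_1+r\mu_2)$ with $\rho = 1-2\mu_1/(\mu_1+r\mu_2)$, hence only $K=\Omega\big((1+r\kappa_1/\kappa_2)\log(1/\eps)\big)$ — a factor of $\kappa_2$ below the target $\Omega(\kappa_1 r\log(1/\eps))$. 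The construction in the paper, $f^{(3)}(x;y)=\tfrac{\mu_1}{2}x^2-\tfrac{L}{2}y^2$ for $r\ge\kappa_2/c$ (and $f^{(4)}(x;y)=\tfrac{L}{2}x^2-\tfrac{\mu_2}{2}y^2$ for $r\le c/\kappa_1$), deliberately places the extremal curvature $L$ on the \emph{non}-bottleneck coordinate: divergence of that coordinate now forces $\beta<2/L$ regardless of what happens on the other axis, and the small-curvature coordinate then contracts at rate $1-\Theta(1/(\kappa_1 r))$.

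For the middle regime your $2\times 2$ coupled instance $\tfrac{\mu_1}{2}x^2+\ell xy-\tfrac{\mu_2}{2}y^2$, while correct in spirit, is also too weak away from $r\approx\mu_1/\mu_2$. First, $\ell=L$ violates $L$-smoothness; the maximal admissible off-diagonal is $\ell^2\le L^2-\mu_1\mu_2-L|\mu_1-\mu_2|$ (Proposition~\ref{prop:quadratic LB}). Second, even with that $\ell$, minimizing $\det\mT=1-(\alpha\mu_1+\beta\mu_2)+\alpha\beta(\mu_1\mu_2+\ell^2)$ over $\beta$ with $\alpha=\beta/r$ yields $\det\mT\ge 1-\tfrac{(\mu_1+r\mu_2)^2}{4r(\mu_1\mu_2+\ell^2)}$; this is $1-\Theta(1/(\kappa_1\kappa_2))$ only when $r\approx\mu_1/\mu_2$, and degrades to $1-\Theta(1/\kappa_2)$ at $r\approx\kappa_2/c$ and to $1-\Theta(1/\kappa_1)$ at $r\approx c/\kappa_1$ (where the eigenvalues are genuinely complex, so no trace argument can recover the gap). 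The paper's Cases~1 and~2 instead use a \emph{three}-dimensional instance: a scalar coordinate carrying only the small curvature $\mu_1$ (resp.\ $\mu_2$) and entirely decoupled, together with a $2\times 2$ coupled block $\tfrac{r\mu_2}{2}x^2+\ell xy-\tfrac{\mu_2}{2}y^2$ (resp.\ $\tfrac{\mu_1}{2}x^2+\tilde{\ell}xy-\tfrac{\mu_1}{2r}y^2$) whose complex-conjugate spectrum forces $\beta\lesssim\mu_2 r/L^2$. The decoupled scalar coordinate then needs $\Omega(r/(\beta\mu_1))=\Omega(\kappa_1\kappa_2)$ iterations — the $2\times 2$ block is used only to constrain $\beta$, not to be slow itself. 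Finally, the middle regime needs to be split at $r=\mu_1/\mu_2$ because the roles of the min/max coordinates in the coupled block swap, which your plan does not anticipate.
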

\begin{proof}
    The proof is done in case by case, constructing a worst-case function for each of 4 different regimes of step size ratio $r$: {\it (1)} $\mu_1/\mu_2 \le r \le\kappa_2/c$, {\it (2)} $c/\kappa_1 \le r \le \mu_1/\mu_2$, {\it (3)} $r\ge \kappa_2/c$, and {\it (4)} $0<r\le c/\kappa_1$.
    Readers might notice the similarities of the proofs for {\it (1)}$\leftrightarrow${\it (2)} and {\it (3)}$\leftrightarrow${\it (4)}. 
    
    \underline{Case 1.} ($\mu_1/\mu_2 \le r \le\kappa_2/c$). Consider
    \[f^{(1)}(v,x;y):=\frac{\mu_1}{2}v^2+\frac{r\mu_2}{2}x^2-\frac{\mu_2}{2}y^2+\ell xy,\]
    where $\ell^2 = L^2 - r\mu_2^2-L\mu_2|r-1|\ge 0$. Applying Proposition~\ref{prop:quadratic LB}, it can be shown that $f^{(1)}\in \gF(L,\mu_1, \mu_2)$. Also, $\vz^*=(0,0;0)$ is its unique saddle point. Note that, the GDA on $f^{(1)}$ can be written as
    \begin{align*}
        v_{t+1}=\left(1-\frac{\beta\mu_1}{r}\right)v_t, \quad 
        \begin{bmatrix}
            x_{t+1} \\ y_{t+1}
        \end{bmatrix}=\underbrace{\begin{bmatrix}
            1-\beta\mu_2 & -{\beta \ell}/{r} \\ \beta \ell & 1-\beta\mu_2
        \end{bmatrix}}_{\mA} \begin{bmatrix}
            x_{t} \\ y_{t}
        \end{bmatrix} = \mA \begin{bmatrix}
            x_{t} \\ y_{t}
        \end{bmatrix}.
    \end{align*}
    Also, the eigenvalues $\tau$ of $\mA$ is
    \begin{align*}
        \tau &= 1-\beta\mu_2 \pm\sqrt{(1-\beta\mu_2)^2 - \left((1-\beta\mu_2)^2 + \beta^2\ell^2/r\right)} \\
        &= 1-\beta\mu_2 \pm \frac{\beta \ell}{\sqrt{r}}\sqrt{-1}.
    \end{align*}
    The spectral radius (\ie, maximum absolute eigenvalue) is
    \[\rho(\mA) = \sqrt{(1-\beta\mu_2)^2 + \beta^2\ell^2/r}.\]
    Since the eigenvalues are complex conjugates of each other (the magnitudes are the same), both eigenvalues have magnitude $\rho(\mA)$. Then, by Proposition~\ref{prop:spectralradius}, $\rho(\mA)<1$ is necessary for convergence. To this end, we need $\beta>0$ satisfying $\beta <{2\mu_2r}/(r\mu_2^2 + \ell^2)$.
    
    To guarantee $\norm{(v_k,x_k;y_k)-(0,0;0)}^2\le \eps^2$, we need a large enough $k$ to have $v_k^2\le \gO(\eps^2)$. Such a $k$ is required to be at least $\Omega\left(\frac{r}{\beta\mu_1}\log(1/\eps)\right)$. Now note that, since $\mu_1/\mu_2 \le r\le \kappa_2/c$ and $\kappa_2\ge c$, 
    \[\frac{1}{\beta}> \frac{r\mu_2^2 + \ell^2}{2\mu_2 r}=\frac{L^2 - L\mu_2|r-1|}{2\mu_2 r} = \frac{L^2}{2\mu_2r}\left(1-\frac{|r-1|}{\kappa_2}\right) \ge \frac{L^2}{2\mu_2 r}\left(1-\frac{1}{c}\right).\]
    The last inequality is true by minimizing  $\left(1-\frac{|r-1|}{\kappa_2}\right)$ for $r\in [\mu_1/\mu_2, \kappa_2/c]$. If $r\ge1$, it has smaller value when $r$ is larger: by taking $r=\kappa_2/c$, we have $1-\frac{\kappa_2/c-1}{\kappa_2}\ge 1-\frac{1}{c}$. Otherwise ($r<1$), which is possible only when $\mu_1<\mu_2$, the term has smaller value when $r$ is smaller: by taking $r=\mu_1/\mu_2$, we have $1+\frac{\mu_1/\mu_2 - 1}{\kappa_2} = 1+\frac{\mu_1-\mu_2}{L}\ge 1-\frac{1}{\kappa_2}\ge 1-\frac{1}{c}$.
    Thus, we eventually need $\Omega\left(\frac{L^2}{\mu_1\mu_2}\log(1/\eps)\right)$ iterations.
    
    \underline{Case 2.} ($c/\kappa_1 \le r \le \mu_1/\mu_2$). Consider
    \[f^{(2)}(x;y,w):=\frac{\mu_1}{2}x^2-\frac{\mu_1}{2r}y^2+\tilde{\ell} xy-\frac{\mu_2}{2}w^2,\]
    where $\tilde{\ell}^2 = L^2 - \mu_1^2/r-L\mu_1|1-1/r|\ge 0$. Applying Proposition~\ref{prop:quadratic LB}, it can be shown that $f^{(2)}\in \gF(L,\mu_1, \mu_2)$, and $\vz^*=(0;0,0)$ is its unique saddle point.
     Note that, the GDA on $f^{(2)}$ can be written as
    \begin{align*} 
        \begin{bmatrix}
            x_{t+1} \\ y_{t+1}
        \end{bmatrix}=\underbrace{\begin{bmatrix}
            1-\beta\mu_1/r & -{\beta \ell}/{r} \\ \beta \ell & 1-\beta\mu_1/r
        \end{bmatrix}}_{\mB} \begin{bmatrix}
            x_{t} \\ y_{t}
        \end{bmatrix} = \mB \begin{bmatrix}
            x_{t} \\ y_{t}
        \end{bmatrix}, \quad 
        w_{t+1}=\left(1-\beta\mu_2\right)w_t.
    \end{align*}
    Also, the eigenvalues $\tau$ of $\mB$ is
    \begin{align*}
        \tau &= 1-\beta\mu_1/r \pm\sqrt{(1-\beta\mu_1/r)^2 - \left((1-\beta\mu_1/r)^2 + \beta^2\ell^2/r\right)} \\
        &= 1-\frac{\beta\mu_1}{r} \pm \frac{\beta \ell}{\sqrt{r}}\sqrt{-1}.
    \end{align*}
    The spectral radius is
    \[\rho(\mB) = \sqrt{(1-\beta\mu_1/r)^2 + \beta^2\ell^2/r}.\]
    Since the eigenvalues are complex conjugates of each other (the magnitudes are the same), both eigenvalues have magnitude $\rho(\mB)$. Then, by Proposition~\ref{prop:spectralradius}, $\rho(\mB)<1$ is necessary for convergence. To this end, we need $\beta>0$ satisfying $\beta <{2\mu_1}/({\mu_1^2/r + \ell^2})$.
    
    To guarantee $\norm{(x_k;y_k,w_k)-(0;0,0)}^2\le \eps^2$, we need a large enough $k$ to have $w_k^2\le \gO(\eps^2)$. Such a $k$ is required to be at least $\Omega\left(\frac{1}{\beta\mu_2}\log(1/\eps)\right)$. Now note that, since $c/\kappa_1 \le r \le \mu_1/\mu_2$ and $\kappa_1\ge c$, 
    \[\frac{1}{\beta}> \frac{\mu_1^2/r + \ell^2}{2\mu_1 }=\frac{L^2 - L\mu_1|1-1/r|}{2\mu_1} = \frac{L^2}{2\mu_1}\left(1-\frac{|1-1/r|}{\kappa_1}\right) \ge \frac{L^2}{2\mu_1 }\left(1-\frac{1}{c}\right).\]
    The last inequality is true by minimizing  $\left(1-\frac{|1-1/r|}{\kappa_1}\right)$ for $r\in [c/\kappa_1, \mu_1/\mu_2]$.
    If $1>1/r$, which is possible only when $\mu_1>\mu_2$, it has smaller value when $r$ is larger: by taking $r=\mu_1/\mu_2$, we have $1-\frac{1-\mu_2/\mu_1}{\kappa_1}=1-\frac{\mu_1-\mu_2}{L}\ge1-\frac{1}{\kappa_1}\ge1-\frac{1}{c}$.
    Otherwise ($1<1/r$), the term has smaller value when $r$ is smaller: by taking $r=c/\kappa_1$, we have $1+\frac{1-\kappa_1/c}{\kappa_1} \ge 1-\frac{1}{c}$.
    Thus, we eventually need $\Omega\left(\frac{L^2}{\mu_1\mu_2}\log(1/\eps)\right)$ iterations.
    
    \underline{Case 3.} ($r\ge \kappa_2/c$). Consider $f^{(3)}(x;y)=\frac{\mu_1}{2}x^2-\frac{L}{2}y^2$. Clearly, $f^{(3)}\in \gF(L,\mu_1,L)\subset \gF(L,\mu_1, \mu_2)$ and $\vz^*=(0,0)$ is its unique saddle point. The GDA on $f^{(3)}$ can be written as
    \begin{align*}
        x_{k+1} = \left(1-\frac{\beta\mu_1}{r}\right)x_k, \quad y_{k+1} = \left(1-\beta L\right)y_k.
    \end{align*}
    To guarantee $\norm{(x_k;y_k)-(0,0)}^2\le \eps^2$, we need a large enough $k$ to have $x_k^2\le \gO(\eps^2)$. Such a $k$ is required to be at least $\Omega\left(\frac{r}{\beta\mu_1}\log(1/\eps)\right)$. Also, we need $\beta<2/L$ to guarantee $y_k\rightarrow0$ (\ie, otherwise, it diverges). Combining these facts, we eventually need $\Omega\left(\frac{Lr}{\mu_1}\log(1/\eps)\right)$ iterations.
    
    \underline{Case 4.} ($0<r\le c/\kappa_1$). Consider $f^{(4)}(x;y)=\frac{L}{2}x^2-\frac{\mu_2}{2}y^2$. Clearly, $f^{(4)}\in \gF(L,L,\mu_2)\subset \gF(L,\mu_1, \mu_2)$ and $\vz^*=(0,0)$ is its unique saddle point. The GDA on $f^{(4)}$ can be written as
    \begin{align*}
        x_{k+1} = \left(1-\frac{\beta L}{r}\right)x_k, \quad y_{k+1} = \left(1-\beta \mu_2\right)y_k.
    \end{align*}
    To guarantee $\norm{(x_k;y_k)-(0,0)}^2\le \eps^2$, we need a large enough $k$ to have $y_k^2\le \gO(\eps^2)$. Such a $k$ is required to be at least $\Omega\left(\frac{1}{\beta\mu_2}\log(1/\eps)\right)$. Also, we need $\beta<2r/L$ to guarantee $x_k\rightarrow0$ (\ie, otherwise, it diverges). Combining these facts, we eventually need $\Omega\left(\frac{L}{r\mu_2}\log(1/\eps)\right)$ iterations.
    
    Lastly, we note that the lower iteration complexity bound in terms of the potential function $V_\lambda$ is equivalent to the complexity in terms of squared distance norm from the (unique) saddle point $\vz^*$, up to constant factors.
    This is proved in Lemma~\ref{lem:CriteriaEquivalence} that we defer its proof.
\end{proof}

Here are the postponed/omitted proofs from the proof above.
\begin{proposition}\label{prop:spectralradius}
    For a square matrix $\mA\in \R^{m\times m}$ and a sequence of $m$-dimensional vectors $(\vv_k)$, the matrix iteration $\vv_{k+1}  = \mA \vv_k$ converges to $\vv_k\rightarrow \bm0$ if and only if the spectral radius (\ie, maximum absolute eigenvalue) of $\rho(\mA)$ of $\mA$ is less than 1. Furthermore, its convergence speed is characterized by $\gO((\rho(\mA)+\eps)^k)$ for any (arbitrarily small) $\eps>0$.
\end{proposition}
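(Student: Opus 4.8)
The plan is to reduce the statement to the behaviour of the matrix powers $\mA^k$ and then read off everything from the Jordan normal form. Since $\vv_k = \mA^k \vv_0$, the iteration sends every initial vector to $\bm 0$ exactly when $\mA^k \to \bm 0$ (as a matrix), so it suffices to prove that $\mA^k \to \bm 0$ iff $\rho(\mA) < 1$, together with the quantitative bound $\norm{\mA^k} = \gO((\rho(\mA)+\eps)^k)$. I would write $\mA = \mP \mJ \mP^{-1}$ with $\mJ$ block-diagonal, each block of the form $\mJ_i = \lambda_i \mI + \mN_i$ where $\lambda_i$ is an eigenvalue and $\mN_i$ is the nilpotent shift of size $s_i$ (so $\mN_i^{s_i} = \bm 0$). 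Then $\mA^k = \mP\mJ^k\mP^{-1}$ and, since $\lambda_i \mI$ and $\mN_i$ commute, the binomial theorem gives $\mJ_i^k = \sum_{j=0}^{s_i-1}\binom{k}{j}\lambda_i^{k-j}\mN_i^j$; hence every entry of $\mA^k$ is, up to fixed constants coming from $\mP$ and $\mP^{-1}$, a finite sum of terms $\binom{k}{j}\lambda_i^{k-j}$ with $0 \le j < s_i$.

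For the ``$\Leftarrow$'' direction and the rate, I would fix $\eps>0$ small enough that $\rho(\mA)+\eps<1$ and bound each such term: using $|\lambda_i|\le\rho(\mA)$ and $\binom{k}{j}\le k^j$, one gets $\binom{k}{j}|\lambda_i|^{k-j}\le C_{i,j}(\rho(\mA)+\eps)^k$ for all $k$, because a polynomial in $k$ is dominated by the geometric factor of ratio $|\lambda_i|/(\rho(\mA)+\eps)<1$ (the case $\lambda_i=0$ is trivial, that block's powers vanishing for $k\ge s_i$). Summing the finitely many terms yields $\norm{\mA^k}\le C(\rho(\mA)+\eps)^k$, hence $\mA^k\to\bm 0$ and $\norm{\vv_k}\le\norm{\mA^k}\,\norm{\vv_0}=\gO((\rho(\mA)+\eps)^k)$. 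An equivalent, slicker route is to invoke the standard fact that for every $\eps>0$ there is a submultiplicative matrix norm $\norm{\cdot}_\eps$ with $\norm{\mA}_\eps\le\rho(\mA)+\eps$, and then pass back to the Euclidean norm by equivalence of norms in finite dimension.

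For ``$\Rightarrow$'' I would argue contrapositively: if $\rho(\mA)\ge1$, pick an eigenvalue $\lambda$ with $|\lambda|=\rho(\mA)$ and a nonzero eigenvector $\vu$; taking $\vv_0=\vu$ gives $\norm{\vv_k}=|\lambda|^k\norm{\vu}\ge\norm{\vu}>0$, so the iteration does not converge to $\bm 0$. The one subtlety, which I expect to be the only mildly annoying point of the whole argument, is when $\mA$ is real and $\lambda\notin\R$: then $\vu=\va+i\vb$ with $\va,\vb$ real and not both zero, the real span of $\{\va,\vb\}$ is $\mA$-invariant, and $\mA$ restricted to it acts as a rotation--scaling of spectral radius $|\lambda|\ge1$, on which no nonzero trajectory decays — so again there is a real initial vector whose iterates do not tend to $\bm 0$. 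Everything else is routine: once the Jordan structure is in place, the rate bound is an immediate consequence of ``polynomial $\times\, c^k \to 0$ for $|c|<1$''.
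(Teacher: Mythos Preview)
Your proposal is correct and follows the standard Jordan-form argument; the paper does not give a self-contained proof but simply cites \cite{Horn2012-du}, Theorems~5.6.10--12, whose proofs proceed along exactly the lines you sketch (Jordan decomposition for the rate, Gelfand's formula / an $\eps$-adapted norm for the equivalent slick route). So you have essentially reproduced the referenced proof.
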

\begin{proof}
    See \citet[Theorem~5.6.10-12]{Horn2012-du}.
\end{proof}

\begin{proposition} \label{prop:quadratic LB}
    Let $\mu_1$, $\mu_2$, and $L$ be positive numbers such that $L\ge \max\{\mu_1, \mu_2\}$. Consider a quadratic function $f$ on $\R\times\R$ defined by 
    \[f(x;y) = \frac{\mu_1}{2} x^2 - \frac{\mu_2}{2} y^2 + \ell xy, \quad \text{where}~~\ell^2 \le L^2 - \mu_1\mu_2 - L|\mu_1-\mu_2|.\]
    Then, $f\in \gF(L, \mu_1, \mu_2)$, and its unique saddle point is $\vz^*=(0,0)$.
\end{proposition}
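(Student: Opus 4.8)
The plan is to verify the three defining properties of the class $\gF(L,\mu_1,\mu_2)$ directly from the quadratic structure of $f$, and then to identify and confirm uniqueness of the saddle point. Since $f$ is a quadratic with constant Hessian
\[
\nabla^2 f(x;y) = \mH := \begin{bmatrix}\mu_1 & \ell \\ \ell & -\mu_2\end{bmatrix},
\]
the block $\partial_{xx}^2 f \equiv \mu_1$ immediately gives $\mu_1$-strong convexity in $x$, and $\partial_{yy}^2 f \equiv -\mu_2$ gives $\mu_2$-strong concavity in $y$. Thus the only substantive point is $L$-smoothness, which (because $\mH$ is constant) is equivalent to the operator-norm bound $\norm{\mH}_2 \le L$.

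For that bound I would diagonalize the symmetric matrix $\mH$: with $\Tr\mH = \mu_1 - \mu_2$ and $\det\mH = -(\mu_1\mu_2 + \ell^2)$, and using the identity $(\mu_1-\mu_2)^2 + 4(\mu_1\mu_2+\ell^2) = (\mu_1+\mu_2)^2 + 4\ell^2$, the eigenvalues are
\[
\lambda_\pm = \tfrac12\Big( (\mu_1-\mu_2) \pm \sqrt{(\mu_1+\mu_2)^2 + 4\ell^2}\,\Big).
\]
Since $\det\mH < 0$ we have $\lambda_- < 0 < \lambda_+$, so $\norm{\mH}_2 = \max\{\lambda_+,\,-\lambda_-\}$, and it suffices to check $\lambda_+ \le L$ and $\lambda_- \ge -L$. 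Each of these isolates the square root against $2L - \mu_1 + \mu_2$ and $2L + \mu_1 - \mu_2$ respectively, both of which are nonnegative because $L \ge \max\{\mu_1,\mu_2\}$; hence each inequality is equivalent to its squared form. Squaring and simplifying yields, respectively, $\ell^2 \le L^2 - \mu_1\mu_2 - L(\mu_1-\mu_2)$ and $\ell^2 \le L^2 - \mu_1\mu_2 + L(\mu_1-\mu_2)$, whose conjunction is exactly $\ell^2 \le L^2 - \mu_1\mu_2 - L\lvert\mu_1-\mu_2\rvert$ — the stated hypothesis (which, as a consistency check, forces its right-hand side to be nonnegative, e.g.\ it equals $(L-\mu_1)(L+\mu_2)$ when $\mu_1\ge\mu_2$). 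Therefore $\norm{\mH}_2 \le L$, $\nabla f$ is $L$-Lipschitz, and $f \in \gF(L,\mu_1,\mu_2)$.

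Finally, for the saddle point: $\nabla f(x;y) = \mH\,[x\ \ y]^\top$, and $\det\mH = -(\mu_1\mu_2+\ell^2) \neq 0$, so the unique stationary point is $\vz^* = (0,0)$. Since $x \mapsto f(x;0)$ is strongly convex with minimizer $0$ and $y \mapsto f(0;y)$ is strongly concave with maximizer $0$, we get $f(0;y) \le f(0;0) \le f(x;0)$ for all $x,y$, so $(0,0)$ is a saddle point; uniqueness is inherited from uniqueness of the stationary point. The whole argument is elementary quadratic algebra; the only step needing a bit of care is checking the sign of the linear quantities $2L \mp(\mu_1-\mu_2)$ before squaring in the $L$-smoothness bound, which is where the hypothesis $L \ge \max\{\mu_1,\mu_2\}$ is used.
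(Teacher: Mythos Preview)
Your proof is correct. The overall strategy matches the paper's---verify strong convexity/concavity from the diagonal Hessian blocks, reduce $L$-smoothness to $\norm{\mH}_2\le L$, and identify the unique stationary point as the saddle---but the algebra for the smoothness step is different. The paper works with the characteristic polynomial of $\mH\mH^\top$, checking $\det(L^2\mI-\mH\mH^\top)\ge 0$ and $\tfrac12\Tr(\mH\mH^\top)\le L^2$ to force both singular values below $L$, then substitutes $\ell^2=L^2-\mu_1\mu_2+a$ and solves for the largest admissible $a$. You instead exploit that $\mH$ is symmetric, compute its eigenvalues $\lambda_\pm$ explicitly, and bound each of $\lambda_+\le L$ and $-\lambda_-\le L$ by isolating and squaring; the two inequalities then combine into the absolute-value constraint on $\ell^2$. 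Your route is more direct (no need to pass through $\mH\mH^\top$) and makes transparent exactly where $L\ge\max\{\mu_1,\mu_2\}$ enters: it is precisely what guarantees both quantities $2L\mp(\mu_1-\mu_2)$ are nonnegative before squaring. For the saddle point, the paper appeals to the general fact that stationary points of two-sided P{\L} functions are saddle points, while you verify the saddle inequality at the origin directly from the one-variable strong convexity/concavity; either is fine here.
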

For example, if $\mu_1 \ge \mu_2$, $\ell^2 = (L-\mu_1)(L+\mu_2)$ is enough to guarantee $L$-smoothness.
\begin{proof}
    The strong-convex-strong-concavity is trivially true. Note that the gradient and hessian of $f$ is
    \[\nabla f(x;y) = \mH [x~~y]^\top, \quad \mH = \begin{bmatrix}\mu_1 & \ell \\ \ell & -\mu_2\end{bmatrix}.\]
    Since $\mH$ is a non-singular matrix, $f$ has a unique stationary point at origin ($x=0,y=0$). By Proposition~\ref{prop:optimality}, it is also a unique saddle \& global minimax point.
    
    For any two distinct points $\vz_1 = (x_1; y_1)$ and $\vz_2 = (x_2;y_2)$ in $\R\times\R$,
    \[\frac{\norm{\nabla f(\vz_1) - \nabla f(\vz_2)}}{\norm{\vz_1-\vz_2}} = \frac{\norm{\mH(\vz_1-\vz_2)}}{\norm{\vz_1-\vz_2}}\le \norm{\mH}_2,\]
    where $\norm{\mH}_2$ is spectral norm (\ie, maximum singular value) of $\mH$. We would like to show that $\norm{\mH}_2\le L$. To this end, it is enough to verify the following two inequalities:
    \begin{align*}
        \det(L^2\mI - \mH\mH^\top) &= L^4 -(\mu_1^2+\mu_2^2+2\ell^2)L^2 + (\mu_1\mu_2+\ell^2)^2 \ge 0,\\
        \operatorname{trace}(\mH\mH^\top)/2 &= (\mu_1^2+\mu_2^2)/2+\ell^2 \le L^2.
    \end{align*}
    This is because the characteristic polynomial of $\mH\mH^\top$, or $\det(\omega\mI - \mH\mH^\top)$, is a quadratic polynomial of $\omega$, and its maximum root should not be greater than $L^2$.
    Let $\ell^2 =  L^2 - \mu_1\mu_2 + a$ for some $a\in\R$. Plugging this $\ell^2$ into both inequalities above, we get
    \begin{align*}
        a^2-(\mu_1-\mu_2)^2L^2 \geq 0 \quad \text{and}\quad
        a\le-(\mu_1-\mu_2)^2/2,
    \end{align*}
    respectively. One can check that $a = -L|\mu_1 - \mu_2|$ is the largest possible $a$ satisfying both inequalities above.
    This proves the proposition.
\end{proof}

Subsequently, we show that if our convergence rate is exponential, then the iteration complexity in terms of $\norm{\vz-\vz^*}^2$ is equivalent to that in terms of $V_\lambda(\vz) = \lambda[\Phi(\vx)-\Phi^*] + [\Phi(\vx)-f(\vz)]$ for P{\L}($\Phi$)-P{\L} problem, up to constant factors. This also applies to the function class $\gF(L, \mu_1, \mu_2)$ since it is a subclass of smooth P{\L}($\Phi$)-P{\L} functions ($\because$ Propositions~\ref{prop:PLQGEB}~and~\ref{prop:Phi PL}).
\begin{lemma}\label{lem:CriteriaEquivalence}
    Suppose $f(\vx;\vy)$ is an $L$-smooth function satisfying $\vy$-side $\mu_2$-P{\L} condition and primal $\mu_1$-P{\L} condition (\ie, P{\L}($\Phi$)-P{\L}). Suppose $\vz^*=(\vx^*;\vy^*)$ is a global minimax point of $f$. Then, it satisfies
    \[\frac{\lambda\mu_1\mu_2^2}{2(\lambda\mu_1\mu_2+2L^2)} \norm{\vz-\vz^*}^2 \le V_\lambda(\vz) \le \frac{(\lambda+1)L^3}{\mu_2^2}\norm{\vz-\vz^*}^2.\]
\end{lemma}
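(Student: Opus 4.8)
I will establish the two inequalities separately, using the block split $\norm{\vz-\vz^*}^2=\norm{\vx-\vx^*}^2+\norm{\vy-\vy^*}^2$. Throughout, the hypotheses give: $\vz^*=(\vx^*;\vy^*)$ being a global minimax point forces $\Phi(\vx^*)=\Phi^*$ and, since $\Phi$ is differentiable (Proposition~\ref{prop:Phi smooth}), $\nabla\Phi(\vx^*)=\bm0$; moreover $\Phi$ is $L(\kappa_2+1)$-smooth (Proposition~\ref{prop:Phi smooth}); $-f(\vx;\cdot)$ is $L$-smooth; and $\kappa_2=L/\mu_2\ge1$ (Proposition~\ref{prop:kappa}). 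For the primal-optimal variable I will always pick $\vy^*(\vx)\in\gY_\vx^*$ to be a projection of $\vy^*$ onto $\gY_\vx^*$, so that $\norm{\vy^*(\vx)-\vy^*}\le\kappa_2\norm{\vx-\vx^*}$ by Proposition~\ref{prop:y*(x) Lipschitz}.

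\textbf{Upper bound.} From $L(\kappa_2+1)$-smoothness of $\Phi$ and $\nabla\Phi(\vx^*)=\bm0$, $\Phi(\vx)-\Phi^*\le\frac{L(\kappa_2+1)}{2}\norm{\vx-\vx^*}^2$. From $L$-smoothness of $-f(\vx;\cdot)$ and $\nabla_2 f(\vx;\vy^*(\vx))=\bm0$, $\Phi(\vx)-f(\vx;\vy)=f(\vx;\vy^*(\vx))-f(\vx;\vy)\le\frac{L}{2}\norm{\vy-\vy^*(\vx)}^2\le L\norm{\vy-\vy^*}^2+L\kappa_2^2\norm{\vx-\vx^*}^2$, the last step by Young's inequality and Proposition~\ref{prop:y*(x) Lipschitz}. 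Adding $\lambda$ times the first bound to the second gives $V_\lambda(\vz)\le\big(\frac{\lambda L(\kappa_2+1)}{2}+L\kappa_2^2\big)\norm{\vx-\vx^*}^2+L\norm{\vy-\vy^*}^2$. Since $\kappa_2\ge1$ implies $\kappa_2+1\le2\kappa_2^2$, the $\vx$-coefficient is $\le(\lambda+1)L\kappa_2^2$, while $L\le(\lambda+1)L\kappa_2^2$ trivially; recalling $L\kappa_2^2=L^3/\mu_2^2$ yields the stated upper bound.

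\textbf{Lower bound.} Here I use that the P{\L} conditions imply quadratic growth (Proposition~\ref{prop:PLQGEB}): $\Phi(\vx)-\Phi^*\ge\frac{\mu_1}{2}\norm{\vx-\vx^*}^2$ and $\Phi(\vx)-f(\vx;\vy)\ge\frac{\mu_2}{2}\norm{\vy-\vy^*(\vx)}^2$. Write $P:=\Phi(\vx)-\Phi^*\ge0$ and $D:=\Phi(\vx)-f(\vx;\vy)\ge0$, so $\norm{\vx-\vx^*}^2\le\frac{2P}{\mu_1}$ and, for a free parameter $t>0$, $\norm{\vy-\vy^*}^2\le(1+t)\norm{\vy-\vy^*(\vx)}^2+(1+\tfrac1t)\norm{\vy^*(\vx)-\vy^*}^2\le(1+t)\tfrac{2D}{\mu_2}+(1+\tfrac1t)\kappa_2^2\tfrac{2P}{\mu_1}$. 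Summing, $\norm{\vz-\vz^*}^2\le\frac{2}{\mu_1}\big(1+(1+\tfrac1t)\kappa_2^2\big)P+\frac{2(1+t)}{\mu_2}D$, whereas $V_\lambda(\vz)=\lambda P+D$; hence $V_\lambda(\vz)\ge\min\left\{\frac{\lambda\mu_1}{2(1+(1+1/t)\kappa_2^2)},\ \frac{\mu_2}{2(1+t)}\right\}\norm{\vz-\vz^*}^2$. Choosing $t=\frac{2L^2}{\lambda\mu_1\mu_2}$ makes the second entry exactly $\frac{\lambda\mu_1\mu_2^2}{2(\lambda\mu_1\mu_2+2L^2)}$, the target constant; using $\mu_2^2\kappa_2^2=L^2$ and $\mu_2\le L$ one checks $1+(1+\tfrac1t)\kappa_2^2=1+\kappa_2^2+\frac{\lambda\mu_1}{2\mu_2}\le 2\kappa_2^2+\frac{\lambda\mu_1}{\mu_2}$, so the first entry is at least the same value, and the minimum equals the target.

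\textbf{Main obstacle.} The upper bound is routine; the delicate point is the lower bound. First, a naive split (parameter $t=1$) produces a constant of the form $\frac{\lambda\mu_1}{2(1+2\kappa_2^2)}$ that does not match the stated denominator $\lambda\mu_1\mu_2+2L^2$, so the tuned choice $t=2L^2/(\lambda\mu_1\mu_2)$ is essential. Second, the quadratic-growth steps toward $\vx^*$ and toward $\vy^*(\vx)$ implicitly rely on the relevant optimal sets being single-valued near $\vz^*$ (otherwise quadratic growth only controls distance to a possibly-closer optimum); this is automatic for the class $\gF(L,\mu_1,\mu_2)$ in which the lemma is applied, since there $\Phi$ is $\mu_1$-strongly convex with a unique minimizer and each $\gY_\vx^*$ is a singleton, so one may simply read $\vy^*(\vx)$ as that unique maximizer.
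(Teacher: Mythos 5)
Your proof is correct and follows essentially the same route as the paper: both use the quadratic-growth consequences of the two P{\L} conditions, the $L(\kappa_2+1)$-smoothness of $\Phi$, and the Lipschitz-type bound $\norm{\vy^*(\vx)-\vy^*}\le\kappa_2\norm{\vx-\vx^*}$, with a free-parameter Young/triangle split tuned so that the two coefficients balance at exactly the stated constant (your parameter $t$ and the paper's $a$ correspond via $t=\kappa_2^2/a$, and both pick the value making $a=\lambda\mu_1/(2\mu_2)$). The only cosmetic difference is that you bound $\norm{\vz-\vz^*}^2$ from above by $a_1P+a_2D$ and then take $\min\{\lambda/a_1,1/a_2\}$, whereas the paper inverts the split to lower-bound $\norm{\vy-\vy^*(\vx)}^2$ directly and then compares per-block coefficients; you also explicitly flag the implicit single-valuedness of the $\arg\max$/$\arg\min$ sets needed for the quadratic-growth steps, which the paper leaves tacit (and which is indeed automatic in the SC-SC setting where the lemma is invoked).
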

We remark that the second inequality also holds for general smooth nonconvex-P{\L} problems. 
\begin{proof}
    Let $\kappa_1 = L/\mu_1$ and $\kappa_2 = L/\mu_2$ be condition numbers.
    By the conditions of $f$ (smoothness and P{\L} conditions), for any $\vx$ and $\vy$,
    \begin{align*}
        \frac{\mu_1}{2}\norm{\vx-\vx^*}^2 &
        \stackrel{\text{Prop.~\ref{prop:Phi PL}}}{\le} \Phi(\vx)-\Phi^* \stackrel{\text{Prop.~\ref{prop:Phi smooth}}}{\le} \frac{L(\kappa_2+1)}{2}\norm{\vx-\vx^*}^2,\\
        \frac{\mu_2}{2}\norm{\vy-\vy^*(\vx)}^2 &\stackrel{\text{Ass.~\ref{ass:NCPL}}}{\le} \Phi(\vx)-f(\vx;\vy) \stackrel{\text{Ass.~\ref{ass:smooth}}}{\le} \frac{L}{2}\norm{\vy-\vy^*(\vx)}^2,
    \end{align*}
    where $\vy^*(\vx)$ is a projection of $\vy$ to $\argmax_{\vy'} f(\vx;\vy')$. In particular, $\vy^*(\vx^*) = \vy^*$. Since $\vy^*(\vx)$ is a function of $\vx$ and can differ from $\vy^*$, we need to bound the term $\norm{\vy-\vy^*(\vx)}^2$ using $\norm{\vx-\vx^*}^2$ and $\norm{\vy-\vy^*}^2$. To upper-bound the term $\norm{\vy-\vy^*(\vx)}^2$, note that,
    \begin{align*}
        \norm{\vy-\vy^*(\vx)}^2 &\le \left(\norm{\vy-\vy^*(\vx^*)} + \norm{\vy^*(\vx)-\vy^*(\vx^*)}\right)^2 \\
        &\le \left(\norm{\vy-\vy^*} + \kappa_2\norm{\vx-\vx^*}\right)^2\\
        &\le \left(1+\kappa_2^2\right)\left(\norm{\vy-\vy^*}^2 + \norm{\vx-\vx^*}^2\right).
    \end{align*}
    The first inequality holds by triangle inequality, the second inequality holds by Proposition~\ref{prop:y*(x) Lipschitz}, and the last inequality holds by Cauchy-Schwarz inequality.\footnote{$(ax+by)^2\le (a^2+b^2)(x^2+y^2)$ for real numbers $a,b,x,y$.}
    To lower-bound in a similar way, note that for any constant $a>0$,
    \begin{align*}
        \norm{\vy-\vy^*}^2 &\le \left(\norm{\vy-\vy^*(\vx)} + \norm{\vy^*(\vx)-\vy^*(\vx^*)}\right)^2 \\
        &\le \left(\norm{\vy-\vy^*(\vx)} + \frac{\kappa_2}{\sqrt{a}}\cdot\sqrt{a}\norm{\vx-\vx^*}\right)^2\\
        &\le \left(1+\frac{\kappa_2^2}{a}\right)\left(\norm{\vy-\vy^*(\vx)}^2 + a\norm{\vx-\vx^*}^2\right).\\
        \therefore \norm{\vy-\vy^*(\vx)}^2 &\ge \frac{1}{1+\kappa_2^2/a}\norm{\vy-\vy^*}^2 - a\norm{\vx-\vx^*}^2.
    \end{align*}
    Now we can prove the inequalities in the lemma. We first show the second one.  Applying $\kappa_2\ge1$ multiple times,
    \begin{align*}
        V_\lambda (\vx;\vy) &=\lambda[\Phi(\vx)-\Phi^*] + [\Phi(\vx)-f(\vz)]\\
        &\le \frac{\lambda L(\kappa_2+1)}{2}\norm{\vx-\vx^*}^2+\frac{L}{2}\norm{\vy-\vy^*(\vx)}^2\\
        &\le \left(\frac{\lambda L(\kappa_2+1)}{2} + \frac{L(1+\kappa_2^2)}{2}\right)\norm{\vx-\vx^*}^2 + \frac{L(1+\kappa_2^2)}{2}\norm{\vy-\vy^*}^2\\
        &\le (\lambda+1)L\kappa_2^2 \left(\norm{\vx-\vx^*}^2+\norm{\vy-\vy^*}^2\right) = \frac{(\lambda+1)L^3}{\mu_2^2}\norm{\vz-\vz^*}^2.
    \end{align*}
    To show the first inequality of the lemma, let $a=\frac{\lambda \mu_1}{2\mu_2}$.
    \begin{align*}
        V_\lambda (\vx;\vy) &\ge \frac{\lambda \mu_1}{2}\norm{\vx-\vx^*}^2+\frac{\mu_2}{2}\norm{\vy-\vy^*(\vx)}^2\\
        &\ge \left(\frac{\lambda \mu_1}{2} - \frac{\mu_2 a}{2}\right)\norm{\vx-\vx^*}^2 + \frac{\mu_2}{2(1+\kappa_2^2/a)}\norm{\vy-\vy^*}^2\\
        &\ge \frac{\lambda \mu_1}{4} \norm{\vx-\vx^*}^2 + \frac{\lambda\mu_1}{4(a+\kappa_2^2)}\norm{\vy-\vy^*}^2\\
        &\ge \frac{\lambda\mu_1}{4(a+\kappa_2^2)} \left(\norm{\vx-\vx^*}^2+\norm{\vy-\vy^*}^2\right) = \frac{\lambda\mu_1\mu_2^2}{2(\lambda\mu_1\mu_2+2L^2)}\norm{\vz-\vz^*}^2.
    \end{align*}
    This concludes the proof.
\end{proof}
The equivalence of iteration complexities for achieving $\norm{\vz_K-\vz^*}^2 \le \eps^2$ or $V_\lambda(\vz_K)\le \eps^2$ is quite straightforward from this lemma, as long as the convergence speed is exponential. For example, suppose we have a upper convergence bound $\norm{\vz_K-\vz^*}^2 \le a \exp(-K/r)$ for some constants $a,r>0$. This implies a upper iteration complexity bound $K = \gO(r\log(1/\eps))$ sufficient to achieve $\norm{\vz_K-\vz^*}^2 \le \eps^2$. Then by Lemma~\ref{lem:CriteriaEquivalence}, we also have $V_\lambda(\vz_K)^2 \le a' \exp(-K/r)$ where $a'={a(\lambda+1)L^3}/{\mu_2^2}$ is also a constant. This implies a lower iteration complexity bound $K = \gO(r\log(1/\eps))$ as well, sufficient to achieve $V_\lambda(\vz_K)^2 \le \eps^2$. The other way of complexity translation operates with a similar logic.

\section{Remark on smoothness assumptions and Lower bound of with-replacement SGD(A)}
\label{sec:LB with-replacement}
During the discussion phase of the conference, a reviewer raised a question about whether or not the \emph{component smoothness} (Assumption~\ref{ass:smooth}) is more crucial than the without-replacement component sampling for faster convergence.
However, we would like to claim that the component smoothness alone is not sufficient for improving the convergence rate for with-replacement SGD(A). To this end, we provide some formal results on lower convergence bounds. For simplicity, we use mini-batches of size 1 throughout this appendix. 

Firstly, the theorem below provides a lower bound on with-replacement SGD for minimization problems. Readers can also verify that an analogous lower bound holds for SGD with unbiased and independently sampled gradient oracle for more general stochastic minimization problems. The proof will appear later in this appendix.
\begin{theorem} \label{thm:LB with-replacement}
	For any step size $\eta>0$, there exists a real-valued strongly-convex function $f(\vx)$ defined on $\R^d$ with $f^* := \min_{\vx} f(\vx)$, satisfying:
	\begin{enumerate}
		\item $f$ consists of $n>1$ smooth component functions $f_i$: $f(\vx)= \frac{1}{n}\sum_{i=1}^n f_i (\vx)$, where each component $f_i$ is smooth;
		\item After running $T> 1$ iterations of with-replacement SGD (with mini-batch size 1) starting from $x_0\in\R^d$, the last iterate $x_T$ satisfies $\mathbb{E}[f(\vx_T)-f^*] \ge \Omega(1/T)$, where the expectation is taken with respect to the randomness of i.i.d. index choice at each iteration.
	\end{enumerate}
\end{theorem}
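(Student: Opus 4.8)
The plan is to exhibit an explicit one-dimensional ($d=1$) hard instance. Take $n=2$ components, say $f_1(x) = \tfrac{\mu}{2}(x-c)^2$ and $f_2(x) = \tfrac{\mu}{2}(x+c)^2$ for a well-chosen separation $c>0$ depending on the step size $\eta$, so that $f(x) = \tfrac{1}{n}\sum_i f_i(x) = \tfrac{\mu}{2}x^2 + \tfrac{\mu c^2}{2}$ is $\mu$-strongly convex with minimizer $x^* = 0$ and $f^* = \tfrac{\mu c^2}{2}$. Each $f_i$ is $\mu$-smooth, so the component smoothness hypothesis holds trivially. The point of the construction is that even though $f$ has a unique minimizer at the origin, the \emph{component} gradients $\nabla f_i(0) = \mp \mu c \neq 0$ do not vanish there, so SGD with a fixed step size cannot converge to $x^*$ — it retains a persistent $\Omega(1)$-variance noise floor. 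First I would write out the SGD recursion $x_{t+1} = x_t - \eta \nabla f_{i(t)}(x_t)$ with $i(t)$ i.i.d.\ uniform on $\{1,2\}$, which becomes $x_{t+1} = (1-\eta\mu)x_t \pm \eta\mu c$, a linear recursion with i.i.d.\ sign noise.

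The key steps, in order: (1) choose $\eta$-dependent parameters so the recursion is a contraction in expectation, i.e.\ assume WLOG $0 < \eta\mu < 1$ (if $\eta$ is too large the iterates diverge and the conclusion is immediate, so that case can be disposed of separately by noting $f(x_T) - f^*$ stays $\Omega(1)$ trivially); (2) compute $\mathbb{E}[x_t]$ and, more importantly, $\mathrm{Var}(x_t)$ exactly. Conditioning on $x_t$, the noise term $\pm\eta\mu c$ has mean zero and variance $\eta^2\mu^2 c^2$, so $\mathrm{Var}(x_{t+1}) = (1-\eta\mu)^2 \mathrm{Var}(x_t) + \eta^2\mu^2 c^2$; unrolling gives $\mathrm{Var}(x_t) \to \frac{\eta^2\mu^2 c^2}{1-(1-\eta\mu)^2} = \frac{\eta\mu c^2}{2-\eta\mu} =: \sigma_\infty^2 > 0$, a strictly positive limit independent of $T$. (3) Since $f(x) - f^* = \tfrac{\mu}{2}x^2$, we get $\mathbb{E}[f(x_T) - f^*] = \tfrac{\mu}{2}\mathbb{E}[x_T^2] = \tfrac{\mu}{2}(\mathrm{Var}(x_T) + (\mathbb{E}[x_T])^2) \ge \tfrac{\mu}{2}\mathrm{Var}(x_T)$. (4) Lower-bound $\mathrm{Var}(x_T)$: from the recursion $\mathrm{Var}(x_T) = \sigma_\infty^2(1 - (1-\eta\mu)^{2T}) + (1-\eta\mu)^{2T}\mathrm{Var}(x_0) \ge \sigma_\infty^2(1 - (1-\eta\mu)^{2T})$, which for $T \ge 1$ is at least a positive constant (independent of $T$) once $\eta\mu$ is bounded away from $0$ — in fact it converges to $\sigma_\infty^2$, so $\mathbb{E}[f(x_T)-f^*] = \Omega(1)$, which is even stronger than the claimed $\Omega(1/T)$.

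There is a subtlety worth addressing: the theorem wants a bound $\Omega(1/T)$, but a fixed-step-size construction actually gives a \emph{constant} lower bound, not a decaying one. The cleanest reading is that the paper's claim is that with-replacement SGD cannot beat $\Omega(1/T)$ even with component smoothness — i.e.\ the point is to rule out faster-than-$1/T$ rates, and any lower bound that is $\Omega(1/T)$ or larger suffices; the constant lower bound from the fixed-step construction is a fortiori $\Omega(1/T)$ for $T \ge 1$. Alternatively, if one insists the step size should be allowed to be tuned as $\eta = \Theta(1/T)$ (as in last-iterate analyses), then one rescales the separation $c$ and recomputes: with $\eta = \Theta(1/T)$ the noise floor $\sigma_\infty^2 = \Theta(\eta) = \Theta(1/T)$, giving exactly $\mathbb{E}[f(x_T)-f^*] = \Omega(1/T)$. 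Either way the arithmetic is elementary. The main obstacle — such as it is — is not technical difficulty but bookkeeping: making sure the construction's constants ($\mu$, $c$, and the admissible range of $\eta$) are laid out so that the claimed $L$-smoothness of components, strong convexity of $f$, and the $\Omega(1/T)$ bound all hold simultaneously and uniformly in $\eta$, and handling the large-$\eta$ (divergent) regime as a trivial separate case. I would also double-check that "last iterate" is what is bounded (it is, since the variance lower bound holds for every $T$, not just on average over iterations).
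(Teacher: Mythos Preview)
Your proposal is correct and takes essentially the same approach as the paper: both use a one-dimensional strongly convex quadratic whose components are perturbed by mean-zero linear terms (your $\tfrac{\mu}{2}(x\mp c)^2$ is the paper's $\tfrac{L}{2}x^2 \pm \nu x$ up to an additive constant), and both lower-bound $\mathbb{E}[x_T^2]$ via the accumulated variance of the resulting recursion $x_{t+1}=(1-\eta\mu)x_t+\eta\mu c\,s_t$ with i.i.d.\ Rademacher noise $s_t$. The paper organizes the argument slightly differently---it splits explicitly on whether $\eta\in[1/(LT),\,2/L-1/(LT)]$ (using the variance sum to extract exactly $\Omega(1/T)$) versus $\eta$ outside that range (switching to identical components and using the bias $(1-\eta L)^{2T}x_0^2\ge x_0^2/16$), and it handles general $n$ by taking half the components with each sign (plus one zero component when $n$ is odd)---but your observation that the variance floor already yields an $\Omega(1)$ bound for any fixed contractive $\eta$, which is a fortiori $\Omega(1/T)$, is equally valid.
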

Next, we show this theorem naturally induces a convergence lower bound for the minimax counterpart: \textit{with-replacement SGDA}.
Consider a (finite-sum) minimax problem $\min_x \max_y g(\vx,\vy) := f(\vx)-f(\vy)$, where $f=\frac{1}{n}\sum_{i=1}^n f_i$ is a worst-case function in the proof of Theorem~\ref{thm:LB with-replacement}. Here, the minimax problem on $g$ can be solved by minimizing $f$. Moreover, since the primal function $\Phi(\vx):= \max_y g(\vx,\vy)$ associated with $g$ is in fact the same as $f(\vx)-f^\ast$, the potential function $V_\lambda (\vx,\vy) := \lambda[\Phi(\vx)-(\min_x \Phi(\vx))] + [\Phi(\vx)-g(\vx,\vy)]$ becomes the same as $\lambda(f(\vx)-f^\ast) + (f(\vy)-f^\ast)$ for a constant $\lambda>0$. Combining these facts, we can immediately obtain the following lower convergence bound of with-replacement SGDA.
\begin{corollary} \label{coro:LB with-replacement}
	There exists a strongly-convex-strongly-concave function $g(\vx,\vy):=\frac{1}{n}\sum_{i=1}^n g_i(\vx,\vy)$ consisting of $n$ smooth component functions $g_i$, where the last iterate $(\vx_T,\vy_T)$ of with-replacement SGDA satisfies $\mathbb{E}[V(\vx_T,\vy_T)]\ge \Omega(1/T)$.
\end{corollary}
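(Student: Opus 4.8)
The plan is to obtain Corollary~\ref{coro:LB with-replacement} as a direct consequence of Theorem~\ref{thm:LB with-replacement} through an \emph{additively separable} minimax construction, so that running SGDA on the minimax problem amounts to running two independent copies of SGD on a minimization problem. Fix step sizes $\alpha,\beta>0$, and let $f=\frac1n\sum_{i=1}^n f_i$ be the worst-case $\mu$-strongly-convex finite sum with smooth components furnished by Theorem~\ref{thm:LB with-replacement} applied with step size $\eta=\alpha$. Define $g_i(\vx;\vy):=f_i(\vx)-f_i(\vy)$ and $g:=\frac1n\sum_{i=1}^n g_i=f(\vx)-f(\vy)$. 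First I would check that this $g$ is admissible: each $\nabla g_i=(\nabla f_i(\vx);-\nabla f_i(\vy))$ is $L$-Lipschitz with the same constant as $\nabla f_i$, $g$ is $\mu$-strongly-convex in $\vx$ and $\mu$-strongly-concave in $\vy$, and $\vz^*=(\vx^*;\vx^*)$ with $\vx^*=\argmin_\vx f(\vx)$ is its unique saddle point; in particular $g$ satisfies Assumptions~\ref{ass:smooth}, \ref{ass:primal,dual}, \ref{ass:NCPL}, and \ref{ass:primalPL}.

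The key observation is that the partial gradients decouple: $\nabla_1 g_i(\vx;\vy)=\nabla f_i(\vx)$ depends only on $\vx$, and $\nabla_2 g_i(\vx;\vy)=-\nabla f_i(\vy)$ depends only on $\vy$. Consequently, whether one uses simSGDA or altSGDA (the choice of $\vx'$ in the $\vy$-update is irrelevant here), the iterates produced by with-replacement SGDA on $g$ satisfy $\vx_t=\vx_{t-1}-\alpha\nabla f_{i(t)}(\vx_{t-1})$ and $\vy_t=\vy_{t-1}-\beta\nabla f_{i(t)}(\vy_{t-1})$. Thus the $\vx$-marginal is exactly with-replacement SGD on $f$ with step size $\alpha$ started from $\vx_0$, driven by the i.i.d.\ index stream $(i(t))$, and the $\vy$-marginal is the same with step size $\beta$ started from $\vy_0$.

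Next I would compute the potential function for this $g$. Since $\Phi(\vx)=\max_\vy g(\vx;\vy)=f(\vx)-f^*$, we have $\Phi^*=\min_\vx\Phi(\vx)=0$ and $\Phi(\vx)-g(\vx;\vy)=f(\vy)-f^*$, hence $V_\lambda(\vx;\vy)=\lambda\,(f(\vx)-f^*)+(f(\vy)-f^*)$ with both summands non-negative. It therefore suffices to lower bound a single summand: the last-iterate guarantee of Theorem~\ref{thm:LB with-replacement}, which (by the choice $\eta=\alpha$) applies verbatim to the $\vx$-marginal, yields $\E[f(\vx_T)-f^*]\ge\Omega(1/T)$, and so $\E[V_\lambda(\vx_T;\vy_T)]\ge\lambda\,\E[f(\vx_T)-f^*]\ge\Omega(1/T)$, which is exactly the claim (and, via Lemma~\ref{lem:CriteriaEquivalence}, one could equivalently phrase it in terms of $\norm{\vz_T-\vz^*}^2$ since $g$ is SC-SC).

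Granting Theorem~\ref{thm:LB with-replacement}, the reduction above is essentially bookkeeping; the only points needing care are (i) matching step sizes, handled by exploiting only the $\vx$-marginal so that a single worst-case $f$ tuned to $\alpha$ suffices even when $\beta\neq\alpha$, and (ii) confirming that the simultaneous/alternating distinction collapses because $g$ is additively separable in $\vx$ and $\vy$. The genuine difficulty is entirely contained in Theorem~\ref{thm:LB with-replacement}: one must exhibit an $n$-term strongly-convex sum of smooth components on which constant-step-size with-replacement SGD retains an $\Omega(1/T)$ last-iterate suboptimality --- e.g.\ a one-dimensional quadratic family whose gradient variance at the optimum forces a non-vanishing ``noise ball,'' scaled so that the residual provably exceeds $1/T$ for the prescribed $\eta$ --- which is where the smoothness-alone-is-not-enough message comes from.
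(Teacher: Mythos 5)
Your proposal is correct and takes essentially the same approach as the paper: both define $g(\vx;\vy)=f(\vx)-f(\vy)$ (componentwise $g_i=f_i(\vx)-f_i(\vy)$) with $f$ the worst-case instance from Theorem~\ref{thm:LB with-replacement}, note that $V_\lambda(\vx;\vy)=\lambda(f(\vx)-f^*)+(f(\vy)-f^*)$, and transfer the SGD lower bound to SGDA. Your write-up merely makes explicit the decoupling of the $\vx$- and $\vy$-marginals, the collapse of the sim/alt distinction, and the step-size bookkeeping ($\eta=\alpha$), all of which the paper leaves implicit.
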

Corollary~\ref{coro:LB with-replacement} formally proves that with-replacement SGDA on strongly-convex-strongly-concave minimax problems with smooth components has a worst-case convergence rate $\Omega(1/T)$. This in fact matches the $\mathcal{O}(1/T)$ upper bound obtained for primal-P{\L}-P{\L} problems by \citet{yang2020global}. Considering that strongly-convex-strongly-concave functions form a strict subset of primal-P{\L}-P{\L} functions, Corollary~\ref{coro:LB with-replacement} establishes that adding component smoothness assumption does not provide further speed up for with-replacement SGDA.

In contrast, our theoretical result in Theorem~\ref{thm:2PL} shows that SGDA-RR has a much faster convergence rate $\mathbb{E} [V_\lambda] \le \tilde{\mathcal{O}}(\frac{1}{nK^2})$ for primal-P{\L}-P{\L} minimax problems, where $K$ is the number of epochs. One can check that our $\tilde{\mathcal{O}}(\frac{1}{nK^2})$ bound is faster than the tight convergence rate $\Theta(1/T)$ of with-replacement SGDA by simply plugging in $T = nK$. In light of Corollary~\ref{coro:LB with-replacement} we proved, we can now claim that the improvement can be solely attributed to RR.

Although we do not provide a lower bound for more general nonconvex-P{\L} problems here, we believe the more challenging case of nonconvex-P{\L} lower bound is a topic for another separate paper. Nonetheless, we conjecture that the speed up by SGDA-RR in nonconvex-P{\L} settings is also due to the effect of RR, not component smoothness.

From now on, we provide the postponed proof of Theorem~\ref{thm:LB with-replacement}.

\begin{proof}[Proof of Theorem~\ref{thm:LB with-replacement}]
We construct worst-case functions with quadratic functions on $\mathbb{R}$, which are clearly $ L$-smooth for a fixed constant $ L>0$. Then, it is easy to extend the logic to the functions with domains of higher dimensions. Let $x_0\in \mathbb{R}$ be the initial iterate.
	
\textbf{Case 1 ($\frac{1}{ L T}\le \eta\le\left( \frac{2}{ L}-\frac{1}{ L T} \right)$).} Note that the condition on the step size, $\frac{1}{ L T}\le \eta\le\left( \frac{2}{ L}-\frac{1}{ L T} \right)$, is equivalent to an inequality $(1-\eta L)^2 \le (1-1/T)^2$.
    
We first assume $n$ is an even number. We will encounter the case with an odd $n>1$ a bit later. Consider $f(x)=\frac{L}{2} x^2$ consisting of even number of components $f_i$'s defined by
\[f_i(x) =
\begin{cases}
    \frac{ L}{2} x^2 + \nu x, & (i\le \frac{n}{2}),\\
    \frac{ L}{2} x^2 - \nu x, & (i\ge \frac{n}{2}+1),
\end{cases}\]
for some number $\nu\in\mathbb{\R}$.
At each iteration $t\ge 1$, we choose a component index $i(t)\stackrel{\mathrm{i.i.d.}}{\sim} \mathrm{Unif}([n])$ (with-replacement sampling). Then we can write the chosen component function at iteration $t$ as $f_i(t) = \frac{ L}{2}x^2 - s_t \nu x$ for some i.i.d. random variable $s_t \sim \mathrm{Unif}(\{\pm 1\})$. Accordingly, an SGD step can be written as
\[x_t = x_{t-1} - \eta \nabla f_{i(t)} (x_{t-1}) = (1-\eta L) x_{t-1} + \eta s_t \nu.\]
By applying telescopic sum, we have
\[x_T = (1-\eta L)^T x_0 + \eta \nu \sum_{t=1}^T (1-\eta L)^{(T-t)} \cdot s_{t}.\]
Taking squares and expectations (with respect to the random variables $s_1, \ldots, s_T$) to both sides, we have
\[\mathbb{E}[x_T^2] = (1-\eta L)^{2T}x_0^2 + \eta^2\nu^2 \sum_{t=1}^T (1-\eta L)^{2(T-t)},\]
by applying the fact that $s_t$'s are zero-mean independent random variables with absolute values 1:
\[\mathbb{E}[s_t\cdot s_{t'}] =
\begin{cases}
    0, & t\ne t'\quad (\because \text{independent}),\\
    1, & t=t' \quad (\because s_t^2=1).
\end{cases}\] 
We calculate the sum above as follows: since $(1-\eta L)^2 \le (1-1/T)^2$ and $(1-1/T)^T\le e^{-1}$,
\[\sum_{t=1}^T (1-\eta L)^{2(T-t)} = \frac{1-(1-\eta L)^{2T}}{1-(1-\eta L)^2}\ge \frac{1-(1-\frac{1}{T})^{2T}}{2\eta L(1-\frac{\eta L}{2})}\ge \frac{1-e^{-2}}{2\eta L}.\]
With this inequality, and since $(1-\eta L)^{2T}x_0^2\ge 0$,  we can lower-bound the expectation $\mathbb{E}[x_T^2]$:
\[\mathbb{E}[x_T^2] \ge \eta^2 \nu^2 \cdot \frac{1-e^{-2}}{2\eta L} = \frac{(1-e^{-2})\nu^2}{2 L}\eta\ge \frac{(1-e^{-2})\nu^2}{2 L^2 T}.\]
Since $f$ has a minimum $f^*=0$ at $x=0$, we eventually have
\[\mathbb{E}[f(x_T)-f^*]=\frac{ L}{2}\mathbb{E}[x_T^2]\ge \frac{(1-e^{-2})\nu^2}{4 L T}=\Omega \left( \frac{\nu^2}{ L T} \right).\]

Now we consider the case when the number of components $n>1$ is odd. Consider $f_n(x)\equiv 0$ and let the remaining $n-1$ components be the same as the case above (with an even number of components). Note that the zero-component $f_n$ does not affect the trajectory of SGD ({\it i.e.}, the points visited by SGD) and the optimality of $f$ ($f^*=0$ at $x=0$), while the whole objective function becomes $f(x)=\frac{n-1}{n}\cdot \frac{L}{2}x^2$. Thus, it can be easily shown that the $\Omega \left( \frac{\nu^2}{ L T} \right)$ lower bound also holds. 

\textbf{Case 2 ($0<\eta<\frac{1}{ L T}$ or $\eta>\left( \frac{2}{ L}-\frac{1}{ L T} \right)$).} From the condition on the step size, we have $(1-\eta L)^2 > (1-1/T)^2.$ Consider $f_i(x) = \frac{ L}{2} x^2$ for every $i\in [n]$: every components are the same. In this case, we show that the last iterate of SGD is bounded below by a constant with respect to $T>1$.

At each iteration $t\ge 1$, we obtain $x_t = (1-\eta L)x_{t-1}$ by a step of SGD. Then, applying $T\ge 2$,
\[x_T^2 = \left(1-\eta L\right)^{2T} \cdot x_0^2 > \left( 1-\frac{1}{T} \right)^{2T} x_0^2 \ge \left( 1-\frac{1}{2} \right)^4 x_0^2 = \frac{x_0^2}{16}.\]
Since $f(x)=\frac{1}{n}\sum_{i=1}^n f_i(\vx)=\frac{ L}{2} x^2$ has a minimum $f^*=0$ at $x=0$, we have $$f(x_T)-f^*>\frac{ Lx_0^2}{32} = \Omega( 1)\cdot L x_0^2.$$

\end{proof}

\section{Experiments: quadratic games}
\label{sec:experimental details:quadratic}
In this appendix, we provide a more detailed illustration of our numerical evaluations on quadratic games introduced in Section~\ref{sec:experiments}. Recall that the objective function $f$ and its component functions $f_i$ are given in Equation~\eqref{eq:quadraticgame} as
\begin{align*}
    f(\vx;\vy) &= \frac{1}{2}\vx^\top \mA \vx + \vx^\top \mB \vy - \frac{1}{2}\vy^\top\mC\vy, \\
    f_i(\vx;\vy) &= \frac{1}{2}\vx^\top \mA_i \vx + \vx^\top \mB_i \vy - \frac{1}{2}\vy^\top\mC_i\vy + \vu_i^\top \vx- \vv_i^\top \vy. %
\end{align*}

We choose the same dimensions for the variables $\vx\in\R^{d_x}$ and $\vy\in\R^{d_y}$: we set $d_x=d_y=d$.

\subsection{Parameter choices}

To sample the matrix $\mC=\frac{1}{n}\sum_{i=1}^n\mC_i\in\R^{d}$ satisfying that $\mu_C \mI_d \preceq \mC$ and $\|\mC_i\|_2\le L_C$, we first randomly generate an orthogonal matrix $\mQ_C\in \R^{d\times d}$ (\ie, $\mQ_C\mQ_C^\top = \mI_d$), by taking advantage of the QR-decomposition of a random matrix. Then, we generate the eigenvalues of $\mC_i$'s as follows. We sample the entries of $n$ vectors $\bm\lambda^C_i\in \R^d$ ($i\in[n]$) uniformly from the interval $[\mu_C, L_C]$. We add some level of perturbations to some entries of each $\bm\lambda^C_i$; we replace some entries to the numbers in an interval $[-L_C, \mu_C]$, keeping the entries of the vector $\frac{1}{n}\sum_{i=1}^{n}\bm\lambda^C_i$ in the interval $[\mu_C, L_C]$. Finally, we define $\mC_i = \mQ_C\bm\Lambda^C_i\mQ_C^\top$ where $\bm\Lambda^C_i= \operatorname{diag}(\bm\lambda^C_i)$. Because of the perturbation step, some $\mC_i$'s are not positive definite, thereby some components $f_i$'s become non-(strongly-)concave in $\vy$. 

Next, we sample the matrix $\mB_i$'s. There are no requirements for $\mB$ but $\norm{\mB_i}_2\le L_B$; $\mB_i$'s are even not necessarily symmetric when $d_x\ne d_y$. Thus, we first generate the orthogonal matrices $\mU^B_i$ and  $\mV^B_i$ by taking advantage of the singular value decomposition of random matrices. Then, we generate the singular values of $\mB_i$'s by sampling the entries of $n$ vectors $\bm\sigma^B_i$ uniformly from the interval $[0, L_C]$. After that, we define $\mB_i=\mU^B_i\bm\Sigma^B_i\mV^B_i$ where $\bm\Sigma^C_i= \operatorname{diag}(\bm\sigma^C_i)$. We typically want to take a larger $L_B$ than $L_C$ to strengthen the interaction term $\vx^\top\mB\vy$.

Recall that the primal function $\Phi$ associated with $f$ is explicitly written as
\begin{align}
    \Phi(\vx) = \max_{\vy\in \R^d} f(\vx;\vy) =\frac{1}{2}\vx^\top \left(\mA + \mB\mC^{-1}\mB^\top\right)\vx := \frac{1}{2}\vx^\top \mM\vx. \label{eq:quadratic primal}
\end{align}

Note that the inverse of $\mC$ can be efficiently computed as $\mC^{-1}=\mQ_C(\bm\Lambda^C)^{-1}\mQ_C^\top$.

Before generating the matrices $\mA_i$'s, we first generate $\mM_i$'s satisfying that  $\frac{1}{n}\sum_{i=1}^n \mM_i =\mM$ and the nonzero eigenvalues of positive \emph{semi}definite $\mM$ are in the interval $[\mu_M, L_M]$. The process of sampling $\mM_i$'s is almost identical to how to sample $\mC_i$'s. One notable difference is, $\mM_i$'s and $\mM$ are forced to have $r(<d)$ zero eigenvalues: this makes $\mM$ a positive semidefinite (but not strictly positive definite) matrix of rank $d-r$. Moreover, we get the $\mu_M$-P{\L}($\Phi$) condition in $\vx$ as follows:

\begin{proposition}
    Consider a positive semidefinite matrix $\mM\in \R^d$. If the smallest nonzero eigenvalue of $\mM$ is $\mu$, then $\Phi(\vx):=\frac{1}{2}\vx^\top\mM\vx$ is $\mu$-P{\L} in $\vx$. Also, $\Phi^* =\min_\vx \Phi(\vx)=0$.
\end{proposition}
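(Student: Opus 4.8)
The plan is to handle the two claims separately; both are elementary. First I would observe that positive semidefiniteness of $\mM$ gives $\Phi(\vx) = \tfrac12\vx^\top\mM\vx \ge 0$ for every $\vx$, while $\Phi(\vzero) = 0$; hence $\Phi^* = \min_\vx \Phi(\vx) = 0$ and the minimum is attained at the origin. This also shows $\Phi$ has a well-defined minimum value, so that the P{\L} condition is meaningful.

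For the P{\L} inequality, the key computation is that $\Phi$ is a quadratic with $\nabla\Phi(\vx) = \mM\vx$ (using symmetry of $\mM$), so $\norm{\nabla\Phi(\vx)}^2 = \vx^\top\mM^2\vx$, and the desired bound $\norm{\nabla\Phi(\vx)}^2 \ge 2\mu(\Phi(\vx)-\Phi^*)$ becomes $\vx^\top\mM^2\vx \ge \mu\,\vx^\top\mM\vx$. I would prove this by diagonalizing $\mM = \mQ\,\operatorname{diag}(\lambda_1,\dots,\lambda_d)\,\mQ^\top$ with all $\lambda_i\ge 0$ and substituting $\vw = \mQ^\top\vx$, reducing the claim to $\sum_i \lambda_i(\lambda_i-\mu)w_i^2 \ge 0$; term by term, the summand is $0$ when $\lambda_i = 0$ and is nonnegative when $\lambda_i > 0$ because then $\lambda_i \ge \mu$. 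This is the only place the hypothesis ``$\mu$ is the smallest \emph{nonzero} eigenvalue'' is used, and it is exactly what prevents a zero eigenvalue from spoiling the bound.

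A shorter alternative would be to invoke Proposition~\ref{prop: SC compose linear is PL}: write $\Phi(\vx) = g(\mM^{1/2}\vx)$ with $g(\vt) = \tfrac12\norm{\vt}^2$ being $1$-strongly convex, note that the smallest nonzero singular value of $\mM^{1/2}$ is $\sqrt{\mu}$, and conclude that $\Phi$ is $1\cdot(\sqrt{\mu})^2 = \mu$-P{\L}. There is no genuine obstacle in either route; the only thing to be careful about is not to claim more than P{\L} — since $\mM$ may be rank-deficient, $\Phi$ need not be strongly convex (nor even have a unique minimizer), so the argument must go through the gradient/eigenvalue inequality rather than any strong-convexity lower bound.
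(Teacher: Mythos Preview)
Your proposal is correct. The paper's own proof is precisely your ``shorter alternative'': it writes $\Phi(\vx)=\tfrac12\norm{\overline{\mM}\vx}^2$ with $\overline{\mM}=\bm\Lambda^{1/2}\mQ^\top$ (so the smallest nonzero singular value of $\overline{\mM}$ is $\sqrt{\mu}$) and then invokes Proposition~\ref{prop: SC compose linear is PL} with the $1$-strongly convex $g(\vt)=\tfrac12\norm{\vt}^2$. Your primary route via the scalar inequality $\sum_i \lambda_i(\lambda_i-\mu)w_i^2\ge 0$ is a slightly more self-contained variant that avoids citing that proposition, but the content is the same.
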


\begin{proof}
    Apply the eigendecomposition of $\mM$: $\mM=\mQ\bm\Lambda\mQ^\top$. Let $\overline{\mM}=\bm\Lambda^{1/2}\mQ^\top$. Then, we have $\Phi(\vx)=\frac{1}{2}\norm{\overline{\mM}\vx}^2$, which implies that $\Phi(\vx)\ge 0$ ($\forall \vx$) and in fact $\Phi^*=0$.  Note that $\frac{1}{2}\norm{\vx}^2$ is 1-strongly convex. Also, the minimum nonzero singular value of $\overline{\mM}$ is $\sqrt{\mu}$ ({\small $\because \mM=\overline{\mM}^\top\overline{\mM}$}). Therefore, by the proof of Proposition~\ref{prop: SC compose linear is PL}, $\Phi(\vx)$ is a $\mu$-P{\L} function of $\vx$. Lastly, we note that $\Phi(\vx)$ is not strongly convex in general, especially when $\mM$ is a rank-deficient matrix.
\end{proof}
Typically, the spectral norm $\|\mM\|_2$ is known to be bounded above by $\|\mA\|_2 + L_B^2/\mu_C$ in \emph{worst-case} \citep{nouiehed2019solving, li2022convergence}. However, since we sample $\mM$ without knowing the exact form of $\mA_i$'s while we want to control the spectral norm $\|\mA_i\|_2$ not too large (for smoothness of $f_i$), we (empirically) decide to choose rather smaller $L_M$: simply, we choose $L_M=L_B$.

Now we let $\mA_i=\mM_i - \mB \mC^{-1}\mB^\top$ and $\mA = \frac{1}{n}\sum_{i=1}^n \mA_i$ to satisfy Equation~\eqref{eq:quadratic primal}. We emphasize that $\mA$ may have negative eigenvalues; the objective is nonconvex in $\vx$ in general. We have checked this is true across the experimental settings. 
Also, we let $L:=\max\{\|\mA\|_2, L_B, L_C\}$ for further parameter selection. (In fact, because of our choice of parameter values, $L$ was always equal to $L_B$ in our experiments.)

Furthermore, we generate the vectors $\vu_i$'s and $\vv_i$'s satisfying $\sum_{i=1}^n \vu_i = \vzero = \sum_{i=1}^n \vv_i$. The entries of these vectors are uniformly sampled from an interval $[-\Delta, \Delta]$, thereby the average of entries is centered to zero. In addition, to verify our theory, we choose the step-sizes of the form $\beta = c_1\cdot\nicefrac{b}{nL}$ and $\alpha=c_0\cdot\nicefrac{\beta}{\kappa_2^2}$ for some constants $c_0$ and $c_1$ and batch size $b$.

Lastly, we specify the values of parameters described above: $n=100$, $d=25$, $\mu_M=\mu_C$, and $L_C=1<L_M=L_B$. The constants $c_0$ and $c_1$ are tuned among $10^{\{-2,\,-1.5,\,\pm1,\,\pm0.5,\,0\}}$.
In the following subsections, we investigate the effects of the change of 
\begin{itemize}
    \item[(i)] $\Delta\in \{10, {\bf 20}, 40\}$, determining the discrepancy between components,
    \item[(ii)] condition number $\kappa_2\in \{5, {\bf 10}, 20\}$, determined by $L_B$ and $\mu_C$, and
    \item[(iii)] batch size $b\in \{{\bf 1}, 25, 50, 100\}$,
\end{itemize}
from the plots of the values potential function $V_\lambda(\vx;\vy) = (1+\lambda)\Phi(\vx) -f(\vx;\vy)$ over \emph{epochs}.\footnote{During and after the discussion phase, we performed some more experiments. As we tried to plot all the results over \emph{iterations}, the size of the figures in \texttt{pdf} format became too large. Consequently, in this appendix, we only plot the results over epochs to reduce the file size of the figures.} (Numbers in bold font above are the default values of parameters.)

\begin{figure}[t]
    \centering
    \begin{subfigure}[b]{0.3\textwidth}
        \centering
        \includegraphics[width=\textwidth]{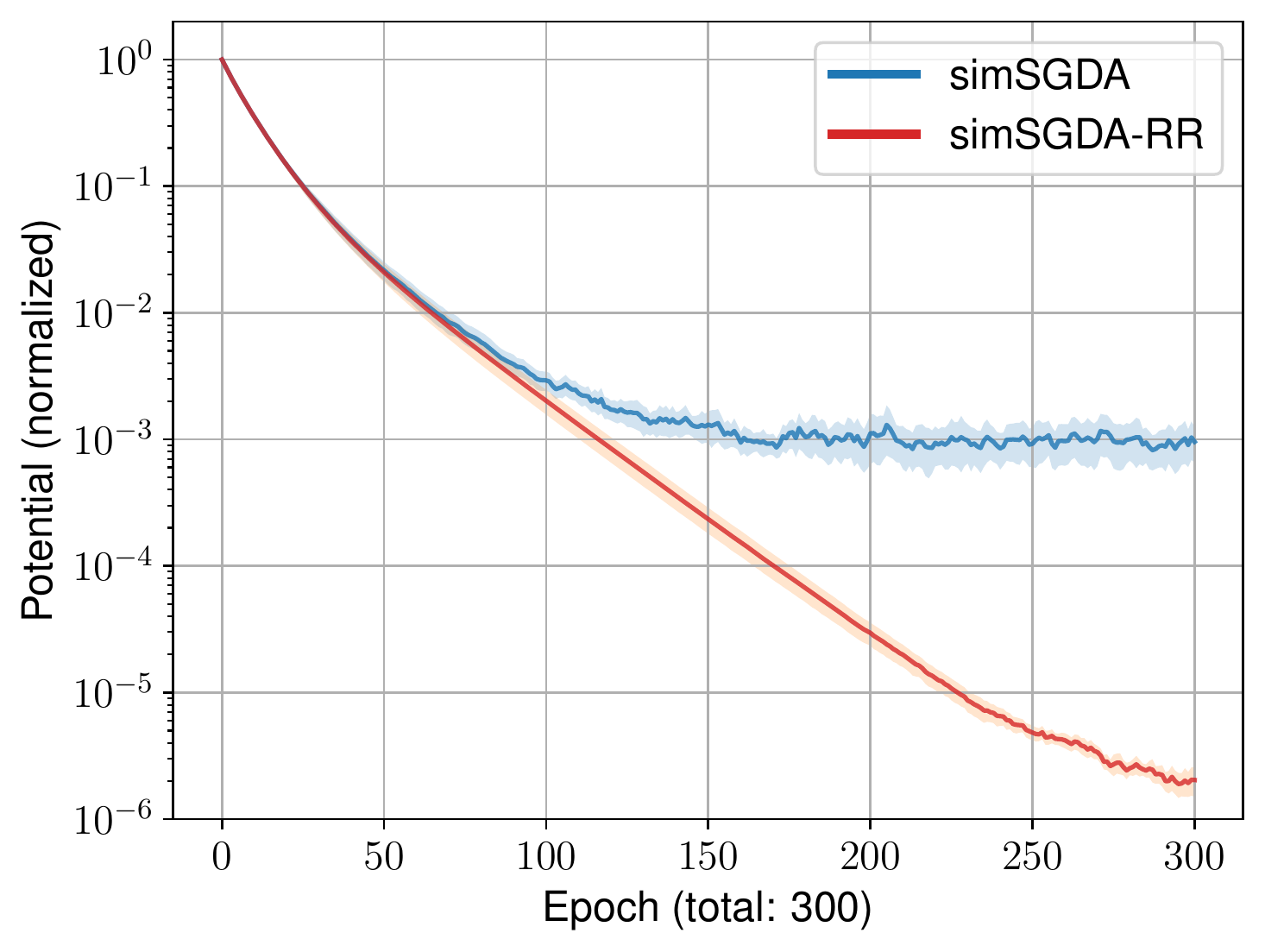}
        \caption{$\Delta=10$, {\color{tab:blue}simSGDA}{\color{tab:red}(-RR)}}
        \label{fig:Experiment:Delta10simSGDA}
    \end{subfigure}
    ~
    \begin{subfigure}[b]{0.3\textwidth}
        \centering
        \includegraphics[width=\textwidth]{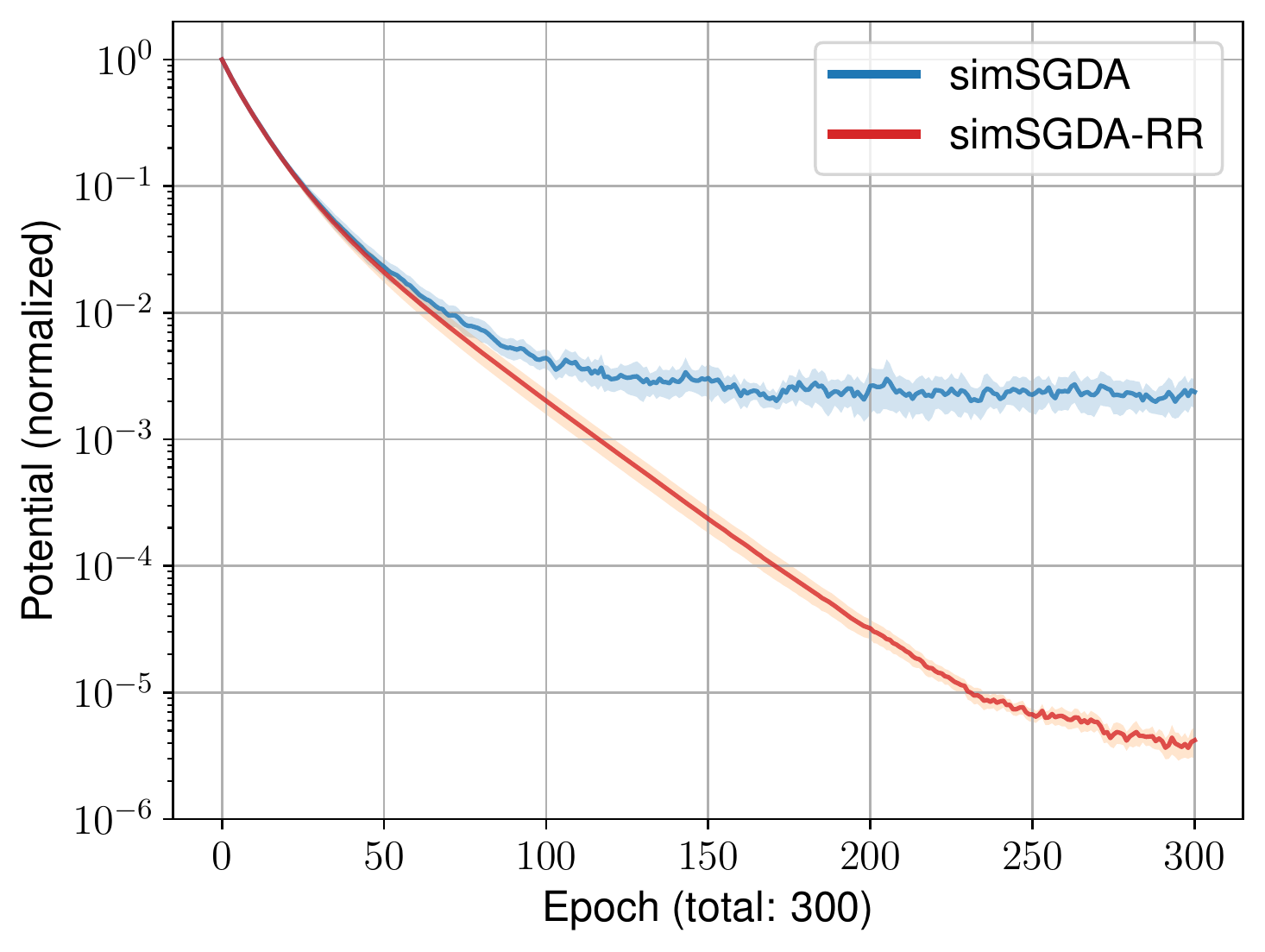}
        \caption{$\Delta=20$, {\color{tab:blue}simSGDA}{\color{tab:red}(-RR)}}
        \label{fig:Experiment:Delta20simSGDA}
    \end{subfigure}
    ~
    \begin{subfigure}[b]{0.3\textwidth}
        \centering
        \includegraphics[width=\textwidth]{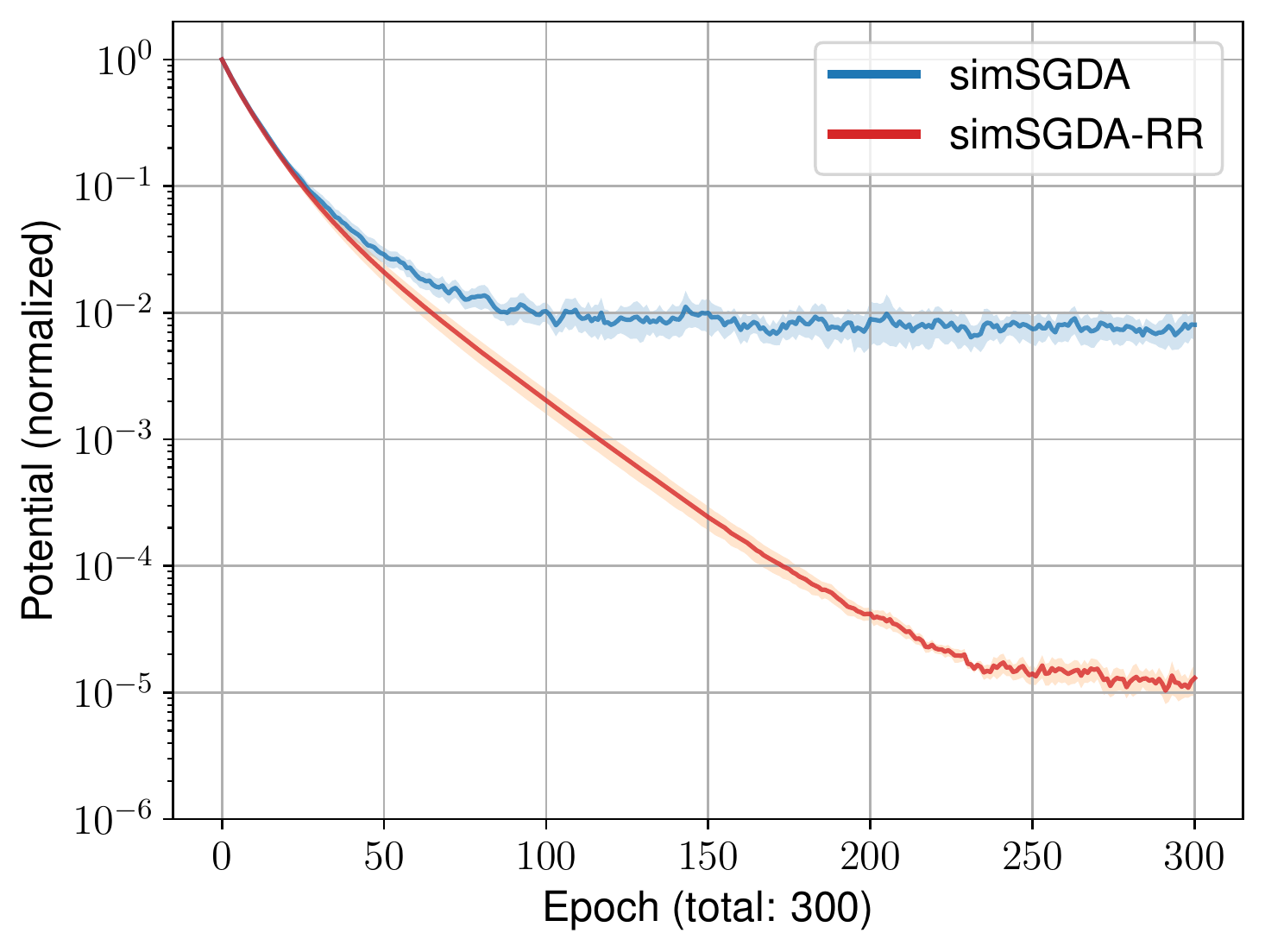}
        \caption{$\Delta=40$, {\color{tab:blue}simSGDA}{\color{tab:red}(-RR)}}
        \label{fig:Experiment:Delta40simSGDA}
    \end{subfigure}
    \\
    \begin{subfigure}[b]{0.3\textwidth}
        \centering
        \includegraphics[width=\textwidth]{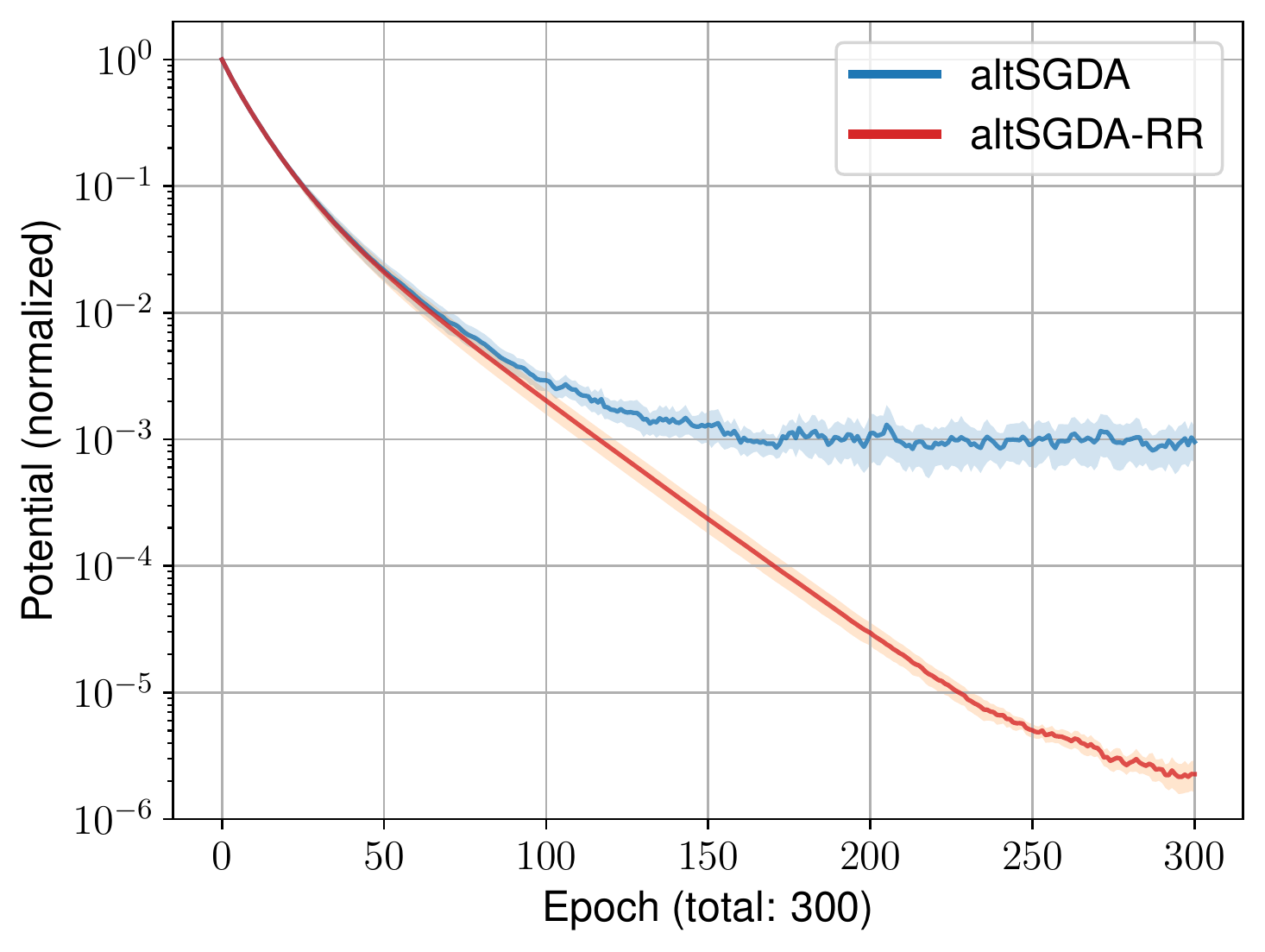}
        \caption{$\Delta=10$, {\color{tab:blue}altSGDA}{\color{tab:red}(-RR)}}
        \label{fig:Experiment:Delta10altSGDA}
    \end{subfigure}
    ~
    \begin{subfigure}[b]{0.3\textwidth}
        \centering
        \includegraphics[width=\textwidth]{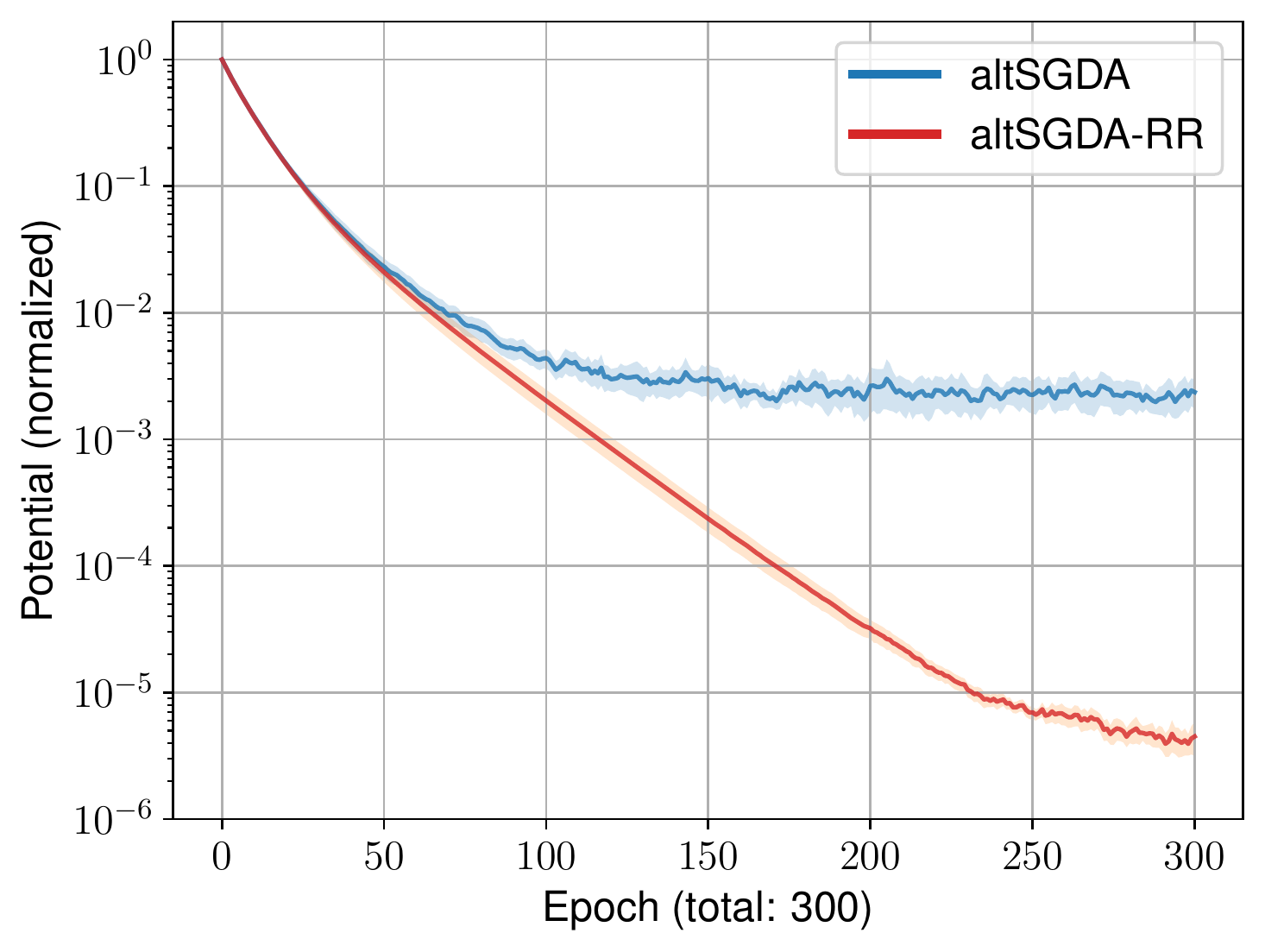}
        \caption{$\Delta=20$, {\color{tab:blue}altSGDA}{\color{tab:red}(-RR)}}
        \label{fig:Experiment:Delta20altSGDA}
    \end{subfigure}
    ~
    \begin{subfigure}[b]{0.3\textwidth}
        \centering
        \includegraphics[width=\textwidth]{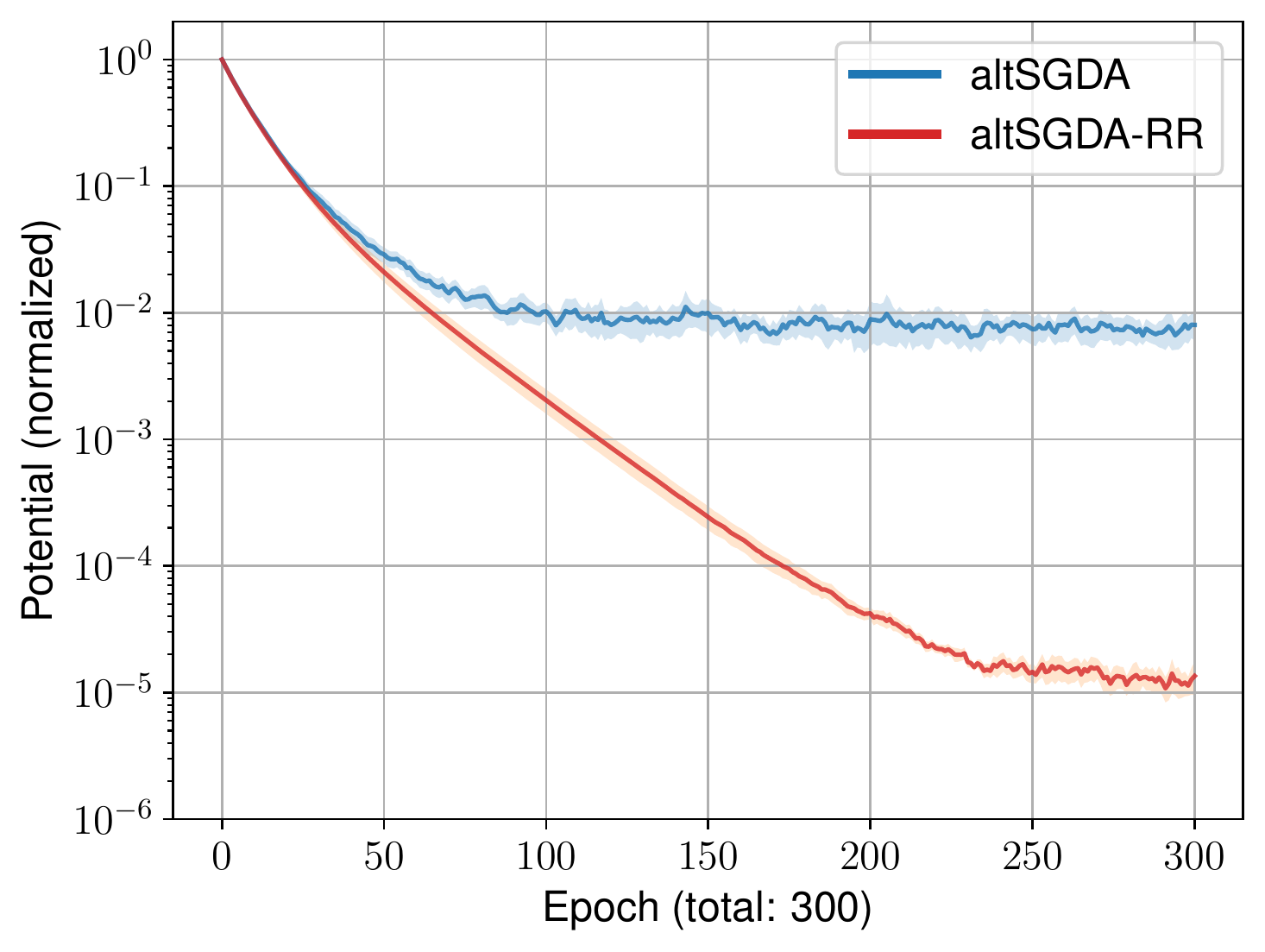}
        \caption{$\Delta=40$, {\color{tab:blue}altSGDA}{\color{tab:red}(-RR)}}
        \label{fig:Experiment:Delta40altSGDA}
    \end{subfigure}
    \\
    \begin{subfigure}[b]{0.3\textwidth}
        \centering
        \includegraphics[width=\textwidth]{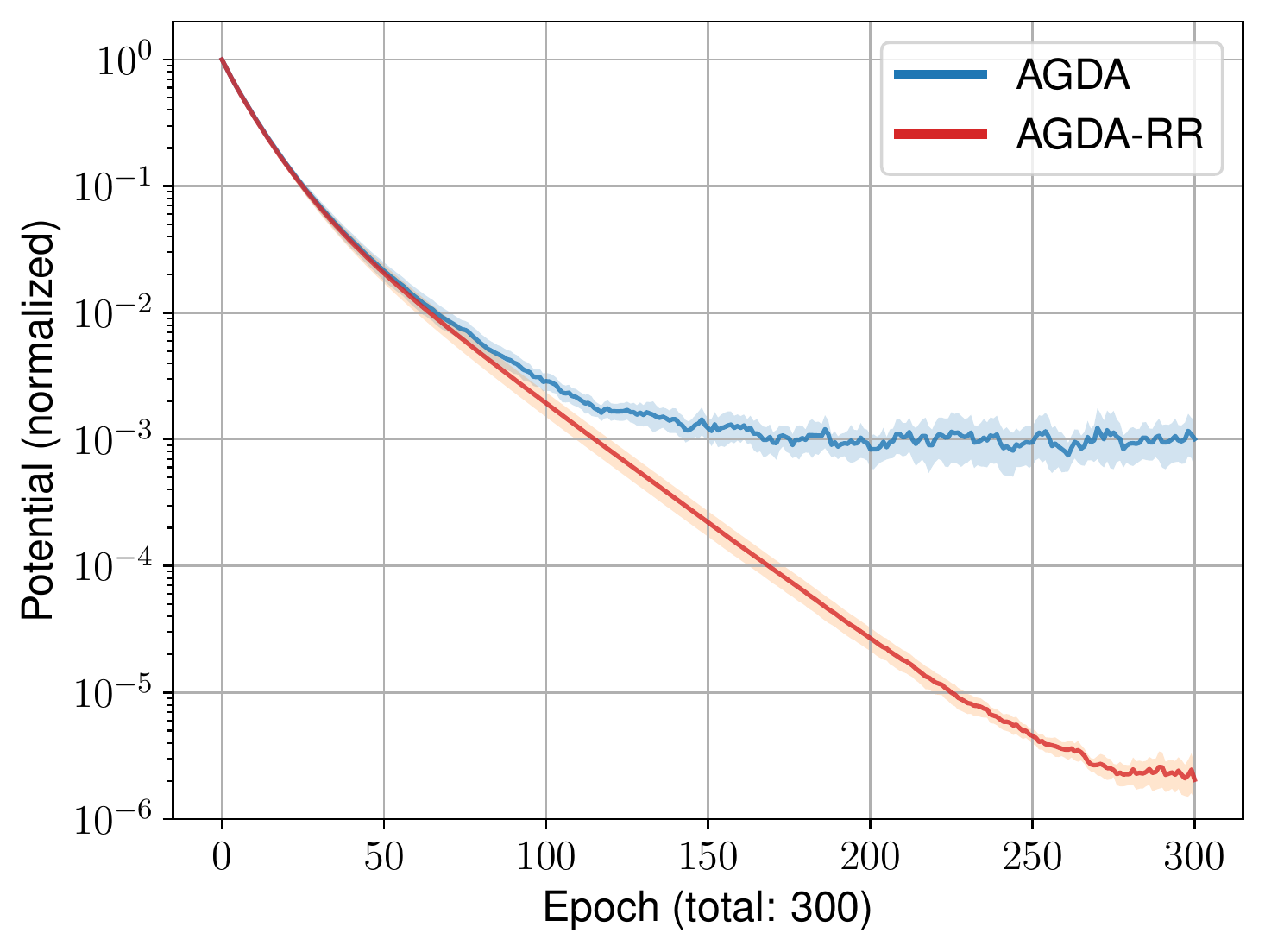}
        \caption{$\Delta=10$, {\color{tab:blue}AGDA}{\color{tab:red}(-RR)}}
        \label{fig:Experiment:Delta10AGDA}
    \end{subfigure}
    ~
    \begin{subfigure}[b]{0.3\textwidth}
        \centering
        \includegraphics[width=\textwidth]{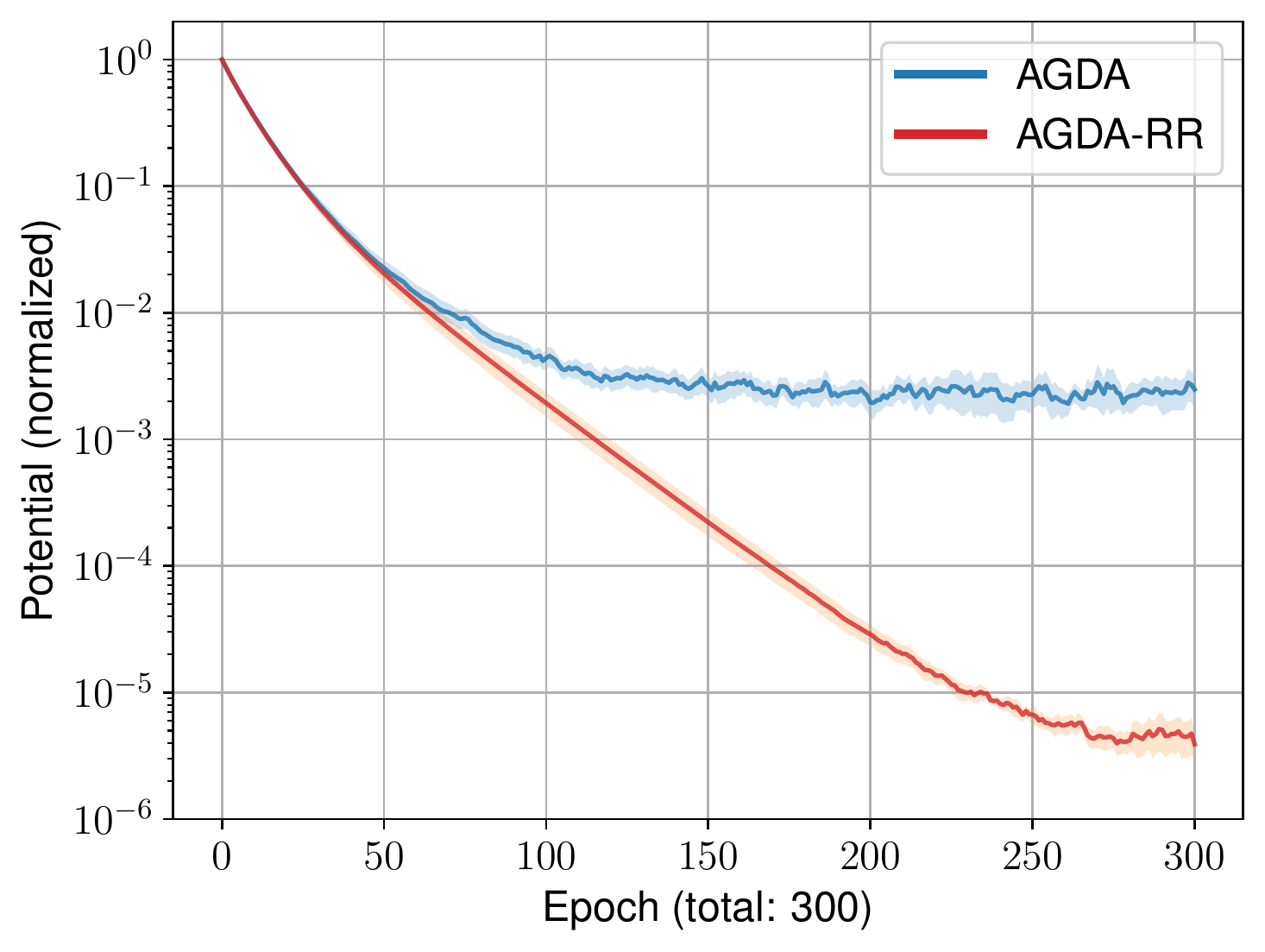}
        \caption{$\Delta=20$, {\color{tab:blue}AGDA}{\color{tab:red}(-RR)}}
        \label{fig:Experiment:Delta20AGDA}
    \end{subfigure}
    ~
    \begin{subfigure}[b]{0.3\textwidth}
        \centering
        \includegraphics[width=\textwidth]{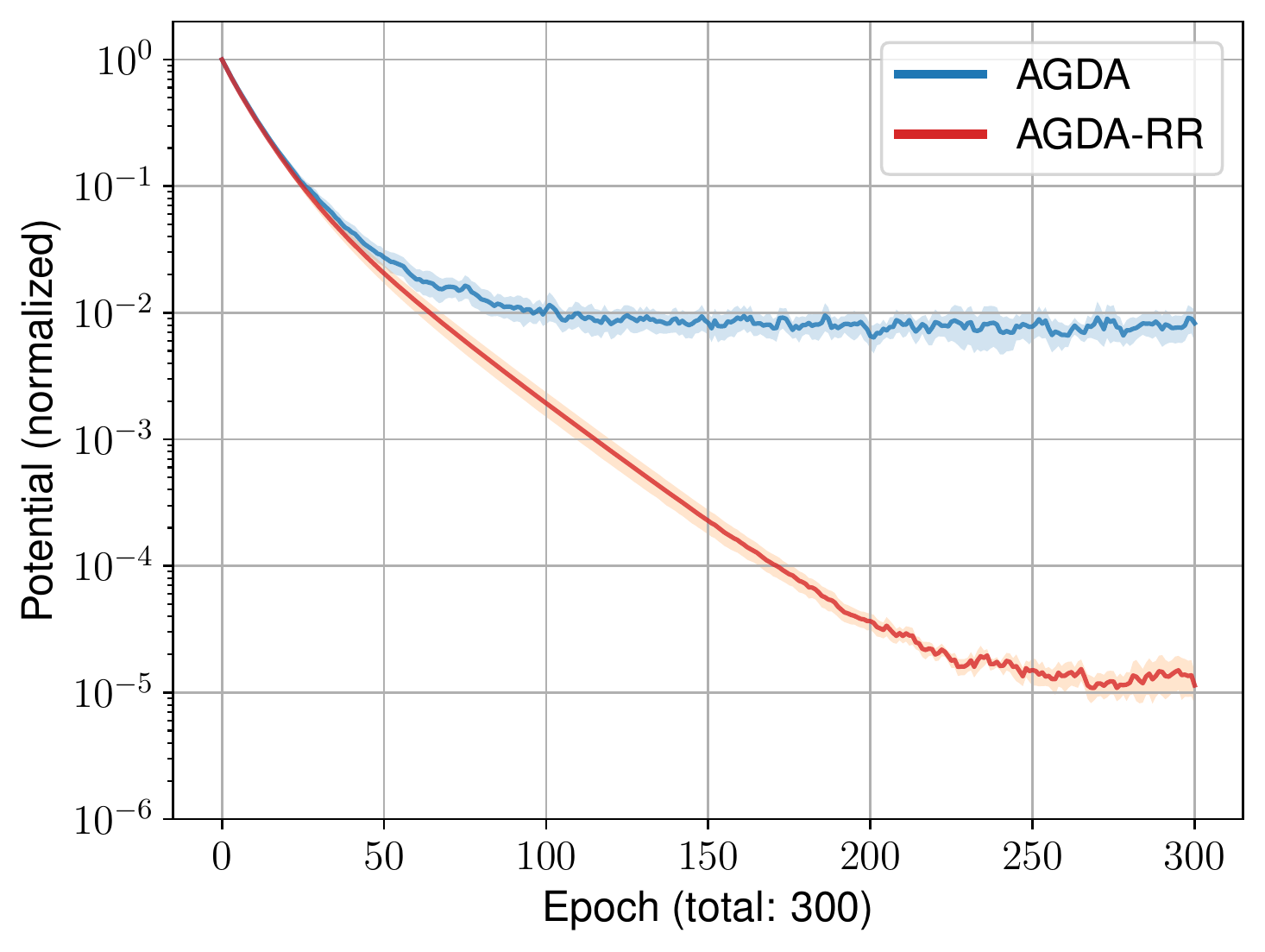}
        \caption{$\Delta=40$, {\color{tab:blue}AGDA}{\color{tab:red}(-RR)}}
        \label{fig:Experiment:Delta40AGDA}
    \end{subfigure}
    \caption{Comparisons by changing the value of $\Delta\in\{10,20,40\}$. Solid lines: average across 10 different runs. Shaded regions: 95\% confidence intervals ($\pm 1.96$ std). The vertical axes are on a \emph{logarithmic scale}.}
    \label{fig:ExperimentDeltaComparison}
\end{figure}

\subsection{Comparison: the effect of component discrepancy}

Notice that the discrepancy between component functions gets larger as $\Delta$ grows. Technically, one can check that the gradient variance (that we controlled in Assumption~\ref{ass:bddvar}) is proportional to the norms of the vectors $\vu_i$ and $\vv_i$. Moreover, we have already discussed that the gap between convergence speeds of SGDA and SGDA-RR becomes larger especially when the gradient variance is large. 

Now, we present the results of numerical experiments by varying the values of $\Delta$ to 10, 20, and 40, while fixing $L_B=4$, $\mu_C=0.4$, $b=1$, and other experiment parameters. As shown in \Figref{fig:ExperimentDeltaComparison}, we can observe that the difference between the random-reshuffling algorithm and the uniform-sampling algorithm gets larger as $\Delta$ increases.

\begin{figure}[t]
    \centering
    \begin{subfigure}[b]{0.3\textwidth}
        \centering
        \includegraphics[width=\textwidth]{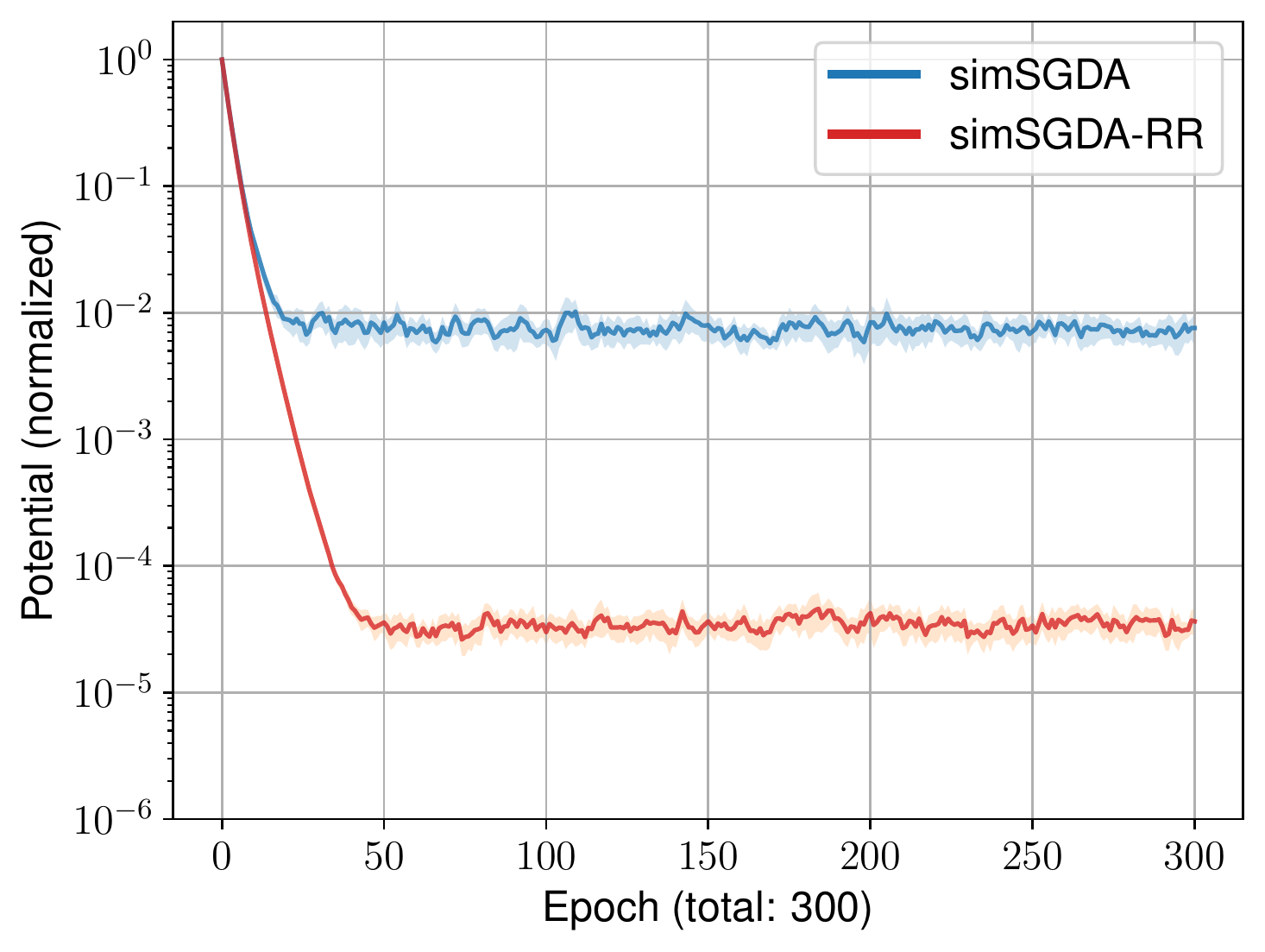}
        \caption{$\kappa_2=5$, {\color{tab:blue}simSGDA}{\color{tab:red}(-RR)}}
        \label{fig:Experiment:kappa5simSGDA}
    \end{subfigure}
    ~
    \begin{subfigure}[b]{0.3\textwidth}
        \centering
        \includegraphics[width=\textwidth]{Lyapunov_simSGDA_QuadraticPrimalPLSC_base.pdf}
        \caption{$\kappa_2=10$, {\color{tab:blue}simSGDA}{\color{tab:red}(-RR)}}
        \label{fig:Experiment:kappa10simSGDA}
    \end{subfigure}
    ~
    \begin{subfigure}[b]{0.3\textwidth}
        \centering
        \includegraphics[width=\textwidth]{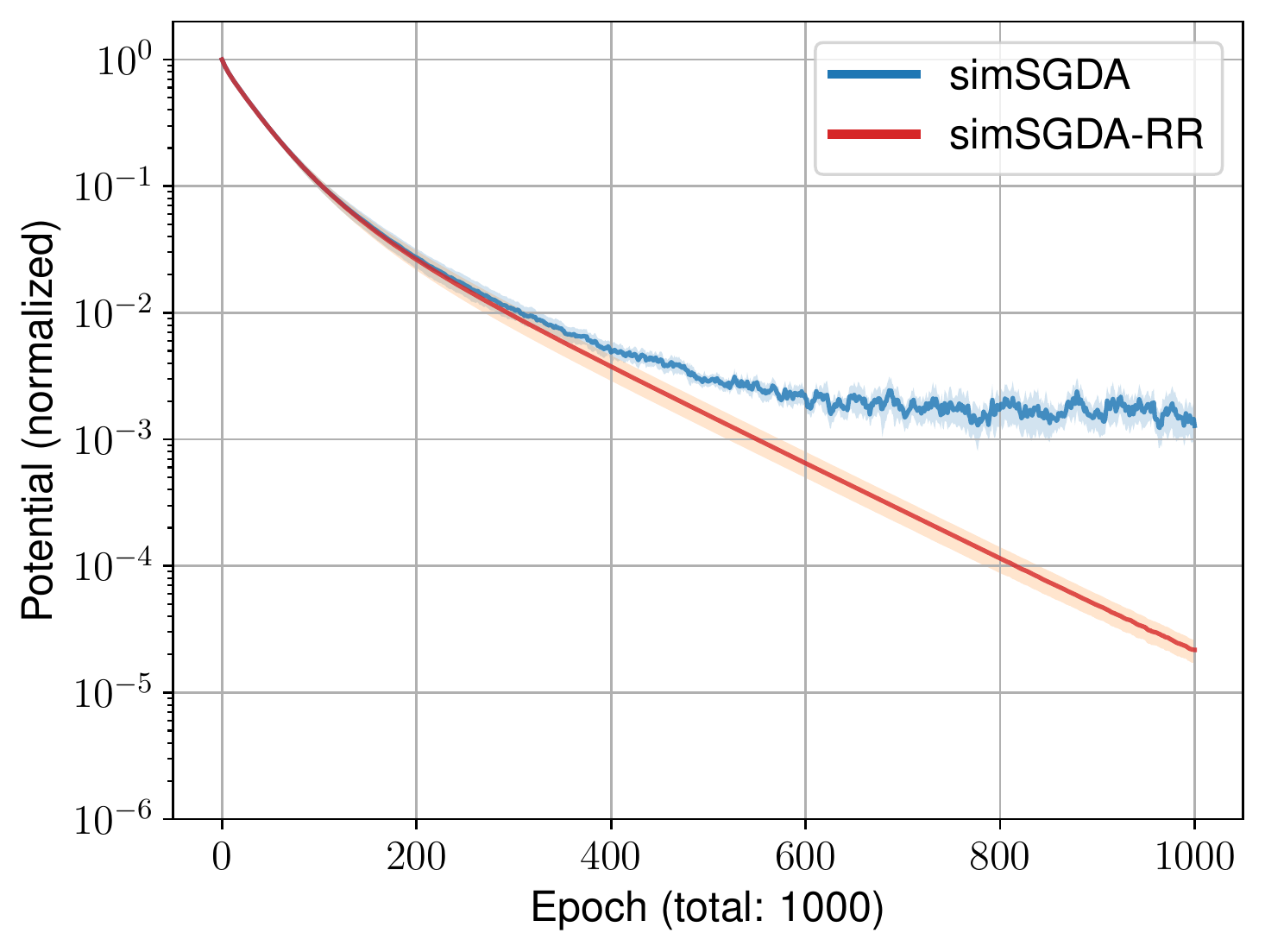}
        \caption{$\kappa_2=20$, {\color{tab:blue}simSGDA}{\color{tab:red}(-RR)}}
        \label{fig:Experiment:kappa20simSGDA}
    \end{subfigure}
    \\
    \begin{subfigure}[b]{0.3\textwidth}
        \centering
        \includegraphics[width=\textwidth]{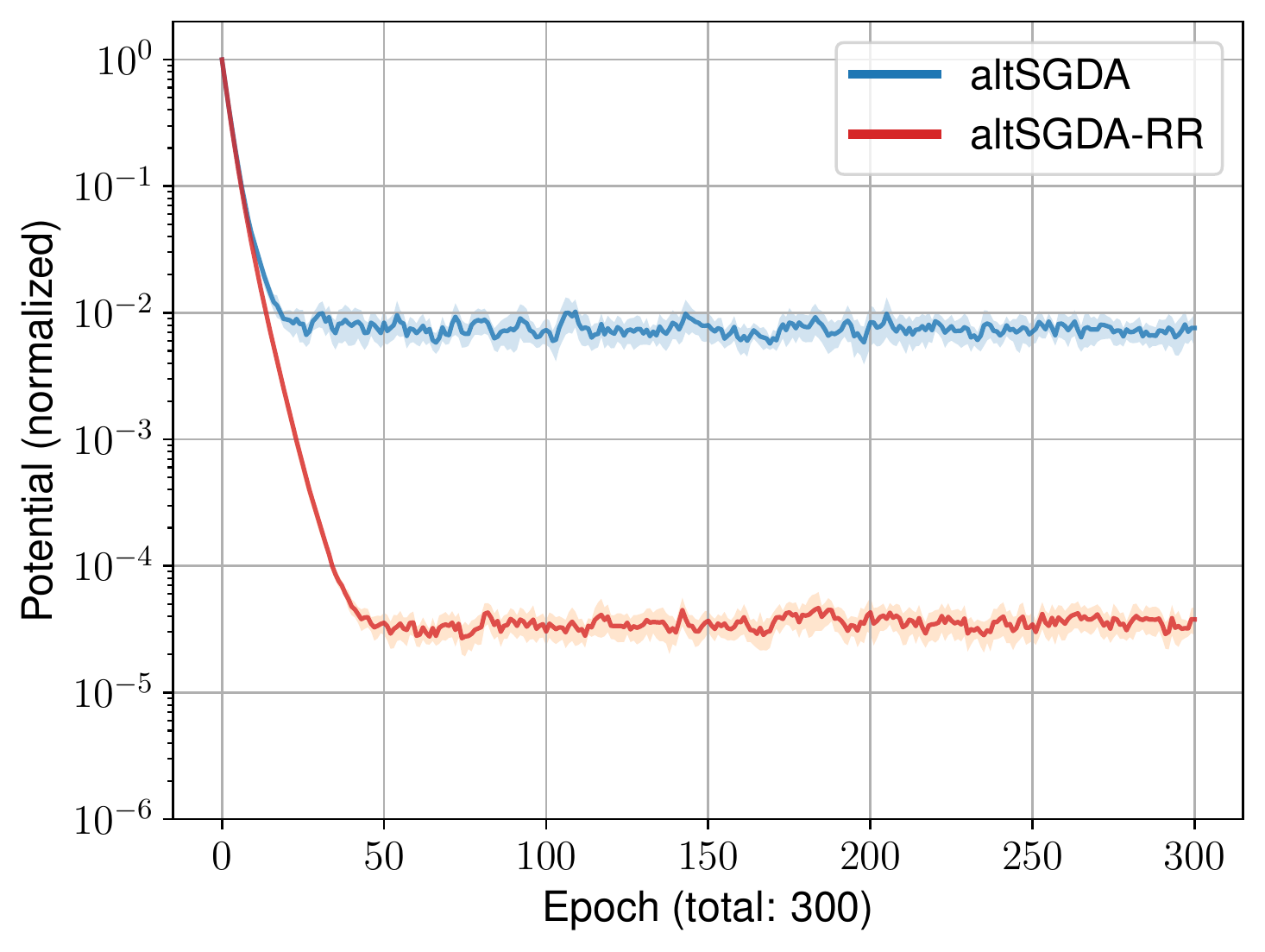}
        \caption{$\kappa_2=5$, {\color{tab:blue}altSGDA}{\color{tab:red}(-RR)}}
        \label{fig:Experiment:kappa5altSGDA}
    \end{subfigure}
    ~
    \begin{subfigure}[b]{0.3\textwidth}
        \centering
        \includegraphics[width=\textwidth]{Lyapunov_altSGDA_QuadraticPrimalPLSC_base.pdf}
        \caption{$\kappa_2=10$, {\color{tab:blue}altSGDA}{\color{tab:red}(-RR)}}
        \label{fig:Experiment:kappa10altSGDA}
    \end{subfigure}
    ~
    \begin{subfigure}[b]{0.3\textwidth}
        \centering
        \includegraphics[width=\textwidth]{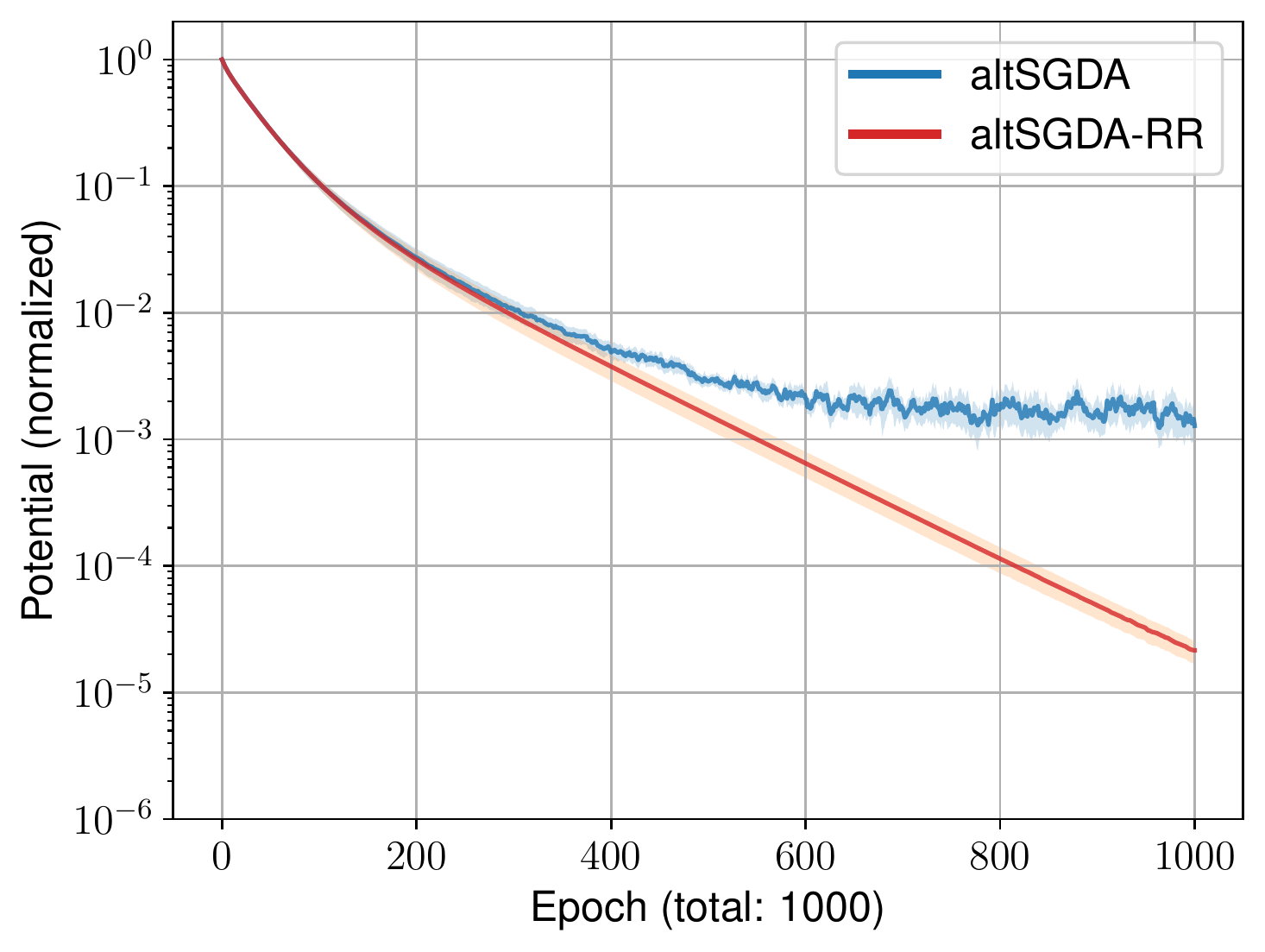}
        \caption{$\kappa_2=20$, {\color{tab:blue}altSGDA}{\color{tab:red}(-RR)}}
        \label{fig:Experiment:kappa20altSGDA}
    \end{subfigure}
    \\
    \begin{subfigure}[b]{0.3\textwidth}
        \centering
        \includegraphics[width=\textwidth]{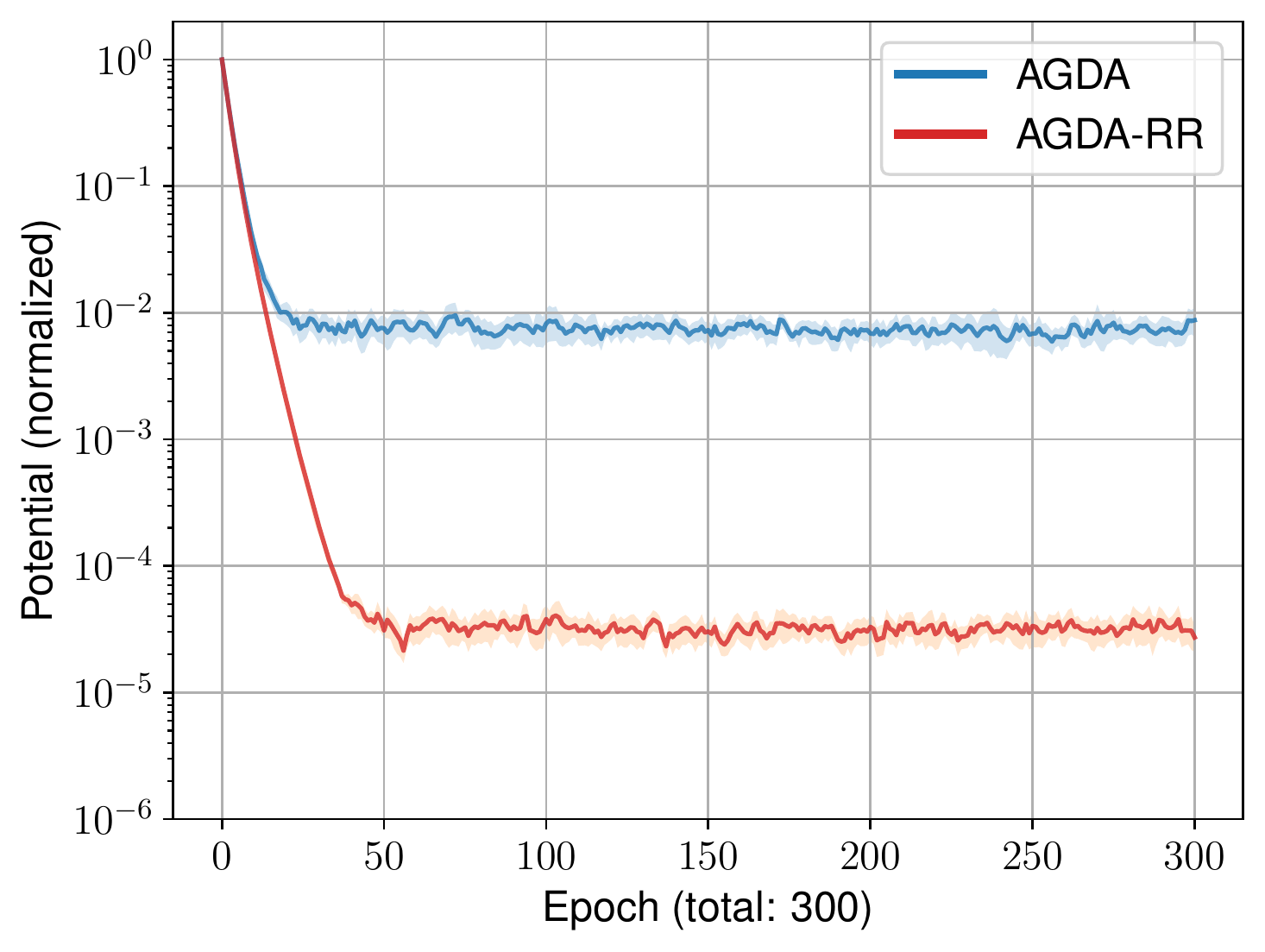}
        \caption{$\kappa_2=5$, {\color{tab:blue}AGDA}{\color{tab:red}(-RR)}}
        \label{fig:Experiment:kappa5AGDA}
    \end{subfigure}
    ~
    \begin{subfigure}[b]{0.3\textwidth}
        \centering
        \includegraphics[width=\textwidth]{Lyapunov_AGDA_QuadraticPrimalPLSC_base.pdf}
        \caption{$\kappa_2=10$, {\color{tab:blue}AGDA}{\color{tab:red}(-RR)}}
        \label{fig:Experiment:kappa10AGDA}
    \end{subfigure}
    ~
    \begin{subfigure}[b]{0.3\textwidth}
        \centering
        \includegraphics[width=\textwidth]{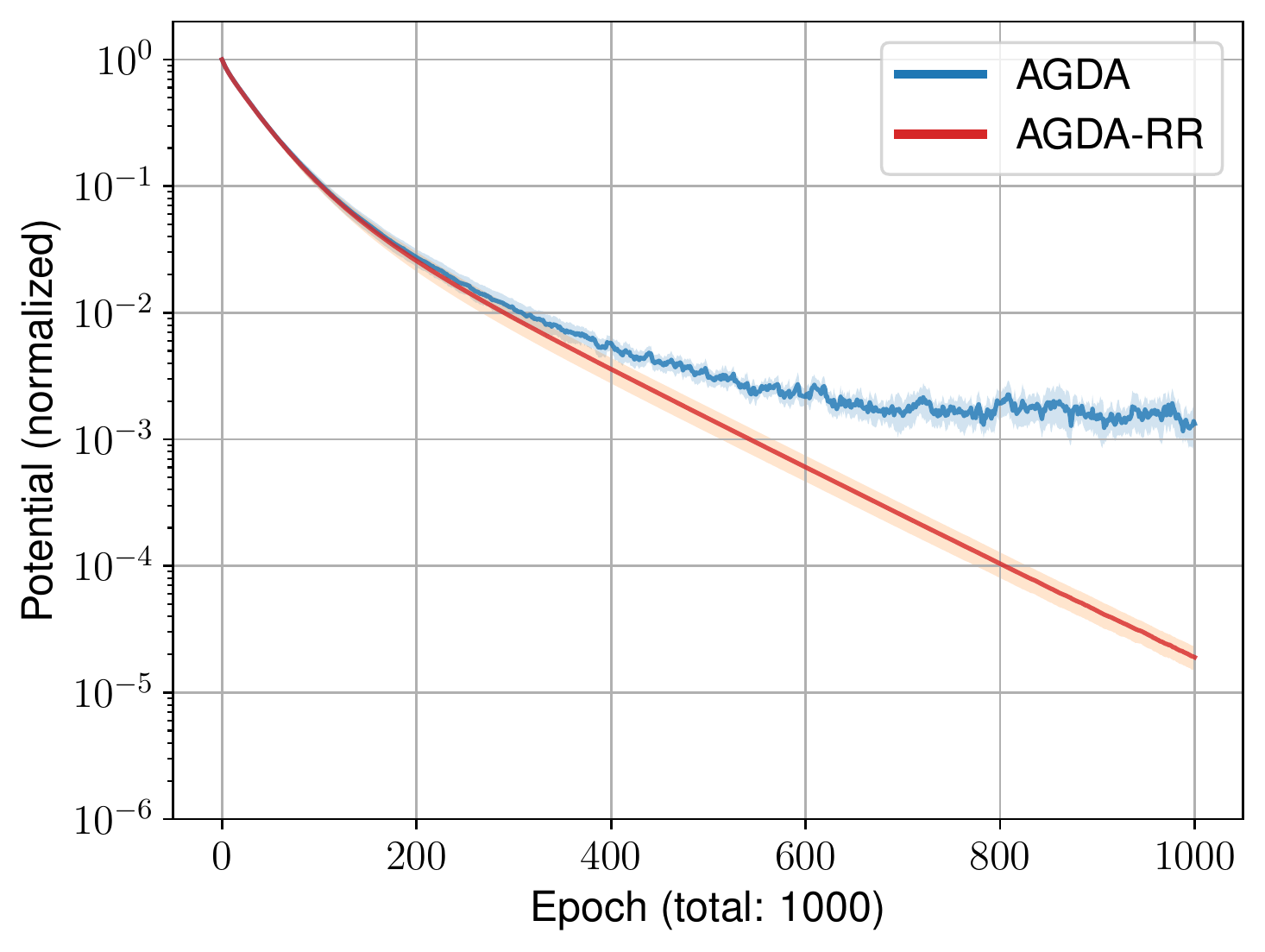}
        \caption{$\kappa_2=20$, {\color{tab:blue}AGDA}{\color{tab:red}(-RR)}}
        \label{fig:Experiment:kappa20AGDA}
    \end{subfigure}
    \caption{Comparisons by changing the value of $\kappa_2=L/\mu_C\in\{5,10,20\}$. Solid lines: average across 10 different runs. Shaded regions: 95\% confidence intervals ($\pm 1.96$ std). The vertical axes are on a \emph{logarithmic scale}. 
    Note: we run \textbf{1000 epochs for $\kappa_2=20$} (see the rightmost column), whereas we run \textbf{300 epochs for the other $\kappa_2\in\{5, 10\}$} (see the leftmost \& middle columns).}
    \label{fig:ExperimentkappaComparison}
\end{figure}

\subsection{Comparison: the effect of condition number}

Here, we present the results of experiments by varying the values of $\kappa_2$ to 5, 10, and 20, while fixing $\Delta=20$, $b=1$, and other experiment parameters. To this end, we applied the parameter settings for $L_B$ and $\mu_C$ as $(L_B, \mu_C) = (2.5,0.5), (4, 0.4), (5, 0.25)$, respectively.

The results are shown in \Figref{fig:ExperimentkappaComparison}. We observe that more epochs are required for convergence when $\kappa_2$ increases, regardless of the type of algorithm. One may think that the performance gap between RR-based/non-RR-based algorithms is small when $\kappa_2$ is huge. However, when we run the algorithm for an extended number of epochs, we observe a significant gap in convergence speeds.

\begin{figure}[t]
    \centering
    \begin{subfigure}[b]{0.45\textwidth}
        \centering
        \includegraphics[width=\textwidth]{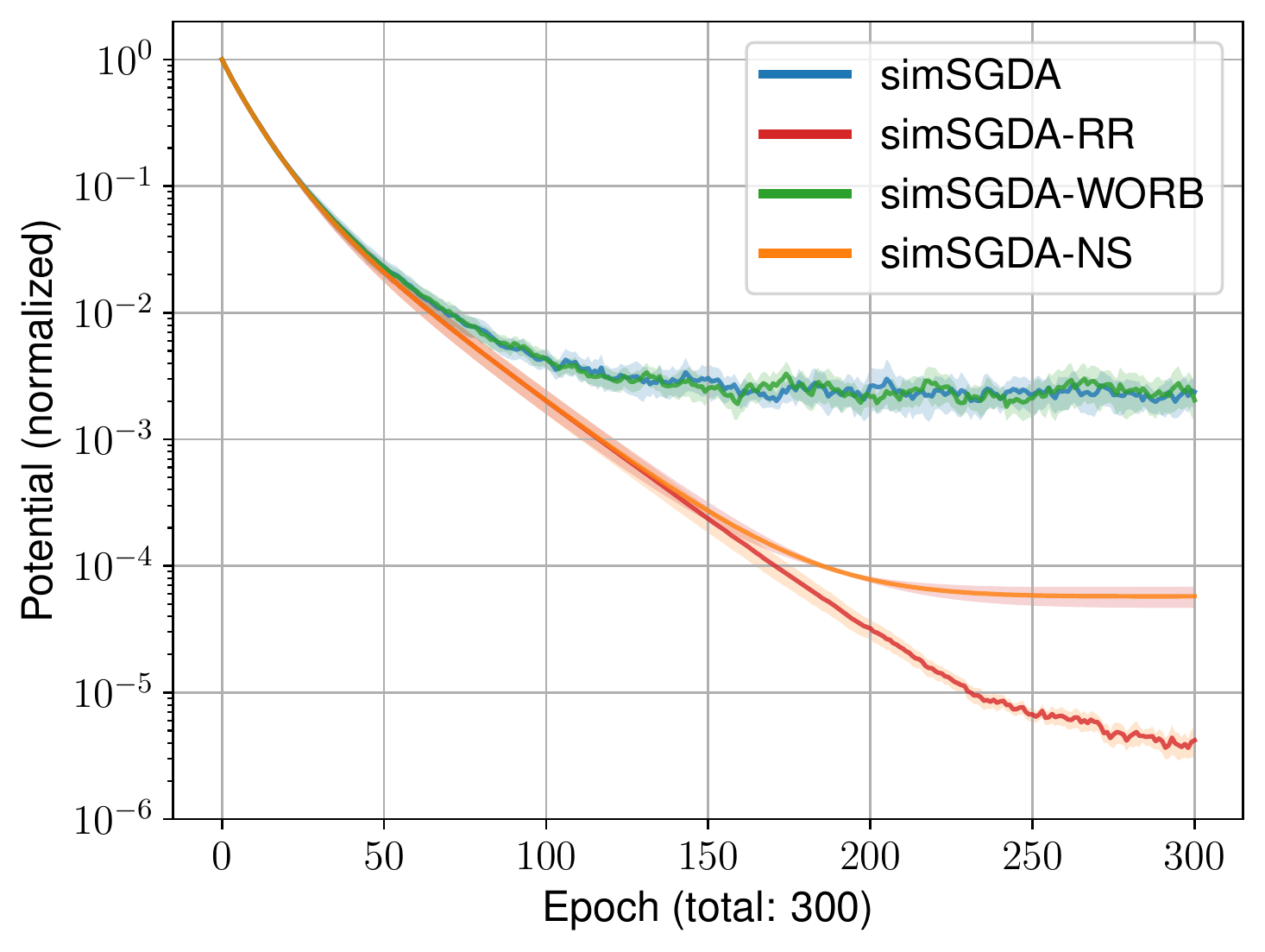}
        \caption{$b=1$}
        \label{fig:Experiment:bs1simSGDA}
    \end{subfigure}
    ~
    \begin{subfigure}[b]{0.45\textwidth}
        \centering
        \includegraphics[width=\textwidth]{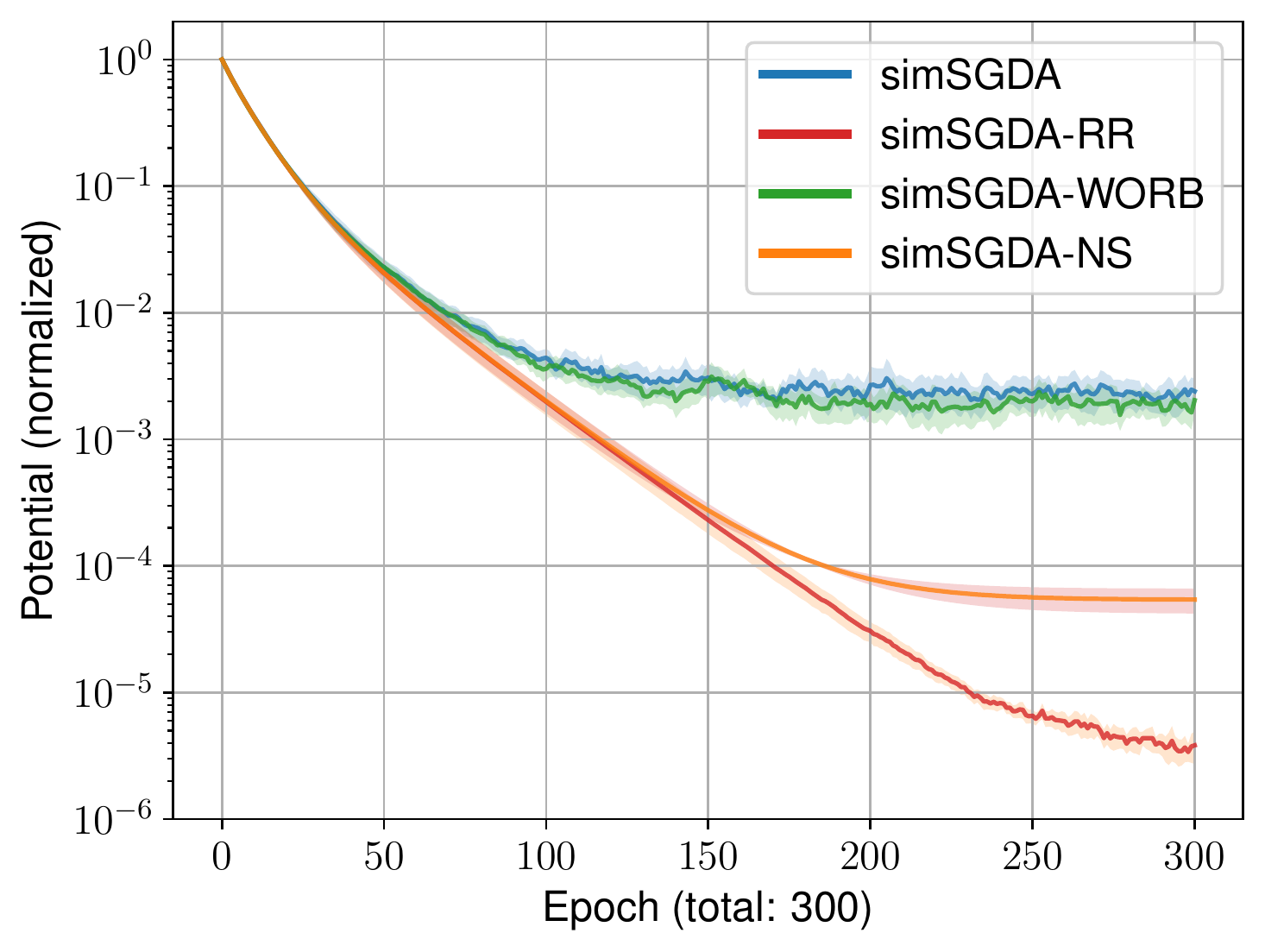}
        \caption{$b=25$}
        \label{fig:Experiment:bs25simSGDA}
    \end{subfigure}
    \\
    \begin{subfigure}[b]{0.45\textwidth}
        \centering
        \includegraphics[width=\textwidth]{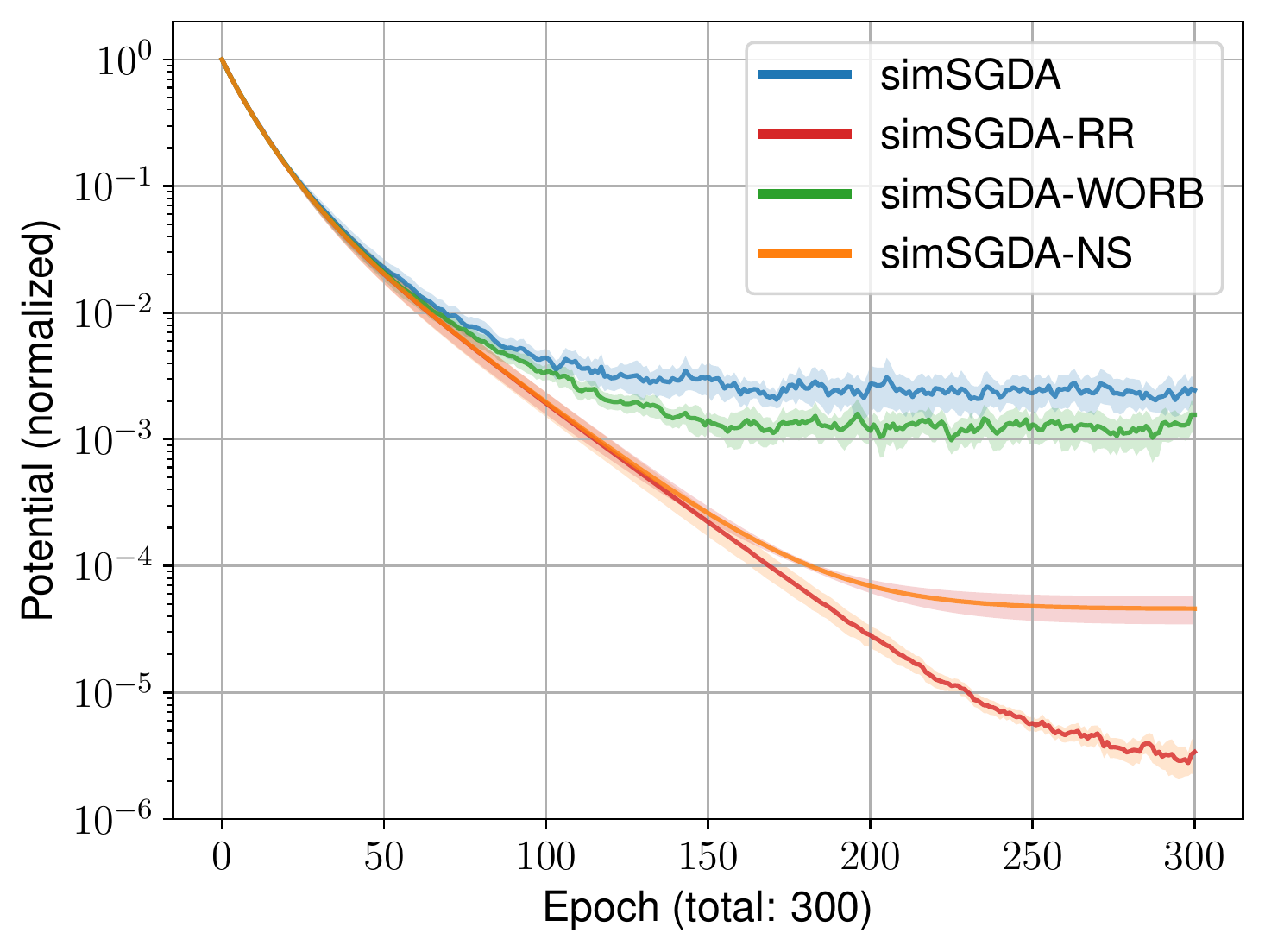}
        \caption{$b=50$}
        \label{fig:Experiment:bs50simSGDA}
    \end{subfigure}
    ~
    \begin{subfigure}[b]{0.45\textwidth}
        \centering
        \includegraphics[width=\textwidth]{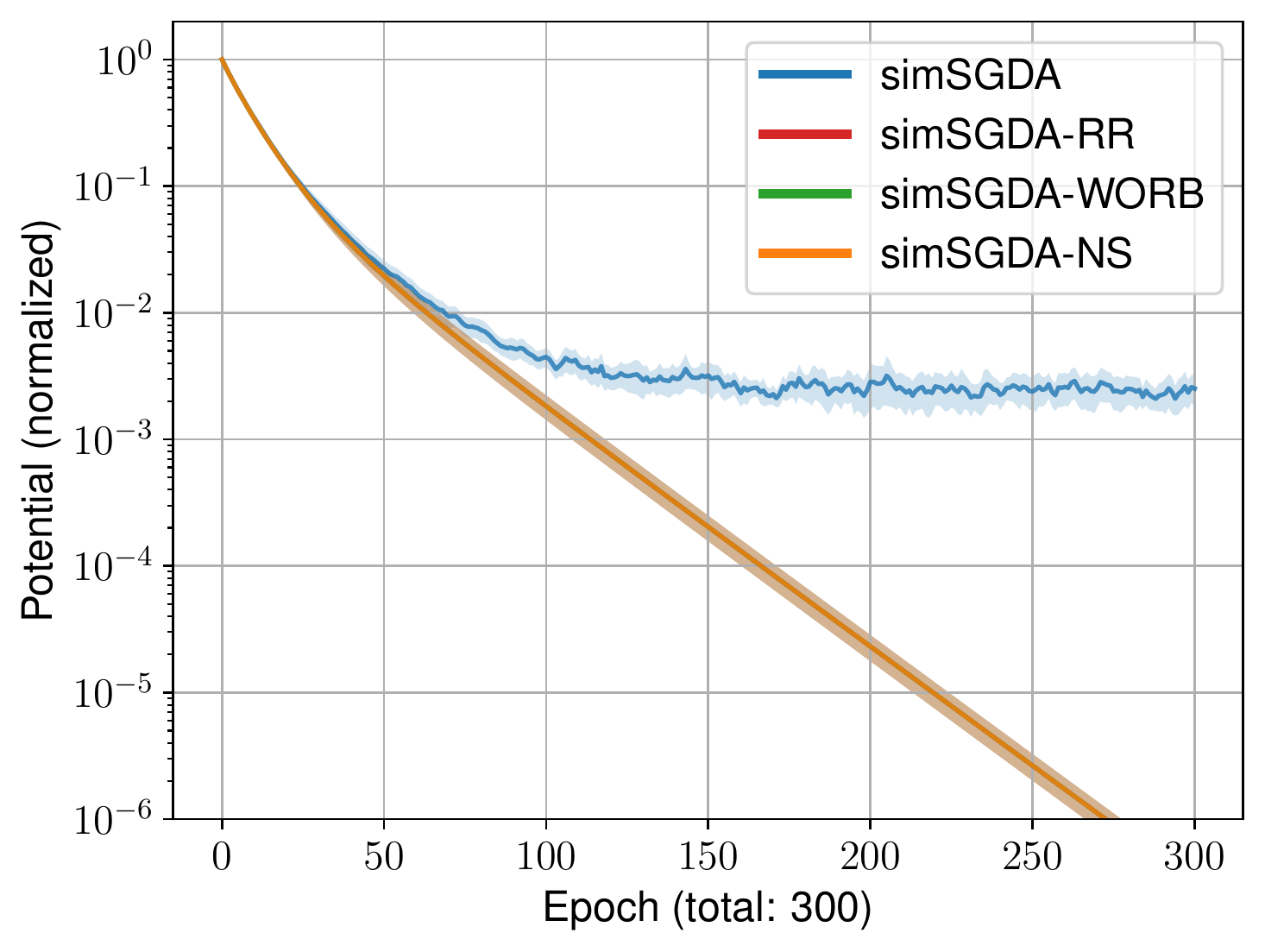}
        \caption{$b=100$}
        \label{fig:Experiment:bs100simSGDA}
    \end{subfigure}
    \\
    \caption{Comparisons of {\color{tab:blue}simSGDA}({\color{tab:red}-RR},{\color{tab:green}-WORB},{\color{tab:orange}-NS}) as changing $b\in\{1,25,50,100\}$. Solid lines: average across 10 different runs. Shaded regions: 95\% confidence intervals ($\pm 1.96$ std). The vertical axes are on a \emph{logarithmic scale}.}
    \label{fig:ExperimentBsComparison}
\end{figure}

\subsection{Comparison: the effect of batch size}

The last comparison is about the effect of batch size $b\in\{1, 25,50,100\}$. Recall that we linearly scale the step sizes as the batch size changes. However, since the number of epochs is fixed, the number of iterations decreases as $b$ gets larger.

As the readers can notice, the convergence behavior of SGDA (resp., SGDA-RR) and AGDA (resp., AGDA-RR) are similar in our construction of quadratic games. Thus, in this subsection, we only compare simSGDA and its variants. Rather, we introduce two more methods of component choice other than with-replacement uniform sampling and random reshuffling:
\begin{itemize}
    \item {\color{tab:green} WORB}(WithOut-Replacement mini-Batching): every mini-batch is without-replacement \& uniformly-randomly sampled, while any pair of mini-batches in an epoch may have some indices in common; the same as \emph{$b$-minibatch sampling} \citep{loizou2021stochastic}.
    \item {\color{tab:orange} NS}(No Shuffle): accessing $1,...,n$ in its predefined order to construct mini-batches; without-replacement but deterministic. Remark: for \emph{minimization} problems, SGD with NS is usually referred to as \emph{incremental gradient} (IG) algorithm \citep{mishchenko2020random}.
\end{itemize}

These two methods are somewhat related to without-replacement component sampling, whereas they are both different from RR which uniformly randomly samples a permutation of $[n]$ every epoch. We call simSGDA using mini-batches sampled by WORB and NS as \emph{simSGDA-WORB} and \emph{simSGDA-NS}, respectively. Remarks: If $b=1$, simSGDA-WORB becomes the same algorithm as vanilla simSGDA. Also, since we choose $n=100$, if $b=n=100$, all three algorithms simSGDA-RR/-WORB/-NS become the same as deterministic \& full-batch (simultaneous) GDA.

The results are shown in \Figref{fig:ExperimentBsComparison}. One can notice that the potential plots of simSGDA, simSGDA-RR, and simSGDA-NS are respectively the same even if we change the batch size ($b<100$). Also, if $b>1$, simSGDA-WORB has better performance than vanilla simSGDA. These imply that without-replacement mini-batches benefit the convergence speed to some extent in our quadratic game. However, the result of experiments also implies that both (i) without-replacement \emph{per epoch} (\ie, shuffling) and (ii) randomization are indeed essential for fast convergence in our quadratic game experiments. In particular, WORB requires a very large batch size but still has a much slower convergence rate than RR (see \Figref{fig:Experiment:bs50simSGDA} which is the case of using half of the total components at each iteration).

\section{Omitted comparison with related works}
\label{sec:omitted comparison}
\subsection{Comparison with \cite{xie2021efficient}}

To specialize \citet[Theorem~3]{xie2021efficient} to the single-machine setup and discuss their results in terms of our notation, we need to replace their symbols
\[(T,S,K,\sigma_1^2, \sigma_2^2, G_1^2, G_2^2,  L_{12},L_f, \mu, L_\Phi, \gL_0, \eta_t, \gamma_t)\]
with the following symbols from our notation 
\[(K,1,n,0,0,B,B,L, L,\mu_2, L(\kappa_2+1),V_\lambda(\vz_0^1), \alpha, \beta),\]
and also put $A=0$ (their analysis only applies uniformly bounded component variance per machine).
Then we can \emph{naively} translate the bound of \citet[Theorem~3]{xie2021efficient} to our language as 
\[\min_{k\in [K]} \E \left[\norm{\Phi(\vx)}^2\right] \stackrel{\text{(?)}}{\le} \gO\left(\frac{\kappa_2LV_\lambda(\vz_0^1)}{K}+\kappa_2^2\left(\frac{L^2BV_\lambda(\vz_0^1)^2}{K^2}\right)^{1/3}\right).\]
To the best of our knowledge, however, we believe there may be a mistake in the proof of \citet[Appendix~C.4]{xie2021efficient}. From the inequalities on the last page of their paper, we notice that the term $\frac{40L_{12}^2\gL_0}{\mu^2\gamma K T}$ might be missing in a step, where $\gamma$ is chosen to be the minimum of several terms including $\frac{1}{87L_fK}$. Thus, as far as we can tell, it seems inevitable that this omitted term would lead to an additional term $\frac{3480L_fL_{12}^2\gL_0}{\mu^2 T}$ in the final bound. By combining this to their bound and re-translating it, we eventually have
\[\min_{k\in [K]} \E \left[\norm{\Phi(\vx)}^2\right] {\le} \gO\left(\frac{{\color{tab:red}\kappa_2^2}LV_\lambda(\vz_0^1)}{K}+\kappa_2^2\left(\frac{L^2BV_\lambda(\vz_0^1)^2}{K^2}\right)^{1/3}\right),\]
since their $L_{12}^2/\mu^2$ translates to our $\kappa_2^2$.
Therefore, their result actually shows the same dependency on condition number $\kappa_2$ as our Theorem~\ref{thm:NCPL}. Nevertheless, comparing the terms related to the component-wise variance $B$, ours is better. In the second term in the bound above does not shrink even when the number of iterations (per machine \& per communication) grows. In our case (Theorem~\ref{thm:NCPL}), however, the dominant term (in $K$) can be briefly written as $\gO\left(\left(\frac{B}{nK^2}\right)^{1/3}\right)$ which can diminish with large $n$, \ie, the number of iterations per epoch.

\end{document}